\documentclass[11 pt]{amsart}

\parskip = 02 pt

\usepackage[T1]{fontenc}
\usepackage[OT2, T1]{fontenc}
\usepackage[russian,english]{babel}
\usepackage{geometry}

 \geometry{verbose,
%  tmargin=2.5cm,
% bmargin=2.5cm,
 lmargin=1.5in,
 rmargin=1.5in,
%  headsep=1cm
 }
\usepackage{setspace}
\usepackage{verbatim}
\usepackage{mathrsfs}
\makeindex
\usepackage{dsfont}
\usepackage{amsbsy}
\usepackage{amstext}
\usepackage{amsthm}
\usepackage{amssymb}
\usepackage[hidelinks]{hyperref}
\usepackage{xcolor}
\usepackage{cancel}
\usepackage{graphicx}
\usepackage[export]{adjustbox}

\setcounter{tocdepth}{2}
\usepackage{float}
\PassOptionsToPackage{normalem}{ulem}
\usepackage[toc,page]{appendix}
\usepackage{graphicx,tikz}
\usepackage[columns=1]{idxlayout}

\usepackage{adjustbox,graphicx,subcaption}
\captionsetup{subrefformat=parens}

\makeatletter
%%%%%%%%%%%%%%%%%%%%%%%%%%%%%% Textclass specific LaTeX commands.
\numberwithin{equation}{section}
\numberwithin{figure}{section}
\theoremstyle{plain}
\newtheorem{thm}{\protect\theoremname}[section]
\newtheorem{definition}[thm]{Definition}
\theoremstyle{plain}
\newtheorem{remark}[thm]{Remark}
\newtheorem{cor}[thm]{\protect Corollary}
\newtheorem{exmp}[thm]{Example}
\newtheorem{prop}[thm]{\protect Proposition}
\newtheorem{lem}[thm]{\protect\lemmaname}
\newtheorem{conj}[thm]{\protect Conjecture}
\newcommand{\xlkj}{\mathbf{x}_{j, \mathbf{k}, \ell, \p}}
\newtheorem{problem}[thm]{Problem}

\def\p{{\textcolor{black} {p}}}
\def\1{{\textcolor{black} {1}}}
\def\d1{{\textcolor{black} {d-1}}}

\def\r{{\textcolor{black} r}}

\def\om{{\textcolor{black} {\omega}}}
\def\bump{{\textcolor{black} {b}}}
% Blackboard Makros

\def \bN {\mathbb N}

\def \bQ {\mathbb Q}
\def \bR {\mathbb R}
\def \bRn {\mathbb R^{n-1}}
\def \bZ {\mathbb Z}
\def \bZn {\mathbb Z^{n-1}}

% Scriptfont Makros

\def \sA {\mathscr A}
\def \sB {\mathscr B}
\def \sC {\mathscr C}
\def \sD {\mathscr D}

\def \sF {\mathscr F}
\def \sG {\mathscr G}
\def \sH {\mathscr H}

\def \sK {\mathscr K}

\def \sM {\mathscr M}

\def \sP {\mathscr P}

\def \sR {\mathscr R}
\def \sS {\mathscr S}

\def \sU {\mathscr U}

\def \sW {\mathscr W}
\def \sX {\mathscr X}

\def \sZ {\mathscr Z}

%Romanfont Makros

\def \rc {\mathrm c}
\def \rd {\mathrm d}

\def \ri {\mathrm i}

\def \rN {\mathrm N}

%Boldfont Makros

\def \ba {\mathbf a}

\def \bc {\mathbf c}

\def \be {\mathbf e}
\def \bf {\mathbf f}

\def \bk {\mathbf k}

\def \bn {\mathbf n}

\def \bp {\mathbf p}

\def \bs {\mathbf s}

\def \bx {\mathbf x}

\def \by {\mathbf y}
\def \bz {\mathbf z}

% Boldfont symbols

\def \bzero {\mathbf 0}

%Fraktur Makros

\def \fa {\mathfrak a}

% not allowed to define \fi because it's reserved

\def \fk {\mathfrak k}

\def \fE {\mathfrak E}

\def \fL {\mathfrak L}

\def \fN {\mathfrak N}

\def \fX {\mathfrak X}

%Caligraphic Makros

\def \cA {\mathcal A}
\def \cB {\mathcal B}

\def \cE {\mathcal E}

\def \cH {\mathcal H}

\def \cT {\mathcal T}

%Miscellanous Makros

\def \dim {\mathrm{dim}}

\def \det {\mathrm{det}}

\def \supp {{\mathrm{supp}}}

\def \ellpcond {$0\leq  \ell \leq  \fL_p(\delta,Q)$}
\def \ellponecond {$0\leq  \ell \leq  \fL_\1(\delta,Q)$}
\def \ellpdcond {$0\leq  \ell \leq  \fL_\d1(\delta,Q)$}

\def \ds1 {\mathds{1}}

\def \ind {\mathds{1}}

\def\om{{\textcolor{black} {\omega}}}
\def\om{{\textcolor{black} {\omega}}}
\def\Lc{{\textcolor{black} {\fL_p }}}

\def\const{{\textcolor{black} {\fk}}}

\pagenumbering{arabic}

\makeatother

\usepackage{babel}
\providecommand{\lemmaname}{Lemma}

\providecommand{\theoremname}{Theorem}
\usepackage{xifthen,ifthen}
\makeatletter
\newcounter{@ToDo}
\newcommand{\todo@helper}[1]{%
	({\color{blue}TODO~\arabic{@ToDo}: {#1\@addpunct{.}}})%
}
\newcommand{\todo}[1]{\stepcounter{@ToDo}%
	\relax\ifmmode\text{\todo@helper{#1}}%
	\else\todo@helper{#1}\fi%
}
\newcounter{@cdo}
\newcommand{\cdo@helper}[1]{%
	({\color{red}CITE~\arabic{@cdo}: {#1\@addpunct{.}}})%
}
\newcommand{\cdo}[1]{\stepcounter{@cdo}%
	\relax\ifmmode\text{\cdo@helper{#1}}%
	\else\cdo@helper{#1}\fi%
}

\begin{document}

%\onehalfspacing
\author{Rajula Srivastava, Niclas Technau}
\title{
Density of Rational Points Near Flat/Rough
Hypersurfaces
}
\begin{abstract}
For $n\geq 3$, let $\mathscr{M} 
\subseteq\bR^{n}$ be a compact hypersurface, parametrized by a 
 homogeneous function of degree 
$d\in \bR_{>1}$, with non-vanishing curvature away from the origin. Consider the number
$\rN_\sM(\delta,Q)$
of rationals
$\ba/q$, with denominator 
$q\in [Q,2Q)$ and $\ba \in \bZn$, lying at a distance at most
$\delta/q$ from $\sM$.
This manuscript provides
essentially sharp estimates
for $\rN_\sM(\delta,Q)$
throughout the range $\delta \in (Q^{\varepsilon-1},1/2)$ for $d>1+\tfrac{1}{2n-3}$.
Our result
is a first of its kind 
for hypersurfaces with vanishing 
Gaussian curvature ($d>2$) and 
those which are rough (meaning not even $C^2$ 
at the origin which happens when $d<2$). 
An interesting outcome of our investigation
is the understanding of a `geometric' term 
$(\delta/Q)^{(n-1)/d}Q^n$
(stemming from a so-called Knapp cap), 
arising in addition to the usual probabilistic term 
$\delta Q^n$; the sum of these terms 
determines the size of $\rN_\sM(\delta,Q)$
for $\delta\in(Q^{\varepsilon-1},1/2)$. Consequences of our result concern 
the metric theory of 
Diophantine approximation
on `rough' hypersurfaces
--- going beyond
the recent break-through of
Beresnevich and L. Yang. 
Further, we establish smooth 
extensions of
Serre's dimension growth conjecture.
 
% Several tools from harmonic analysis are used to establish the upper bounds.
% A central role is played by a dyadic scaling argument exploiting the homogeneous nature of $\sM$. Further, we build upon a powerful bootstrapping argument of \cite{Huang rational points}, which in our case is even more delicate due the additional geometric term. We also carry out this argument for each dyadic scale separately, keeping track of estimates for two different classes of hypersurfaces at each step: the `flat' $\sM$ and its `rough' (Legendre) dual $\tilde{\sM}$. 
% The lower bounds,
% demonstrating the sharpness of the result
% in several cases, follow 
% from a combination of harmonic analysis techniques
% and the geometry of numbers. 

\tableofcontents{}

\end{abstract}

\keywords{Rational Points; Stationary Phase; Flat Surfaces}
\subjclass[2020]{11J83; 11J13; 11J25}

\address{Rajula Srivastava; \newline Former Address: Department of Mathematics, 
University of Wisconsin 480 Lincoln Drive, Madison, WI, 53706, USA
\newline New Address: Mathematical Institute, University of Bonn, Endenicher Allee 60,
53115, Bonn, Germany, and
\newline Max Planck Institute for Mathematics, Vivatsgasse 7,
53111, Bonn,
Germany.}
\email{rsrivastava9@wisc.edu \quad rajulas@math.uni-bonn.de}

\address{Niclas Technau; \newline 
Former Address: The Division of Physics, Mathematics and Astronomy, 
Caltech, 1200 E California Blvd, Pasadena, CA, 91125
\newline 
New Address: Graz University of Technology, 
Institute of Analysis and Number Theory, Steyrergasse 30/II, 8010 Graz, Austria}
\email{ntechnau@caltech.edu \quad ntechnau@tugraz.at}

\maketitle

\section{Introduction}
We begin by introducing the 
necessary amount of notation 
to state the main theorem  
quickly and 
contextualize the result thereafter.
Let $\mathscr{M}\subseteq
\mathbb{R}^{n}$ \index{dimension@$n$: dimension of
the ambient space}
be a compact $C^{1}$-manifold
of dimension $m$, $\Vert
\cdot\Vert_{2}$\index{Euclideannorm@
$\Vert\cdot\Vert_2$: Euclidean norm} 
be the Euclidean norm 
on $\bR^n$. Throughout, $n$ denotes the dimension of the ambient Euclidean space.
The distance of a point 
$\mathbf{y}\in\mathbb{R}^{n}$
to $\mathscr{M}$ is defined by 
\[
\mathrm{dist}(\mathbf{y},\mathscr{M})
:=\inf_{\mathbf{x}\in\mathscr{M}}
\Vert\mathbf{x}-\mathbf{y}\Vert_{2}.
\]
For functions $A, B: \sX\to\mathbb{C}$
with $\sX \subseteq \mathbb{R}^k$, we shall use the Vinogradov notation $A\ll B$ to mean that $|A(\bx)|\leq C|B(\bx)|$ 
is true all $\bx \in \sX$ for some positive constant $C>1$. We shall write $A\asymp B$ to denote that $A\ll B$ and $B\ll A$. Further, the characteristic function of a set $\sX$ shall be denoted by $\mathds{1}_{\sX}$.

Given $Q\geq 1$ and $\delta\geq 0$,
we are interested in counting the number of rational
points with
denominator $q\in[Q,2Q)$, located in a $\delta/q$-neighbourhood
of $\mathscr{M}$. This is captured by the counting function
\begin{equation}
\mathrm{N}_{\mathscr{M}}
(\delta,Q)
:=
\#\big\{
(q,\bp) \in \bZ \times \bZ^{n}
:\,q\in[Q,2Q),\,
\mathrm{dist}
\Big(
\frac{\mathbf{p}}{q},
\mathscr{M}
\Big)
\leq
\frac{\delta}{q}
\big\}.\label{def: counting rational points near manifolds}
\end{equation}
\index{counting_function@$\mathrm{N}_{\mathscr{M}}
(\delta,Q)$: unsmoothed count of rational points near
the manifold $\sM\subseteq \bR^n$}
We make some standard reductions. 
Throughout, we denote the codimension
of $\mathscr{M}\subseteq \bR^n$ by
$c:=n-m$ . Using compactness, we can
cover $\mathscr{M}$ 
by a finite collection of open charts. 
Thus, using the implicit function theorem, 
we may assume without loss of generality
that $\mathscr{M}$ can be represented in the
Monge form as the graph 
$$ \{(\mathbf{x}, \mathbf{f}(\mathbf{x})):
\mathbf{x}\in\mathscr{B}_m\}$$
over an open ball $\sB_m\subseteq \bR^{m}$ of a $C^{1}$ vector valued function 
$\mathbf{f}:=(f_{1},\ldots,f_{c})$. 
Indeed by translation and re-scaling,
if necessary, 
we can arrange that 
\begin{equation}\label{def: normalised Monge}
    \sM = \{(\mathbf{x},
\mathbf{f}(\mathbf{x})):
\mathbf{x}\in\sU_m\},
\end{equation}
where 
\index{unitball@$\sU_m$: the $m$-dimensional (closed)
Euclidean unit ball}
$$
\sU_m:=
\{\bx \in \bR^m: 
\Vert\bx\Vert_2 <1
\}
$$
is the unit ball
in $\bR^m$ around the origin.
If \eqref{def: normalised Monge} holds,
then $\sM$ is said to be in 
\emph{normalised Monge form}. 
An standard application of Taylor's 
theorem shows that one can control the counting function
\eqref{def: counting rational points near manifolds}
in terms of 
\index{counting@$\mathrm{N}_{\mathscr{\mathbf{f}}}
(\delta,Q)$: unsmoothed count of rational points
near the graph of $\mathbf{f}$}
$$
\mathrm{N}_{\mathscr{\mathbf{f}}}
(\delta,Q):=\sum_{
(q,\mathbf{a})\in\bZ\times \mathbb{Z}^{m}}
\ind_{[1,2)}
\big(\frac{q}{Q}\big)
\ind_{\sU_m} 
\big(\frac{\mathbf{a}}{q}\big)
\mathds{1}_{[0,\delta]}
\big(
\Vert qf_{1}
\big(\frac{\mathbf{a}}{q}\big)
\Vert
\big)
\ldots\mathds{1}_{[0,\delta]}
\big(
\Vert qf_{c}
\big(\frac{\mathbf{a}}{q}\big)
\Vert
\big)
$$
where $\Vert x \Vert:=
\min\{\vert x -z\vert : z\in \bZ\}$ 
abbreviates the distance 
to the nearest integer. \index{nearestinteger@
$\Vert \cdot \Vert$: distance to the nearest integer}
How large is
$\mathrm{N}_{\mathscr{\mathbf{f}}}(\delta,Q)$ 
expect to be? Fixing $q\in [Q,2Q)$, 
we model each function
$\ba \mapsto \Vert qf_{i}(\ba /q)\Vert$ 
by a uniformly distributed 
random variable $R_{q,i}:\sU_m\rightarrow[0,1/2]$. 
Assume that the random variables are 
independent of each other.
There are about $Q^m$ many relevant choices 
for $\ba$, and $Q$ many choices for $q$.
Because of mutual independence, 
the constraint $$\max_{t\leq c}\Vert R_{q,t}
\Vert<\delta$$ holds with probability 
$\delta^{c}$. 
Thus it is reasonable to {\emph{expect}} that 
\begin{equation}\label{eq: heuristic guess}
 \mathrm{N}_{\mathscr{\mathbf{f}}}(\delta,Q)
 \asymp
\delta^{c}Q^{m+1}.
\end{equation}
However, this random model 
cannot predict
the size of $ \mathrm{N}_{\mathscr{\mathbf{f}}}(\delta,Q)$ correctly when $\delta$ is rather
small in terms of $Q$, 
because $\mathrm{N}_{\bf}(\delta,Q)$
counts, in particular, the
points \emph{lying} \emph{on} 
the manifold ---
there might be relatively 
many such points, 
or none at all!
Let us illustrate these
situations by well--known examples,
see Beresnevich \cite[p. 189]{Beresnevich Rational points near manifolds}.

\begin{exmp}
The cylinder 
$\sZ := \{(x_1,\ldots,x_n)\in \bR^n: 
x_1^2+x_2^2 = 3\}$
is readily seen to contain
no point of $\bQ^n$ at all. 
A short computation, in combination with the fact that
any non-zero integer is at least one 
in absolute value, shows that any rational 
point contributing to
$\rN_\sZ(\delta, Q)$
when $\delta =o(Q^{-1})$
must in fact be a point on $\sZ$.
Hence $\rN_\sZ(\delta, Q)=0$
in the range $\delta \in  
(0,(Q \log Q)^{-1})$,
once $Q$ is sufficiently large
---
violating 
\eqref{eq: heuristic guess}
dramatically.
\end{exmp}
On the other hand,  
%by 
% tweaking 
% the moment curve $x\mapsto(x,x^2,\ldots,x^n)$
% in $\bR^n$ a little, 
we can easily generate a class of manifolds 
with a good amount of rational points
on them.
\begin{exmp}
Let     
$$
\sM_{m,n}:=\{(x_1,\ldots,
x_{m-1}, x_m,x_{m}^2,\ldots,
x_{m}^{n-m+1})\in \bR^n: 
x_1^2+ \ldots + x_m^2 \leq  1\}.
$$
Given a denominator 
$q \in [Q,2Q)$, 
the point $\bp/q$
with $$\bp:=(p_1,\ldots,
p_{m-1},0,\ldots,0)\in \bZ^n$$
is on $\sM$. 
Thus, $$
\rN_{\sM_{m,n}}(\delta,Q)\geq N(0,Q)
\geq Q^m
$$
for any $\delta\geq 0$.
So, $\rN_{\sM_{m,n}}
(\delta,Q)\asymp
\delta^{c}Q^{m+1}$
can only be true if $\delta^c Q^{m+1} \gg
Q^m$, that is
$\delta \gg Q^{-\frac{1}{c}}$.
\end{exmp}

Hypersurfaces
in $\bR^n$ (or manifolds of dimension $m=n-1$),
are the focal point of 
our manuscript. 
In this case, we can write
\begin{equation}\label{eq; counting function hypersurfaces}
    \rN_{f}(\delta,Q)
=\sum_{
(q,\mathbf{a})\in\bZ
\times \mathbb{Z}^{n-1}}
\ind_{[1,2)}(\frac{q}{Q})
\ind_{\sU_m} 
(\frac{\mathbf{a}}{q})
\mathds{1}_{[0,\delta]}
(\Vert qf(\frac{\mathbf{a}}{q})
\Vert).
\end{equation}
In light of the above examples,
one might hope that the following is true for hypersurfaces:
if there is no ``local obstruction'', 
then the probabilistic 
guess \eqref{eq: heuristic guess} 
is accurate up to a $Q^\varepsilon$
of room in the expected range for $\delta$, meaning that
\begin{equation}\label{eq: guess hypersurfaces}
\mathrm{N}_{f}(\delta,Q)
 \asymp
\delta Q^{n} \,\,
\mathrm{uniformly\, for\,any\,} 
Q\geq 1,\, \mathrm{and}\,
\delta\in (Q^{\varepsilon-1},1/2)
\end{equation}
where $\varepsilon>0$ is fixed.
The implied
constants are allowed to depend on $f$ 
and $\varepsilon$.

The term ``local obstruction''
is deliberately vague. While there is  
some understanding of geometric 
properties (described in due course)
which have to be excluded
for \eqref{eq: guess hypersurfaces} to be true,
it is currently not known 
what a precise characterisation 
all such local obstructions could be. 
Indeed, it is the goal of our
manuscript to investigate this. 
In a recent break-through,
J.-J. Huang showed the following:
\begin{thm}[\cite{Huang rational points}]\label{thm: Huang}
Let $n\geq 3$ and $\sM$ be 
immersed by $f: \sU_{n-1} \rightarrow \bR$. Suppose that $f$ is 
$\max(\lfloor \frac{n-1}{2}\rfloor +5, 
n+1)$-many times continuously differentiable. Let
\index{Hessian@$H_f$: the Hessian matrix of $f$}
$$
H_{f}(\bx):= 
\Big( \frac{\partial^2}
{\partial x_i \partial x_j}
f(\bx)
\Big)_{i,j\leq n-1}
%\in \bR^{(n-1)\times(n-1)}
$$ 
be the Hessian of $f$ at 
$\mathbf{x}\in\sU_{n-1}$.  
If the Gaussian curvature of $\sM$ is bounded away from zero; %$\sM$ is not locally flat,
that is, if the determinant of $H_f(\bx)$
%$\det\, H_f(\bx)$
does not vanish for any $\bx \in \sU_{n-1}$,
then \eqref{eq: guess hypersurfaces}
holds.
\end{thm}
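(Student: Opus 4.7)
The plan is to follow the Fourier-analytic / circle-method strategy of Huang, which refines earlier work of Vaughan--Velani and Beresnevich. The broad shape of the argument is: periodize and smooth the counting condition, apply Poisson summation to convert it into oscillatory integrals, extract the main term $\delta Q^n$ from the zero frequency, and use stationary phase (powered by $\det H_f\neq 0$) together with cancellation in the denominator variable $q$ to show that the non-zero frequencies contribute $O(\delta Q^{n-\eta})$ for some $\eta=\eta(n)>0$.

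First I would replace each of the cut-offs $\ind_{[0,\delta]}$, $\ind_{[1,2)}$, and $\ind_{\sU_{n-1}}$ by smooth bump functions $\psi_\delta,\chi,\omega$ of matched $L^1$-mass, accepting only a boundary error of order $\delta Q^{n-1}$. Viewing $\psi_\delta(\|\cdot\|)$ as a $1$-periodic function, I would Fourier-expand
\[
\psi_\delta(\|t\|)=\sum_{k\in\bZ}\widehat{\psi_\delta}(k)\,e(kt),\qquad |\widehat{\psi_\delta}(k)|\ll \delta\min\big(1,(|k|\delta)^{-N}\big),
\]
so the $k=0$ term reproduces the predicted main term $\widehat{\psi_\delta}(0)\cdot\vol(\sU_{n-1})\,Q^n\asymp \delta Q^n$. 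Everything then hinges on controlling
\[
\mathrm{Err}(\delta,Q):=\sum_{0<|k|\ll \delta^{-1}Q^\varepsilon}\widehat{\psi_\delta}(k)\,S(k),\qquad S(k):=\sum_{q\asymp Q}\chi(q/Q)\sum_{\ba\in\bZn}\omega(\ba/q)\,e\big(kq\,f(\ba/q)\big),
\]
where the $k$-sum is truncated safely via the rapid decay of $\widehat{\psi_\delta}$.

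For fixed $k\neq 0$ and $q\asymp Q$, Poisson summation in $\ba\in\bZn$ turns the inner sum into $\sum_{\bb\in\bZn}q^{n-1}I_{k,q}(\bb)$ with
\[
I_{k,q}(\bb)=\int_{\bR^{n-1}}\omega(\by)\,e\big(q\,\Phi_{k,\bb}(\by)\big)\,d\by,\qquad \Phi_{k,\bb}(\by):=k f(\by)-\bb\cdot\by.
\]
The hypothesis $\det H_f\neq 0$ on $\sU_{n-1}$ enters decisively here: the critical equation $\nabla f(\by)=\bb/k$ has at most one solution $\by_c\in\sU_{n-1}$ because $\nabla f$ is a local diffeomorphism, and at $\by_c$ the Hessian of $q\,\Phi_{k,\bb}$ equals $qk\,H_f(\by_c)$, which is nondegenerate with $|\det|\asymp (q|k|)^{n-1}$. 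Standard stationary phase then yields
\[
I_{k,q}(\bb)\ll (q|k|)^{-(n-1)/2}\,\ind_{\bb/k\in \nabla f(\sU_{n-1})}+O_N\big((q|k|)^{-N}\big),
\]
with the non-stationary tail controlled by repeated integration by parts; the smoothness threshold $\max(\lfloor(n-1)/2\rfloor+5,n+1)$ in the theorem is precisely what balances the order of derivatives needed in these two expansions.

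Plugging these estimates back and summing naively over $\bb$ (there are $\asymp|k|^{n-1}$ admissible $\bb$ per $k$) and over $q\asymp Q$ produces only a \emph{borderline} bound that barely fails to beat the main term, even after using $|\widehat{\psi_\delta}(k)|\ll\delta$. The decisive extra saving must come from cancellation in the $q$-variable. After stationary phase, $I_{k,q}(\bb)$ carries a residual phase of the form $e(q\,\Phi_{k,\bb}(\by_c))$, and a further Poisson summation (or a second stationary-phase analysis) applied to $\sum_q\chi(q/Q)\,q^{(n-1)/2}e(q\,\Phi_{k,\bb}(\by_c))$ yields an extra factor of roughly $Q^{1/2}$ in the oscillatory regime; in the near-stationary regime a sharp lattice-point count for the constraint ``$\|kf(\bb/k)\|$ small'' takes over. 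Combining these cases gives $\mathrm{Err}(\delta,Q)=O(\delta Q^{n-\eta})$ throughout $\delta>Q^{\varepsilon-1}$. This last step is the hardest part of the argument: the termwise stationary-phase estimate is critical, so one cannot afford any sloppiness and must genuinely exploit oscillation of $\Phi_{k,\bb}(\by_c)$ in $q$ while simultaneously keeping the smoothness budget on $f$ under control.
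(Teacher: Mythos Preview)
This theorem is not proved in the paper; it is quoted from \cite{Huang rational points} and stated without proof. So there is no ``paper's own proof'' to compare against directly. However, the paper's Section~\ref{sec: outline} describes Huang's method in enough detail (since the present paper generalises it) that one can assess your outline.

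Your first five paragraphs are an accurate sketch of the reduction: smooth cut-offs, Fourier expansion in the nearest-integer condition, Poisson in $\mathbf{a}$, and stationary phase powered by $\det H_f\neq 0$. You correctly identify that the resulting bound is borderline and that the remaining saving must come from the $q$-sum carrying the residual phase $e(q\,\Phi_{k,\bb}(\by_c))$.

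The gap is in your final step. You describe the endgame as a dichotomy: an oscillatory regime where Poisson in $q$ gains $Q^{1/2}$, and a near-stationary regime handled by ``a sharp lattice-point count for the constraint $\|kf(\bb/k)\|$ small''. But that near-stationary constraint \emph{is} a rational-point counting problem of exactly the same type as the original one, now for the Legendre-dual hypersurface $\widetilde{f}$ with the roles of $\delta$ and $Q$ swapped (this is the duality principle of Step~6 in the paper's outline, going back to Vaughan--Velani). A single pass does not give $O(\delta Q^{n-\eta})$; one obtains only a relation of the shape $N_f(\delta,Q)\ll \delta Q^n + (\text{stuff})\cdot N_{\widetilde f}(Q^{-1},\delta^{-1})$. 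Huang's innovation --- and the part missing from your outline --- is to \emph{iterate} this relation: feed a crude bound for $N_{\widetilde f}$ back through the same machinery to get a better bound for $N_f$, and repeat $O(\varepsilon^{-1})$ (or $O(\log\varepsilon^{-1})$ for $n\geq 4$) times until the error exponent is pushed down to $n-1+O(\varepsilon)$. Without this bootstrap the argument stalls at the borderline.
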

In other words, uniform curvature information (and no local flatness) rules 
out any local obstruction for hypersurfaces. 
Thus \eqref{eq: guess hypersurfaces} is true for a wide class of hypersurfaces in all dimensions $n\geq 3$. 
The main innovation in \cite{Huang rational points}
was an elegant bootstrap procedure relying on projective duality and the method of stationary phase from harmonic analysis.

On the other hand,
our main result (Theorem \ref{thm: main} 
below) establishes a \textit{new
heuristic} for a rich class of flat or rough
hypersurfaces.
Indeed we handle hypersurfaces whose Hessian determinant
(along with several other higher order derivatives) 
either vanishes or blows up at an isolated point, say the origin.
To handle this, we have to introduce
an additional new term to account for this 
local flatness/roughness. We shall see
that this term arises naturally 
from the underlying harmonic analysis 
but also from purely geometric considerations.

\subsection{Main Results} In order to describe 
the class of hypersurfaces we consider, 
some more notation is needed.
Let $\mathscr{H}_d(\bRn)$\index{classhom@$\mathscr{H}_d(\bRn)$: set of homogeneous functions 
$f: \bRn \rightarrow \bR$ 
of degree $d\in \bR$}
denote the collection of  
functions 
$f:\bRn \rightarrow \bR$
which are homogeneous 
of degree $d$, that is \index{degree@$d$: the degree of homogeneity of a function $f\in \mathscr{H}_d(\bRn)$}
$f(\lambda \bx)
= \lambda^d 
f(\bx)$ for all 
$\bx \in \bRn$
and all $\lambda \geq 0$.

The word {\emph{smooth}} 
means, as usual, `infinitely many times
continuously differentiable'. 
\begin{definition}
For a real number 
$d\neq 0 $, 
denote by
$\mathscr{H}_d^{
\mathbf{0}}(\bRn)$ 
the set of all 
smooth functions 
$f\in \mathscr{H}_d(\bRn)$ 
whose Hessian $H_{f}(\bx)$ remains invertible
away from the origin; that is
\index{classhomzero@$\mathscr{H}_d^{\bzero}(\bRn)$: set of smooth  $f\in \mathscr{H}_d^{\bzero}(\bRn)$ 
whose is non-zero Hessian away from $\bzero$}
\begin{align*}
 \mathscr{H}_d^{\mathbf{0}}(\bRn)
    := \{&f\in \mathscr{H}_d(\bRn): 
    f \mathrm{\,\,is \,\, smooth\,\,on\,\,} 
    \bRn\setminus\{\bzero \},\,
    \mathrm{and}\,\\
    &\det\, H_{f}(\mathbf{x})
    \neq 0 \mathrm{\,\,if\,\,} 
    \mathbf{x} \neq  \bzero
    \}.    
\end{align*}
% \begin{equation*}
%     \mathscr{H}_d^{\mathbf{0}}(\bRn)
%     := \{f\in \mathscr{H}_d(\bRn): 
%     f \mathrm{\,\,is \,\, smooth\,\,on\,\,} 
%     \bRn\setminus\{\bzero \},\,
%     \mathrm{and}\,
%     \det\, H_{f}(\mathbf{x})
%     \neq 0 \mathrm{\,\,if\,\,} 
%     \mathbf{x} \neq  \bzero
%     \}.
% \end{equation*}
\end{definition}

The simplest example of a
function in
$\sH_d^{\mathbf{0}}
(\bRn)$ to keep in mind
is $f(\bx):= 
\Vert \bx \Vert_2^d$. We are now ready to state our main result.

\begin{thm}[Main Theorem]
\label{thm: main}Let $n\in \bZ_{\geq 3}$. 
Let $d>\tfrac{2(n-1)}{2n-3}$ be a real number.
Fix $\varepsilon>0$,  
and $f\in \mathscr{H}_d^{\mathbf{0}}(\bR^{n-1})$. 
Then 
\begin{equation}\label{eq: main bounds 1}
\rN_f(\delta,Q) \asymp 
\delta Q^n
+ 
\Big(\frac{\delta}{Q}\Big)^{
\frac{n-1}{d}}Q^{n}
\,\,\mathrm{for\,any}\,\, Q\geq 1 \,\,
\mathrm{and}\,\, \delta \in 
(Q^{\varepsilon-1}, 1/2),
\end{equation}
provided $\tfrac{2(n-1)}{2n-3}<d <n-1$.

If $d \geq n-1$, then
\begin{equation}\label{eq: main bounds 2}
\delta Q^n
+ 
\Big(\frac{\delta}{Q}\Big)^{
\frac{n-1}{d}}Q^{n}
\ll
\rN_f(\delta,Q) \ll
\delta Q^n
+ 
\Big(\frac{\delta}{Q}\Big)^{
\frac{n-1}{d}}Q^{n+k \varepsilon} 
% \,\,\mathrm{for\,any}\,\, Q\geq 1 \,\,
% \mathrm{and}\,\, \delta \in 
% (Q^{\varepsilon-1}, 1/2),
\end{equation}
for any $Q\geq 1$ and $\delta \in 
(Q^{\varepsilon-1}, 1/2)$, with $k=k(n)>0$ depending only 
on $n$.
The other implied constants 
in this theorem depend 
on $\varepsilon,f,n$, and $d$,
but not on $\delta$ or $Q$.
\end{thm}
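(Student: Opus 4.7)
The plan is to establish matching lower and upper bounds via complementary techniques. For the lower bound, the Knapp-cap contribution $(\delta/Q)^{(n-1)/d}Q^n$ arises from counting rational points close to the origin. By homogeneity, $|f(\bx)|\ll \|\bx\|_2^d$, so every rational $\bp/q$ with $q\in[Q,2Q)$ and $\|\bp/q\|_2 \leq c(\delta/Q)^{1/d}$ (for a sufficiently small constant $c$) satisfies $|qf(\bp/q)|\leq \delta$, whence $\|qf(\bp/q)\|\leq \delta$; the number of such rational points is $\asymp Q\cdot (Q\,(\delta/Q)^{1/d})^{n-1}$, matching the claimed term. The linear lower bound $\delta Q^n$ is more delicate and is most naturally extracted as a by-product of the upper-bound analysis, by isolating the contribution of the zero-frequency term and controlling the remaining oscillatory pieces.

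For the upper bound, I would smooth the characteristic functions in \eqref{eq; counting function hypersurfaces} by bump functions on scales $\delta$ and $1$ respectively, then apply Poisson summation twice: first to detect $\|qf(\ba/q)\|\leq\delta$ via a Fourier series in an integer frequency $k$, and then to the lattice sum in $\ba\in\bZ^{n-1}$, producing a dual frequency $\bk\in\bZ^{n-1}$. This yields a representation of $\rN_f(\delta,Q)$ as a main term plus
$$\sum_{(k,\bk)\neq(0,\bzero)} \hat\phi_\delta(k) \int w\!\Big(\frac{q}{Q}\Big)\, q^{n-1}\,\eta(\bx)\, e\big(kqf(\bx) - q\bk\cdot\bx\big)\, d\bx\, dq,$$
where $\hat\phi_\delta(k)\asymp \delta$ for $|k|\ll \delta^{-1}$ and decays rapidly beyond. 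The $k=0$ contribution is handled by direct integration in $\bx$ and $q$, and furnishes the linear term $\asymp \delta Q^n$.

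For each non-zero $(k,\bk)$, the critical points of the phase $\Phi(\bx)=kf(\bx)-\bk\cdot\bx$ solve $\nabla f(\bx)=\bk/k$, mirroring the projective-duality picture exploited in \cite{Huang rational points}. I would dyadically decompose $\|\bx\|_2\asymp 2^{-j}$ and rescale each annulus to unit size using the homogeneity identities $\nabla f(\lambda\bx)=\lambda^{d-1}\nabla f(\bx)$ and $H_f(\lambda\bx)=\lambda^{d-2}H_f(\bx)$. The assumption $f\in\sH_d^{\mathbf{0}}(\bR^{n-1})$ then guarantees $|\det H_f|\asymp 1$ uniformly on the unit sphere, so that the method of stationary phase applies on each rescaled piece and produces decay in $(k,\bk)$. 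Summing over $j$, the dominant scale is that at which $|\bk|/|k|$ matches the rescaled gradient; the total contribution collapses precisely to the Knapp-cap term $(\delta/Q)^{(n-1)/d}Q^n$.

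The principal obstacle is the interaction between the dyadic scales $j$ and the dual frequencies $(k,\bk)$: stationary phase delivers clean decay only on a single scale, and near the boundary of the allowable $j$-range the phase degenerates --- becoming flat when $d>2$ or rough (not even $C^2$) when $d<2$ --- and must be handled by a bootstrap procedure in the spirit of \cite{Huang rational points} coupled with careful rescaling. The restriction $d>\tfrac{2(n-1)}{2n-3}$ is expected to arise from optimising this scale-by-scale stationary-phase analysis against the Poisson truncation threshold $|k|\ll\delta^{-1}$; the further threshold $d=n-1$ marks the transition at which the divisor-type sums over $(k,\bk)$ become large enough to force an unavoidable $Q^{k\varepsilon}$ loss, explaining the dichotomy between \eqref{eq: main bounds 1} and \eqref{eq: main bounds 2}.
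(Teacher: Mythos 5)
Your skeleton --- smooth, Poisson-sum twice to reach a sum of oscillatory integrals with phase $kf(\bx)-\bk\cdot\bx$, dyadically decompose $\|\bx\|_2\asymp 2^{-j}$, rescale via homogeneity, apply stationary phase, and bootstrap in the spirit of Huang --- is the same architecture the paper uses, and your lower-bound arguments (Knapp-cap volume count, extraction of $\delta Q^n$ from the zero mode) match the paper's Propositions in Section~\ref{sec: lower bounds}. However, there are several genuine gaps in your sketch that correspond precisely to what makes the theorem hard.

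First, after rescaling the annulus $\|\bx\|_2\asymp 2^{-\ell}$, the oscillatory parameter becomes $\lambda = kq\,2^{-\ell d}$, and the spatial pruning cut-off only guarantees $\lambda\gg Q^{\varepsilon}$, \emph{not} $\lambda\gg Q$ as in the non-degenerate case. This means a one-term stationary phase expansion produces an error of size $\lambda^{-1}\cdot(\text{main term})$, which is catastrophically large. The paper is forced to use a multi-term expansion (Theorem~\ref{thm: stationary phase higher order terms} with $t\asymp\varepsilon^{-1}$ terms). Your proposal writes ``the method of stationary phase applies on each rescaled piece and produces decay in $(k,\bk)$'' as if one term suffices; it does not, and noticing this is the crux.

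Second, the higher-order coefficients $\mathfrak{D}_{\tau,\phi}\rho$ in a multi-term expansion involve derivatives of the weight and are not positive, which breaks the positivity needed to interpret the dual sum as a rational-point count. The paper resolves this with the \emph{enveloping} device (Lemma~\ref{lem: env}): majorising the sum of stationary-phase coefficients by a new positive weight with slightly larger support. This must be iterated through $O_\varepsilon(1)$ bootstrap steps while keeping the enlarged supports inside the region where $\nabla f$ is invertible. Your proposal does not mention any mechanism to restore positivity.

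Third, the duality step maps a degree-$d$ homogeneous phase to its Legendre dual of degree $d'=d/(d-1)$, so for $d\neq 2$ the dual hypersurface is in a \emph{different} class (flat $\leftrightarrow$ rough). The bootstrap must therefore alternate between two distinct estimates (Theorems~\ref{thm: og to dual sdeg}/\ref{thm: dual to og sdeg} or their $d\geq n-1$ counterparts) and track a separate, $\ell$-dependent prefactor in each. Your sketch treats the dual as if it were self-similar, which only holds at $d=2$. Relatedly, there is a ``twin'' Knapp cap in the frequency variable $k$ (small $k$ relative to $\ell$ and $q$) that must be pruned before stationary phase is usable; you mention only the spatial Knapp cap. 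Finally, your heuristic for the thresholds is not quite right: $d>\tfrac{2(n-1)}{2n-3}$ is exactly the condition $d'<2(n-1)$ on the dual degree, and $d=n-1$ is where the flat term at $\delta=1/Q$ begins to dominate $Q^{n-1}$, forcing the bootstrap fixed point to shift from $\beta=n-1$ to $\beta=n-\tfrac{2(n-1)}{d}$ (Theorem~\ref{thm: dual to og ldeg}); it is not a matter of divisor-sum losses against the Poisson cut-off.
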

\begin{remark}
In fact, we prove a 
more general (smoothed) version of the above theorem 
which also includes the range 
$1<d\leq  \tfrac{2(n-1)}{2n-3}$, see 
Theorem \ref{thm: main smoothed}.
However, the upper bound we obtain in this range 
is not sharp, cf. Remark \ref{rem exclusion}.
\end{remark}
An interesting consequence of the above theorem is that the original heuristic \eqref{eq: guess hypersurfaces} is indeed true for a large subclass of the locally flat 
hypersurfaces we consider!  Thus having 
non-vanishing Gaussian curvature 
is a sufficient 
but, by far, not a necessary condition 
for \eqref{eq: guess hypersurfaces} 
to be valid. On the other hand, the `flat term' 
that we introduce is indeed required 
to count rational points near 
manifolds corresponding to functions 
of large enough homogeneity, 
as a straightforward reformulation of our 
theorem shows.
\begin{cor}\label{cor: counting explicit}
Keep 
the notation and assumptions as in Theorem 
\ref{thm: main}. Let $\varepsilon>0$
be small in terms of $d$
and $n$.\\
(i) 
If $ \tfrac{2(n-1)}{2n-3}<d\leq 2(n-1)$, then
$$
\rN_{f}(\delta,Q) \asymp \delta Q^n
\,\,\mathrm{for}\,\, Q\geq 1 \,\,
\mathrm{and}\,\, \delta \in 
(Q^{\varepsilon-1}, 1/2).
$$
(ii)
If $d> 2(n-1)$, then
$$ 
\Big(\frac{\delta}{Q}\Big)^{
\frac{n-1}{d}}Q^{n}
\ll
\rN_{f}(\delta,Q) \ll 
\Big(\frac{\delta}{Q}\Big)^{
\frac{n-1}{d}}Q^{n} Q^{k\varepsilon}
\,\,  \mathrm{for}\,\, Q\geq 1 \,\,
\mathrm{and}\,\, \delta \in 
(Q^{\varepsilon-1}, 
Q^{- \frac{n-1-k\varepsilon d}{d-(n-1)}}) 
$$
while
$$
\rN_{f}(\delta,Q) \asymp \delta Q^n
\,\, 
\mathrm{for}\,\, Q\geq 1 \,\,
\mathrm{and}\,\, \delta \in 
[Q^{- \frac{n-1-k\varepsilon d}{d-(n-1)}},1/2).
$$
All the implied constants may depend 
on $\varepsilon,f,n,d$ but not on $\delta$, or $Q$.
\end{cor}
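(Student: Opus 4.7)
The plan is to derive Corollary~\ref{cor: counting explicit} as a direct consequence of Theorem~\ref{thm: main}; all the content lies in a careful comparison of the two competing terms $\delta Q^n$ and $(\delta/Q)^{(n-1)/d} Q^n$ (with a possible $Q^{k\varepsilon}$-slack in the upper bound when $d \geq n-1$) as a function of the parameters $d$ and $\delta$.

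First I would locate the threshold $\delta_{*}$ at which the two upper-bound terms coincide. Solving $\delta_{*} Q^{n} = (\delta_{*}/Q)^{(n-1)/d} Q^{n + k\varepsilon}$ for $d > n-1$ yields $\delta_{*} = Q^{-(n-1 - k\varepsilon d)/(d-(n-1))}$, matching the threshold appearing in part (ii). For $d \leq n-1$, the exponent $(n-1)/d \geq 1$ together with $\delta < 1 \leq Q$ forces $(\delta/Q)^{(n-1)/d} \leq \delta/Q$, so the flat term is bounded by $\delta Q^{n-1+k\varepsilon} \leq \delta Q^n$ for $\varepsilon \leq 1/k$, and Theorem~\ref{thm: main} immediately gives $\rN_{f}(\delta, Q) \asymp \delta Q^n$ throughout $\delta \in (Q^{\varepsilon-1}, 1/2)$. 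The remaining task is to compare $\delta_{*}$ with the endpoints $Q^{\varepsilon-1}$ and $1/2$ in the range $d > n-1$.

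For part (i) in the sub-range $n-1 < d \leq 2(n-1)$, the decisive inequality is $\delta_{*} \leq Q^{\varepsilon-1}$, which a short manipulation reduces to $\varepsilon((n-1) + (k-1) d) \leq 2(n-1) - d$. For $d < 2(n-1)$ the right-hand side is strictly positive, so this is achieved by taking $\varepsilon$ small in terms of $d, n, k$ (hence in terms of $d$ and $n$, since $k=k(n)$), as permitted by the hypothesis. Theorem~\ref{thm: main} then collapses to $\rN_{f}(\delta,Q) \ll \delta Q^n$, matching the trivial lower bound $\gg \delta Q^n$ from the first summand of \eqref{eq: main bounds 1} or \eqref{eq: main bounds 2}.

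For part (ii), with $d > 2(n-1)$, the same reduction shows $\delta_{*}$ lies strictly in $(Q^{\varepsilon-1}, 1/2)$ for all sufficiently small $\varepsilon$. When $\delta \in (Q^{\varepsilon-1}, \delta_{*})$, the flat upper-bound term dominates $\delta Q^n$ by construction, while the matching lower bound $(\delta/Q)^{(n-1)/d} Q^n$ is directly one of the summands in \eqref{eq: main bounds 2}; together these yield the two-sided inequality in (ii). When $\delta \in [\delta_{*}, 1/2)$ the inequality reverses, the probabilistic term absorbs the flat term even after the $Q^{k\varepsilon}$ correction, and $\rN_{f}(\delta, Q) \asymp \delta Q^n$ follows. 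The argument is pure bookkeeping; the only subtlety is tracking when the $Q^{k\varepsilon}$ slack in Theorem~\ref{thm: main} can be absorbed into the implicit constant, which is exactly why the hypothesis requires $\varepsilon$ small in terms of $d$ and $n$.
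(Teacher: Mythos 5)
Your proposal is correct and follows essentially the same approach as the paper's own proof: solve for the crossover $\delta_{*}$ where the two upper-bound terms coincide, compare $\delta_{*}$ with the endpoints of the admissible $\delta$-range, and distinguish the cases. Incidentally, you have the direction of domination right: for $d>n-1$ the exponent of $\delta$ in the flat term is $(n-1)/d<1$, so the flat term dominates for $\delta<\delta_{*}$ and the probabilistic term $\delta Q^{n}$ dominates for $\delta>\delta_{*}$. The paper's own proof text states this the other way around (asserting that $\cA(\delta)=\delta Q^{n}$ dominates for $0<\delta<\delta_{n,d}$ after noting $\cA(1)>\cB(1)$), which is a slip that does not affect the corollary as stated; your write-up is the cleaner of the two on this point. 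Your explicit reduction for part (i) to $\varepsilon\big((n-1)+(k-1)d\big)\leq 2(n-1)-d$ is also more informative than the paper's ``straightforward case by case analysis,'' though like the paper it leaves the endpoint $d=2(n-1)$ (where the right-hand side vanishes) without a separate remark.
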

\begin{proof}
The claims follow directly from 
Theorem \ref{thm: main}
once we know 
where 
$\cA(\delta):= \delta Q^n $
dominates 
the second term $\cB(\delta):= 
(\frac{\delta}{Q})^{
\frac{n-1}{d}}Q^{n + k\varepsilon}$
and thus dictates the size
of the right hand side of 
\eqref{eq: main bounds 1}.
It is readily seen that 
the set of $\delta>0$ for which
$\cA(\delta)=\cB(\delta)$
is a singleton and 
the unique $\delta$
realising this equality is $$
\delta_{n,d} := 
Q^{- \frac{n-1-k\varepsilon d}{d-(n-1)}}.
$$
Upon observing 
$\cA(1)=Q^n>Q^{n-\frac{n-1}{d}
+k\varepsilon}=\cB(1)$,
we conclude that $\cA(\delta)$ dominates over $\cB(\delta)$
when $0<\delta<\delta_{n,d}$.
Further $\cB(\delta)>\cA(\delta)$ 
on the interval $\delta >\delta_{n,d}$.
% To verify Statement (i), we 
% claim that 
% $ \delta_{n,d} \notin (Q^{\varepsilon-1},1/2)$.
% Indeed if $1<d\leq n-1$ then
% $\delta_{n,d}\geq 1$;
% and if $n-1 < d\leq 2(n-1)$ 
% then $\delta_{n,d} <Q^{-1}$.
% Thus, $\cA(\delta)>\cB(\delta)$
% for any 
% $ \delta \in (Q^{\varepsilon-1},1/2)$.
% Next we verify part (ii).
% To this end, we notice that $d\geq 2(n-1)$
% implies 
% $\delta_{n,d} \in (Q^{-1},1/2)$.
% Since $\cA(\delta)\geq \cB(\delta)$
% for $\delta \in
% (Q^{\varepsilon-1},\delta_{n,d})$
% and otherwise 
% $\cA(\delta)\leq \cB(\delta)$
% for $\delta \in (\delta_{n,d},1/2)$,
A straightforward case by case
analysis completes the proof.
\end{proof}

To illustrate 
the underlying geometry,
we sketch the hypersurfaces
immersed into $\bR^3$
by 
$$
F_d(x_1,x_2):=
    (x_1^2+x_2^2)^{
    \frac{d}{2}
    }\in  
    \mathscr{H}_d^{
    \mathbf{0}}(\bR^2)
$$
for various values of $d$.
First we consider $d=2$
which is the 
only value of $d$ 
leading to a hypersurface
with (everywhere) non-vanishing 
Gaussian curvature, 
see Figure \ref{fig:d2}.
\begin{figure}[ht]
\captionsetup{justification=centering}
\includegraphics[width=.2
\linewidth,height=.2
\textheight,keepaspectratio]
{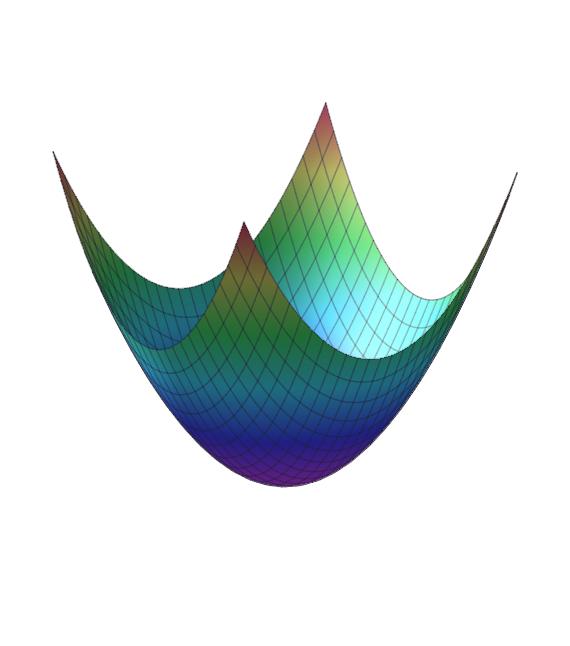}
    \caption{The graph of 
    $F_2$.}
    \label{fig:d2}
\end{figure}
Indeed, values of $d>2$ lead 
to hypersurfaces 
whose Gaussian curvature
vanishes at the origin,
see Figure \ref{fig:sfig1}.
On the other hand, 
$d<2$
leads to unbounded 
Gaussian curvature 
near the origin
(and indeed the hypersurface 
is not even $C^2$), 
unless $d=0$,
compare Figure \ref{fig:sfig2}.
\begin{figure}[ht]
\begin{subfigure}{.5\textwidth}
  \centering
  \includegraphics[width=.5\linewidth]{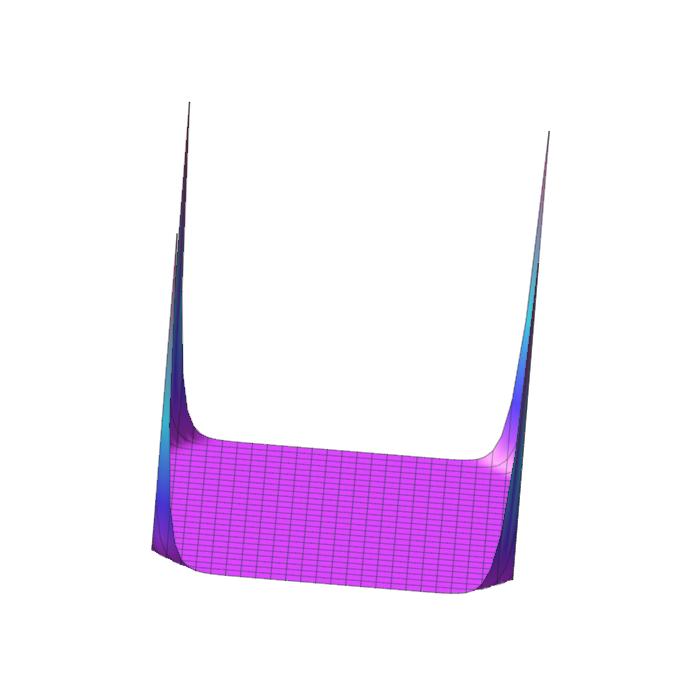}
  \caption{The graph of 
    $F_{40}$.}
  \label{fig:sfig1}
\end{subfigure}%
\begin{subfigure}{.5\textwidth}
  \centering
  \includegraphics[width=.5\linewidth]{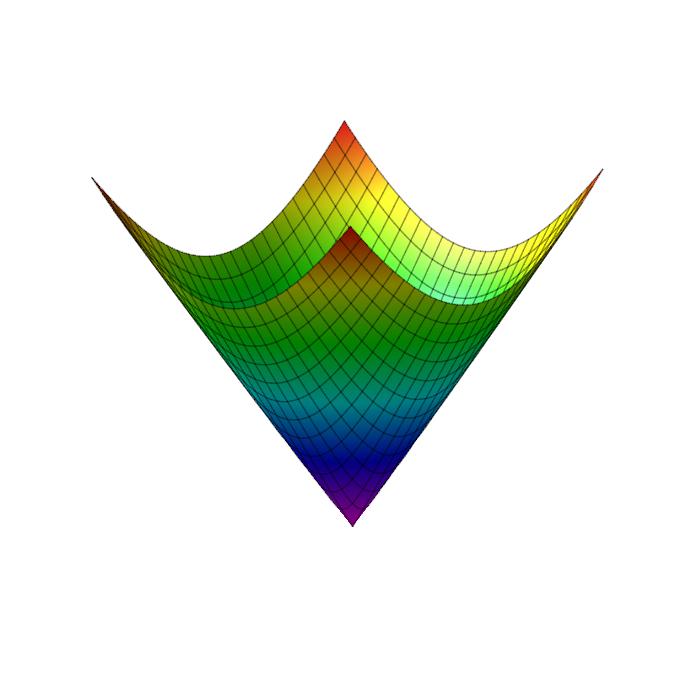}
  \caption{The graph of 
    $F_{\frac{40}{39}}$.}
  \label{fig:sfig2}
\end{subfigure}
\caption{Figure (A) depicts $F_d$ when
$d=40$ is `large' in terms of
the ambient dimension $3$. 
Let $d'$ be such that
$\frac{1}{d}+\frac{1}{d'}=1$.
As $d$ is large, the graph of 
$F_{d'}$ --- as seen in Figure (B)
--- is rough (note that $d'$ is close to $1$). 
This phenomenon 
is useful to observe in what follows.
}
\end{figure}

\subsection{State of the Art}
The probabilistic guess 
\eqref{eq: heuristic guess} 
has long manifested itself into
different folklore conjectures.
In order to state these
in a precise form,
it is crucial to observe 
the following.
As alluded to earlier,
the random behaviour 
leading to the guess
\eqref{eq: heuristic guess} might not
emerge if the geometry (curvature) 
of the manifold obstructs it. For example, 
the manifold might locally 
behave like a rational hyper-plane.
So, a basic (open-ended) conjecture 
is the following.
\begin{conj}[Main Conjecture,
see 
\cite{Huang rational points}]\label{conj: Huang main}
Let $\mathscr{M}\subseteq \mathbb{R}^n$ 
be a compact manifold
of dimension $m$ and
co-dimension $c=n-m$.
If $\sM$ satisfies
proper curvature conditions, 
then there exists a constant
$c_{\mathscr{M}}>0$ so that
\begin{equation}
\mathrm{N}_{\mathscr{M}}
(\delta,Q)\sim c_{\mathscr{M}}
\delta^{c}Q^{m+1}
\qquad(Q\rightarrow\infty)\label{eq:  rational  main conjectur}
\end{equation}
holds, 
for every fixed $\epsilon>0$,
uniformly in the range
\begin{equation}
\delta\in(Q^{\epsilon-\frac{1}{c}},1/2).\label{eq: conjectured range delta}
\end{equation}
\end{conj}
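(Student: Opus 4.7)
The plan is to adopt the harmonic-analytic framework developed by Huang \cite{Huang rational points} and extended in the present paper. Since Conjecture \ref{conj: Huang main} is open in full generality, I would aim instead for a strategy that clarifies what the ``proper curvature condition'' should be, and that reproduces the main heuristic under it.

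After reducing to finitely many patches in normalised Monge form, I would smooth the counting function: replace $\ind_{[1,2)}(q/Q)$, $\ind_{\sU_m}(\ba/q)$, and the tube indicators $\ind_{[0,\delta]}(\Vert q f_i(\ba/q)\Vert)$ with smooth bumps at the same scales. The sharp and smooth versions agree up to a boundary error of size $O(Q^{-\varepsilon/2})$ times the main term, which is acceptable in the range \eqref{eq: conjectured range delta}. One then Fourier expands the smoothed $\delta$-tube bump in each codimension direction, and applies Poisson summation in $\ba$ at fixed $q\asymp Q$. Schematically, this yields
\begin{equation*}
\rN_{\sM}(\delta,Q) = M(\delta,Q) + E(\delta,Q),
\end{equation*}
where $M(\delta,Q) \asymp \delta^c Q^{m+1}$ is the zero-frequency contribution (whose careful computation delivers the constant $c_{\sM}$), and $E$ is an oscillatory double sum indexed by dual frequencies $(\blambda,\bk) \in \bZ^c \times \bZ^m \setminus \{\bzero\}$.

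The heart of the argument is the estimation of $E$ through the method of stationary phase applied to integrals of the form
\begin{equation*}
I_{\bk,\blambda}(q) := \int_{\sU_m} e^{2\pi i(q\blambda\cdot \bf(\bx) - \bk\cdot\bx)}\, \rho(\bx)\, d\bx,
\end{equation*}
with $\rho$ a fixed smooth cutoff. Under a suitable non-degeneracy condition on the vector-valued phase $\blambda\cdot\bf$ (which is precisely where the ``proper curvature condition'' intervenes), each such integral ought to gain a factor $(q|\blambda|)^{-m/2}$. Summing this gain against the Fourier decay of the tube bump and the range of the dual variable $\bk$ should then give $E = o(M)$ throughout the window \eqref{eq: conjectured range delta}. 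A matching lower bound is typically the easier part: it can be produced either from the smoothed identity directly, or by constructing sufficiently many ``canonical'' rational points via the implicit function theorem.

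The hard part is identifying what the ``proper curvature conditions'' should be. As Theorem \ref{thm: main} illustrates, even a single isolated degeneracy of the Hessian forces the appearance of a new Knapp-cap term $(\delta/Q)^{(n-1)/d} Q^n$, which can dominate the predicted main term for large $d$. Any honest general formulation of the conjecture must therefore either impose non-degenerate Gaussian curvature uniformly---in which case the hypersurface case is Huang's theorem and the higher-codimension case demands a vector-valued stationary-phase machinery---or it must tolerate degeneracies on a thin locus and build in explicit Knapp-type corrections there. Pushing this past the homogeneous setting of the present paper appears to require a resolution-of-singularities-style local analysis at each degenerate point, combined with a Huang-style bootstrap through projective duality to propagate the stationary-phase gains from the non-degenerate locus into the degenerate one; this propagation step is what I would expect to be the hardest obstacle in carrying out the full strategy.
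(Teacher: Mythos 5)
The statement you were asked to prove is labelled a \emph{Conjecture} in the paper (Conjecture~\ref{conj: Huang main}), and the paper offers no proof of it: indeed its hypothesis, ``proper curvature conditions,'' is deliberately left unspecified, and the authors say explicitly that a precise characterisation of the needed local obstructions is not known. There is therefore no ``paper's own proof'' to compare yours against. You recognise this yourself, and your proposal is not a proof but an outline of the general Huang-style machinery (smoothing, Poisson summation in $\ba$, stationary phase in the oscillatory integrals $I_{\bk,\blambda}(q)$, duality bootstrap) together with the observation that the curvature hypothesis must either enforce uniform non-degeneracy or else absorb Knapp-cap corrections. That diagnosis is accurate and is in fact the point the paper itself makes: its Theorem~\ref{thm: main} shows that within the homogeneous class $\sH_d^{\bzero}(\bRn)$ the naive prediction $\delta^c Q^{m+1}$ fails once $d>2(n-1)$, and a term $(\delta/Q)^{(n-1)/d}Q^n$ must be added. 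So your sketch is consistent with the paper's viewpoint, but it should be understood as commentary on an open problem rather than a proof; the only thing that is actually established in the paper is the restricted result of Theorem~\ref{thm: main smoothed}, under hypotheses much narrower than those of the conjecture.
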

The above conjecture appears also 
in the work of Beresnevich and Kleinbock, 
see \cite[Problem 4]{BK}.
Of course there are guesses around
what exactly those 
`proper curvature conditions'
mentioned in 
Conjecture \ref{conj: Huang main} 
should be.
In order to state these precisely,
we need more notation.
Denote by $\be_i:=(0,\ldots, 0,
1,0\ldots, 0)\in\bR^n$
the $i^{th}$ standard unit vector
in $\bR^n$. We say $\mathscr{M}$
is \emph{$l$-nondegenerate} \index{nondegenerate manifold}
\emph{at $\mathbf{x}_0$} 
if the derivatives of order at most $l$
of the map $\mathbf{x}\mapsto(\mathbf{x},
\mathbf{f}(\mathbf{x}))$ 
in the directions $\be_1,\ldots,\be_n$ 
span $\mathbb{R}^{n}$. Geometrically this means that
the order of contact of $\mathscr{M}$ 
%\index{Gamma@$\gamma^2$}
with any hyperplane is at
most $l$. Now we can state: 
\begin{conj}[\cite{Huang Subspaces}]
\label{conj: Rational Main Conjecture }
If $\mathscr{M}$
is $m+1$ 
non-degenerate everywhere, 
then 
\eqref{eq:  rational  main conjectur}
holds in the range 
\eqref{eq: conjectured range delta}.
\end{conj}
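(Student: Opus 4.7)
The plan is to attack Conjecture \ref{conj: Rational Main Conjecture } by extending the Fourier-analytic framework behind Theorem \ref{thm: Huang} and Theorem \ref{thm: main} from hypersurfaces to manifolds of arbitrary codimension $c=n-m$ under the bare $(m+1)$-non-degeneracy assumption. First, reduce via compactness and the implicit function theorem to a smoothed count $\rN_{\mathbf{f},\omega,\psi}(\delta,Q)$, where $\sM$ is locally the graph of a smooth vector-valued function $\mathbf{f}=(f_1,\ldots,f_c)$ over $\sU_m$, with $\omega\in C_c^\infty(\sU_m)$ and $\psi\in C_c^\infty([1,2])$ replacing the rough indicators. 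After approximating each $\mathds{1}_{[0,\delta]}(\Vert\cdot\Vert)$ by its Beurling--Selberg majorant/minorant and Poisson-summing in both the $q$- and $\ba$-variables, the zero-frequency contribution produces exactly the conjectured main term $c_\sM \delta^c Q^{m+1}$ (up to the smoothing correction, which one removes at the end by a sandwich argument).

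The remaining work is to bound the non-zero frequencies. After a dyadic decomposition in the dual variables $\mathbf{m}\in \bZ^c\setminus\{\bzero\}$ and $\mathbf{l}\in \bZ^m$, the problem reduces to weighted sums of oscillatory integrals
\[
I(q, \mathbf{m}, \mathbf{l}) =
\int_{\sU_m}
\omega(\bx)\,
\exp\!\big( 2\pi \ri\, q\, \mathbf{m}\cdot \mathbf{f}(\bx/q)
+ 2\pi \ri\, \mathbf{l}\cdot \bx \big)\,
\rd\bx,
\]
weighted by $\psi(q/Q)$ and summed over $q\asymp Q$. Multi-dimensional stationary phase combined with the Varchenko--Stein Newton-polytope theory converts the $(m+1)$-non-degeneracy of $\sM$ into a decay bound $|I(q, \mathbf{m}, \mathbf{l})| \ll_{\sM} |\mathbf{m}|^{-\alpha}$ for some $\alpha=\alpha(n,m,\sM)>0$ in every direction; non-stationary $(\mathbf{m},\mathbf{l})$ are dispatched by repeated integration by parts. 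One then adapts Huang's projective-duality bootstrap: feed the current estimate for $\rN$ back into the non-zero-frequency sum, each pass shrinking the error exponent by a fixed $\eta=\eta(n,m)>0$, until $O(\delta^c Q^{m+1})$ is reached throughout $\delta\in(Q^{\varepsilon-1/c},1/2)$. The matching lower bound follows by positivity applied to a non-negative smoothing, plus the same main-term analysis.

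The hard part, and the reason the conjecture remains open in general codimension, is precisely this final bootstrap. For $c=1$ Huang closes the loop by exploiting the fact that the dual variety of a hypersurface with non-vanishing Gaussian curvature is again such a hypersurface, leaving a single induction scale. For $(m+1)$-non-degenerate manifolds in codimension $c\geq 2$ there is no such self-duality, and the decay exponent $\alpha$ supplied by bare non-degeneracy is typically too weak to absorb the accumulated loss across the iteration uniformly in the full conjectured range \eqref{eq: conjectured range delta}.

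The route suggested by the present manuscript is to quantify the pointwise order of contact of $\sM$ with hyperplanes, and to isolate a ``Knapp-cap'' term analogous to $(\delta/Q)^{(n-1)/d}Q^n$ in Theorem \ref{thm: main}. Concretely, one would aim for a two-term estimate $\rN_\sM(\delta,Q) \asymp \delta^c Q^{m+1} + \cE_\sM(\delta,Q)$, where $\cE_\sM$ is a geometric quantity determined by the contact profile of $\sM$, and then verify that $\cE_\sM \ll \delta^c Q^{m+1}$ throughout \eqref{eq: conjectured range delta} whenever $\sM$ is $(m+1)$-non-degenerate. This would reduce Conjecture \ref{conj: Rational Main Conjecture } to a purely convex-geometric bound on the maximal size of Knapp caps on $\sM$, which in our view is the most promising --- and likely decisive --- step.
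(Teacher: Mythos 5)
There is no proof of this statement in the paper to compare your proposal against: the item you were asked about is labelled \emph{Conjecture} (attributed to Huang's paper on extremal affine subspaces), and the present manuscript neither claims nor supplies a proof of it. The paper's own contribution, Theorem~\ref{thm: main}, only verifies the conjectured asymptotic for a restricted family of hypersurfaces (homogeneous $f\in\sH_d^{\bzero}(\bRn)$ with $\tfrac{2(n-1)}{2n-3}<d\leq 2(n-1)$, codimension $c=1$), and in fact the same theorem shows that once $d>2(n-1)$ the probabilistic term $\delta Q^n$ is no longer the dominant one in the whole conjectured range, so the geometric ``Knapp'' term $(\delta/Q)^{(n-1)/d}Q^n$ takes over; the remark immediately following the conjecture in the paper spells this out. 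Your proposal is honest on this point --- you explicitly say you are reducing the conjecture to a further open convex-geometric bound on the size of Knapp caps rather than proving it --- but you should be aware that what you wrote is a research plan, not a proof, and the paper treats the statement as open as well. As a plan it is consistent in spirit with what the paper actually proves in codimension one (smoothing, Poisson summation, stationary phase, Huang-type duality/bootstrap, and a Knapp-cap correction term), but the specific ingredients you invoke for higher codimension, namely Varchenko--Stein Newton-polytope decay under bare $(m+1)$-non-degeneracy and a uniform bootstrap gain $\eta(n,m)$, are precisely the steps that are not known to work, so the proposal leaves exactly the gap that makes the conjecture a conjecture.
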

\begin{remark}
Being non-degenerate (of some order) 
does not ensure a manifold satisfies
\eqref{eq:  rational  main conjectur}
throughout  
\eqref{eq: conjectured range delta}.
Example 4 in
\cite{Huang rational points}
points out that the Fermat curve 
$\sF_d$ in $\bR^2$,
defined by $x_1^d + x_2^d=1,$
has the property that if
$\delta \in (0, Q^{-\frac{1}{d-1}})$
then $N_{\sF_d}(\delta,Q)
\asymp \delta Q^2 $
cannot be true. 
The threshold $Q^{-\frac{1}{d-1}}$
for the counting function 
$N_{\sF_d}(\delta,Q)$
to change its behaviour
is expected.
One may compare this threshold with 
the one from 
Corollary \ref{cor: counting explicit}
and take $n=2$ there, 
which is however not allowed 
because of the requirement $n\geq 3$.\\
Further, our Theorem \ref{thm: main}
establishes that
various non-degenerate (homogeneous)
hypersurfaces satisfy the conclusion 
of Conjecture \ref{conj: Huang main}
if and only if the threshold $2(n-1)$ 
is not crossed by the degree $d$. 
Roughly speaking, Theorem \ref{thm: main}
thus verifies Conjecture 
\ref{conj: Rational Main Conjecture }
for a class of hypersurfaces 
which are allowed to be twice as degenerate 
as originally asked for.
\end{remark} 

In the following the implied constants
are, without further mention, 
only allowed to depend on the ambient 
manifold (and the dimension) and $\varepsilon$ where applicable.
To ease the exposition, 
we shall usually not state 
the smoothness 
assumptions on the manifolds
which are needed for the various 
results to hold. The reader is referred to
the relevant papers.

\subsubsection*{Planar Curves}

Huxley \cite{Huxley} was the first to
establish general upper bound for \\ $\mathrm{N}_{\mathscr{M}}(\delta,Q)$.
Let $\mathscr{M}$ be a planar curve with curvature bounded between
positive constants. Then Huxley showed 
$\mathrm{N}_{\mathscr{M}}
(\delta,Q)\ll
\delta^{1-\varepsilon} Q^{2}$
for all $\delta\in(Q^{-1},1/2)$ 
and for any $\varepsilon>0$. 
The sub-optimal factor 
$\delta^{-\varepsilon}$ 
was removed by 
Vaughan and Velani 
\cite{Vaughan Velani} 
in a seminal work. 
On the other hand, Beresnevich, Dickinson, and Velani 
\cite{BDV limsup sets} obtained
the complementary bound $\mathrm{N}_{\mathscr{M}}(\delta,Q)\gg\delta Q^{2}$ uniformly in
$\delta\in(Q^{-1},1/2)$. 
These results require 
at least $C^1$-smoothness.
A notable exception
is the work of Beresnevich and Zorin
\cite{BZ}. In general, 
the theory of counting rational points 
near rough manifolds, 
i.e. which are not $C^2$,
is barely existent. 
For counting rationals in lowest terms,
see S. Chow \cite{Chow}.
For further reading, see \cite{Huang: Planar curves}.

\subsubsection*{Higher Dimensions}

Beresnevich's 
breakthrough 
\cite{Beresnevich Rational points near manifolds} 
provided the first 
general lower bounds:
if $\mathscr{M}$ 
is any analytic 
(and non-degenerate) manifold, 
then the lower bound implicit in
\eqref{eq:  rational  main conjectur}
holds. 
The theory of general upper 
bounds is far murkier. 
In a nutshell,
only sufficiently smooth and sufficiently curved hypersurfaces are
completely understood --- save for some rather special exceptions. 

J.J. Huang \cite{Huang rational points} advanced the subject substantially 
by showing Theorem \ref{thm: Huang}.
For higher co-dimensions, 
Schindler and Yamagishi 
\cite{ Schindler Yamagishi} showed
recently that \eqref{eq: heuristic guess} holds true
for a manifold $\mathscr{M}$ 
of dimension $m$ (and codimension $c$)
provided the following
curvature condition holds:
\begin{equation}
\det \,H_{t_{1}f_{1}+\ldots+t_{c}f_{c}}(\bx)= 0 \label{eq: Schindler-Yamagishi curvature condition}
\end{equation}
for $\bx \in \sU_m $
implies that $(t_{1},\ldots,
t_{c})\in\mathbb{R}^{c}$ 
has to be the zero vector 
in $\mathbb{R}^{c}$. 
The condition \eqref{eq: Schindler-Yamagishi curvature condition}
restricts the co-dimension $c$
significantly, in particular, 
it is known that 
$c\leq\rho_{\mathrm{RH}}(m)$ 
where $\rho_{\mathrm{RH}}$
is the Radon--Hurwitz 
function (e.g. 
$\rho_{\mathrm{RH}}(2t)=2$
for odd $t$). We refer to 
Roos, A. Seeger, 
and R. Srivastava 
\cite[Page 4]{RSS Heisenberg} for
further discussion.

A remarkable feature of Schindler and Yamagishi's result
is that \eqref{eq: conjectured range delta} 
holds in a wider range than
\eqref{eq:  rational  main conjectur}! The condition (\ref{eq: Schindler-Yamagishi curvature condition})
was recently weakened by 
Munkelt \cite{Munkelt} 
to just
requiring that $\mathrm{rank}
(H_{t_{1}f_{1}+\ldots+
t_{c}f_{c}}(\bx))
 \geq n-1$.
 
%Further, Adiceam and Marmon
%\cite{AM} studied the number 
%of rationals close 
%to certain homogeneous 
%polynomial curves in $\bR^3$
%via an approximate version of Heath-Brown's
%Determinant Method.

\section{Applications}
\label{sec: apps}
This section gathers two applications 
of Theorem \ref{thm: main}. 
Rather than working with the functions 
from the class 
$ \sH_d^{\mathbf{0}}(\bRn)$
it is more natural at present 
to work with their graphs, i.e. the manifolds 
they parameterise (in their normalised
Monge form). 
The next definition formalises this. 
\begin{definition}
%\label{def: class of hom manifolds}
Let $d\neq0$ and $n\geq 2$. We define
$\sG( \sH_d^{\mathbf{0}}(\bRn))$ 
to be the set of hypersurfaces 
$\sM \subset \mathbb{R}^{n-1}$ with
%arising as the graphs of the 
normalised Monge parametrisation given by
\eqref{def: normalised Monge}
for some 
$f\in \sH_d^{\mathbf{0}}(\bRn)$.
\end{definition}
The subsequent section 
details applications of our main results:
firstly to the metric theory of 
Diophantine approximation 
(Khintchine's Theorem on 
hypersurfaces,
including Hausdorff dimension
refinements)
and secondly to arithmetic geometry 
(smooth variants of Serre's
Dimension Growth Conjecture).
Thirdly, we 
indicate applications 
to estimating exponential sums 
(via the large sieve).

\subsection{Metric Diophantine Approximation on Manifolds}
%\label{sec: DA apps}

%Let us first revisit the foundations of metric Diophantine approximation.
At its ancient origin, Diophantine approximation
is concerned with how well a given
real numbers can be approximated by simpler numbers, for instance
by rational numbers with bounded denominator. The word `metric'
here abbreviates `Lebesgue measure theoretic', indicating that one is
only interested in statements which are true for Lebesgue almost all
parameters. 

A foundational result in the area is Khintchine's theorem. This elegant theorem
quantifies how well a generic number $x\in[0,1)$ can be approximated
by a fraction $p/q$.\footnote{Notice that approximating by rationals is a $1$-periodic operation.
Hence $[0,1)$ is the natural probability space to work with.} We express, as one traditionally does, the upper bound on the approximation
error as a function $\psi(q)$ 
of the denominator $q$.  
We say $\psi$ is an approximation function, 
if $\psi:\bZ_{\geq 1} \rightarrow (0,1/2)$
is non-increasing. 
Define the set of $\psi$-approximable numbers 
as $$ 
\mathscr{W}(\psi):=\{x\in[0,1]:\,\Vert qx\Vert<\psi(q)\,\mathrm{for\,infinitely\,many\,}q\geq1\}.
$$
Denoting by $\mathrm{meas}_n$ \index{meas@$\mathrm{meas}_n$: the $n$-dimensional
Lebesgue measure}
the $n$-dimensional 
Lebesgue measure on $[0,1]^n$, we can now state:
\begin{thm}[Khintchine's theorem; 1926]
Let $\psi$ be an approximation function. Then 
\[
\mathrm{meas}_1(\mathscr{W}(\psi))
=\begin{cases}
1, & \mathrm{if}\,\sum_{q\geq1}\psi(q)\,\mathrm{diverges},\\
0, & \mathrm{if}\,\sum_{q\geq1}\psi(q)\,\mathrm{converges}.
\end{cases}
\]
\end{thm}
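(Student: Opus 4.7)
The plan is to realise $\mathscr{W}(\psi)$ as a $\limsup$ and apply the two halves of the Borel--Cantelli lemma. For each $q\geq 1$, set
$$
A_q:=\{x\in[0,1]:\,\|qx\|<\psi(q)\},
$$
so that $\mathscr{W}(\psi)=\limsup_{q\to\infty}A_q$. A direct computation (the set is essentially a union of $q$ intervals of length $2\psi(q)/q$) gives $\mathrm{meas}_1(A_q)=2\psi(q)$. In the convergent regime, $\sum_{q}\mathrm{meas}_1(A_q)=2\sum_{q}\psi(q)<\infty$, and the first Borel--Cantelli lemma immediately yields $\mathrm{meas}_1(\mathscr{W}(\psi))=0$.

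The divergence case is the substantive direction. My plan is to apply the Erd\H{o}s--R\'enyi divergence form of the Borel--Cantelli lemma, which demands the quasi-independence estimate
$$
\sum_{q,q'\leq Q}\mathrm{meas}_1(A_q\cap A_{q'})\ll\Big(\sum_{q\leq Q}\mathrm{meas}_1(A_q)\Big)^{2}
$$
along a sequence $Q\to\infty$. To obtain this, I would bound $\mathrm{meas}_1(A_q\cap A_{q'})$ for $q<q'$ by parametrising the relevant rationals $p/q,\,p'/q'$ through $\lcm(q,q')$ and counting near-coincidences $|q'p-qp'|$ small; the monotonicity of $\psi$ permits replacing $\psi$ by its dyadic maximum and absorbing the near-diagonal contribution. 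Combined with the divergence hypothesis $\sum_q\psi(q)=\infty$, this produces $\mathrm{meas}_1(\limsup A_q)>0$. To upgrade positive measure to full measure I would invoke a zero-one law: either Cassels' scaling argument, or the observation that $\mathscr{W}(\psi)$ is invariant under the ergodic $\mathbb{Q}/\mathbb{Z}$-action on $[0,1)$ by translation, forcing its measure to lie in $\{0,1\}$.

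The principal obstacle is the quasi-independence bound. The heuristic $\mathrm{meas}_1(A_q\cap A_{q'})\approx 4\psi(q)\psi(q')$ holds in expectation but can fail dramatically when $\gcd(q,q')$ is large: the intersection is then governed by a single common Farey neighbour and is locally inflated. The monotonicity of $\psi$ is exactly what tames this defect in the averaged sum---indeed, without monotonicity the conclusion fails (as Duffin and Schaeffer observed), with the correct statement becoming their now-proved conjecture. Thus the proof must carefully exploit monotonicity at the combinatorial step where pairs of approximations sharing a large common factor are absorbed into a harmless diagonal term.
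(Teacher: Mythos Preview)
Your proposal is correct and matches the approach the paper itself indicates: the paper does not give a full proof of this classical 1926 result, but the remark immediately following the statement explains that it ``is essentially a consequence of the statement that the family of functions $x\mapsto\Vert qx\Vert$ is `quasi-independent' on the interval $[0,1]$'' via the first and second Borel--Cantelli lemmas, which is precisely the strategy you outline. Your additional remarks on the role of monotonicity and the Duffin--Schaeffer phenomenon are accurate and go somewhat beyond what the paper says.
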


\begin{remark}
In particular, Khintchine's theorem states that the Lebesgue measure
of $\mathscr{W}(\psi)$ satisfies a zero--one--dichotomy. In view
of the $1^{st}$ and $2^{nd}$ 
Borel--Cantelli lemma, from basic
probability theory, 
the previous theorem 
is essentially a consequence
of the statement that 
the family of functions 
$x\mapsto\Vert qx\Vert$
is `quasi-independent' 
on the interval $[0,1]$. 
This perspective
explains at once 
why we should, indeed, 
expect $\mathscr{W}(\psi)$ to have measure either zero or one,
%$\mathscr{W}(\psi)\in\{0,1\}$,
and why the convergence of 
$\sum_{q\geq1}\psi(q)$ is relevant.
\end{remark}
Leaving aside how well 
a generic number is approximable, 
it is of interest to understand 
the set of numbers which 
are \emph{very well approximable}, 
that is the set $\sW (\psi_{z})$ where
$\psi_z(y):=y^{-z}$ and $z>1$. 
An application of the pigeonhole
principle,
which is often called Dirichlet's 
approximation theorem, reveals that
$\mathscr{W}(\psi_{z})=[0,1]$ for 
$z\in(0,1]$. So we confine our attention to $z>1$.
In light of Khintchine's theorem,
$\mathscr{W}(\psi_{z})$ is a null set. 
The foundational result of Jarn\'{i}k 
quantifies its size by computing
its Hausdorff dimension $\dim_{\mathrm{Haus}}$.
\begin{thm}[Jarn\'{i}k's theorem; 1931]
If $z>1$, then 
$\dim_{\mathrm{Haus}}
(\mathscr{W}(\psi_{z}))
=\frac{2}{1+z}$.
\end{thm}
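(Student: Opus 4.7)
The plan is to establish $\dim_{\mathrm{Haus}}(\mathscr{W}(\psi_{z})) = \tfrac{2}{1+z}$ by proving matching upper and lower bounds, exploiting the natural $\limsup$ realisation
\[
\mathscr{W}(\psi_{z}) \;=\; \bigcap_{N\geq 1}\,\bigcup_{q\geq N}\,\bigcup_{p=0}^{q} B\!\left(\tfrac{p}{q},\, q^{-(1+z)}\right),
\]
where $B(y,r):=(y-r,y+r)$ is the open interval. Both directions then follow from standard Hausdorff-dimension technology applied to this $\limsup$ description.

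For the upper bound, I would use the family above directly as a cover. Fix any $s > \tfrac{2}{1+z}$. For every $N\geq 1$ the family covers $\mathscr{W}(\psi_{z})$ by intervals of diameter $2q^{-(1+z)}$, which tends to $0$ as $q\to \infty$, so it is admissible for computing the $s$-dimensional Hausdorff measure. The associated premeasure is dominated by
\[
\sum_{q\geq N}(q+1)\bigl(2q^{-(1+z)}\bigr)^{s} \;\ll\; \sum_{q\geq N} q^{\,1-s(1+z)},
\]
and by the choice of $s$ the exponent is strictly less than $-1$, so the tail tends to $0$ as $N\to \infty$. Hence the $s$-dimensional Hausdorff measure of $\mathscr{W}(\psi_{z})$ vanishes for every such $s$, proving $\dim_{\mathrm{Haus}}(\mathscr{W}(\psi_{z})) \leq \tfrac{2}{1+z}$.

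For the lower bound, the cleanest modern route is the Mass Transference Principle of Beresnevich and Velani. Dirichlet's approximation theorem supplies, for every $x\in [0,1]$, infinitely many fractions $p/q$ with $|x-p/q|\leq q^{-2}$, so the ``fattened'' limsup set $\bigcap_{N}\bigcup_{q\geq N}\bigcup_{p}B(p/q,q^{-2})$ has full Lebesgue measure in $[0,1]$. Applying the Mass Transference Principle in ambient dimension $n=1$, with small radii $r_{q} = q^{-(1+z)}$ and exponent $s = \tfrac{2}{1+z}$ chosen so that $r_{q}^{s}=q^{-2}$ matches the fattened radii, one transfers this full Lebesgue measure into infinite (since $s<1$) $s$-dimensional Hausdorff measure of $\mathscr{W}(\psi_{z})$, giving $\dim_{\mathrm{Haus}}(\mathscr{W}(\psi_{z}))\geq \tfrac{2}{1+z}$. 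The upper bound is elementary once the natural cover is spotted; the substantive obstacle is the lower bound, where Jarn\'ik's original approach is an intricate Cantor-set construction that recursively places rational approximants of rapidly increasing denominator, balancing $\psi_{z}$-approximability against the separation needed to drive a mass distribution principle. The MTP-based plan bypasses this combinatorial work, shifting the real content to the (now classical) transfer of Lebesgue measure into Hausdorff measure.
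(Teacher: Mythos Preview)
The paper does not prove Jarn\'{i}k's theorem; it is stated purely as a classical background result (attributed to 1931) to motivate the later questions about Diophantine approximation on manifolds. There is therefore no proof in the paper to compare your proposal against.

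That said, your sketch is a correct and standard modern proof. The upper bound via the natural cover by intervals of length $2q^{-(1+z)}$ is the usual argument, and the series estimate is right. For the lower bound, invoking the Mass Transference Principle of Beresnevich--Velani (whose paper \cite{BDV limsup sets} is in the bibliography here) with Dirichlet's theorem supplying the full-measure input is exactly the clean contemporary route; your identification of the exponent $s=\tfrac{2}{1+z}$ so that $r_q^s = q^{-2}$ is correct. Your remark that Jarn\'{i}k's original 1931 argument proceeded instead via an explicit Cantor-type construction with a mass distribution is also accurate. So while there is nothing in the paper to benchmark against, your proposal stands on its own as a valid proof outline.
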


Higher dimensional versions of Khintchine's theorem and Jarn\'{i}k's theorem
concerning the set 
\[
\mathscr{W}_{n}(\psi):=\{\mathbf{x}\in[0,1]^n:\,\max_{1\leq i\leq n}\Vert qx_{i}\Vert<\psi(q)\,\mathrm{for\,infinitely\,many\,}q\geq1\}
\]
of \emph{simultaneously $\psi$-approximable vectors} 
are known, see the survey article \cite{B Recent}.
The word simultaneous comes from
the fact that one tries to 
approximate real vectors
by rational vectors with the very same denominator $q$.
\begin{remark}
Another natural generalisation of 
$\mathscr{W}(\psi)$ is the set
of the dual $\psi$-approximable vectors, 
that is
\[
\mathscr{W}_{n}^{\perp}(\psi):=\{\mathbf{x}\in[0,1]^n:
\,\Vert\left\langle \mathbf{a},
\mathbf{x}\right\rangle \Vert
<\psi(\Vert\mathbf{a}
\Vert_{2})\,
\mathrm{for\,infinitely
\,many\,}\mathbf{a}\in
\mathbb{Z}^{n}\}.
\]
This notion quantifies 
how close a vectors 
$\mathbf{x}$ comes to
a rational higher-plane, 
in terms of the canonical 
`height' associated
to the plane. Typically, 
dual $\psi$-approximable 
vectors are easier
to understand than simultaneously $\psi$-approximable vectors. We
shall not be concerned with the former (see \cite{B Recent}
for further reading).
\end{remark}
In contrast to this, deciding if either of these basic results holds
for a proper sub-manifold $\mathscr{M}\subset
\mathbb{R}^{n}$ is a
challenging question 
which is under active investigation. 

Using a standard framework 
in Diophantine approximation 
(called `ubiquity')
one usually proves 
Khintchine and Jarn\'{i}k 
type theorems by counting rational
points near manifolds.
However, there is an 
important  difference:
for such a statement one
\emph{is allowed} to remove any 
proportion of the manifolds 
from the counting problem
as long as one can provide a suitable
measure bound for what was removed. 
Hence, counting rational points near the 
\emph{entire} manifold
is a strictly more difficult problem. 
We refer to Beresnevich, Dickinson, and Velani 
\cite{BDV limsup sets}
for how the ubiquity framework works. 

\subsubsection*{Khintchine's theorem for manifolds}

Unlike the classical Khintchine's theorem, where only the divergence
statement requires some effort, both the divergence and the convergence
parts contained in Khintchine's theorem are difficult for manifolds.
Indeed the two theories
are developed to slightly differently stages.
To state the results, we say $\mathscr{M}$ is 
non-degenerate if it
is $l$ non-degenerate 
at every point for 
some uniform $l>0$.
Since counting rational points
near non-degenerate planar curves is sufficiently
well understood, we restrict attention 
to $n\geq3$ in the following.
Further, 
we will not repeat to mention classes in which Conjecture
\ref{conj: Rational Main Conjecture } has been established as in
these cases Khintchine's theorem is known.

\textit{Convergence theory of Khintchine's theorem on manifolds:}
There is a collection of fine results which can be seen as partial
progress towards a coherent Khintchine convergence theory. The resolution
of the Baker--Sprinzuk problem by Kleinbock and Margulis \cite{KM} is
such an instance. However, in a recent breakthrough the convergence
theory of non-degenerate $C^{2}$-manifolds was proved by Beresnevich
and L. Yang \cite{Beresnevich Yang}. This was followed by the work of Schindler and the authors \cite{DRN} which proved better quantitative estimates for smooth manifolds. 
We refer to \cite{BBDV} for the history on the matter. 

\textit{Divergence theory of Khintchine's theorem on manifolds:}
The divergence theory for \emph{analytic}
non-degenerate manifolds was established
by Beresnevich \cite{Beresnevich Rational points near manifolds}. 
We also refer to \cite{BBDV} for further reading. 
Beresnevich \cite[Section 8]{Beresnevich Rational points near manifolds} explicitly asks for removing 
the strong smoothness assumptions from
his results by writing in the final section of that paper: 
\begin{quotation}
`More precisely, the analyticity assumption is used to verify condition
(i) of Theorem KM. A natural challenging question is then to what
extent the analyticity assumption can be relaxed within Theorem 5.2
and consequently within all the main results of this paper.'
\end{quotation}
Further, there is interesting work 
of J.-J. Huang 
in the case that the manifold is a 
sub-space \cite{Huang Subspaces}. 
Here the curvature assumptions to
count rational points take the form 
of assumptions on certain
Diophantine exponents.\\
\\
In terms of smoothness assumptions, 
the theory of Diophantine approximation
on rough manifolds is essentially 
non-existent. The present paper,
as a consequence,
establishes for the first time 
counting results for general 
classes of such hypersurfaces.

\subsubsection*{Diophantine Applications}

Now we turn our attention to Jarn\'{i}k's theorem for manifolds. Here
the situation is quite different. Not all that much is known. For
instance, Jarn\'{i}k's theorem for the moment curve $(x,x^{2},x^{3})$
in $\mathbb{R}^{3}$ is only partially known --- we refer to Beresnevich
and L. Yang \cite{Beresnevich Yang}, and Schindler-Srivastava-Technau \cite{DRN} for further discussion. 

Our main applications here
are the following two theorems. 
Denote by $\cH^s$ the Hausdorff $s$-measure
in $\mathbb{R}^n$.\index{Hausdorff_measure@$\cH^s$:
the Hausdorff $s$-measure in $\bRn$}
\begin{thm}\label{thm: dio app low degree}
Let $n\geq 3$ be an integer.
Let $ \tfrac{2(n-1)}{2n-3} < d\leq 2(n-1)$ 
be a real number. 
For any approximation function
$\psi$, $s> \frac{n-1}{2}$ 
and 
$\sM \in \sG(\sH_d^{\mathbf{0}}(\bRn))$ 
we have 
$$
\cH^s(\sS_n(\psi)\cap \sM)=0 
\quad \mathrm{if}
\quad \sum_{q\geq 1} 
\Big(\frac{\psi(q)}{q}\Big)^{s+1} q^n <\infty.
$$
\end{thm}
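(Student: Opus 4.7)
The plan is to carry out the standard convergence-type argument from the Khintchine--Jarn\'ik toolbox: realise $\sS_n(\psi)\cap \sM$ as a $\limsup$ set of small neighbourhoods of rationals clustering around $\sM$, count those rationals via Corollary~\ref{cor: counting explicit}(i), and close the argument with the Hausdorff--Cantelli lemma.

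First, I would note that any $\bx\in \sS_n(\psi)\cap \sM$ satisfies, for infinitely many $q$, the system $\max_i\Vert qx_i\Vert<\psi(q)$, so that there is $\bp\in\bZ^n$ with $\Vert \bx - \bp/q\Vert_2 \le \sqrt{n}\,\psi(q)/q$. In particular, the rational $\bp/q$ lies at Euclidean distance at most $\sqrt{n}\,\psi(q)/q$ from $\sM$. Hence
$$
\sS_n(\psi)\cap \sM \;\subseteq\; \limsup_{q\to\infty} E_q, \quad E_q := \bigcup_{\substack{\bp\in \bZ^n\\ \mathrm{dist}(\bp/q,\sM)\le \sqrt{n}\psi(q)/q}} B\!\bigl(\bp/q,\ \sqrt{n}\psi(q)/q\bigr).
$$

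Next I would dyadically decompose by setting $Q=2^j$. Since $\psi$ is non-increasing, every ball appearing in $\bigcup_{q\in[Q,2Q)} E_q$ has diameter $\ll \psi(Q)/Q$, and the number of such balls is bounded above by $\rN_\sM(c\psi(Q),Q)$ for some absolute $c=c(n)$.

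In the \emph{main regime} $\psi(Q)\ge Q^{\varepsilon-1}$, Corollary~\ref{cor: counting explicit}(i) is applicable (this is precisely the range $\tfrac{2(n-1)}{2n-3}<d\le 2(n-1)$), yielding $\rN_\sM(c\psi(Q),Q)\ll \psi(Q)\,Q^n$. Summing the $s$-th powers of diameters and invoking the Hausdorff--Cantelli lemma produces
$$
\cH^s\!\Big(\limsup_{j\text{ main}} \bigcup_{q\in[2^j,2^{j+1})}E_q\Big)\;\ll\; \sum_{j} \psi(2^j)^{s+1}\,2^{j(n-s)} \;\asymp\; \sum_{q\ge 1}\bigl(\psi(q)/q\bigr)^{s+1}q^n,
$$
where the dyadic comparison exploits the monotonicity of $\psi$. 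The right-hand side is finite by hypothesis.

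In the \emph{residual regime} $\psi(Q)<Q^{\varepsilon-1}$, Corollary~\ref{cor: counting explicit}(i) is not directly applicable to the argument $c\psi(Q)$; however by monotonicity of $\rN_\sM$ in $\delta$ we have $\rN_\sM(c\psi(Q),Q)\le \rN_\sM(2cQ^{\varepsilon-1},Q)\ll Q^{\varepsilon-1}\cdot Q^n = Q^{n-1+\varepsilon}$, now applying the corollary at the boundary. The Hausdorff--Cantelli contribution from this regime is then
$$
\sum_{j:\psi(2^j)<2^{j(\varepsilon-1)}} 2^{j(n-1+\varepsilon)}\,\bigl(\psi(2^j)/2^j\bigr)^{s}
\;\le\; \sum_{j} 2^{j(n-1+\varepsilon - (2-\varepsilon)s)},
$$
which converges provided $s>(n-1+\varepsilon)/(2-\varepsilon)$; for sufficiently small $\varepsilon>0$ the assumption $s>(n-1)/2$ furnishes exactly this bound.

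\textbf{The main obstacle.} The most delicate part is handling the residual regime $\psi(Q)<Q^{\varepsilon-1}$, where Corollary~\ref{cor: counting explicit} does not speak directly and one must squeeze an additional bound out of it via monotonicity. It is precisely the book-keeping in this regime that forces the threshold $s>(n-1)/2$ into the statement; once this is absorbed, the rest is a routine $\limsup$-covering estimate.
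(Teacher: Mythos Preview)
Your proposal is correct and follows essentially the same route as the paper: a $\limsup$ covering of $\sS_n(\psi)\cap\sM$ by balls around near-rational points, a dyadic count via Corollary~\ref{cor: counting explicit}(i), and the Hausdorff--Cantelli lemma. The one stylistic difference is in the treatment of the residual regime $\psi(Q)<Q^{\varepsilon-1}$: you split into cases and use monotonicity of $\rN_\sM$ in $\delta$ to bound the count by $Q^{n-1+\varepsilon}$, whereas the paper replaces $\psi$ upfront by $\hat\psi(q):=\max(\psi(q),q^{-1+\eta})$ with $\eta<(2s-n+1)/(s+1)$, so that the residual regime never arises and Theorem~\ref{thm: main} applies uniformly. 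Both devices are standard and encode the same computation (the choice of $\eta$, respectively $\varepsilon$, is exactly what forces the threshold $s>(n-1)/2$); your explicit case split has the minor advantage of making the role of that threshold more transparent.
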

In the case of large degree $d$
we have:

\begin{thm}\label{thm: dio app high degree}
Let $n\geq 3$ be an integer, 
$d>2(n-1)$
%(1, \tfrac{2(n-1)}{2n-3}]\cup 
%[2(n-1),\infty)$ 
be a real number, and $\varepsilon>0$.
For any approximation function
$\psi$, $s>\frac{n-1}{2}$
and $\sM \in 
\sG(\sH_d^{\mathbf{0}}(\bRn))$ 
we have 
$$
\cH^s(\sS_n(\psi)\cap \sM)=0 
\,\, \mathrm{if}
\,\,\sum_{q\geq 1} 
\Big(\frac{\psi(q)}{q}\Big)^{s+1} q^n <\infty,
\,\, \mathrm{and} \,\,
\sum_{q\geq 1} 
\Big(\frac{\psi(q)}{q}\Big)^{s+\frac{n-1}{d}
+\varepsilon} 
q^{n-1} <\infty.
$$
\end{thm}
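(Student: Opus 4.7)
The plan is to prove Theorem \ref{thm: dio app high degree} by a Hausdorff--Cantelli limsup covering argument, using our main counting bound Theorem \ref{thm: main} as the central input. The starting observation is that any $\bx \in \sS_n(\psi) \cap \sM$ satisfies, for infinitely many denominators $q$, an inequality $\|\bx - \bp/q\|_\infty < \psi(q)/q$ for some $\bp \in \bZ^n$; since $\bx \in \sM$, the rational point $\bp/q$ then lies within $\sqrt{n}\,\psi(q)/q$ of $\sM$ and is therefore counted by $\rN_\sM(C\psi(q),q)$ for a suitable absolute constant $C>0$. Consequently, $\sS_n(\psi) \cap \sM$ is contained in the $\limsup$ (as $j\to\infty$) of the sets
$$
E_j := \bigcup_{q \in [2^j, 2^{j+1})}\ \bigcup_{\bp\,:\,\mathrm{dist}(\bp/q,\sM)\leq C\psi(q)/q} B\!\left(\bp/q,\ \psi(q)/q\right).
$$

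By the monotonicity of $\psi$, the number of balls composing $E_j$ is at most $\rN_\sM(C'\psi(2^j),\,2^j)$, each of radius at most $\psi(2^j)/2^j$. The Hausdorff--Cantelli lemma then yields
$$
\cH^s(\sS_n(\psi) \cap \sM)\ \ll\ \sum_{j \geq j_0} \rN_\sM\!\bigl(C'\psi(2^j),\,2^j\bigr)\left(\frac{\psi(2^j)}{2^j}\right)^{\!s}.
$$
Inserting the bound $\rN_\sM(\delta, Q) \ll \delta Q^n + (\delta/Q)^{(n-1)/d}Q^{n+k\varepsilon'}$ from Theorem \ref{thm: main} (with an auxiliary $\varepsilon' > 0$ to be tuned in terms of $\varepsilon$) and converting the resulting dyadic sums to ordinary series via the standard equivalence $\sum_j 2^j f(2^j) \asymp \sum_q f(q)$, the probabilistic term produces a series comparable to $\sum_q (\psi(q)/q)^{s+1} q^n$, which is exactly the first hypothesis; the Knapp-cap term produces a series comparable to $\sum_q (\psi(q)/q)^{s+(n-1)/d}\,q^{n-1+k\varepsilon'}$.

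The main obstacle is controlling this Knapp-cap series using the second hypothesis. My plan is to split the dyadic range according to whether $\psi(2^j)$ lies above or below the threshold $\delta_{n,d}(2^j)=2^{-j(n-1-k\varepsilon'd)/(d-(n-1))}$ from Corollary \ref{cor: counting explicit}. Above the threshold, Corollary \ref{cor: counting explicit} already delivers the sharper estimate $\rN_\sM \asymp \delta Q^n$ \emph{without any} $Q^{\varepsilon}$ loss, so this sub-contribution collapses to a constant multiple of the first hypothesis. Below the threshold, the regime constraint forces $\psi(2^j) \leq 2^{-j\alpha}$ with $\alpha=\alpha(n,d,\varepsilon')>0$, which gives a polynomial lower bound on $(2^j/\psi(2^j))^\varepsilon$; this lets us trade the $2^{jk\varepsilon'}$ loss against the $(\psi(q)/q)^\varepsilon$ margin built into Hypothesis 2. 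The delicate part is calibrating $\varepsilon'$ small enough (in terms of $\varepsilon$, $k$, $d$, $n$, and the threshold exponent $\alpha$) so that this trade can be carried out uniformly across the below-threshold indices; the hypothesis $s > (n-1)/2$ enters here, ensuring enough convergence margin for the resulting estimate to close. Tracking these trade-offs carefully completes the proof.
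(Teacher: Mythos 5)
Your high-level approach — project to the base chart, bound $\cH^s(\sS_n(\psi)\cap\sM)$ by a Hausdorff--Cantelli sum of ball diameters, then insert the dyadic counting bound from Theorem \ref{thm: main} — is exactly the paper's approach (both follow Huang's template). The only structural addition you make is the above/below-threshold split at $\delta_{n,d}(2^j)$ from Corollary \ref{cor: counting explicit}, which the paper does not use: the paper simply feeds the bound $\#\sD_i \ll \psi(2^i)2^{in}+(\psi(2^i)/2^i)^{(n-1)/d}2^{i(n+k\varepsilon)}$ into the dyadic sum and reads off the two series in \eqref{eq: convergence assumption large degree}. Your above-threshold observation (that $\rN_\sM\asymp\delta Q^n$ there, so this piece is controlled by the first hypothesis alone) is correct and is a nice small refinement, but it does not change the substance.

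There is, however, a genuine gap in your below-threshold step, and it is the same gap the paper leaves implicit. You propose to ``trade the $2^{jk\varepsilon'}$ loss against the $(\psi(q)/q)^{\varepsilon}$ margin built into Hypothesis 2.'' But that margin goes the wrong way. Since $\psi(q)/q<1$, the factor $(\psi(q)/q)^{\varepsilon}\leq 1$ \emph{shrinks} the hypothesis series, i.e.\ convergence of $\sum(\psi(q)/q)^{s+(n-1)/d+\varepsilon}q^{n-1}$ is a \emph{weaker} requirement than convergence of $\sum(\psi(q)/q)^{s+(n-1)/d}q^{n-1}$, and certainly weaker than convergence of $\sum(\psi(q)/q)^{s+(n-1)/d}q^{n-1+k\varepsilon'}$, which is what you actually need. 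Concretely, the ratio of the Knapp-derived term to the hypothesis term is $q^{k\varepsilon'}(\psi(q)/q)^{-\varepsilon}\geq q^{k\varepsilon'}\to\infty$, so no choice of $\varepsilon'$, and no restriction to a below-threshold regime, can close this. Your appeal to $\psi(2^j)\leq 2^{-j\alpha}$ produces a \emph{lower} bound on $(2^j/\psi(2^j))^\varepsilon$, which only worsens the comparison; and your appeal to $s>(n-1)/2$ only yields the convergence you want when $s>n-(n+1)(n-1)/d$, which is strictly larger than $(n-1)/2$ whenever $d>2(n-1)$.

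What actually closes the argument is to apply Theorem \ref{thm: main smoothed} with $\varepsilon'$ chosen so that $k\varepsilon'\leq\varepsilon$, and to use the form of the second hypothesis with the opposite sign, namely $\sum_{q\geq 1}(\psi(q)/q)^{s+(n-1)/d-\varepsilon}q^{n-1}<\infty$ (equivalently, with the $\varepsilon$-loss placed in the $q$-power). Then one has the termwise inequality $(\psi(q)/q)^{s+(n-1)/d}q^{n-1+k\varepsilon'}\leq(\psi(q)/q)^{s+(n-1)/d-\varepsilon}q^{n-1}$ directly, and the dyadic sum converges. This is also what the paper's proof tacitly assumes: it proves that the pair of conditions in \eqref{eq: convergence assumption large degree} implies $\cH^s=0$ and then declares the theorem proved, but the step from the theorem's stated hypothesis (with $+\varepsilon$ in the exponent) to \eqref{eq: convergence assumption large degree} is the exact same gap you hit, and it is never bridged. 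So your difficulty is real, but the resolution is to correct the sign in the hypothesis (or, equivalently, put the $\varepsilon$-loss in the $q$-power), not to devise a threshold-dependent trade.
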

The proof of these results,
once a device like 
Theorem \ref{thm: main}
(or Theorem \ref{thm: main smoothed}) is at hand,
is well--known to experts in Diophantine 
approximation on manifolds.
The appendix, see Section
\ref{app: Dio}, provides the details
for the interested reader.

\subsection{A smooth variant of Serre's Dimension Growth Conjecture}
%\label{sec: dimension growth}

Let $\mathbb{P}^{n}(\mathbb{Q})$ 
denote the 
$n$-dimensional projective
space over the rationals $\mathbb{Q}$. 
Consider an irreducible variety
$\mathscr{V}\subset
\mathbb{P}^{n}(\mathbb{Q})$ of 
degree $D\geq1$.
For a given point 
$\mathbf{p}=(p_{0},\ldots,p_{n})
\in\mathbb{P}^{n}(\mathbb{Q})$,
take
$\mathbf{p}\in\mathbb{Z}^{n+1}$
and so that 
$\mathrm{GCD}(p_{0},\ldots,p_{n})=1$,
define its height via\index{naiveheigt@$H$: the naive height on projective $n$-space}
\[
H(\mathbf{p}):=\max_{0\leq i\leq d}\vert p_{i}\vert.
\]
A fundamental problem in number theory is to count 
the number of points in 
$\mathscr{V}$ with bounded height,
as the height bound tends to infinity. 
More formally, one is interested in
the asymptotics of
\[
N_{\mathscr{V}}(B):=\#\{\mathbf{p}\in\mathscr{V}:\,H(\mathbf{p})\leq B\},
\]
which is the subject
of a deep conjecture of Manin 
and his collaborators,
see \cite{Franke Manin Tschinkel}
and \cite{Batyrev Manin}. 
In this context,
Serre \cite{Serre Topics in Galois theory}
proposed the following upper-bound:
\begin{conj}[Serre's Dimension Growth Conjecture]
%\label{conj: variety Serre}
If $\mathscr{V}\subset\mathbb{P}^{n}(\mathbb{Q})$
is an irreducible variety of degree at least two, 
then $N_{\mathscr{V}}(B)\ll_{\mathscr{V}}
B^{\dim\mathscr{V}}(\log B)^{\mathfrak{z}}$
for some constant $\mathfrak{z}
=\mathfrak{z}(\mathscr{V})>0$.
\end{conj}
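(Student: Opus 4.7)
The displayed bound is Serre's Dimension Growth Conjecture, which is open in full generality; my plan therefore targets only the \emph{smooth variant} reachable through Theorem~\ref{thm: main}, namely the case in which $\mathscr{V}\subset \mathbb{P}^n(\mathbb{Q})$ is an irreducible hypersurface whose affine dehomogenisation on each coordinate chart lies in $\sG(\sH_d^{\mathbf{0}}(\bR^{n-1}))$ for some admissible degree $d$. The overall strategy is to dominate $N_{\mathscr{V}}(B)$ by an \emph{exact-distance} count on each chart, bound this in turn by $\rN_f(\delta,Q)$ at the smallest scale the main theorem allows, and finally sum dyadically over $Q\in[1,B]$.

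Concretely, I would first cover $\mathscr{V}$ by finitely many affine charts, invoke the implicit function theorem to place each one in normalised Monge form $\{(\mathbf{x},f(\mathbf{x})):\mathbf{x}\in\sU_{n-1}\}$, and identify each rational point of $\mathscr{V}$ of height $\asymp Q$ with a primitive rational $\mathbf{a}/q$, $q\in[Q,2Q)$, lying \emph{exactly} on the local Monge graph $\sM$. Since trivially $\#\{\mathbf{a}:\mathrm{dist}(\mathbf{a}/q,\sM)=0\}\leq \rN_f(\delta,Q)$ for every $\delta>0$, I would apply Theorem~\ref{thm: main} at $\delta=Q^{\varepsilon-1}$. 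When the probabilistic term dominates, for example if $\tfrac{2(n-1)}{2n-3}<d\leq 2(n-1)$ as in Corollary~\ref{cor: counting explicit}(i), this yields $\rN_f(Q^{\varepsilon-1},Q)\ll Q^{n-1+k\varepsilon}$, and dyadic summation gives
\[
N_{\mathscr{V}}(B)\ll\sum_{Q\text{ dyadic}\leq B}Q^{n-1+k\varepsilon}\ll B^{\dim\mathscr{V}+k\varepsilon}.
\]
For larger $d$ the Knapp-cap term dominates in Theorem~\ref{thm: main} and would inflate the bound beyond $B^{\dim\mathscr{V}}$; here one must supplement the near-miss count with the arithmetic observation that the flat contribution cannot entirely consist of genuine rational points on an integrally-defined hypersurface, which should allow a smaller effective $\delta$ to be used.

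The principal obstacle is that, even in the favourable range of $d$, the route above produces a $B^{\varepsilon}$-loss rather than the polylogarithmic refinement $(\log B)^{\mathfrak{z}}$ demanded by Serre. Removing the $Q^{k\varepsilon}$ factor from Theorem~\ref{thm: main} is closely analogous to the Vaughan--Velani sharpening of Huxley's bound for planar curves and would require a genuinely more delicate analysis of the oscillatory integrals produced by stationary phase, almost certainly married to arithmetic input exploiting the integrality of the defining equation of $\mathscr{V}$ (plausibly via a determinant method or a $p$-adic enhancement in the spirit of Heath-Brown). Extending further, to arbitrary irreducible varieties of any codimension and degree, lies outside the geometric/analytic framework of the present paper and would demand a substantially different toolkit.
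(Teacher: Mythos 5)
You were asked to prove a statement that the paper itself does not prove: this is Serre's Dimension Growth Conjecture, quoted only for context, and the manuscript establishes merely a smooth variant of it (Theorem \ref{thm: dimension growth app}) for manifolds $\sM\in\sP_c(\sH_d^{\mathbf{0}}(\bRn))$ of codimension $c\geq 2$ and $\tfrac{2(n-1)}{2n-3}<d\leq 2(n-1)$, with an $\varepsilon$-loss rather than Serre's polylogarithmic factor. So your opening judgement --- that the full conjecture is out of reach of these methods and only an $\varepsilon$-loss smooth variant can be extracted from Theorem \ref{thm: main} --- is exactly right, and there is no hidden proof for you to have missed.

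Comparing your sketch with the paper's actual argument (Appendix \ref{app: dimension growth}): the mechanism is the same --- cover by charts, dominate the count of rational points lying exactly on the local graph by the near-count at $\delta=Q^{\varepsilon-1}$, apply Theorem \ref{thm: main}, and sum dyadically over $Q\leq B$ --- but you run it in codimension one, for a projective hypersurface whose dehomogenisations lie in $\sG(\sH_d^{\mathbf{0}}(\bRn))$. The paper deliberately excludes $c=1$ (see the discussion before Theorem \ref{thm: dimension growth app} and Section \ref{sec: final remarks}), essentially because for a hypersurface the bound $N_{\mathscr{V}}(B)\ll B^{\dim\mathscr{V}+\varepsilon}$ is already elementary, so the counting theorem adds nothing there; the content of the application is in codimension $c\geq 2$, where one cannot speak directly of the Monge graph of a single scalar function. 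That is where Definition \ref{def: class of addmissible manifolds} enters: the admissibility hypothesis provides an integer vector $\bs$ and a $Q_0$ with $f(\bx)=\langle\bf(\bx),\bs/Q_0\rangle\in\sH_d^{\mathbf{0}}(\bRn)$, and a rational point of $\sM$ with denominator $q$ forces $\Vert Q_0 q f(\ba/q)\Vert=0\leq Q^{\varepsilon-1}$, whence $N_{\sM}(B)\leq N_{Q_0 f}(B,Q^{\varepsilon-1})$ and Theorem \ref{thm: main} applies. This projection onto a scalar ``direction'' is the one genuinely new ingredient of the paper's application and is absent from your proposal. Your remaining observations --- that for $d>2(n-1)$ the Knapp-cap term spoils the bound, and that removing the $Q^{k\varepsilon}$ to reach Serre's $(\log B)^{\mathfrak{z}}$ would require additional arithmetic input --- are consistent with why the paper formulates only Problem \ref{prob: smooth Serre} (with an $\varepsilon$) and restricts its theorem to $d\leq 2(n-1)$; compare also Corollary \ref{cor: counting explicit}.
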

For a historical account of the (now essentially solved)
Dimension Growth Conjecture, 
we refer the reader to the 
book of Browning 
\cite{Browning Quantitative arithmetic geometry}.
Indeed, substantial progress 
was achieved by the work of Browning,
of Heath-Brown, and
of Salberger.
An important technique 
that grew out of this body of work 
is Heath-Brown's
version \cite{Heath Brown} 
of the determinant method 
of Bombieri 
and Pila \cite{Bombieri Pila}.

A natural generalisation 
of counting 
rational points 
on projective varieties,
is counting rational points
on (smooth) projective manifolds. 
In this spirit, we consider the
following problem: 
\begin{problem}[Smooth variant of Serre's Dimension Growth Conjecture]
\label{prob: smooth Serre}
Suppose $\mathscr{V}\subset\mathbb{P}^{d}(\mathbb{Q})$
is a smooth manifold. 
Under which conditions on $\mathscr{V}$ is
it true that $N_{\mathscr{V}}(B)\ll_{\mathscr{V},
\varepsilon}B^{\dim\mathscr{V}+\varepsilon}$
holds each $\varepsilon>0$?
\end{problem}

In \cite{Huang rational points}, 
Problem \ref{prob: smooth Serre}
was solved
for a rich class 
of smooth manifolds
defined by a mild 
curvature condition. 
This curvature condition
requires that there 
exists for every 
$\mathbf{x}_{0}\in[-1,1]^{m}$ 
a unit vector 
$\mathbf{u}\in\mathbb{R}^{c}$ 
such that 
\[
\det
\Big(\frac{\partial^{2}
\left\langle \mathbf{u},\mathbf{f}
\right\rangle (\mathbf{x}_{0})}
{\partial x_{i}\partial x_{j}}
\Big)_{
1\leq i,j\leq m}
\neq 0.
\]
For further discussion, see
Schindler and Yamagishi 
\cite{Schindler Yamagishi} 
and Munkelt \cite{Munkelt}.
We use the following convention,
which is the analog of the above condition
for hypersurfaces from
the class $\sG( \sH_d^{\mathbf{0}}(\bRn))$.
\begin{definition}\label{def: class of addmissible manifolds}
Denote by $\sP_c(\sH_d^{\mathbf{0}}(\bRn))$ 
the set of $\sM \in \sG( \sH_d^{\mathbf{0}}(\bRn))$
of co-dimension 
$\rc\geq 1$
which exhibit the subsequent property.
For every small open ball 
$\sB(\bx_0,r)$ with centre $\bx_0$ and radius $r>0$
and any local chart $(\bx,\bf(\bx))$ of $\sM$, 
where  
$\bx \in \sB(\bx_0,r)$,
there exists $\bs \in \mathbb{Z}^{\rc}$
and $Q_0\in \bZ_{\geq 1}$
%of (Euclidean) unit length 
satisfying
$$
\bx \mapsto \langle \bf(\bx + \bx_0), 
\frac{\bs}{Q_0} \rangle \in \sH_d^{\mathbf{0}}(\bRn).
$$
\end{definition}
We can resolve Problem \ref{prob: smooth Serre}
for the class $\sP_c(\sH_d^{\mathbf{0}}(\bRn))$
as soon as $c>1$. The case $c=1$
has a rather different character, 
we elaborate more on this
in Section \ref{sec: final remarks}.
\begin{thm}\label{thm: dimension growth app}
Let $n\geq 3$, and $c\in [2,n-1]$ be integers.
If $\sM \in \sP_c(\sH_d^{\mathbf{0}}(\bRn))$ 
and $ \tfrac{2(n-1)}{2n-3} < d\leq 2(n-1)$, then 
$$
N_{\sM}(B) \ll_{\sM,\varepsilon} 
B^{\dim \sM + \varepsilon}
$$
for any fixed $\varepsilon>0$.
\end{thm}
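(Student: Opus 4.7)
The plan is to reduce the codimension-$c$ count on $\sM$ to counting rationals near a single hypersurface and then to apply Theorem~\ref{thm: main} (or equivalently Corollary~\ref{cor: counting explicit}) to the latter.

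First, I would cover $\sM$ by finitely many affine patches in normalised Monge form $\bx\mapsto (\bx,\bf(\bx))$ with $\bx\in \sU_{n-1}$ and $\bf:\sU_{n-1}\to \bR^c$. Any projective rational point $\bp$ on $\sM$ of height $H(\bp)\asymp B$ corresponds in an appropriate affine chart to an affine rational $\ba/q\in \sU_{n-1}$ with $q\asymp B$ and $\ba\in \bZ^{n-1}$, subject to the $c$ integrality conditions $q\bf(\ba/q)\in \bZ^c$.

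Second, I would exploit the defining property of $\sP_c$ to collapse these $c$ conditions onto a single hypersurface. Partition $\sU_{n-1}$ into finitely many balls $\sB(\bx_0,r)$ whose centres $\bx_0$ are rational with a common denominator $Q^{*}$ fixed in advance. On each ball, Definition~\ref{def: class of addmissible manifolds} supplies $\bs \in \bZ^c$ and $Q_0 \in \bZ_{\geq 1}$ such that
\[
h(\by):=\Big\langle \bf(\by+\bx_0),\tfrac{\bs}{Q_0}\Big\rangle \;\in\; \sH_d^{\mathbf{0}}(\bR^{n-1}).
\]
For any $\ba/q\in \sB(\bx_0,r)$ coming from a point on $\sM$, the integrality of $q\bf(\ba/q)$ forces $qQ_0\,h(\ba/q-\bx_0)\in \bZ$. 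Setting $\tilde q := Q^{*}Q_0 q$ and $\tilde{\bc}:=Q^{*}Q_0(\ba-q\bx_0)\in \bZ^{n-1}$, the rational $\tilde{\bc}/\tilde q$ lies exactly on the graph of $h$, i.e.\ $\Vert \tilde q\,h(\tilde{\bc}/\tilde q)\Vert=0$. Distinct $\ba/q$ yield distinct $\tilde{\bc}/\tilde q$ because the base component $\bx$ determines the $\sM$-point uniquely in a Monge chart.

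Third, I would apply Theorem~\ref{thm: main} to the hypersurface parameterised by $h$. The assumption $\tfrac{2(n-1)}{2n-3}<d\leq 2(n-1)$ places us in Corollary~\ref{cor: counting explicit}(i), giving $\rN_h(\delta,\tilde Q)\asymp \delta\tilde Q^n$ for $\delta\in(\tilde Q^{\varepsilon-1},1/2)$. Selecting $\delta := \tilde Q^{\varepsilon-1}$ bounds the contribution of one ball at dyadic scale $\tilde Q\asymp Q^{*}Q_0 q$ by $O(\tilde Q^{n-1+\varepsilon})$. Summing over the finitely many charts/balls and dyadically over $\tilde Q \lesssim Q^{*}Q_0 B$ yields $N_\sM(B)\ll B^{n-1+\varepsilon}=B^{\dim\sM+\varepsilon}$.

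The main hurdle will be the integrality bookkeeping in the shift-and-rescale step: the translates $\ba-q\bx_0$ and rescalings by $Q^{*}Q_0$ must be arranged so that $\tilde{\bc}$ remains integral and so that only $O(1)$-many dyadic scales contribute at each magnitude. The upper threshold $d\leq 2(n-1)$ is essential: it is precisely the cap at which the Knapp-cap term $(\delta/\tilde Q)^{(n-1)/d}\tilde Q^n$ is dominated by the probabilistic term $\delta \tilde Q^n$ at our choice $\delta=\tilde Q^{\varepsilon-1}$. Above this threshold (the regime of Corollary~\ref{cor: counting explicit}(ii)) the flat-geometric contribution would spoil the target exponent, explaining why the clean $B^{\dim\sM+\varepsilon}$ bound is confined to $d\leq 2(n-1)$ and a different argument must intervene in the range $d>2(n-1)$.
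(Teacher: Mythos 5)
Your proposal takes essentially the same bi-rational projection route as the paper's appendix proof: reduce to a local chart, collapse the $c$ integrality constraints into a single one via the $\sP_c$-weight $\langle\,\cdot\,,\bs/Q_0\rangle$, observe that rational points on $\sM$ give exact solutions for the projected $\sH_d^{\bzero}$ function, relax the exact equation to a $\delta$-neighbourhood with $\delta=Q^{\varepsilon-1}$, apply Theorem \ref{thm: main} (via Corollary \ref{cor: counting explicit}(i)), and sum dyadically. The only difference is presentational bookkeeping --- you rescale the denominator to $\tilde q=Q^{*}Q_0q$ and shift the numerator, whereas the paper absorbs $Q_0$ into the counting function $N_{Q_0 f}$ and handles the general base-point $\bx_0$ by a translation remark --- and your closing observation that $d\le 2(n-1)$ is precisely the threshold at which the probabilistic term $\delta Q^n$ still dominates the Knapp-cap term at $\delta=Q^{\varepsilon-1}$ is correct and is the conceptual reason for that hypothesis, which the appendix leaves implicit.
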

\begin{remark}
Suppose $\sM$ is as in Theorem 
\ref{thm: dimension growth app}.
Assume further that $\sM$ is a non-singular 
projective variety (defined
by a homogeneous polynomial).
Then Theorem \ref{thm: dimension growth app}
can be used to count integral points 
on $\sM$ which are located in
dilations $t\sR$ for any fixed region $\sR$
when $t\rightarrow \infty$,
compare 
\cite[Remark 7]{Huang rational points}. 
\end{remark}

\section{Outline of the Proof and Novelties}
\label{sec: outline}
Let $f\in \sH_d^{\mathbf{0}}(\bRn)$. Our analysis of the counting 
function in \eqref{eq; counting function hypersurfaces}
involves eight steps. 
The first step of introducing smooth 
cut-offs is rather standard. However, steps two through five introduce multiple ideas from harmonic analysis which are new in the context of counting rational points.
Steps two and three involve 
Knapp cap type considerations. These are inspired from examples 
often used in Fourier analysis 
to analyse the effect of curvature 
(or lack thereof) on estimates 
for restriction operators associated 
to frequencies arising from 
submanifolds; and for Radon-type operators associated to averages over such manifolds. 
The idea of rescaling in step four 
allows us to `zoom in' around the region where the curvature
vanishes (or blows up), 
by exploiting the homogeneity of the 
parametrizing function $f$. 
The next step is to employ a more 
elaborate version of stationary phase expansion involving higher order terms.
Steps six and seven build on the 
iterative method of 
\cite{Huang rational points} 
but involve several additional 
ideas to adapt it to the the present context. 
A key difference is that while in 
\cite{Huang rational points}, 
the class of surfaces is essentially 
preserved under duality, 
we need to simultaneously 
count rational points near 
two different kinds of hypersurfaces 
(the `flat' case and the `rough' case) 
for the induction to work. We also need to run the bootstrapping argument for each dyadic scale individually.

\subsubsection*{Step 1: Localisation and Smoothing} 
Since the curvature of $\sM$ vanishes (or blows up) only at the origin in $\bR^{n-1}$, we first localise to a small region around it. To this effect, for a small enough $\upsilon>0$ (to be specified later), let $\sB(\mathbf{0}, \upsilon)$ 
be the (Euclidean) ball of radius $\upsilon$ centred at $\bzero\in \bRn$. 
While $\nabla f$ 
might not be invertible on $\sB(\mathbf{0}, \upsilon)$, 
our assumption on the Hessian of $f$, 
combined with the homogeneity of $f$ 
implies that $\sB(\mathbf{0}, \upsilon)\setminus\{\bzero\}$ can be covered 
by a union of a finite number of 
domains (depending on $f$) such that 

1. Each domain can be expressed as the union of dyadic dilations of a sufficiently small ball contained in the annulus $\{\bx: \tfrac{\upsilon}{2}\leq \|\bx\|_2\leq \upsilon\}$ (see \S\ref{subsec local} for a detailed description).

2. $\nabla f$ is invertible 
when restricted to each of the aforementioned balls bounded away from the origin, and thus on each of the domains generated by them (using homogeneity).

We fix one such domain $\mathscr{D}_1$, generated by the dyadic dilations of a ball $\mathscr{B}$ of sufficiently small radius contained in the annulus $\{\bx: \upsilon/2\leq \|\bx\|_2\leq \upsilon \}$. Fixing a smooth bump function $\varrho$ supported in the said ball, we approximate the indicator function of $\mathscr{D}_1$
%$\mathscr{D}_1\subseteq \sB(\mathbf{0}, \upsilon)$ 
by the dyadic sum \index{varrho@$\varrho$: smooth and compactly supported function used to construct a 
suitable (dyadic) partition of unity of a conical region and designed so that the inveribility of 
$\nabla f: \bRn \rightarrow \bRn$
holds in a slight enlargement of
$\supp(\varrho)$ }
$\sum_{\ell\geq 1}\varrho (2^\ell\cdot)$.\index{ell@$\ell$: the dyadic power $2^{\ell}$
is used to partition $\bRn$ into the annuli 
$ \sA(2^{-(\ell+1)},2^{-\ell})$ 
%$\{\bx\in\bRn: \Vert \bx \Vert_2 \in [2^{-(\ell+1)},2^{-\ell})\}$
} Further, let $b,\om: \bR \rightarrow [0,1]$\index{bump@$\omega$: smooth and compactly supported function used to
track $q\in [Q,2Q)$}
\index{bump2@$b$: smooth and compactly supported function used to detect $\Vert q f(\ba/q)\Vert \leq \delta$}
be smooth and compactly supported functions approximating the indicator functions of the intervals $[1,2)$ and $[0,1/2]$ in
\eqref{eq; counting function hypersurfaces}. 
Our focus will be on obtaining estimates for
the smoothed counting function\index{smooth_count@$N_{f}^{\om,\varrho,b}(\delta,Q)$: smooth count of rational points 
near the graph of $f\in \sH_{d}^{d}(\bRn)$}
\begin{equation}\label{def: counting function}
N_{f}^{\om,\varrho,b}(\delta,Q)
:= \sum_{(q,\ba) \in \mathbb{Z}
\times \bZn}
\om\Big(\frac{q}{Q}\Big)
\sum_{\ell\geq 0}\varrho \Big(2^{\ell}
\frac{\mathbf{a}}{q}\Big)
b\Big(\frac{\Vert q 
f(\mathbf{a}/q)\Vert}{\delta}\Big).
\end{equation}
To switch seamlessly between smooth and sharp-cut
off functions, we tacitly require henceforth that
\begin{equation}\label{def: om partion of unity}
\ind_{[1,2^R]} (\vert x\vert )
\leq 
\sum_{0\leq r\leq R} \om\Big(\frac{x}{2^r}\Big)
\leq 
\ind_{[1/10,2^{R+1}]} (\vert x\vert)
\end{equation}
for all non-zero $x\in \bR$
and $R\geq 1$.
Such functions $\om$ are well-known, see Lemma \ref{lem: partition of unity}
in the appendix.

It is a routine task to deduce
Theorem \ref{thm: main} 
%and Theorem \ref{thm: main dual}
from the following estimate 
on the smooth counting function 
\eqref{def: counting function} (see Section \ref{app: smoothing} of the appendix for details).
We have
\begin{equation}
\label{eq: main smooth}
\Big(\delta +
\Big(\frac{\delta}{Q}\Big)^{
\frac{n-1}{d}}\Big)Q^{n}
\ll
N_{f}^{\om,\varrho,b}(\delta,Q) \ll
\delta Q^{n} +
\Big(\frac{\delta}{Q}\Big)^{
\frac{n-1}{d}}
Q^{n+k\varepsilon}
\,\, \mathrm{for}\,\, \delta \in 
(Q^{\varepsilon-1}, 1/2)  
\end{equation}
where $k>0$ is a constant only
depending on the dimension $n$.
The remaining implied constants 
depend exclusively
on $\varepsilon,f,n,d,\om,\varrho,b$ 
but not on $\delta$, or $Q$.
A more precise statement shall follow in Theorem \ref{thm: main smoothed}. For now, we suppress the dependence on the parameters $\om, \varrho$ and $b$.

The proof of the above estimate is
based on harmonic analysis
and driven 
by a curvature-informed decomposition.
Let
$$
\mathscr{D}_{d-1}:=\nabla f(\mathscr{D}_1),
$$ 
and let $$\widetilde{f}: \nabla f(\mathscr{D}_1)\to \mathscr{D}_1$$\index{dual_f@$\widetilde{f}$: the (Legendre) dual of $f$}
be the Legendre transform of $f$ (see \S\ref{subsec hom func duality} for details). It is not very hard to see that $\widetilde{f}$ is homogeneous of degree\index{Hoelderdual@$d'$: the H\"{o}lder dual of $d>1$ which is the degree of homogeneity of $\widetilde{f}$}
\begin{equation*}
    %\label{eq dual d}
    d':=(1-d^{-1})^{-1}=\frac{d}{d-1}.
\end{equation*}
In fact, we have
$$
\widetilde{f} \in 
\mathscr{H}_{d'}^{\mathbf{0}}(\mathscr{D}_{d-1}).
$$
Since $d'$ is the H\"{o}lder conjugate of $d$, exactly one of the relations
$$1<d'\leq 2\leq d<\infty$$ or $$1<d<2<d'<\infty$$ holds true. Without loss of generality, let us assume that
$$d\geq 2$$
and $d'\in (1,2]$.
As will be clear shortly, the main induction argument rests on a delicate connection between the rational point count for the
hypersurface immersed by $f$ and that for the (Legendre) dual
hypersurface immersed by $\widetilde{f}$.

The steps 3 through 7
need to be executed for both $f$
as well as its dual $\widetilde{f}$,
with various approximate replacements
of the weight functions $\varrho$.
(Here, and throughout, by a `weight function'
we mean a smooth, non-negative,
and compactly supported function.)
However, for ease of exposition,
we confine attention in the present
outline to the case
$f\in \mathscr{H}_{d}^{\mathbf{0}}(\bRn)$
with $d\geq 2$ (the `flat case') and 
keep $\varrho$ artificially fixed.
Roughly speaking
we (smoothly) decompose the hypersurface 
into a part which 
is ``too flat'' 
for harmonic analysis to be profitably used, 
and a part which is ``appropriately curved''.
The next step deals with 
the former.

\subsubsection*{Step 2: Pruning out Spatial Knapp Caps}
By our assumption on $f$,
the curvature of
the hypersurface immersed by 
$f$ vanishes only at the 
origin $\bzero \in \bRn$. 
In what neighborhood of $\bzero$ 
is it approximately flat at
the scale $\delta/Q$ 
within we are to locate the
rational points?
The uncertainty principle 
provides the following heuristic answer: it is only possible to detect curvature outside the box

\begin{equation*}
%\label{def: Knapp cap}
\sC(\delta,Q):=
\Big[-\Big(
\frac{\delta}{Q}\Big)^\frac{1}{d},
\Big(\frac{\delta}{Q}
\Big)^\frac{1}{d}\Big]^{n-1}
\end{equation*}
centred at the origin.
Indeed, for any $\bx\in \sC(\delta,Q)$, we have
$\Vert f(\bx)\Vert 
\leq \vert f(\bx)\vert \ll \delta/q$.
Thus the Knapp
cap given by $\{(\bx,f(\bx)): \bx\in \sC(\delta,Q)\}$
is fully contained in the box 
$ \sC(\delta,Q)\times [-\frac{\delta}{Q},
\frac{\delta}{Q}]$. We shall remove 
a tiny enlargement 
of the box $\sC(\delta,Q)$
to avoid the obstruction due to local flatness (the so-called Knapp obstruction). 
How many rationals 
do we find in this set? 
Thanks to the geometry 
of numbers,
see Section \ref{sec: lower bounds},
is not difficult to see 
that for each $q\asymp Q$,
the contribution is of size
$$ 
\#(\sC(\delta,Q) \cap \frac{1}{q}\bZn)
\asymp 
\frac{\mathrm{vol}
(\sC(\delta,Q))}{q^{-(n-1)}}
\asymp \Big(\frac{\delta}{Q}
\Big)^{\frac{n-1}{d}}Q^{n-1}
$$
provided the re-normalised volume 
$\sC(\delta,Q)/q^{-(n-1)}$ 
tends to infinity with $Q$, 
which essentially means that
$\delta \geq 
Q^{1-d+\varepsilon}$.
Summing up the 
re-normalised volumes over $q\asymp Q$
gives one of the two main terms, namely
$(\frac{\delta}{Q})^{\frac{n-1}{d}}Q^{n}$. This term accounts for the contribution to $N_f(\delta, Q)$ (as defined in \eqref{def: counting function}) coming from terms corresponding to sufficiently large values of $\ell$ (in terms of $\delta$ and $Q$).
The problem is now reduced to dealing with terms of the form\index{counting@$N_f(\delta,Q,\ell )$: 
smooth, spatially localised count}
$$
N_f(\delta,Q,\ell )
:=\sum_{(q,\ba) \in \mathbb{Z}
\times \bZn}
\om\Big(\frac{q}{Q}\Big)
\varrho \Big(2^{\ell}
\frac{\mathbf{a}}{q}\Big)
b\Big(\frac{\Vert q 
f(\mathbf{a}/q)\Vert}{\delta}\Big)
$$
when $\ell$ is not too large 
in terms of $\delta$ and $Q$.
\subsubsection*{Step 3: Knapp Caps 
in Frequency Domain}
Let $\widehat{b}$ 
be the Fourier transform of $b$.
By Poisson summation 
and partial integration, 
(see the proof of 
Lemma \ref{lem: truncated Poisson}), 
the function
$N_{f}(\delta,Q,\ell)$ 
is equal to the exponential sum
\begin{equation}\label{eq: heuristic expon sum shape}
\sum_{(q,\mathbf{a})\in \bZ\times \mathbb{Z}^{n-1}}
\om(\frac{q}{Q})
\varrho(2^\ell\frac{\mathbf{a}}{q}) 
\sum_{0\leq \vert j\vert \leq 
\frac{Q^\varepsilon}{\delta}} 
\delta
\widehat{b}(j\delta)
e(jqf(\mathbf{a}/q))
\end{equation}
aside from a tiny error.
The zero mode, $j=0$, 
gives rise to the 
``probabilistic main term''
of size $\delta Q^n$.

It might seem that since 
we already extracted the 
more subtle ``geometric main term''
$(\frac{\delta}{Q})^{\frac{n-1}{d}}Q^{n}$
in the previous step, 
all that is left to show is that the remaining terms
only contribute a negligible error term.
However, this initial impression 
turns out to be incorrect, for there exists
a `twin' of the Knapp cap
in Fourier space which needs to be removed 
as well. To this end,
we introduce another partition 
of unity in $j$
by summing up 
dyadic powers $\om(x/2^r)$
over $r\geq 0$ and
then remove contributions from small $j$ 
by another volume based argument. 
Only now can we actually hope to 
exploit significant cancellation 
in the exponential sums.
Via this removal process,
the term 
$(\frac{\delta}{Q})^{\frac{n-1}{d}}Q^{n}$
emerges once again.

At this stage, we have reduced matters 
to bounding\index{counting@$N_f(\delta,Q,\ell,r)$:
smooth count localised in physical and frequency space}
$$
N_f(\delta,Q,\ell,r):=
\sum_{(q,\mathbf{a})\in \bZ\times \mathbb{Z}^{n-1}}
\om(\frac{q}{Q})
\varrho(2^\ell\frac{\mathbf{a}}{q}) 
\sum_{ j \in \bZ}
\om(\frac{j}{2^r})
\delta
\widehat{b}(j\delta)
e(jqf(\frac{\mathbf{a}}{q}))
$$
when $\ell \geq 0$ is not too large
and $r$ is positive and not too small.

\subsubsection*{Step 4: Re-scaling}
For now we fix $q,j$.
By Poisson summation, the 
sum in $\ba$ in
\eqref{eq: heuristic expon sum shape} is converted into a sum of oscillatory integrals of the form
$$
\sum_{\bk\in\bZn}
\int_{\bRn}
\varrho\Big(\frac{2^\ell}{q}\bx\Big) 
e(jqf(\mathbf{x}/q) 
- \langle \bx ,\bk \rangle )
\rd \bx
$$
By a change of variables,
the inner integral above equals
$$
\Big(\frac{q}{2^{\ell}}\Big)^{n-1}
\int_{\bRn}
\varrho(\bx) 
e(qj[f(2^{-\ell} \mathbf{x}) 
- \langle 2^{-\ell}\bx ,j^{-1}\bk \rangle] )
\rd \bx.
$$
As $f$ is homogeneous of
degree $d$, we can rewrite the phase as
$$ qj[f(2^{-\ell} \mathbf{x}) 
- \langle 2^{-\ell}\bx ,j^{-1}\bk \rangle]=qj[2^{-\ell d} f(\mathbf{x}) 
- \langle 2^{-\ell}\bx ,j^{-1}\bk \rangle]=qj 2^{-\ell d}[ f(\mathbf{x}) 
- \langle 2^{\ell(d-\1)}\bx ,j^{-1}\bk \rangle].$$
Thus we have reduced matters to oscillatory integrals of the form
\begin{equation*}
%\label{def: oscillatory integral}
I(\lambda,\varrho,\phi):=\int_{\mathbb{R}^{n-1}}\,\varrho(\mathbf{x})\,e(\lambda\phi(\mathbf{x}))\,\mathrm{d}\mathbf{x}
\end{equation*}
where 
$$\lambda:=qj2^{-\ell d},\, \qquad \phi(\bx):= 
\phi(\bx,\bk,j,\ell):=
f(\mathbf{x}) 
- \langle \bx ,
2^{(\d1)\ell}j^{-1}\bk \rangle,$$
and the amplitude $\varrho$ is supported in a bounded set \textit{away} from the origin.
The re-scaling
and the previous pruning step ensure 
that we can access the theory
of oscillatory integrals since:\\
1.) the oscillatory parameter
$\lambda$
is at least a little 
large (in our case that means 
$\lambda:=jq2^{-\ell d} 
\geq Q^\varepsilon$), 
and \\
2.) there is 
uniformly bounded curvature,
i.e. the determinant of 
$H_\phi$ is uniformly bounded from above and below, and the phase function $\phi$ and its derivatives are uniformly bounded from above, in the support of $\varrho$.\\
Indeed, either
such an integral
$I(\lambda,\varrho,\phi)$
has a negligible contribution
(by the non-stationary phase principle)
or can be computed
(by the stationary 
phase principle) with a substantial decay in $\lambda$. In any case,
we get information 
that can be summed
over both $\bk$ and $j$.

\subsubsection*{Step 5: Multi-term stationary phase expansion and Enveloping}
A significant barrier that we need to work around
is that the 
oscillatory parameter $\lambda$ 
is allowed to be of size 
as small as $Q^\varepsilon$.
(For comparison, in \cite{Huang rational points}, 
it was essentially always true that $\lambda \geq Q$.)
As a result, in the worst case, we obtain very small decay and hence the size of the error term arising out of the 
standard one term stationary phase expansion (employed to great success in \cite{Huang rational points})
is still too large. To obtain an acceptable error term,
we employ a multi-term stationary phase expansion. 
As expected, the latter is significantly more technical. Further, since such an  expansion involves a family of differential operators acting on the weight $\varrho$, the resulting expression is no longer positive.
More precisely, up to a small error, $I(\lambda,\varrho,\phi)$
evaluates to a sum of
complex valued functions
$w_1,\ldots,w_t$,
arising from certain
complex-valued differential 
operators applied to $\varrho$,
times (in our case) the oscillatory
factor $e(qj\widetilde{f}(\bk/j))$.

As it happens, the positivity of the weight functions
is crucial for making certain enlargement
arguments, pertaining 
to sets of critical points 
being either empty 
or only having bounded many elements.
To restore 
the said positivity, we engineer
a technical device 
we call enveloping. This is the method of majorizing 
the absolute value of a smooth complex 
valued function by a positive smooth function on a slight
enlargement of its support.
It is important for us 
to have a good control over a finite number of multi-index
derivatives of the envelope 
in order to successfully execute 
Step 6. In principle, after each inductive step, we need to replace
the implicit weight function in the counting function \eqref{def: counting function}
by a new envelope and keep track of the sup-norm 
of each envelope in terms of the powers of the factor $Q^{-\varepsilon}$. For simplicity of the exposition
we ignore this point in what follows, postponing a more detailed and careful description to \S\ref{sec: envelope}.

\subsubsection*{Step 6: A Duality Principle}
In this step, by means of the stationary phase expansion,
we connect the rational point count
$N_f(\delta,Q,\ell,r)$ associated to a dyadic piece of the given hypersurface to the weighted 
count $N_{\widetilde{f}}(Q^{-1},
\delta^{-1},\ell,r)$ associated with the corresponding dyadic piece of the dual hypersurface.
Notice that the roles of $\delta$ and $Q$
have now been interchanged!
This kind of observation
was explicitly made
by Vaughan  and Velani 
\cite[p.105]{Vaughan Velani},
and also underpins
\cite{Huang rational points}. One may
think of this as a strong and special form 
of the uncertainty principle.
% We now give some pointers as to 
% how this interchange is verified.
Unlike applying partial summation, as
for instance Vaughan and Velani did, 
we shall use 
Poisson summation over $q$,
capitalising on
the presence of 
our smooth weight function $\om(q/Q)$. 
Thus we save a logarithmic factor over 
the partial summation argument
and more importantly,
we can readily discard any error terms
(which would be more difficult 
without the smoothing).
In a nutshell,
the $q$-summation, 
after taking absolute values,
is roughly just
$$
Q \cdot \ind_{[0,2^iQ^{-1}]}(\Vert 
j\widetilde{f}(\bk/j)\Vert)
$$
summed over those $i$ which are small 
in terms of $Q$.
We estimate this quantity in terms of
$N_{\widetilde{f}}(Q^{-1},
\delta^{-1},\ell,r)$
by summing over $r$.
The rapid decay of 
$\widehat{b}(\delta j)$
ensures that the frequencies 
$j\asymp \delta^{-1}$ contribute
the lion's share. Similarly,
$i=0$ contributes the major part
because of the rapid decay of 
$\widehat \om$.
So, heuristically one may take
$i=0$ and $j = \delta^{-1}$.

A non-trivial complication
arises because of contributions to
$N_{\widetilde{f}}(Q^{-1},
\delta^{-1},\ell,r)$ arising out of local Knapp-cap obstructions in certain ranges of the dyadic 
parameters $(r,\ell,i)$.
In order to handle 
these contributions, 
we need to use the geometry of the relevant
regions to relate $Q$ and $\delta$
with $(r,\ell,i)$.
It takes a few pages of estimations before
these ranges can be handled.

\subsubsection*{Step 7: Bootstrapping at Several Dyadic Scales:}
After deriving a useful 
relation between $N_f(\delta,Q,\ell,r)$ 
and $N_{\widetilde{f}}(Q^{-1},
\delta^{-1},\ell,r)$ in the last step,
we start to develop a  
bootstrapping procedure, inspired by the
seminal work \cite{Huang rational points}. 
One of the novelties of our argument is a delicate interplay between rational point counts for dual manifolds \textit{of different nature}. For 
instance, in \cite{Huang rational points}, the functions $f$ and $\widetilde{f}$
are both in the same class. This happens in our set-up only when $d=2$. For all other values of $d$, we need to keep alternating between estimating rational points for the flat manifold and its rough dual throughout our induction, since these estimates feed on each other. 

Moreover, the presence of the additional geometric term (arising from the Knapp-cap) makes the induction on scales of $\delta$ more intricate. Further, the entire bootstrapping procedure needs to run at several spatial dyadic scales (corresponding to different values of $\ell$) in parallel. Indeed, the range of $\ell$ we consider has a delicate dependence on both $\delta$ and $Q$. Consequently, any induction on scales type argument involving $\delta$ has an effect on the range of $\ell$ for which the induction hypothesis remains valid. For the spatial scales where we cannot use the hypothesis, we can do no better than a trivial volume estimate which then needs to be handled efficiently. 

Finally, our arguments here are sensitive 
to the relative size of $d$ (the degree of homogeneity) and $n-1$ (the dimension of the hypersurface). In particular, for $d\geq 2(n-1)$, our bootstrapping argument cannot improve the error term
beyond the geometric main term evaluated at the scale 
$\delta \approx Q^{-1}$.
The reason is that
there are genuinely that many rational 
points close to the hypersurface in this regime,
as can be shown by a separate argument;
see Section \ref{sec: lower bounds}.

\subsubsection*{Step 8: Obtaining Upper Bounds}
Once we obtain sharp bounds on 
each dyadic piece $N_{f}(\delta,
Q,\ell)$ (and $N_{\widetilde{f}}(\delta,
Q,\ell)$), all that remains is to sum up the terms
over $\ell$ and add the various bounds (such as those obtained from pruning) together. 
This is rather straightforward.

\subsection{Structure of the Manuscript}
In the following, the symbol $\S$
abbreviates the word section/subsection.
\begin{itemize}
\item Section \ref{sec: preliminaries} contains a detailed treatment of Step 1 (\S\S\ref{subsec local}-\ref{subsec smooth}).  This is followed by the statement of Theorem \ref{thm: main smoothed} which is a smoothed and more general version of Theorem \ref{thm: main}. This section also contains facts about oscillatory integrals (\S\ref{subsec: oscillatory}) and geometric summation (\S \ref{subsec: geometric sum}) which play a crucial role in the arguments to follow.
\item Section \ref{sec: pruning} contains the pruning Steps 2 and 3: in spatial (\S \ref{subsec: prun phys}) and frequency (\S \ref{subsec: prun freq}) domains.
\item Section \ref{sec: counting rational points to oscillatory} expands on Step 4, 
reducing the counting problem to obtaining estimates on
certain oscillatory integrals. The latter are handled by dividing the analysis into three regimes: non-stationary phase analysis, an intermediate regime and the (most critical) stationary phase regime. The last two regimes employ a multi-term stationary phase asymptotic expansion.
\item Section \ref{sec: envelope} carries out Steps 5 and 6. The device of enveloping for weights is introduced in \S\ref{subsec: envelope}. This is combined with the stationary phase analysis (from before) to deduce a duality principle in \S\ref{subsec: duality} (connecting the oscillatory integrals to counting rational 
points for a dual hypersurface). \S\ref{subsec strange terms} contains a few technical estimates to handle terms arising out of local Knapp-cap obstructions in certain ranges of the dyadic parameters.
\item Section \ref{sec: bootstrapping} carries out Step 7 to establish a bootstrapping argument utilising the duality between the flat hypersurface and its rough dual (or vice versa). The cases when $d\leq n-1$ and $d\geq n-1$ need to be handled separately, in \S\ref{subsec: d small} and \S\ref{subsec: d large} respectively.
\item Section \ref{sec: upper bounds} utilises the arguments above to carry out the induction and obtain the upper bounds in Theorem \ref{thm: main smoothed} (Step 8).
\item Section \ref{sec: lower bounds} proves the lower bounds in Theorem \ref{thm: main smoothed} by using lattice point counting techniques.
\item Section \ref{sec: final remarks} contains some closing remarks, discussion of related open problems and acknowledgement.
\item The appendices contain a brief discussion about standard arguments used throughout the manuscript: from harmonic analysis (\S\ref{app: smoothing}), Diophantine approximation (\S\ref{app: Dio}) and arithmetic geometry (\S\ref{app: dimension growth}).
\end{itemize}

\subsection{Notation}
Generally, a bold-faced letter 
denotes a vector and sets 
(aside from specific standard sets like $\mathbb{R}$) are written in a script-font. As explained before, 
$\ind$ denotes the indicator function of 
the set specified in its subscript.

Bachmann--Landau notation is used
in the usual way. 
For functions $A, B: \sX\to\mathbb{C}$
with $\sX \subseteq \mathbb{R}^{n-1}$, we shall use the Vinogradov notation $A\ll B$ to mean that $|A(\bx)|\leq C|B(\bx)|$ 
is true all $\bx \in \sX$ for some positive constant $C>1$ which might depend on $n, d$, $\varepsilon$ and $f$  from Theorem \ref{thm: main}. We shall write $A\asymp B$ to denote that $A\ll B$ and $B\ll A$.
(In harmonic analysis, the symbols
$\lesssim$ and $\gtrsim$ are used 
in place of the Vinogradov symbols.)
Any other dependency 
will be indicated 
by an appropriate subscript. 

Given $\mathscr{X}\subseteq\mathbb{R}^{n-1}$
and $\mathscr{Y}\subseteq\mathbb{R}^{m}$, 
we denote by $$ 
\mathscr{C}_{c}^{\infty}
(\mathscr{X},\mathscr{Y})
$$
the collection of 
all smooth functions 
$s:\mathscr{X}\rightarrow\mathscr{Y}$
whose support is compact.  
For a smooth function 
$g:\mathscr{X}\rightarrow\mathbb{R}$,
with $\mathscr{X}\subseteq\mathbb{R}^{n-1}$, 
we define the canonical
$C^{t}(\mathscr{X})$-norm by
\begin{equation}\label{def: C^k norm}
\Vert g\Vert_{C^{t}(\mathscr{X})}:=\max_{0\leq\vert\boldsymbol{\alpha}\vert\leq t}\sup_{\mathbf{x}\in\mathscr{X}}\vert g^{(\boldsymbol{\alpha})}(\mathbf{x})\vert
\end{equation}
where the derivative $g^{(\boldsymbol{\alpha})}$ is with respect to
the multi-index derivative $\boldsymbol{\alpha}:=(\alpha_{1},\ldots,\alpha_{m})\in\mathbb{Z}_{\geq0}^{m}$
of length 
\[
\vert\boldsymbol{\alpha}\vert:=\sum_{1\leq i\leq m}\alpha_{i}.
\]
We follow the convention that 
$g^{(\bzero)}=g$.
When $\sX = \bR^{n-1}$ we simplify notation
by letting
$$ \Vert g\Vert_{C^{t}}
:= \Vert g\Vert_{C^{t}(\mathscr{X})}.
$$

The Euclidean norm 
$$\Vert(x_{1},\ldots,x_{m})
\Vert_{2}:=(\vert x_{1}\vert^{2}
+\ldots+\vert 
x_{m}\vert^{2})^{1/2}
$$
should not be confused 
with the distance to 
the nearest integer 
$\Vert\cdot\Vert
=\mathrm{dist}(\cdot,\bZ)$.

An open (Euclidean) ball 
in $\bRn$ with radius $r>0$
and centre $\bx$ will be denoted by $\sB(\bx,r)$. 
% more precisely
% $$
% \sB(\bx,r):=\{\by\in \bRn: 
% \Vert \by - \bx \Vert_2 < r \}.
% $$
The unit ball $\sB(\bzero,1)$ shall be denoted by $\sU$. For $0<r<R$, we define
\index{annulus@$\sA(r,R)$: the open (Euclidean) annulus
with inner radius $r$ and outer radius $R$}
$$
\sA(r,R):=\{\by\in \bRn: 
r<\Vert \by \Vert_2 < R \}
$$
to be the open (Euclidean) annulus
in $\bRn$ centred at the origin, with inner radius $r$ and outer radius $R$. Further, the closures of these sets shall be denoted by $\overline{\sB}(\bx,r), \overline{\sU}$ and $\overline{\sA}(r,R)$ respectively.

For a set 
$\mathscr{S}\subseteq\mathbb{R}^{n}$, $\partial\mathscr{S}$ 
shall denote its (Euclidean) boundary. 
For a compact set 
$\mathscr{S}\subseteq\mathbb{R}^{n-1}$ and a point $\mathbf{x}\in \mathbb{R}^{n-1}$, we define
\[
\mathrm{dist}(\mathbf{x},\mathscr{S}):=\min\{\left\Vert \mathbf{x}-\mathbf{s}\right\Vert _{2}:\mathbf{s}\in\mathscr{S}\}
\]
to be the (Euclidean) distance of 
$\mathbf{x}$ 
from $\mathscr{S}$.

Given a smooth function $w$ on $\bR^{n-1}$ and $\ell\in\mathbb{Z}_{\geq 0}$, we define\index{localisation@$w_{\ell}(\cdot)=  w(2^{\ell}\cdot)$: localisation 
of a smooth function $w$ to the scale $2^{\ell}$}
\[
w_{\ell}(\mathbf{x}):= w(2^{\ell}\mathbf{x}).
\]

\section{Preliminaries}\label{sec: preliminaries}
This section gathers 
reduction steps and facts
which are sometimes considered standard 
in the higher echelon. 
However, it stands to reason
that not every such step and fact 
is trivial for a general reader.
Therefore it is worthwhile 
to lay them out in the present section. 
%\subsection{Smoothing and Localisation}
\subsection{Localisation}
\label{subsec local}
Let
$f\in\sH_d^{\mathbf{0}}(\bRn)$ with $d>1$. 
As the determinant 
of $H_f$ vanishes or blows up only at the origin, there exists a $\upsilon>0$ and
constants $c_f, C_f>0$ such that
\begin{equation*}
    \label{eq pre Hess cond}
    2{c_f}<|\det\, H_{f}(\mathbf{x})|<\frac{C_f}{2}
\end{equation*}
for all $\mathbf{x}$ 
contained in the closure of the annulus 
$\sA(\frac{\upsilon}{2},\upsilon)$.
% Here $\upsilon>0$ is a small 
% enough constant as mentioned in 
% Step 1 of \S\ref{sec: outline}.
By the Inverse Function Theorem, 
we conclude that for every such $\bx$, 
there exists a ball 
$\sB(\bx,\epsilon_\bx) 
\subset \sA(\frac{\upsilon}{3},
\frac{4}{3}\upsilon)$ 
such that $$
\nabla f: \sB(\bx,\epsilon_\bx) 
\to \nabla f(\sB(\bx,\epsilon_\bx))
$$ is a diffeomorphism, and
\begin{equation}
    \label{eq Hess cond}
    c_f<|\det\, H_{f}(\mathbf{z})|<C_f
\end{equation}
for all $\mathbf{z}\in \sB(\bx,\epsilon_\bx)$. Naturally $\epsilon_{\bx} < \upsilon$. 
The collection 
$$\{\mathscr{B}(\bx, \tfrac{\epsilon_{\bx}}8): \tfrac{\upsilon}{2}\leq \|\bx\|_2\leq \upsilon\}$$
forms an open cover of $\overline{\mathscr{A}}(\tfrac{\upsilon}{2}, \upsilon)$. Using compactness, we can extract a finite subcollection of balls, associated to points $\bx_1, \ldots, \bx_I$ such that 
$${\mathscr{A}(\tfrac{\upsilon}{2}, \upsilon)}\subseteq \bigcup_{i=1}^I\mathscr{B}(\bx_i, \tfrac{\epsilon_{\bx_i}}8).$$
By scaling, we also have
$$
\sB(\bzero, \upsilon)\setminus\{{\bzero}\}
\subseteq \bigcup_{i=1}^I\bigcup_{\ell \geq 0}
2^{-\ell} \mathscr{B}(\bx_i, 
\tfrac{\epsilon_{\bx_i}}{8}).
$$
The number $I$ above depends only on $f$. Hence, without loss of generality, we shall fix one such ball, say $\mathscr{B}(\bx_1, \tfrac{\epsilon_{\bx_1}}8)$ and focus our attention on the domain
$$\mathscr{D}_{\1}:=\bigcup_{\ell \geq 0}2^{-\ell}\mathscr{B}(\bx_\1, \tfrac{\epsilon_{\bx_\1}}8).$$

Due to the fact that 
$\nabla f$ is homogeneous of degree 
$d-1$, it follows that 
$\nabla f: \mathscr{D}_{\1}
\to \nabla f(\mathscr{D}_{\1})$ 
is a diffeomorphism. 
It can be readily verified 
that the inverse map 
$(\nabla f)^{-1}$ is a 
homogeneous function 
of degree $\frac{1}{d-1}$ 
on $$\mathscr{D}_{\d1}:=\nabla f(\mathscr{D}_{\1}).$$
Using homogeneity, 
this set can be expressed as a union 
of dyadic scalings of \\
$\nabla f(\sB(\bx_{\1},\epsilon_{\bx_{\1}}) )$. Indeed
$$\mathscr{D}_{\d1}=
\bigcup_{\ell \geq 0}
\nabla f(2^{-\ell}\mathscr{B}(\bx_{\1}, 
\tfrac{\epsilon_{\bx_{\1}}}8))=
\bigcup_{\ell \geq 0}
2^{-\ell(\d1)}\nabla 
f(\mathscr{B}(\bx_{\1}, \tfrac{\epsilon_{\bx_{\1}}}8)).$$
 
Next, we use a smooth dyadic sum 
of bump functions to approximate 
the characteristic function of $\mathscr{D}_1$. 
For technical reasons, 
we need to be careful
with the support of these functions. At
every step outlined in the introduction, we need 
to ensure that there is a 
small amount of room 
for the support of the said 
functions to be expanded later-on, 
while still being contained 
inside a region where
$\nabla f$ is invertible. 
\begin{definition}
    Let $\eta>0$. We say that a smooth non-negative function $w$ is an \emph{$(f,\eta)$ admissible weight function} if $\nabla f$ is invertible 
on the $\eta$-thickening of the support of $w$, that is, on the set given by
$$\{\bx\in \bRn: 
\mathrm{dist}(\bx,\supp(w))\leq \eta \}.$$

\end{definition}
% Given a positive $\eta<\frac{\epsilon_{\bx_1}}{4}<\frac{\upsilon}{8}$, 
We introduce 
a bump function 
$\varrho: \bR^{n-1}\to [0,1]$ such that
\begin{equation}
    \label{eq varrho prop}
    \supp\,{\varrho}\subseteq \mathscr{B}(\bx_{\1},\tfrac{\epsilon_{\bx_{\1}}}4),\, \qquad \varrho\equiv 1 \,\text{ on }\, \mathscr{B}(\bx_{\1},\tfrac{\epsilon_{\bx_{\1}}}8).
\end{equation}

Clearly $\varrho$ is $(f, \tfrac{3\epsilon_{\bx_{\1}}}{4})$ admissible. Moreover, if $\epsilon_{\bx_{\1}}$ (or $\upsilon$) is chosen sufficiently small, we have

\begin{equation}
    \label{eq varrho pou}
    \sum_{\ell\geq 0}\varrho (2^{\ell}\cdot\bz)\geq 1
\,\, \mathrm{for\, all}
\,\,
\bz\in \mathscr{D}_1.
\end{equation}

Observe that the $\sum_{\ell\geq 0}
\varrho(2^\ell\cdot)$ is supported in the set
$$\bigcup_{\ell \geq 0}
2^{-\ell}\mathscr{B}(\bx_1, \tfrac{\epsilon_{\bx_{\1}}}4).$$

% We recall from \eqref{def: counting function} 
% the smooth counting function
% \begin{equation*}
% N_{f}^{\om,\varrho,b}(\delta,Q)
% =\sum_{(q,\ba) \in \mathbb{Z}
% \times \bZn}
% \om\Big(\frac{q}{Q}\Big)
% \sum_{\ell\geq 0}\varrho \Big(2^{\ell}
% \frac{\mathbf{a}}{q}\Big)
% b\Big(\frac{\Vert q 
% f(\mathbf{a}/q)\Vert}{\delta}\Big).
% \end{equation*}

\subsection{Homogeneity and Legendre Duality}
%\label{subsec: homog}
\label{subsec hom func duality} 

As discussed in the last subsection,
$\nabla f: \mathscr{D}_1\to 
\mathscr{D}_{d-1}=\nabla f(\mathscr{D}_1)$ 
is a diffeomorphism. 
We can thus define the Legendre dual of 
$f$ on $\mathscr{D}_{d-1}$ as follows 
\begin{equation}
\label{def Legendre dual}
\widetilde{f}(\mathbf{y}):=\mathbf{y}\cdot(\nabla f)^{-1}(\mathbf{y})-(f\circ(\nabla f)^{-1})(\mathbf{y}).
\end{equation}
 It can be checked that $\tilde{f}$ is homogeneous of degree $\frac{d}{d-1}$. Its gradient $\nabla \tilde{f}$ is smooth away from the origin, and is homogeneous of degree $\frac{1}{d-1}$. In fact, we have
\[\nabla \tilde{f}(\mathbf{y})=(\nabla f)^{-1}(\mathbf{y})\]
for all $\mathbf{y}\in\mathscr{D}_{d-1}$ and 
\begin{equation}
    \label{eq dual Hess}
    H_{\tilde{f}}(\by)=H_f({\bx})^{-1}, \quad \text{whenever } \by=\nabla f(\bx)\neq \mathbf{0}.
\end{equation}
In fact, substituting $\by=\nabla f(\bx)$ in \eqref{def Legendre dual}, we get
\begin{equation}
    \label{eq Ldual2}
    \tilde{f}(\by)=\by\cdot\bx-f(\bx).
\end{equation}
On account of \eqref{eq Hess cond} and \eqref{eq dual Hess}, we conclude that
\begin{equation*}
    %\label{eq dual Hess cond}
    C_f^{-1}<|\det\, H_{\tilde{f}}(\mathbf{y})|<c_f^{-1}
\end{equation*}
for all $\by\in \nabla 
f(\mathscr{B}(\bx_1, \tfrac{\epsilon_{\bx_{\1}}}8)).$
We are now ready to define the so-called dual manifold $$\mathscr{M}(\Tilde{f}):=\{(\mathbf{y}, \tilde{f}(\mathbf{y})): \mathbf{y}\in \mathscr{D}_{d-1}\}.$$
\begin{remark}
The basis of the proof of Theorem \ref{thm: main} is an induction argument, in which we keep switching between dyadic pieces of manifolds described by $f$ and its dual $\tilde{f}$. We need to establish the corresponding estimates for both the functions individually.
%All the estimates in the following sections need to be established in both the cases. 
In order to avoid ambiguity and emphasize the role of the degree of homogeneity in our arguments, we shall denote $f$ by $f_\1$ and $\tilde{f}$ by $f_{\d1}$, so that $f_\p$ is homogeneous of degree $\frac{d}{\p}$.
\end{remark}
\begin{remark}
\label{rem self duality}   
For $\by=\nabla f_\1(\bx)$,
Equation \eqref{eq Ldual2} reads
$$
    f_{\d1}(\by)=\tilde{f_\1}(\by)=\by\cdot\bx-f_\1(\bx).
$$
One can repeat the entire argument above for $f_{\d1}$ instead of $f_{\1}$ and thus see that
$$\widetilde{(f_{\d1})}=\tilde{\tilde{f_{\1}}}=f_{\1}.$$
In other words, the operation of taking Legendre dual is an involution. Moreover, since $f_{\1}$ is smooth away from the origin, so is $f_{\d1}$. 
\end{remark}

Henceforth, we shall assume that $f_1$ is homogeneous of degree
\begin{equation}
    \label{eq d geq 2}
    d\geq 2\,,
\end{equation}
and that $f_{d-1}$ is homogeneous of degree $\frac{d}{d-1}\in (1, 2]$. The complementary case can be handled by switching the roles of $f_1$ and its Legendre dual.
%(see Remark \ref{rem self duality} for a brief explanation for this assumption).
% Since we shall keep track of estimates for both $f_\1$ and $f_{\d1}$ in all of the following sections, assumption \eqref{eq d geq 2} indeed causes no loss of generality.

\subsection{Duality of weights} 
\label{subsec dual weight} 
We need to divide the respective domains into dyadic regions based on the ``size of the curvature'' of $f_\1$ and $f_{\d1}$. We have already introduced a family of smooth bump functions associated to such a partition on $\mathscr{D}_{\1}$ in \S\ref{subsec local}. The pushforward of these bump functions gives rise to a corresponding partition on $\mathscr{D}_{\d1}$. An interesting point is that the sizes of these dyadic regions on $\mathscr{D}_{\d1}$ are determined by the homogeneity of $f_{\d1}$. We now present the details.

Let $\varrho$ be the smooth bump function introduced in \S\ref{subsec local}, which satisfies \eqref{eq varrho prop}-- \eqref{eq varrho pou} and is $(f_1,\tfrac{3\epsilon_{\bx_{\1}}}4)$ admissible. %supported in the ball $\sB(\bx_1, \tfrac{\epsilon_{\bx_\1}}{4})$  
We define its dual $\varrho^{[\d1]}:\mathbb{R}^{n-1}\to \bR_{\geq 0}$
to be 
\begin{equation*}
    %\label{eq rho pushf}
    \varrho^{[\d1]}(\by)=\varrho((\nabla f_\1)^{-1}(\by))=\varrho((\nabla f_\d1)(\by)).
\end{equation*}
The function $\varrho^{[\d1]}$ is $(f_{d-1}, C\epsilon_{\mathbf{x}_1})$ admissible (where $C$ depends only on $f_\1$), and satisfies the properties
\begin{equation*}
    %\label{eq varrho prop}
    \supp\,{\varrho^{[\d1]}}\subseteq \nabla 
f_\1(\mathscr{B}(\bx_{\1},\tfrac{\epsilon_{\bx_{\1}}}4)),\, \qquad \varrho^{[\d1]}\equiv 1 \,\text{ on }\, \nabla 
f_\1(\mathscr{B}(\bx_{\1},\tfrac{\epsilon_{\bx_{\1}}}8)).
\end{equation*}
% $\varrho^{[\d1]}$ is supported in the set
% $$\nabla 
% f_\1(\mathscr{B}(\bx_{\1}, \tfrac{\epsilon_{\bx_{\1}}}4))$$
% and identically equal to one on the set 
% $$\nabla 
% f_\1(\mathscr{B}(\bx_{\1}, \tfrac{\epsilon_{\bx_{\1}}}8)).$$
Observe that the map $$\by\mapsto \sum_{\ell\geq 0}\varrho^{[\d1]}(2^{(d-1)\ell}\by)$$ is supported in the set
$$
\bigcup_{\ell \geq 0}
2^{-\ell(\d1)}\nabla 
f_\1(\mathscr{B}(\bx_{\1}, \tfrac{\epsilon_{\bx_{\1}}}4))=\bigcup_{\ell \geq 0}
\nabla f_{\1}(2^{-\ell}\mathscr{B}(\bx_{\1}, 
\tfrac{\epsilon_{\bx_{\1}}}4)).$$
Moreover, for all $$\by\in \sD_{\d1}:=
\bigcup_{\ell \geq 0}
\nabla f_\1(2^{-\ell}\mathscr{B}(\bx_{\1}, 
\tfrac{\epsilon_{\bx_{\1}}}8))=
\bigcup_{\ell \geq 0}
2^{-\ell(\d1)}\nabla 
f_\1(\mathscr{B}(\bx_{\1}, \tfrac{\epsilon_{\bx_{\1}}}8)),$$ we have
\begin{equation*}
    \sum_{\ell\geq 0}\varrho^{[\d1]} (2^{(\d1)\ell}\cdot\by)
=\sum_{\ell\geq 0}\varrho((\nabla f_1)^{-1}(2^{(\d1)\ell}\by))=
\sum_{\ell \geq 0}\varrho(2^{\ell}(\nabla f_1)^{-1}(\by))\geq 1.
\end{equation*}

\subsection{Smooth Counting Functions}
\label{subsec smooth}
Let $b,\om: \bR \rightarrow [0,1]$
be smooth and compactly supported functions, with $\om$ satisfying \eqref{def: om partion of unity}. Let $$b_{\delta}(t):=b(t/\delta).$$ Further, for $\ell\in\mathbb{Z}_{\geq 0}$, we recall our notation
\[
\varrho_{\ell}^{[1]}(\mathbf{z}):= \varrho^{[1]}(2^{\ell}\mathbf{z}),\qquad \varrho_{\ell}^{[\d1]}(\mathbf{z}):= \varrho^{[1]}(2^{\ell(d-1)}\mathbf{z}).
\]
Using the weights above, we define the smoothed counting function 
%from \eqref{eq: main smooth}
\begin{equation*}
N_{f_1}^{\om,\varrho^{[\1]},b}(\delta,Q)
:= \sum_{(q,\ba) \in \mathbb{Z}
\times \bZn}
\om\Big(\frac{q}{Q}\Big)
\sum_{\ell\geq 0}\varrho^{[\1]}_\ell \Big(
\frac{\mathbf{a}}{q}\Big)
b_{\delta}\Big({\Vert q 
f_\1(\mathbf{a}/q)\Vert}\Big),
\end{equation*}
and its dual
\begin{equation*}
N_{f_{\d1}}^{\om,\varrho^{[\d1]},b}(\delta,Q)
:=\sum_{(q,\ba) \in \mathbb{Z}
\times \bZn}
\om\Big(\frac{q}{Q}\Big)
\sum_{\ell\geq 0}\varrho^{[\d1]}_{(\d1)\ell} \Big(
\frac{\mathbf{a}}{q}\Big)
b_{\delta}\Big({\Vert q 
f_{\d1}(\mathbf{a}/q)\Vert}\Big).  
\end{equation*}

We are now poised to state the smooth version of Theorem \ref{thm: main}. 
\begin{thm}
\label{thm: main smoothed}
Let $\varepsilon>0$ and $d\geq 2$
be real numbers, $n\geq 3$ 
be an integer and $b,\om: \bR \rightarrow [0,1]$ be smooth and compactly supported functions, with $\supp(b) \subseteq (-1/3,4/3)$ and $\om$ satisfying \eqref{def: om partion of unity}. Let $f_{\1}\in \mathscr{H}_d^{\mathbf{0}}(\bR^{n-1})$ and suppose $\varrho^{[\1]}: \bR^{n-1} \rightarrow [0,1]$ is a smooth compactly supported function satisfying \eqref{eq varrho prop} and \eqref{eq varrho pou}. 
%(with respect to $f_\1$). 
Finally, let $f_{\d1}$ be the Legendre dual of $f_\1$ (defined locally as above) and $\varrho^{[\d1]}$ be the dual of $\varrho^{[\1]}$ under $\nabla f_{\1}$. Then
$$
\delta Q^{n} +
\Big(\frac{\delta}{Q}\Big)^{
\frac{n-1}{d}}
Q^{n}
\ll
N_{f_\1}^{\om,\varrho^{[\1]},b}(\delta,Q) 
\ll \delta Q^{n} +
\Big(\frac{\delta}{Q}\Big)^{
\frac{n-1}{d}}
Q^{n+k\varepsilon}
\,\, \mathrm{for}\,\, \delta \in 
(Q^{\varepsilon-1}, 1/2),
$$
where $k>0$ is a constant depending 
on $n$ only, and
$$
\delta Q^{n} 
\ll
N_{f_\d1}^{\om,\varrho^{[\d1]},b}(\delta,Q) \ll 
\delta Q^{n} +
% \Big(\frac{\delta}{Q}\Big)^{
% \frac{n-1}{d}}
Q^{n - \frac{2(n-1)}{d} +k\varepsilon}
\,\, \mathrm{for}\,\, \delta \in 
(Q^{\varepsilon-1}, 1/2).
$$
The implied constants depend exclusively
on $\varepsilon,f_1,n,d,\om,\varrho,b$ 
but not on $\delta$, or $Q$.
\end{thm}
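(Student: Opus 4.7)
The plan is to follow the eight-step strategy already sketched in Section \ref{sec: outline}, running the argument \emph{simultaneously} for both $N_{f_1}^{\om,\varrho^{[1]},b}(\delta,Q)$ and $N_{f_{d-1}}^{\om,\varrho^{[d-1]},b}(\delta,Q)$, since the Legendre duality $\widetilde{f_1}=f_{d-1}$ forces the estimates to feed into one another. First I would decompose each smoothed counting function into the spatial dyadic pieces indexed by $\ell\geq 0$ supplied by $\varrho_\ell^{[1]}$ (respectively $\varrho^{[d-1]}_{(d-1)\ell}$), using \eqref{eq varrho pou}. The large-$\ell$ tail, corresponding to $\mathbf{a}/q$ inside the Knapp cap $\sC(\delta,Q)$, is treated by a direct lattice count via the geometry-of-numbers argument already advertised for Section~\ref{sec: lower bounds}; this isolates the geometric term $(\delta/Q)^{(n-1)/d}Q^n$ as both an upper and a lower bound contribution (for the flat case $f_1$; for the rough dual $f_{d-1}$ an analogous but dimensionally smaller contribution emerges, producing the $Q^{n-2(n-1)/d+k\varepsilon}$ error).

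For the remaining moderate-$\ell$ regime I would apply Poisson summation in the nearest-integer variable (introducing a frequency dual variable $j$), truncated using the decay of $\widehat b$, to rewrite the piece $N_{f_\p}(\delta,Q,\ell)$ as an exponential sum of the form \eqref{eq: heuristic expon sum shape}. The $j=0$ term provides the probabilistic main term $\delta Q^n$. After another dyadic decomposition in $|j|\sim 2^r$ one prunes out the frequency-side Knapp cap (Step~3), again producing a $(\delta/Q)^{(n-1)/d}Q^n$ contribution. On the remaining piece $N_{f_\p}(\delta,Q,\ell,r)$ I would Poisson-sum in $\ba$ and rescale by the dyadic factor $2^{-\ell}$, using the homogeneity of $f_\p$, to reduce matters to the oscillatory integral
\[
I(\lambda,\varrho,\phi)=\int_{\bR^{n-1}}\varrho(\bx)\,e(\lambda\phi(\bx))\,\rd\bx,\qquad \lambda=qj\,2^{-\ell d},
\]
with phase $\phi(\bx)=f_\p(\bx)-\langle \bx,2^{(d-1)\ell}j^{-1}\bk\rangle$, on which $\det H_\phi=\det H_{f_\p}$ is uniformly bounded above and below (by \eqref{eq Hess cond} and the admissibility of $\varrho$). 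Non-stationary phase kills the contribution when there is no critical point; when a critical point $\bx_0$ exists it satisfies $\nabla f_\p(\bx_0)=2^{(d-1)\ell}j^{-1}\bk$, and by \eqref{eq Ldual2} the resulting phase equals $qj\,\widetilde{f_\p}(\bk/j)=qj f_{d/\p'}(\bk/j)$—this is precisely the duality that converts the asymptotic of $I(\lambda,\varrho,\phi)$ into a rational-point count for the dual surface.

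The main technical delicacy enters here: because $\lambda$ can be as small as $Q^\varepsilon$, a one-term stationary phase expansion (as in \cite{Huang rational points}) yields too weak an error. I would therefore apply a long stationary phase asymptotic expansion. The resulting ``weights'' $w_1,\ldots,w_t$ are complex-valued, so the enveloping device of \S\ref{sec: envelope} has to be invoked to majorise each $|w_s|$ by a smooth, positive, $(f_{d/\p'},\eta)$-admissible weight with controlled $C^t$-norm—this is what makes it possible to iterate the procedure. Summing over $q$ by Poisson summation (exploiting the smooth cut-off $\om(q/Q)$) then reduces, up to rapidly decaying tails in a geometric parameter $i$, to a bound of the shape
\[
N_{f_\p}(\delta,Q,\ell,r)\;\ll\;(\text{geometric/volume terms})\;+\;(\text{power of }\lambda)\cdot N_{f_{d/\p'}}(Q^{-1},\delta^{-1},\ell',r'),
\]
interchanging the roles of $\delta$ and $Q$. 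Handling the residual Knapp-cap-type terms in the ranges where this duality is not directly applicable is the content of \S\ref{subsec strange terms}.

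Finally I would run the bootstrap of \S\ref{sec: bootstrapping} in parallel at each spatial scale $\ell$ and across the two classes $\sH_d^{\bzero}$ and $\sH_{d/(d-1)}^{\bzero}$: an estimate for $N_{f_1}$ at scale $(\delta,Q)$ yields, via the duality relation, a feed into the bound for $N_{f_{d-1}}$ at scale $(Q^{-1},\delta^{-1})$, and vice versa, at each step improving the exponent until it saturates at the main term $\delta Q^n+(\delta/Q)^{(n-1)/d}Q^n$ (respectively $\delta Q^n+Q^{n-2(n-1)/d}$). The cases $d\leq n-1$ and $d\geq n-1$ are treated separately (\S\ref{subsec: d small} and \S\ref{subsec: d large}) because the balance between $\delta Q^n$ and the geometric term changes. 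Summing the per-scale bounds over $\ell$, adding in the pruning contributions, and combining with the matching lower bound from Section~\ref{sec: lower bounds} yields Theorem~\ref{thm: main smoothed}. The principal obstacle I anticipate is the simultaneous bootstrap: because $f_1$ and $f_{d-1}$ belong to genuinely different classes (flat vs.\ rough) whenever $d\neq 2$, one must track envelope norms, ranges of $\ell$ in which the inductive hypothesis is valid, and the factor $Q^{k\varepsilon}$ accrued at each iteration with great care, and use a trivial volume bound on the ranges of $\ell$ where the hypothesis fails.
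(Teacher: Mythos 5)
Your proposal faithfully reproduces the eight-step strategy the paper carries out in Sections \ref{sec: preliminaries}--\ref{sec: lower bounds}: dyadic spatial/frequency decomposition with Knapp-cap pruning, Poisson summation and rescaling to oscillatory integrals, multi-term stationary phase, enveloping to restore positivity of the weights, the $\delta \leftrightarrow Q^{-1}$ duality via Poisson summation in $q$, and the parallel bootstrap over $\ell$ and over the two classes $\sH_d^{\bzero}$, $\sH_{d/(d-1)}^{\bzero}$, with lower bounds supplied by the geometry-of-numbers count of Section \ref{sec: lower bounds}. This is essentially the same approach as the paper's. One small imprecision: you attribute the error $Q^{n-2(n-1)/d+k\varepsilon}$ in the rough case partly to the Knapp-cap volume count in Step 2, whereas in the paper the Knapp-cap tail for $f_{d-1}$ (Lemma \ref{lem: tail terms}) gives $(\delta/Q)^{(d-1)(n-1)/d}Q^n$, which is absorbed into $\delta Q^n$ on the range $\delta>Q^{\varepsilon-1}$; the exponent $n-2(n-1)/d$ appears only as the saturation point $\beta_\infty=\max(n-1,\,n-2(n-1)/d)$ of the bootstrap sequence $\{\beta_\kappa\}$ in Theorems \ref{thm: og to dual ldeg}--\ref{thm: dual to og ldeg}, which is where the mismatch for $d\geq 2(n-1)$ (cf.\ Remark \ref{rem exclusion}) actually originates.
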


\begin{remark}\label{rem exclusion}
    Note that the upper and lower bounds in the second estimate above do not match precisely when $n-\tfrac{2(n-1)}{d}\geq n-1$. This is the case when $d\geq 2(n-1)$, or equivalently, when 
    $$d'=\tfrac{d}{d-1}\in (1,\tfrac{2(n-1)}{2n-3}].$$
\end{remark}
The rest of the paper is devoted to the proof of Theorem \ref{thm: main smoothed}. The fact that it implies Theorem \ref{thm: main} is quite standard. For the convenience of the reader, we include a proof in Section \ref{app: smoothing} of the appendix.
From now on, we fix a choice of $d\geq 2$, $f_{\1}\in \mathscr{H}_d^{\mathbf{0}}(\bR^{n-1})$, $b, \omega$ and $\varrho^{[\1]}$ satisfying the conditions prescribed above, and define
\begin{equation*}
    %\label{def varrho smooth count}
    N_{f_\1}(\delta,Q):=N_{f}^{\om,\varrho^{[\1]},b}(\delta,Q),\, \qquad N_{f_\d1}(\delta,Q):=N_{f_{\d1}}^{\om,\varrho^{[\d1]},b}(\delta,Q).
\end{equation*}

For technical reasons, it will be useful to consider not just weights supported in these domains, but also those supported in slight ``thickenings'' of these sets, which are still $(f_\1, \eta)$ or $(f_{\d1}, \eta)$ admissible for small $\eta$. To this effect, let $\rho$ be an $(f_\1, \tfrac{\epsilon_{\bx_{\1}}}{2})$ admissible weight supported in the ball 
\begin{equation}
\label{eq W1def}
    \mathscr{W}_{\1}:=\mathscr{B}(\bx_\1, \tfrac{\epsilon_{\bx_{\1}}}{2}).
\end{equation}
Then its pushforward $\rho^{[\d1]}$ under $\nabla f_{\1}$ is a weight supported in the set 
\begin{equation}
\label{eq Wd1def}
    \mathscr{W}_{\d1}:=\nabla f_{\1}(\mathscr{B}(\bx_{\1}, \tfrac{\epsilon_{\bx_{\1}}}{2})).
\end{equation} More precisely, we define $\rho^{[\d1]}:\mathscr{W}_{\d1}\to \bR_{\geq 0}$
to be 
\begin{equation*}
    %\label{eq rho pushf}
    \rho^{[\d1]}(\by)=\rho((\nabla f_\1)^{-1}(\by))=\rho((\nabla f_\d1)(\by)).
\end{equation*}
This dual weight $\rho^{[\d1]}$ is an $(f_{\d1}, C\epsilon_{\bx_{\1}})$ admissible weight, where the constant $C$ depends only on $f_\1$. To keep our notation uniform, 
we shall denote an $(f_\1,\eta)$ 
admissible weight $\rho$ 
supported in a thickening of $\mathscr{D}_1$ 
by $\rho^{[\1]}$. 

As generalisations of the smoothed counting functions associated to the weight $\varrho^{[\1]}$ and $\varrho^{[\d1]}$, we define 
\begin{equation*}
N_{f_1}^{\om,\rho^{[\1]},b}(\delta,Q)
:= \sum_{(q,\ba) \in \mathbb{Z}
\times \bZn}
\om\Big(\frac{q}{Q}\Big)
\sum_{\ell\geq 0}\rho^{[\1]}_\ell \Big(
\frac{\mathbf{a}}{q}\Big)
b_{\delta}\Big({\Vert q 
f_{\1}(\mathbf{a}/q)\Vert}\Big),
\end{equation*}
and
\begin{equation*}
N_{f_{\d1}}^{\om,\rho^{[\d1]},b}(\delta,Q)
:=\sum_{(q,\ba) \in \mathbb{Z}
\times \bZn}
\om\Big(\frac{q}{Q}\Big)
\sum_{\ell\geq 0}\rho^{[\d1]}_{(\d1)\ell} \Big(
\frac{\mathbf{a}}{q}\Big)
b_{\delta}\Big({\Vert q 
f_{\d1}(\mathbf{a}/q)\Vert}\Big).
\end{equation*}
% Here \[
% \rho_{\ell}^{[1]}(\mathbf{z}):= \rho^{[1]}(2^{\ell}\mathbf{z}),\qquad \rho_{\ell}^{[\d1]}(\mathbf{z}):= \rho^{[1]}(2^{\ell(d-1)}\mathbf{z}).
% \]
To clean up notation, for $\p\in \{1,d-1\}$ and $\ell\in \mathbb{Z}_{\geq 0}$,
%and an $(f_\1, \tfrac{\epsilon_{\bx_{\1}}}{2})$ admissible weight $\rho$, 
we also let \index{envel@$\mathfrak{N}^{\rho}(\delta,Q, \ell,\p, \r)$: smooth count, frequency and spatially localised which is associated to the envelope $\rho$}
\begin{equation}
     \label{eqn: frak nl}
     \mathfrak{N}^{\rho}(\delta,Q, \ell,\p):=
%      \sum_{(q,\ba) \in \mathbb{Z}
% \times \bZn}
% \om\Big(\frac{q}{Q}\Big)
% \rho^{[\p]}_{\p\ell} \Big(
% \frac{\mathbf{a}}{q}\Big)
% b\Big(\frac{\Vert q 
% f_{\p}(\mathbf{a}/q)\Vert}{\delta}\Big)=
\sum_{(q,\ba) \in \mathbb{Z}
\times \bZn}
\om\Big(\frac{q}{Q}\Big)
\rho^{[\p]}_{\p\ell} \Big(
\frac{\mathbf{a}}{q}\Big)
b_{\delta}\Big({\Vert q 
f_{\p}(\mathbf{a}/q)\Vert}\Big).
\end{equation}

\begin{remark}
From now on until the end of Section \ref{sec: counting rational points to oscillatory}, we shall work with the smoothed counting functions defined above, associated to the weights $\rho^{[1]}$ and $\rho^{[\d1]}$. In Section \ref{sec: envelope} we shall construct a family of weights, starting from $\varrho$, supported in an increasing sequence of sets contained in $\mathscr{W}_1$ (or $\mathscr{W}_{\d1}$). The weight $\rho$ works as a placeholder for members of this family. Working with a more general weight will facilitate and ease our inductive argument later-on.    
\end{remark}

\subsection{Oscillatory Integrals}\label{subsec: oscillatory}
Let $u\in C^{\infty}_0(\mathbb{R}^{n-1})$ and $\phi\in C^{\infty}(\mathbb{R}^{n-1})$. We need several
facts about oscillatory integrals of the form\index{oscillatory_integal@$I(\lambda,u,\phi)$:
oscillatory integral with phase function 
$\lambda \phi$ and amplitude function $u$}
\begin{equation*}\label{def: oscillatory integral}
I(\lambda,u,\phi):=\int_{\mathbb{R}^{n-1}}\,u(\mathbf{x})\,e(\lambda\phi(\mathbf{x}))\,\mathrm{d}\mathbf{x}.
\end{equation*}
The first lemma,
the Non-Stationary Phase Principle, 
quantifies that if an oscillatory integral
$I(\lambda,u,\phi)$ is such that $\nabla\phi$ is somewhat
large (throughout the support of the amplitude function $u$), 
then $\lvert I(\lambda,u,\phi)\rvert$ is very small in terms of $\lambda$.
\begin{lem}[Non-Stationary Phase Principle]
\label{lem: non-stationary phase}
Let $m\geq 1$ be an integer.
Let $\mathscr{K}\subseteq\mathbb{R}^{m}$
be a compact set and 
$\mathscr{X}\subseteq\mathbb{R}^{m}$ 
be an open
neighbourhood of $\mathscr{K}$. 
If $\lambda\geq1$ is a
real number and $A\geq1$ is an integer, 
then
\[
\vert I(\lambda,u,\phi)\vert\leq \lambda^{-A} 
C
\sum_{\vert\boldsymbol{\alpha}\vert\leq A}
\sup_{\mathbf{x}\in\mathbb{R}^{m}}
(\Vert u^{(\boldsymbol{\alpha})}
(\mathbf{x})\Vert_{2}\,
\Vert\nabla\phi(\mathbf{x})
\Vert_{2}^{\vert\boldsymbol{\alpha}\vert-2A}).
\]
Here the constant $C>0$ depends 
only on an upper bound on 
$\Vert\phi\Vert_{C^{A+1}(\mathscr{X})}$.
\end{lem}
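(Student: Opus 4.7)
The plan is to iterate integration by parts via the standard vector-field trick. I would first define the first-order differential operator
\[
L := \frac{1}{2\pi i \lambda \,\|\nabla\phi(\mathbf{x})\|_2^2}\,\nabla\phi(\mathbf{x}) \cdot \nabla,
\]
which satisfies $L\big(e(\lambda\phi(\mathbf{x}))\big) = e(\lambda\phi(\mathbf{x}))$ pointwise wherever $\nabla\phi \neq 0$. If $\nabla\phi$ vanishes somewhere on $\mathscr{K}$, the right-hand side of the claimed bound equals $+\infty$ (since the exponent $|\boldsymbol{\alpha}|-2A$ is strictly negative for the summand $\boldsymbol{\alpha} = \bzero$), so there is nothing to prove; hence I may assume $\nabla\phi$ is bounded away from $0$ on a neighbourhood of $\mathscr{K}$. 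The formal adjoint of $L$ is
\[
L^t v = -\frac{1}{2\pi i \lambda}\,\nabla \cdot \Big( v\, \frac{\nabla\phi}{\|\nabla\phi\|_2^2}\Big),
\]
and since $u$ is compactly supported, the $A$-fold integration by parts
\[
I(\lambda,u,\phi) = \int_{\mathbb{R}^m}\,(L^t)^A u(\mathbf{x})\, e(\lambda\phi(\mathbf{x}))\, d\mathbf{x}
\]
incurs no boundary contributions.

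Next, I would establish by induction on $A$ the pointwise expansion
\[
(L^t)^A u(\mathbf{x}) = \frac{1}{(2\pi i \lambda)^A} \sum_{|\boldsymbol{\alpha}|\leq A} u^{(\boldsymbol{\alpha})}(\mathbf{x})\, h_{A,\boldsymbol{\alpha}}(\mathbf{x}),
\]
where each coefficient $h_{A,\boldsymbol{\alpha}}$ is a finite linear combination of products involving exactly $A$ factors that are (possibly differentiated) entries of $\nabla\phi/\|\nabla\phi\|_2^2$, with a total of $A-|\boldsymbol{\alpha}|$ additional derivatives landing on those factors. The bookkeeping step is to prove, as a side-lemma, that any partial derivative of order $k$ applied to an entry of $\nabla\phi/\|\nabla\phi\|_2^2$ is bounded by $C_k\,\|\nabla\phi\|_2^{-(k+1)}$, with $C_k$ depending only on $\|\phi\|_{C^{k+1}(\mathscr{X})}$; this follows by an elementary induction on $k$ using the quotient rule. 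Distributing $A - |\boldsymbol{\alpha}|$ derivatives among the $A$ copies of $\nabla\phi/\|\nabla\phi\|_2^2$ and multiplying the resulting size estimates then gives
\[
|h_{A,\boldsymbol{\alpha}}(\mathbf{x})| \leq C\,\|\nabla\phi(\mathbf{x})\|_2^{|\boldsymbol{\alpha}|-2A},
\]
with $C$ depending only on an upper bound for $\|\phi\|_{C^{A+1}(\mathscr{X})}$, because each derivative landing on a factor contributes an extra $\|\nabla\phi\|_2^{-1}$ on top of the baseline $\|\nabla\phi\|_2^{-1}$ of each factor.

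To conclude, I would substitute the expansion into the integrated-by-parts identity, take absolute values under the integral, and use that $u^{(\boldsymbol{\alpha})}$ vanishes outside $\mathscr{K}$ to replace the integral by $\mathrm{vol}(\mathscr{K}) \cdot \sup$, absorbing $\mathrm{vol}(\mathscr{K})$ into the constant $C$. The main obstacle is the combinatorial tracking in the inductive expansion: each application of $L^t$ either places a derivative on $u$, places a derivative on an already-accumulated coefficient factor, or multiplies by a fresh copy of $\nabla\phi/\|\nabla\phi\|_2^2$, and one must verify that \emph{every} term produced by this process fits the template above with the sharp exponent $|\boldsymbol{\alpha}|-2A$. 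Once the side-lemma on derivatives of $\nabla\phi/\|\nabla\phi\|_2^2$ is in hand, this reduces to a straightforward check using the Leibniz and chain rules.
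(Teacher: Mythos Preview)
Your proposal is correct and is precisely the standard repeated-integration-by-parts argument; the paper's own proof consists of a one-line citation to H\"ormander, Theorem~7.7.1, which carries out exactly this computation. One cosmetic point: the constant $C$ you produce also depends on $m$, $A$, and $\mathrm{vol}(\mathscr{K})$, not solely on $\|\phi\|_{C^{A+1}(\mathscr{X})}$ as the lemma's phrasing suggests, but this is an imprecision in the statement rather than in your argument.
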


\begin{proof}
This follows by repeated partial integration, see
 Theorem 7.7.1 in H\"{o}rmander's book \cite{Hoermander: The Analysis}.
\end{proof}
We also need to deal with the cases that $\nabla\phi(\mathbf{x})$
can be very small (somewhere on the support of the amplitude function $u$). 
The analysis of the oscillatory integral in this case, 
provided the Gaussian curvature of the hypersurface parametrized by $\phi$ does not vanish,
is the content of the classical stationary phase principle. Recall that the Hessian of $\phi$ at a point $\mathbf{x}_0\in \mathbb{R}^{n-1}$ is the matrix $$H_{\phi}(\bx_0):= 
\Big( \frac{\partial^2}
{\partial x_i \partial x_j}
\phi(\bx)
\Big)_{1\leq i,j\leq n-1}.$$
The following version of the stationary phase principle suffices for us:

\begin{thm}[Stationary Phase Principle]
\label{thm: stationary phase higher order terms}
Let $m\geq 1$ be an integer,
$\mathscr{X}\subseteq\mathbb{R}^{m}$
be a compact set, 
and $\mathscr{\widetilde{X}}$ 
be an open neighbourhood
 of $\mathscr{X}$. Let $u\in C_{0}^{\infty}(\mathscr{\widetilde{X}})$ and $\phi\in C^{\infty}(\mathbb{R}^{n-1})$. 
Suppose
$\phi$ has at most one critical point 
$\mathbf{x}_{0}$ in the support of $u$
where $\nabla\phi(\mathbf{x}_{0})=\mathbf{0}$. 
Further, assume 
$\det\, H_{\phi}(\mathbf{x}_{0})\neq 0$ 
and that the signature $\sigma$ 
of $H_{\phi}(\mathbf{x})$
does not change on $\mathscr{\widetilde{X}}$. 
Let 
\[
g_{\mathbf{x}_{0}}(\mathbf{x}):=\phi(\mathbf{x})
-\phi(\mathbf{x}_{0})-\frac{1}{2}\left\langle H_{\phi}(\mathbf{x}_{0})(\mathbf{x}-\mathbf{x}_{0}),
\mathbf{x}-\mathbf{x}_{0}\right\rangle, 
\]
which vanishes of order three at $\mathbf{x}_{0}$.
Set $\mathrm{D} :=-\ri(\frac{\partial}{\partial x_1},\ldots, \frac{\partial}{\partial x_m}) $.
For $\tau\in\mathbb{Z}_{\geq0}$,
we define the differential operator \index{stationary_phase@$\mathfrak{D}_{\tau,\phi}$:
the differential operator of order $3\tau$,
of index $\tau\geq 0$, attached to the decay factor
$\lambda^{-\tau}$ which occurs
in the stationary phase expansion 
of $I(\lambda,u,\phi)$}
\begin{equation}
(\mathfrak{D}_{\tau,\phi}u)(\mathbf{x}):=
\frac{1}{(2 \mathrm{i})^{\tau}}
\sum_{0 \leq \mu \leq 2\tau }
\frac{1}{2^{\mu}\mu! (\mu + \tau )!}
\left\langle (H_{\phi}(\mathbf{x}_{0}))^{-1}\mathrm{D},\mathrm{D}
\right\rangle ^{\mu + \tau}(g_{\mathbf{x}_{0}}^{\mu} \cdot u)(\mathbf{x})
\label{def: differential operator with phase function dependence}
\end{equation}
for $\tau\geq1$ and for $\tau=0$ we let
$(\mathfrak{D}_{0,\phi}u)(\mathbf{x}):=u(\mathbf{x})$.
If $\lambda\geq1$, then the finite sum\index{stationary_phase@$I(\lambda,u,\phi,t)$: the
$t$-term stationary phase expansion 
of $I(\lambda,u,\phi)$}
\[
I(\lambda,u,\phi,t):=\frac{e(\lambda\phi(\mathbf{x}_{0})+\sigma)}{(\vert\det(\lambda H_{\phi}(\mathbf{x}_{0}))\vert)^{\frac{1}{2}}}\sum_{0\leq\tau\leq t}\frac{(\mathfrak{D}_{\tau,\phi}u)(\mathbf{x}_{0})}{\lambda^{\tau}}
\]
approximates $I(\lambda,u,\phi)$ with the inequality
\[
\vert I(\lambda,u,\phi)-I(\lambda,u,\phi,t)\vert\leq C\Vert u\Vert_{C^{2t}(\mathscr{X})}\lambda^{-t}.
\]
Here $C$ exclusively depends on $\Vert\phi\Vert_{C^{3t+1}(\mathscr{X})}$
and $\Vert\widetilde{\phi}\Vert_{C(\mathscr{X})}$ where $$
\widetilde{\phi}(\mathbf{x}):=\Vert\mathbf{x}-\mathbf{x}_{0}\Vert_{2}/\Vert\nabla\phi(\mathbf{x})\Vert_{2}.
$$
\end{thm}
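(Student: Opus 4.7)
The plan is to follow the classical Fourier-theoretic proof of stationary phase (essentially H\"ormander's Theorem 7.7.5), adapted to produce the precise differential operator $\mathfrak{D}_{\tau,\phi}$ displayed in \eqref{def: differential operator with phase function dependence}. First I would localise near the critical point: fix a small radius $r>0$ with $\overline{\mathscr{B}(\mathbf{x}_0, 2r)}\subseteq \widetilde{\mathscr{X}}$, choose $\chi \in C_c^\infty(\mathscr{B}(\mathbf{x}_0, 2r))$ with $\chi \equiv 1$ on $\mathscr{B}(\mathbf{x}_0, r)$, and decompose $u = \chi u + (1-\chi)u$. On $\supp((1-\chi) u)$ the hypothesis that $\mathbf{x}_0$ is the only critical point in $\supp u$, together with continuity, forces $\Vert\nabla\phi\Vert_2 \geq c_1 > 0$; Lemma \ref{lem: non-stationary phase} applied with $A$ of size $\asymp t$ then handles this piece with error $O(\lambda^{-t}\Vert u\Vert_{C^{2t}(\mathscr{X})})$. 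The auxiliary quantity $\widetilde{\phi}$ enters precisely at this point, to convert the ratio $\Vert\mathbf{x}-\mathbf{x}_0\Vert_2/\Vert\nabla\phi(\mathbf{x})\Vert_2$ into a uniform bound on $\mathscr{X}$.

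For the main contribution $I_0 := I(\lambda, \chi u, \phi)$, translate so that $\mathbf{x}_0 = \mathbf{0}$ and write $\phi = \phi(\mathbf{0}) + Q + g$, where $Q(\mathbf{x}) := \tfrac{1}{2}\langle H_\phi(\mathbf{0})\mathbf{x}, \mathbf{x}\rangle$ and $g := g_{\mathbf{x}_0}$ vanishes to order three at the origin. Taylor expand the non-quadratic factor,
\[
e(\lambda g(\mathbf{x})) = \sum_{\mu=0}^{2t}\frac{(2\pi i\lambda g(\mathbf{x}))^\mu}{\mu!} + R_{2t}(\mathbf{x},\lambda),\qquad |R_{2t}|\ll (\lambda|g(\mathbf{x})|)^{2t+1},
\]
reducing matters to evaluating the Gaussian oscillatory integrals $\int e(\lambda Q)h_\mu\,\mathrm{d}\mathbf{x}$ with $h_\mu := g^\mu\cdot(\chi u)$, $\mu = 0,\ldots,2t$, plus the contribution of $R_{2t}$ (which by a rescaling $\mathbf{x}\mapsto \lambda^{-1/2}\mathbf{x}$ and the vanishing order of $g$ is $O(\lambda^{-t}\Vert u\Vert_{C^{2t}})$). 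Each such integral is handled by Plancherel together with the Fresnel identity
\[
\widehat{e(\lambda Q)}(\boldsymbol{\xi}) = \frac{c_\sigma}{|\det(\lambda H_\phi(\mathbf{0}))|^{1/2}} \, e\!\left(-\frac{\langle H_\phi(\mathbf{0})^{-1}\boldsymbol{\xi},\boldsymbol{\xi}\rangle}{4\pi\lambda}\right),
\]
where $c_\sigma$ is a signature-dependent phase factor. Taylor expanding the right-hand side in powers of $\lambda^{-1}$ and inverting via Parseval produces, for each $\mu$, a finite sum $\sum_{k=0}^{K}\tfrac{1}{(2i)^k k!(2\pi\lambda)^k}\langle H_\phi(\mathbf{0})^{-1}D, D\rangle^k h_\mu(\mathbf{0})$ divided by $|\det(\lambda H_\phi(\mathbf{0}))|^{1/2}$, plus a controlled remainder.

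The final step is to combine the two expansions by setting $\tau := k - \mu$ and regrouping by $\tau$. The crucial observation is that $h_\mu = g^\mu\cdot(\chi u)$ vanishes to order $3\mu$ at the origin, so $\langle H_\phi(\mathbf{0})^{-1}D, D\rangle^k h_\mu(\mathbf{0}) = 0$ unless $2k \geq 3\mu$, equivalently $\mu \leq 2\tau$. This is exactly the index range in \eqref{def: differential operator with phase function dependence}, and combining the factors $(2\pi i\lambda)^\mu/\mu!$ coming from $e(\lambda g)$ with $1/((2i)^k k!(2\pi\lambda)^k)$ (together with $\chi(\mathbf{x}_0)=1$) reproduces the operator $\mathfrak{D}_{\tau,\phi}$ as defined. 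Truncating at $\tau = t$ and at $K = t$, and absorbing the non-stationary and Taylor remainders, yields the asserted estimate. The main obstacle I foresee is bookkeeping: aligning the combinatorial constants to match $\mathfrak{D}_{\tau,\phi}$ exactly, and controlling both truncation errors uniformly by $C\Vert u\Vert_{C^{2t}(\mathscr{X})}\lambda^{-t}$ with $C$ depending only on $\Vert\phi\Vert_{C^{3t+1}(\mathscr{X})}$ and $\Vert\widetilde{\phi}\Vert_{C(\mathscr{X})}$. The count $3t+1$ arises naturally because the deepest nontrivial term applies $\langle H^{-1}D, D\rangle^{3t}$ to $g^{2t}u$, so derivatives of $\phi$ (via both $H_\phi^{-1}$ and $g$) enter up to order $3t+1$.
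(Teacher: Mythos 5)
The paper's own proof of Theorem \ref{thm: stationary phase higher order terms} is a single sentence: it is cited as a special case of H\"ormander's Theorem 7.7.5. Your proposal, by contrast, sketches the argument for that theorem from scratch. The skeleton is correct and is indeed the classical one underlying H\"ormander's result: localize near $\mathbf{x}_0$, dispose of the far part with the non-stationary phase lemma, split the phase as $\phi(\mathbf{x}_0)+Q+g$, reduce to Gaussian oscillatory integrals via Plancherel and the Fresnel transform, and regroup by $\tau=k-\mu$ while noting that $g^\mu\cdot u$ vanishes to order $3\mu$ so that only $\mu\leq 2\tau$ survives. That last observation, together with your bookkeeping of the constants $i^{-\tau}2^{-(\mu+\tau)}/(\mu!(\mu+\tau)!)$, recovers H\"ormander's operator $L_\tau$, which is precisely the paper's $\mathfrak{D}_{\tau,\phi}$.

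However, there is a genuine gap at the remainder step, and your own "obstacle I foresee" flags it only obliquely. The pointwise bound $\vert R_{2t}(\mathbf{x},\lambda)\vert\ll(\lambda\vert g(\mathbf{x})\vert)^{2t+1}$ is of no direct use: on a fixed-radius ball $\mathscr{B}(\mathbf{x}_0,2r)$ it gives $\vert R_{2t}\vert\ll\lambda^{2t+1}$, and the crude rescaling $\mathbf{x}\mapsto\lambda^{-1/2}\mathbf{x}$ that you invoke does not repair this because after rescaling $\vert\mathbf{y}\vert$ ranges up to $2r\lambda^{1/2}$, so $\vert R_{2t}\vert\ll\lambda^{-(2t+1)/2}\vert\mathbf{y}\vert^{3(2t+1)}$ is not integrable against a crude sup bound. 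To close this you must exploit oscillation in the remainder itself. The cleanest route is H\"ormander's: write $R_{2t}$ in integral Taylor form, $R_{2t}=\frac{(2\pi i\lambda)^{2t+1}}{(2t)!}\int_0^1(1-s)^{2t}g^{2t+1}e(s\lambda g)\,ds$, so that the remainder contribution becomes $\lambda^{2t+1}\int_0^1(1-s)^{2t}\big[\int g^{2t+1}(\chi u)\,e(\lambda(Q+sg))\,\rd\mathbf{x}\big]\,ds$; then apply a (cruder) stationary phase estimate to the inner integral, which has a nondegenerate phase $Q+sg$ for every $s\in[0,1]$ if $r$ is small, and whose amplitude $g^{2t+1}(\chi u)$ vanishes to order $3(2t+1)$ at the critical point. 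The resulting gain $\lambda^{-m/2-3(2t+1)/2}$ more than offsets the prefactor $\lambda^{2t+1}$. Without some version of this step, the remainder estimate does not go through.
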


\begin{proof}
This is a special case of \cite[Theorem 7.7.5]{Hoermander: The Analysis}. 
\end{proof}
\subsection{Geometric Summation}\label{subsec: geometric sum} We require the following 
smoothed version of geometric 
summation,
which is slightly superior 
to the un-smoothed 
geometric summation 
(the latter tends to lose a 
logarithmic factor).

\begin{lem}[Geometric Summation]
\label{lem: smooth geometric summation}
Let $\mu \in \mathbb{R}$,
$g:\mathbb{R} \rightarrow\mathbb{C}$
be smooth, and $\mathrm{supp}(g)\subseteq (1/10, 10)$.
Then for any integer $A\geq 1$, \index{A@$A\geq 1$: 
arbitrarily large integer
quantifying rapid decay of Fourier transforms} we have
\begin{equation}\label{eq: smooth geo sum}
\sum_{q\in \mathbb{Z}} g(\frac{q}{Q}) e(\mu q) 
= 
Q\widehat{g}(Q\Vert \mu \Vert) + 
O_A( \Vert g \Vert_{C^A} Q^{-A}).   
\end{equation}

\end{lem}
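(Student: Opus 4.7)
The natural tool is Poisson summation applied to the function $h(x) := g(x/Q)\, e(\mu x)$. Since $g \in C_c^\infty$ with $\mathrm{supp}(g) \subseteq (1/10, 10)$, the function $h$ is smooth and compactly supported, so Poisson summation applies and gives
\[
\sum_{q \in \mathbb{Z}} g\!\left(\frac{q}{Q}\right) e(\mu q) \;=\; \sum_{k \in \mathbb{Z}} \widehat{h}(k).
\]
A direct change of variables shows $\widehat{h}(k) = Q \,\widehat{g}(Q(k - \mu))$ (up to the sign dictated by the Fourier convention). Hence the sum is $\sum_{k} Q\,\widehat{g}(Q(k-\mu))$.

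Next, let $k_0 \in \mathbb{Z}$ be the nearest integer to $\mu$, so that $|k_0 - \mu| = \|\mu\|$. The summand at $k=k_0$ produces the claimed main term $Q\,\widehat{g}(Q\|\mu\|)$ (interpreted up to the sign/parity ambiguity in $\widehat{g}$, which does not affect the subsequent bookkeeping). For every other $k \in \mathbb{Z}$, the triangle inequality gives
\[
|k-\mu| \;\geq\; |k-k_0| - \|\mu\| \;\geq\; \tfrac{1}{2}|k-k_0|,
\]
since $\|\mu\| \leq 1/2$ and $|k-k_0| \geq 1$.

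The rapid decay of $\widehat{g}$ now furnishes the error estimate. Integration by parts $A$ times on $\widehat{g}(\xi) = \int g(x) e(-x\xi)\,dx$ yields the standard bound
\[
|\widehat{g}(\xi)| \;\ll_A\; \|g\|_{C^A}\,(1+|\xi|)^{-A},
\]
with an implied constant depending only on $A$ and the support of $g$. Applying this with $\xi = Q(k-\mu)$ and summing the contributions over $k \neq k_0$ gives
\[
\sum_{k \neq k_0} Q\,|\widehat{g}(Q(k-\mu))| \;\ll_A\; Q \cdot \|g\|_{C^A} \sum_{k \neq k_0} (1 + Q|k-k_0|/2)^{-A} \;\ll_A\; \|g\|_{C^A}\, Q^{1-A},
\]
provided $A \geq 2$, where the last step uses $\sum_{m\geq 1} m^{-A} < \infty$. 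Since $A$ is arbitrary, we may replace $A$ by $A+1$ to absorb the extra factor of $Q$ into the error and recover the stated form $O_A(\|g\|_{C^A} Q^{-A})$, completing the proof.

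\textbf{Where the work lies.} There is no genuine obstacle here: the lemma is a textbook application of Poisson summation combined with non-stationary phase (integration by parts) for $\widehat{g}$. The only minor subtlety is the sign convention used in writing the main term as $Q\widehat{g}(Q\|\mu\|)$ rather than $Q\widehat{g}(Q(k_0-\mu))$; this is inconsequential when the lemma is invoked, since applications only use the size of the main term together with the rapid-decay error.
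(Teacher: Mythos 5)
Your proof is correct and follows essentially the same route as the paper's: Poisson summation to convert the sum into $\sum_k Q\,\widehat{g}(Q(k-\mu))$, then rapid decay of $\widehat g$ (obtained by integration by parts) to isolate the nearest-integer term and bound the remainder. The only organizational difference is that you estimate the tail by the triangle inequality together with $\sum_{m\ge 1} m^{-A}<\infty$, whereas the paper runs a dyadic decomposition in $|\mu-m|$; the two arguments are interchangeable.

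One small point in your favour: you explicitly flag that the main term should really be $Q\widehat g(Q(k_0-\mu))$ rather than $Q\widehat g(Q\|\mu\|)$, since $g$ is supported in $(1/10,10)$ and hence is not even, so $\widehat g$ need not be; the two differ when $k_0-\mu<0$. The paper's own proof writes the nearest-integer term as $Q\widehat g(Q\|\mu\|)$ without comment (and handles the boundary case $\|\mu\|=1/2$ separately, observing that the term is then itself of size $O_A(\|g\|_{C^A}Q^{-A})$). As you correctly note, the discrepancy is harmless in every application, because $g$ is real-valued, so $|\widehat g(\xi)|=|\widehat g(-\xi)|$, and the lemma is only ever invoked through the modulus of $\widehat g$.
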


\begin{proof}
By Poisson summation 
(and by a change of variables), 
we see that 
the left hand side of 
\eqref{eq: smooth geo sum}
equals
$$
\sum_{m\in \mathbb{Z}} 
\int_{\mathbb{R}} g(\frac{y}{Q}) e(\mu y)  e(-m y) \mathrm{d}y
=
\sum_{m\in \mathbb{Z}} Q \widehat{g} (Q (\mu  - m).
$$
We decompose the right hand side into dyadic ranges
$2^{i}< \vert \mu  - m \vert \leq 2^{i+1}$ for  $ i \geq -1$, and the special regime 
$ 0\leq \vert \mu  - m \vert \leq 2^{-1} $.
If $\Vert \mu \Vert < 1/2$, then
the latter regime contains exactly one integer $m$. 
Thus we infer
$$
\sum_{0\leq \vert \mu  - m \vert \leq 2^{-1}}
Q \widehat{g} (Q (\mu  - m))
= 
(1 + \ds1 _{\{0,  1/2 \}}(\Vert \mu \Vert))
Q \widehat{g} (Q \Vert \mu \Vert ).
$$
For any given constant $A>1$, partial integration produces the estimate
$ \widehat{g} (t) \ll_A \Vert g \Vert_{C^A} (1+\vert t\vert)^{-A}$.
For each $i\in \mathbb{Z}_{\geq -1}$,
the regime of all $m$ with $2^{i}< \vert m - \mu  \vert \leq 2^{i+1}$
contributes
$$
\sum_{2^{i}< \vert \mu  - m \vert  \leq 2^{i+1}} 
\widehat{g} (Q (\mu  - m)) \ll_A \Vert g \Vert_{C^A} 2^i (Q 2^i)^{-A}.
$$
Thus
$$
\sum_{ \vert \mu  - m \vert > 1/2} 
 \widehat{g} (Q (\mu  - m)) \ll_A \Vert g \Vert_{C^A} Q^{-A}.
$$
From this \eqref{eq: smooth geo sum} follows at once
by observing that in the case when $\Vert \mu  \Vert = 1/2$,
we also have $\widehat{g} (Q \Vert\mu\Vert)=O_A(\Vert g \Vert_{C^A} Q^{-A})$.
\end{proof}

\section{Pruning}\label{sec: pruning}
Let $\varepsilon>0$ be a fixed small constant 
(from Theorem \ref{thm: main smoothed}). 
For $\p\in\{\1,\d1\}$\index{p@$p\in \{\1,\d1\}$:
a parameter used to analyse $f\in \sH_{d/1}^{\bzero}(\bRn)$ 
and $\widetilde{f} \in \sH_{d/d-1}^{\bzero}(\bRn)$ at the same time by using that the H\"{o}lder
dual $d'= d/(d-1)$},  set \index{spatial_cut_off@$L(\delta, Q),L_{\p}(Q), \Lc(\delta, Q)$: dyadic cut-offs 
to remove Knapp caps near $\bzero$}
\begin{equation}
\label{def Ls}
L(\delta, Q):=
\frac{\log(\delta^{-1}Q)}{d\log2},
\,
L_{\p}(Q) := \frac{\log Q^{1-\varepsilon}}{p \log 2}, \,
\Lc(\delta, Q) := \min \left(L(\delta, Q),L_\p(Q)\right).
\end{equation}

In this section, we fix an $(f_\1, \tfrac{\epsilon_{\bx_{\1}}}{2})$ admissible weight $\rho$ supported in the the ball 
$\sB(\bx_1, \tfrac{\epsilon_{\bx_1}}2)$. 
All the implicit constants 
are allowed to depend on $\rho$.
To avoid pathological situations, 
we assume without loss of 
generality from now on that 
$$
\varepsilon \in \Big(0,\frac{1}{100}\Big).
$$
Next, recall the definition of 
$\mathfrak{N}^{\rho}
(\delta, Q, \ell,\p)$ 
from \eqref{eqn: frak nl}. 
We decompose
\begin{equation*}
N_{f_{\p}}^{\om,\rho^{[\p]},b}(\delta,Q)
=\sum_{0\leq\ell\leq \Lc(\delta, Q)}
\mathfrak{N}^{\rho}
(\delta, Q, \ell,\p)+
\sum_{\Lc(\delta, Q)<\ell}
\mathfrak{N}^{\rho}
(\delta,Q, \ell,\p). 
\end{equation*}

Note that $$\fL_{\d1}(\delta, Q)
\leq \fL_{\1}(\delta, Q)$$ since 
%$Q^{\frac{1}{\d1}}\leq Q$ 
$d-1\geq 1$, recall \eqref{eq d geq 2}.
\begin{remark}
When $\delta$ and $Q$ are fixed and
clear from the context,  
we shall suppress 
the dependence on these parameters and 
refer to the quantities 
above simply by $L, L_{\p}$ and $\Lc$.    
\end{remark}

\subsection{Pruning in Physical Space}
\label{subsec: prun phys}
For small $\ell$, the sub-manifold 
$$\{(\mathbf{x},f_\p(\mathbf{x})):\,\mathbf{x}\in\mathrm{supp}(\rho^{[\p]}_{\ell\p})\}$$
is amenable to a stationary phase analysis because, as it will turn
out, it still possesses some curvature. However, the pieces 
$\{(\mathbf{x},f(\mathbf{x})):\,\mathbf{x}\in\mathrm{supp}(\rho^{[\p]}_{\ell\p})\}$
corresponding to $\ell>L(\delta, Q)$ are located too close to the origin, where the curvature either vanishes (for $\p=\1$), or blows up ($\p=\d1$).
Hence, we utilize the localisation instead. 
The parameter\index{frequency_cut_off@$J=\delta^{-1} Q^{\varepsilon}$: frequency truncation to detect $\Vert q f(\ba/q)\Vert \leq \delta$}
\begin{equation}
J:=
Q^{\varepsilon}\delta^{-1} \label{def: J}
\end{equation}
is crucial in 
this endeavor.

It is useful to record 
the following relations
\begin{equation}
2^{L(\delta, Q)}=
(\delta^{-1}Q)^{1/d}
\ll (QJ)^{1/d},
\label{eq: 2^L}
\end{equation}
and 
\begin{equation}
2^{\p{L_\p(Q)}}=Q^{1-\varepsilon}.
\label{eq: 2^Lp}
\end{equation}

\begin{lem}\label{lem: tail terms}
Let $\delta\in (0,1/2)$ and $Q\geq 1$. 
%and $\p\in\{\1, \d1\}$. 
Then 
\[
\sum_{L(\delta, Q)<\ell}
\mathfrak{N}^{\rho}(\delta,Q, \ell,\p)
\ll
\left(\frac{\delta}{Q}\right)^{\frac{\p(n-1)}{d}}Q^{n}+Q,
\]
and
\[
\sum_{L_\p(Q)<\ell}
\mathfrak{N}^{\rho}(\delta,Q, \ell,\p)
\ll
Q^{1+(n-1) \varepsilon}
\]
for $\p\in\{\1, \d1\}$, with the implied constants independent of $\delta$ and $Q$.
\end{lem}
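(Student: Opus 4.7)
The plan is to bound $\mathfrak{N}^{\rho}(\delta,Q,\ell,\p)$ by the naive lattice point count in the region $\{(q,\ba): q\asymp Q,\,\ba/q\in\supp(\rho^{[\p]}_{\p\ell})\}$, using that $\om$, $\rho^{[\p]}$ and $b$ all take values in $[0,1]$, and then sum geometrically in $\ell$. The set $\supp(\rho^{[\p]})\subseteq\sW_\p$ lies in an annulus bounded away from the origin (cf.\ \eqref{eq W1def}), so $\supp(\rho^{[\p]}_{\p\ell})$ is contained in a ball of radius $\asymp 2^{-\p\ell}$ centred at a point of norm $\asymp 2^{-\p\ell}$. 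Consequently, for each $q\asymp Q$, the scaled slice $q\cdot\supp(\rho^{[\p]}_{\p\ell})$ is contained in such a ball of radius $\asymp q\, 2^{-\p\ell}$.

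The key dichotomy is based on the radius $Q\, 2^{-\p\ell}$. When $Q\, 2^{-\p\ell}\gg 1$, the standard volume principle gives $\ll (q\, 2^{-\p\ell})^{n-1}$ lattice points per slice, so
$$\mathfrak{N}^{\rho}(\delta,Q,\ell,\p)\;\ll\; Q^{n}\,2^{-\p\ell(n-1)}.$$
When $Q\, 2^{-\p\ell}\ll 1$, each slice has diameter less than $1$ and thus contributes at most $O(1)$ lattice points, yielding the crude bound $\mathfrak{N}^{\rho}(\delta,Q,\ell,\p)\ll Q$. Moreover, because $\supp(\rho^{[\p]})$ is separated from $\bzero$ by a distance $\gg \upsilon$ (see \S\ref{subsec local}), any contributing $\ba$ satisfies $\|\ba\|_{2}\gg q\, 2^{-\p\ell}\upsilon$; once $2^{\p\ell}$ exceeds a suitable constant times $Q$, this forces $\|\ba\|_{2}\in(0,1)$ and so no non-zero integer vector can appear. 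In particular $\mathfrak{N}^{\rho}(\delta,Q,\ell,\p)$ vanishes for $\ell$ beyond $\log_2 Q/\p + O(1)$, and only $O(1)$ values of $\ell$ contribute in the small-radius regime.

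With these per-scale bounds, summing in $\ell$ becomes routine. For the first claim, $\sum_{L(\delta,Q)<\ell\leq\log_2 Q/\p} Q^{n}\, 2^{-\p\ell(n-1)}$ is a geometric sum dominated by the smallest admissible $\ell$, giving $\asymp Q^{n}\,2^{-\p L(\delta,Q)(n-1)}$; substituting $2^{L(\delta,Q)}=(\delta^{-1}Q)^{1/d}$ via \eqref{eq: 2^L} produces the advertised term $(\delta/Q)^{\p(n-1)/d}Q^{n}$, while the $O(1)$ residual scales in the small-radius regime contribute at most $O(Q)$. For the second claim, the identical computation with $L_\p(Q)$ in place of $L(\delta,Q)$, together with \eqref{eq: 2^Lp}, yields $Q^{n}\, Q^{-(1-\varepsilon)(n-1)} = Q^{1+(n-1)\varepsilon}$, which already absorbs the $O(Q)$ tail.

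The only step that requires any genuine attention is the vanishing argument for very large $\ell$, and even this is immediate from the localisation $\supp(\rho^{[\p]})\subseteq\sW_\p$ together with the quantitative separation of $\sW_\p$ from the origin set up in \S\ref{subsec local}. Otherwise the proof is a straightforward volume-based lattice count followed by a geometric summation and presents no technical difficulty.
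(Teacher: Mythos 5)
Your proof is correct and uses essentially the same idea as the paper: bound $\mathfrak{N}^{\rho}$ trivially by a lattice-point count in a ball of radius $\asymp q\,2^{-\p\ell}$ and then estimate the resulting sum. The only organizational difference is that you count per scale $\ell$ and sum the resulting geometric series (noting the vanishing beyond $\ell\asymp\log_2 Q/\p$), whereas the paper first collapses the sum $\sum_{L<\ell}\rho^{[\p]}_{\ell\p}$ into a single indicator of the ball $\overline{\sB}(\bzero,2^{-\p L+2})$ and does one lattice count; both routes give the bound $\max(Q,\,Q^n 2^{-\p L(n-1)})$ directly.
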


\begin{proof}
Recall the notation
$ \overline{\sB} (\bzero,R)= \{ \bx \in \bR^{n-1}: 
\Vert \bx \Vert_2 \leq  R\}$.
Observe that 
\[
\sum_{L<\ell}\rho^{[\p]}_{\ell\p}(\bz)\leq\mathds{1}_{\overline{\sB} (\bzero,2^{-\p L(\delta, Q) +2})}(\bz),\qquad \bz\in \mathbb{R}^{n-1}.
\]
Therefore, we can estimate
\begin{align*}
\sum_{L(\delta, Q)<\ell}\mathfrak{N}^{\rho}(\delta,Q, \ell,\p)&=\sum_{L(\delta, Q)<\ell}\,\sum_{(q,\ba) \in \mathbb{Z}
\times \bZn}
\om\Big(\frac{q}{Q}\Big)
\rho^{[\p]}_{\p\ell} \Big(
\frac{\mathbf{a}}{q}\Big)
b_{\delta}\Big({\Vert q 
f_{\p}(\mathbf{a}/q)\Vert}\Big)\\
&\leq
\Vert \rho^{[\p]}\Vert_{\infty}
\Vert b\Vert_{\infty}
\sum_{q\in \mathbb{Z}}
\om(\frac{q}{Q})
\#\{\mathbf{a}\in\mathbb{Z}^{n-1}:\Vert\mathbf{a}\Vert_{2}
\leq q2^{-\p L(\delta, Q)+2}\}\\
 & \ll
\sum_{q\in \mathbb{Z}}
\om(\frac{q}{Q})\big(q2^{-\p L(\delta, Q)+2}+1\big)^{n-1}
 \\
 & \ll Q\max_{0\leq t\le n-1}(Q2^{-\p L(\delta, Q)})^{t}=\max(Q,Q^{n}2^{-\p L(\delta, Q)(n-1)}).
\end{align*}
Since
$2^{-\p L(\delta, Q)}
%=(JQ)^{-\frac{1}{d}}
= 
(\frac{\delta}{Q})^{\frac{\p}{d}}$,
we get 
\[
\sum_{L(\delta, Q)<\ell}\,\sum_{(q,\ba) \in \mathbb{Z}
\times \bZn}
\om\Big(\frac{q}{Q}\Big)
\rho^{[\p]}_{\p\ell} \Big(
\frac{\mathbf{a}}{q}\Big)
b_{\delta}\Big({\Vert q 
f_{\p}(\mathbf{a}/q)\Vert}\Big)
\ll\left(\frac{\delta}{Q}\right)^{\frac{\p(n-1)}{d}}Q^{n}+Q.
\]
Similarly, with $L_\p$ in place of $L$, we deduce
\begin{align*}
\sum_{L(\delta, Q)<\ell}\,\sum_{(q,\ba) \in \mathbb{Z}
\times \bZn}
\om\Big(\frac{q}{Q}\Big)
\rho^{[\p]}_{\p\ell} \Big(
\frac{\mathbf{a}}{q}\Big)
b_{\delta}\Big({\Vert q 
f_{\p}(\mathbf{a}/q)\Vert}\Big)
\ll
\sum_{q\in \mathbb{Z}}
\om(\frac{q}{Q})(Q2^{-\p L_\p+2}+1)^{n-1}.
\end{align*}
Since $Q 2^{-\p L_\p} \asymp Q^{\varepsilon}$, the right hand side is 
$O(Q \cdot Q^{(n-1)\varepsilon})$.
\end{proof}

In view of Lemma \ref{lem: tail terms}, it remains to analyse 
\[
\sum_{0\leq\ell\leq 
\Lc(\delta, Q)}
\mathfrak{N}^{\rho}(\delta,Q, \ell, \p).
\]

For each of the remaining 
$\ell$ a straightforward application 
of Poisson summation,
see Lemma \ref{lem: truncated Poisson}, 
can be used to (approximately) 
Fourier expand each 
$\mathfrak{N}^{\rho}(\delta,Q, \ell, \p)$
into the exponential sum
\[
\sum_{(q,\mathbf{a})\in
\bZ \times \mathbb{Z}^{n-1}}
\om(\frac{q}{Q})
\rho^{[\p]}_{\ell\p}
\left(\frac{\mathbf{a}}{q}\right)
\sum_{1\leq j\leq J}\delta 
\widehat{b}(j\delta)
e(jqf_{\p}(\mathbf{a}/q)).
\]
Here $J$ is as in \eqref{def: J}.
To analyse such sums, 
it is beneficial to introduce 
another smooth partition of unity.
This time in the $j$-aspect,
allowing us to decouple the $j$ 
and $q$ variables in a clean way 
--- which would not be possible otherwise.
\subsection{Partitioning Unity
and Pruning in Frequency Domain}\label{subsec: prun freq}
Recall
\begin{equation*}
%\label{def: N double dyadic}
\mathfrak{N}^{\rho}(\delta,Q, \ell,\p, \r) 
= 
\sum_{(j,q,\mathbf{a})\in\bZ^2 
\times \mathbb{Z}^{n-1}}
\om(\frac{q}{Q})
\rho^{[\p]}_{\ell\p}
\left(\frac{\mathbf{a}}{q}\right)
\om (\frac{j}{2^r}) 
\delta \widehat{b}(j\delta)
e(jqf_{\p}(\mathbf{a}/q)).
\end{equation*}
The cut-off parameter\index{R@$R= \log (J)/\log 2$:
frequency cut-off stemming from Knapp cap considerations}
\begin{equation}\label{def: R}
   % \ds1 _{[1,J/2]}(j) \leq  \sum_{0\leq r \leq R} \om (\frac{j}{2^r}) 
   % \quad \mathrm{where} \quad
    R:= \frac{\log J}{\log 2}
\end{equation}
will make a frequent appearance 
in what follows. 
An integration by parts argument
shows that
$\widehat{\om}$
decays rapidly
in the sense that 
$\widehat{\om}(x)= 
O_A(1+\vert x\vert^A)^{-1}$
for any $A>1$.
Consequently,
we have the (approximate) decomposition 
\begin{equation*}
\mathfrak{N}^{\rho}(\delta,Q, \ell,\p)
= \delta \widehat{b}(0)
\sum_{(q,\mathbf{a})\in\bZ \times \mathbb{Z}^{n-1}}
\om(\frac{q}{Q})
\rho^{[\p]}_{\ell\p}\left(\frac{\mathbf{a}}{q}\right)
+\sum_{0 \leq r \leq R} 
\mathfrak{N}^{\rho}(\delta,Q, \ell,\p, \r)
+ O_A(Q^{-A})
\end{equation*}
for any $A>1$. Here we have used the fact that $\om$
satisfies \eqref{def: om partion of unity}.

Naturally, the first 
term on the right hand side 
stems from the zero mode and plays a special 
role. Let us evaluate this term 
before proceeding further. 

\begin{lem}
%\label{lem: zero mode contribution}
%and $\p\in\{\1, \d1\}$. 
Let $A>1$. Then, 
uniformly in $0\leq \ell \leq L_p(Q)$, we have
that
\[\delta\widehat{b}(0)
\sum_{(q,\mathbf{a}) \in \bZ \times \mathbb{Z}^{n-1}}
     \om(\frac{q}{Q})
 \rho^{[\p]}_{\ell\p}\left(\frac{\mathbf{a}}{q}\right)
 =
c_{\rho,\om, b, \p} 2^{-\ell \p(n-1)}\delta Q^{n}+O_A(Q^{-A}),\]
where
\[
c_{\rho,\om, b, \p}
= 
\int_{-\infty}^{\infty} b (t) \, \rd t
\int_{\bRn} \rho^{[\p]} (\bx)  \, \rd \bx
\int_{-\infty}^{\infty} \om (t) t^{n-1} \rd t.
\]

\end{lem}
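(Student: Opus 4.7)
The plan is to apply Poisson summation twice: first in $\ba$ (for fixed $q$), isolating the zero mode as the main term and using the hypothesis $\ell\leq L_p(Q)$ to ensure the non-zero modes contribute negligibly; and then in $q$, reducing the resulting one-dimensional sum to an integral with rapidly decaying error.

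\textbf{Step 1 (Poisson in $\ba$).} Fix $q$ with $\om(q/Q)\neq 0$, so that $q\asymp Q$. Set
\[
g_q(\ba):=\rho^{[\p]}_{\ell\p}\!\left(\frac{\ba}{q}\right)=\rho^{[\p]}\!\left(\frac{2^{\ell\p}}{q}\,\ba\right).
\]
Poisson summation in $\mathbb{R}^{n-1}$ yields
\[
\sum_{\ba\in\mathbb{Z}^{n-1}} g_q(\ba)=\sum_{\bm\in\mathbb{Z}^{n-1}} \widehat{g_q}(\bm)=\left(\frac{q}{2^{\ell\p}}\right)^{n-1}\sum_{\bm\in\mathbb{Z}^{n-1}}\widehat{\rho^{[\p]}}\!\left(\frac{q\,\bm}{2^{\ell\p}}\right).
\]
The zero mode $\bm=\bzero$ contributes $(q/2^{\ell\p})^{n-1}\int_{\bR^{n-1}}\rho^{[\p]}(\bx)\,\rd\bx$, which is the main term.

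\textbf{Step 2 (Non-zero modes).} Since $\rho^{[\p]}\in C_c^\infty(\mathbb{R}^{n-1})$, repeated integration by parts gives $|\widehat{\rho^{[\p]}}(\bxi)|\ll_A \|\bxi\|_2^{-A}$ for any $A\geq 1$. By the assumption $0\leq \ell\leq L_\p(Q)$ and \eqref{eq: 2^Lp}, we have $2^{\ell\p}\leq Q^{1-\varepsilon}$, hence for $q\asymp Q$,
\[
\frac{q}{2^{\ell\p}}\gg Q^{\varepsilon}.
\]
Consequently
\[
\left(\frac{q}{2^{\ell\p}}\right)^{n-1}\!\!\!\sum_{\bm\neq\bzero}\left|\widehat{\rho^{[\p]}}\!\left(\frac{q\bm}{2^{\ell\p}}\right)\right|\ll_A \left(\frac{q}{2^{\ell\p}}\right)^{n-1-A}\sum_{\bm\neq\bzero}\|\bm\|_2^{-A}\ll_A Q^{-A'}
\]
for any prescribed $A'>0$, upon choosing $A$ large enough.

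\textbf{Step 3 (Sum over $q$).} Combining Steps 1 and 2,
\[
\sum_{(q,\ba)}\om\!\left(\tfrac{q}{Q}\right)\rho^{[\p]}_{\ell\p}\!\left(\tfrac{\ba}{q}\right)=2^{-\ell\p(n-1)}\left(\int_{\bR^{n-1}}\rho^{[\p]}\right)\sum_{q\in\mathbb{Z}}\om\!\left(\tfrac{q}{Q}\right)q^{n-1}+O_A(Q^{-A}),
\]
where the error factor absorbs the $O(Q)$ from $\#\{q:\om(q/Q)\neq 0\}$. Apply Poisson summation to the smooth, compactly supported function $h(q):=\om(q/Q)\,q^{n-1}$: the zero mode gives $Q^n\int_{\mathbb{R}}\om(t)t^{n-1}\rd t$, and repeated integration by parts shows $\widehat{h}(m)\ll_A Q^{n-A}|m|^{-A}$ for $m\neq 0$, whose sum is $O_A(Q^{-A'})$ for arbitrary $A'$.

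\textbf{Step 4 (Assembly).} Multiplying by $\delta\widehat{b}(0)=\delta\int_{\mathbb{R}}b(t)\,\rd t$ gives
\[
\delta\widehat{b}(0)\!\sum_{(q,\ba)}\om\!\left(\tfrac{q}{Q}\right)\rho^{[\p]}_{\ell\p}\!\left(\tfrac{\ba}{q}\right)=c_{\rho,\om,b,\p}\,2^{-\ell\p(n-1)}\delta Q^{n}+O_A(Q^{-A}),
\]
as required, since $\delta<1/2$ only improves the remainder.

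The only real content is controlling the non-zero Fourier modes in Step 2; this is precisely where the cut-off $\ell\leq L_\p(Q)$ is invoked to guarantee the frequency argument $q/2^{\ell\p}$ is $\gg Q^\varepsilon$, yielding super-polynomial savings. No genuine obstacle arises; the remainder of the argument is standard error bookkeeping in Poisson summation.
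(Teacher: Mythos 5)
Your proof is correct and follows essentially the same route as the paper: Poisson summation in $\ba$ (with the hypothesis $\ell\leq L_\p(Q)$ forcing $q/2^{\ell\p}\gg Q^\varepsilon$ so that non-zero frequencies are negligible), followed by Poisson summation in $q$ on the smooth function $\om(\cdot/Q)(\cdot)^{n-1}$, and finally multiplication by $\delta\widehat b(0)=\delta\int b$. The only cosmetic difference is that the paper first records $\widehat{\rho^{[\p]}_{\ell\p}}(q\ba)=2^{-\ell\p(n-1)}\widehat{\rho^{[\p]}}(2^{-\ell\p}q\ba)$ and then isolates the zero mode, whereas you absorb the rescaling directly into the Fourier transform of $\rho^{[\p]}$; the content is identical.
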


\begin{proof}
By Poisson summation and a change of variables,
\begin{equation}\label{eq: sum a}
\sum_{\mathbf{a}\in\mathbb{Z}^{n-1}}
\rho^{[\p]}_{\ell\p}\left(\frac{\mathbf{a}}{q}\right)
= q^{n-1}\sum_{\mathbf{a}\in\mathbb{Z}^{n-1}}
\widehat{\rho^{[\p]}_{\ell\p}}\left(q\mathbf{a}\right).
\end{equation}
The zero mode $\ba = \bzero$ contributes 
$q^{n-1} \widehat{\rho^{[\p]}_{\ell\p}}(\mathbf{0})$.
To estimate the contribution of $\ba \not = \bzero$,
%to the right hand side 
we make two observations. 
Firstly, as before, a change of variables implies
$ \widehat{\rho^{[\p]}_{\ell\p}}(q \ba)
=
2^{-\ell \p(n-1)}
\widehat{\rho^{[\p]}}(2^{-\ell \p} q\ba).
$
Secondly, $0\leq \ell \leq L_p(Q)$
ensures $2^{-\p \ell} Q \gg Q^{\varepsilon}$.
Hence, by utilizing the rapid decay of $\widehat{\rho^{[\p]}}$,
we deduce that the contribution of 
$\ba \neq \bzero $ to \eqref{eq: sum a}
is $O_A(Q^{-A})$ for any $A>1$.

Therefore
$$
\sum_{(q,\mathbf{a}) \in \bZ \times \mathbb{Z}^{n-1}}
     \om(\frac{q}{Q})
 \rho^{[\p]}_{\ell\p}\left(\frac{\mathbf{a}}{q}\right)
= 
\sum_{q\in \bZ}
\om(\frac{q}{Q}) \big(q^{n-1} 
2^{-\ell \p(n-1)} \widehat{\rho^{[\p]}}
(\mathbf{0}) +O_A(Q^{-A})\big).
$$
Let $\Omega (t):= \om (t) t^{n-1}$
and observe $\om(\frac{q}{Q}) q^{n-1} = Q^{n-1} \Omega(\frac{q}{Q}) $.
By Poisson summation,
$$
\sum_{q\in \bZ}
\om(\frac{q}{Q}) q^{n-1} =
Q^{n-1}
\sum_{q\in \bZ}
\Omega(\frac{q}{Q}) 
=
Q^{n-1}
\sum_{r\in \bZ} Q
\widehat{\Omega}(Qr).
$$
The right hand side equals $Q^n \widehat{\Omega}(0)$ 
up to an error of $O_A(Q^{-A})$. Since 
$\widehat{\Omega}(0) = 
\int_{-\infty}^{\infty} \om (t) t^{n-1} \rd t,
$
we infer

\begin{align*}
\sum_{q\in \bZ}
\om(\frac{q}{Q}) 
\sum_{\mathbf{a}\in\mathbb{Z}^{n-1}}
\rho^{[\p]}_{\ell\p}\left(\frac{\mathbf{a}}{q}\right)
& = 
2^{-\ell\p (n-1)}\int_{\bRn} 
%w(\bx) 
\rho^{[\p]} (\bx) \, \rd \bx
\sum_{q\in \bZ}
\om(\frac{q}{Q}) q^{n-1}+O_A(Q^{-A})\\
& = 
2^{-\ell\p (n-1)}Q^n
\int_{\bRn} %w(\bx) 
\rho^{[\p]} (\bx) \, \rd \bx
\int_{-\infty}^{\infty} \om (t) t^{n-1} \rd t+O_A(Q^{-A})
\end{align*}
which produces the required relation.
\end{proof}

To proceed, we need to dispose of bad regimes where 
$r$ is small in terms of $\ell$ and $q$. 
We define\index{frequencycut@$R_{-}$: 
(dyadic) cut-off parameter to remove
Knapp caps in frequency space}
\begin{equation}
    \label{eq: R Rminus}
    R_{-}:=\frac{\log (2^{d\ell} Q^{\varepsilon -1})}{\log 2},\qquad R:=\frac{\log J}{\log 2}=\frac{\log (Q^{\varepsilon} \delta^{-1})}{\log 2}.
\end{equation}

For each $\delta\in (0, 1/2), Q\geq 1$ and $\ell\in \mathbb{Z}_{\geq 0}$, we gather the set of small $r$ into a `bad set' $\mathscr{B}(\delta, Q, \ell)$
given by 
\index{frequencycut2@$\mathscr{B}(\delta, Q, \ell)$: 
bad dyadic ranges in frequency space 
(there is no oscillation)}
\[
\mathscr{B}(\delta, Q, \ell):=
\{ r\in \bZ \cap [0,R]:\,Q 2^r \leq2^{d\ell}Q^{\varepsilon}\}
\]
and the remaining good set 
\begin{equation}\label{def: good set}
\mathscr{G}(\delta, Q, \ell):=
\{r\in \bZ_{\geq 0}:
\,2^{d\ell}Q^{\varepsilon-1}<2^r<Q^{\varepsilon}\delta^{-1}\}=\{r\in \bZ_{\geq 0}:\,R_{-}<r<R\}.
\end{equation}
The key point is that for $\ell \leq L_{\p}(Q)$ and all \index{frequencycut3@$\sG(\delta, Q, \ell)$: 
good dyadic ranges in frequency space (there is
oscillation)}$r \in \sG(\delta, Q, \ell)$, 
any $(q,j)\in Q \cdot \supp{(\om)} \times 2^r \cdot \supp{(\om)}$
satisfies the inequalities
\begin{equation*}
qj>2^{d\ell-2}Q^{\varepsilon},
\,q>2^{\ell\p-2}Q^{\varepsilon}.
\end{equation*}
\begin{remark}
If $\ell$ is so small that $2^{d \ell-3} <Q^{1-\varepsilon}$,
then the bad set $\mathscr{B}(\delta, Q, \ell)$
is empty.
\end{remark}
The next lemma bounds the contribution of
the bad set $\mathscr{B}(\delta, Q, \ell)$. Recall that $d\geq 2$ and $n\geq 3$.

\begin{lem}\label{lem: contr bad set 1}
 (i) For all $\delta\in (0, 1/2)$, $Q\geq 1$ and $0\leq \ell \leq \fL_\1(\delta, Q)$, we have
\[
\sum_{r\in\sB(\delta, Q, \ell)}\mathfrak{N}^{\rho}(\delta,Q, \ell,\1, \r)\ll 
\begin{cases}
2^{-\ell((n-1)-d)}\delta Q^{n-1+\varepsilon}, & \mathrm{if}\,2
\leq d\leq n-1,\\
\left(\frac{\delta}{Q}\right)^{\frac{(n-1)}{d}}Q^{n+\varepsilon}, 
& \mathrm{if}\,n-1\leq d.
\end{cases}
\]

(ii) For all $\delta\in (0, 1/2)$, $Q\geq 1$ and $0\leq \ell \leq \fL_{\d1}(\delta, Q)$, we have \[
\sum_{r\in\sB(\delta, Q, \ell)}\mathfrak{N}^{\rho}(\delta,Q, \ell,\d1, \r)\ll 2^{-\ell((\d1)(n-1)-d)}\delta Q^{n-1+\varepsilon}.
\]
\end{lem}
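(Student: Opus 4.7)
The plan is to estimate each $\mathfrak{N}^{\rho}(\delta,Q,\ell,\p,\r)$ trivially, exploiting the fact that on the bad set there is no meaningful oscillation in $j$ to harvest, and then to sum a geometric series in $r$. I will take absolute values throughout the defining sum. The factor $\delta|\widehat{b}(j\delta)|$ is bounded by $\delta\|b\|_{1}$, and $\om(j/2^{r})$ is supported on $j\asymp 2^{r}$, contributing a count of $O(2^{r})$. For each fixed $q\asymp Q$, the support constraint $\ba/q\in\supp(\rho^{[\p]}_{\ell\p})\subseteq 2^{-\ell\p}\supp(\rho^{[\p]})$ forces $\ba$ into a Euclidean ball of radius $O(2^{-\ell\p}q)$, giving at most $O((Q 2^{-\ell\p})^{n-1}+1)$ choices. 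The restriction $\ell\leq \fL_{\p}(Q)$ ensures $Q 2^{-\ell\p}\geq Q^{\varepsilon}\geq 1$, so the $+1$ is absorbed. Summing over $q\asymp Q$ adds a further factor of $Q$, and combining all of the above yields
\[
\mathfrak{N}^{\rho}(\delta,Q,\ell,\p,\r)\ll \delta\cdot 2^{r}\cdot Q^{n}\cdot 2^{-\ell\p(n-1)}.
\]

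Next, I will sum over $r\in\sB(\delta,Q,\ell)$. By definition of the bad set, $2^{r}\leq 2^{d\ell}Q^{\varepsilon-1}$, so the geometric series is dominated by its largest term, producing the master bound
\[
\sum_{r\in\sB(\delta,Q,\ell)}\mathfrak{N}^{\rho}(\delta,Q,\ell,\p,\r)\ll 2^{\ell(d-\p(n-1))}\delta Q^{n-1+\varepsilon}.
\]
For part (ii), with $\p=\d1$, the exponent is $d-(d-1)(n-1)=-((d-1)(n-1)-d)$, which is the shape claimed. A quick check ($d\geq 2$, $n\geq 3$) verifies $(d-1)(n-1)-d\geq 0$, so no cut-off on $\ell$ is needed. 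For part (i), with $\p=\1$, the master bound becomes $2^{-\ell((n-1)-d)}\delta Q^{n-1+\varepsilon}$, which is already the claimed bound in the range $2\leq d\leq n-1$ where the exponent on $2^{-\ell}$ is non-negative.

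The only nontrivial case is part (i) with $d\geq n-1$: here $2^{\ell(d-(n-1))}$ grows in $\ell$, so I will absorb the growth using the cut-off $\ell\leq\fL_{\1}(\delta,Q)\leq L(\delta,Q)=\log_{2}(\delta^{-1}Q)/d$, giving
\[
2^{\ell(d-(n-1))}\leq (\delta^{-1}Q)^{1-(n-1)/d}.
\]
A routine algebraic rearrangement then turns $2^{\ell(d-(n-1))}\delta Q^{n-1+\varepsilon}$ into $(\delta/Q)^{(n-1)/d}Q^{n+\varepsilon}$, as desired.

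There is no real obstacle; the argument is essentially a volume count plus geometric summation. The only subtlety is bookkeeping of which of the two competing cut-offs in $\fL_{\p}=\min(L,L_{\p})$ is binding: for part (i), case $d\geq n-1$, it is $L(\delta,Q)$ that controls the growing exponential in $\ell$, and this is precisely the cut-off stemming from homogeneity (Step 2 in the outline) rather than from the $\ba$-count being non-degenerate.
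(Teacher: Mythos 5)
Your proposal is correct and follows essentially the same route as the paper: a trivial absolute-value estimate with a lattice-point/volume count for the $\ba$-sum (using $\ell \leq L_\p(Q)$ so that $Q2^{-\ell\p}\geq Q^\varepsilon\geq 1$ absorbs the $+1$), a geometric series in $r$ controlled by the defining condition $2^r\leq 2^{d\ell}Q^{\varepsilon-1}$ of the bad set, and finally, in the case $d\geq n-1$ of part (i), the bound $2^\ell\leq(\delta^{-1}Q)^{1/d}$ from $\ell\leq L(\delta,Q)$ to control the growing $2^{\ell(d-(n-1))}$ factor. The case analysis and algebraic rearrangements agree with the paper's proof.
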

\begin{proof}
If $Q2^r > 2^{d\ell} Q^\varepsilon$,
then $\sB(\delta, Q, \ell)$ is empty and there is nothing to show.
For $2^r\leq 2^{d\ell} Q^{-1}Q^\varepsilon$ and $\p\in\{\1, \d1\}$, we have
\[
%\sum_{0\leq\ell\leq L}
\mathfrak{N}^{\rho}(\delta,Q, \ell,\p, \r)=
\sum_{(j,q,\mathbf{a})\in\bZ^2 \times \mathbb{Z}^{n-1}}
\om(\frac{q}{Q})
\rho^{[\p]}_{\ell\p}\left(\frac{\mathbf{a}}{q}\right)
\om (\frac{j}{2^r}) \delta \widehat{b}(j\delta)
e(jqf_{\p}(\mathbf{a}/q))\]\[\ll 
\delta
\sum_{(q,\ba) \in \bZ \times \mathbb{Z}^{n-1}}
\om(\frac{q}{Q}) 
\rho^{[\p]}_{\ell\p}\left(\frac{\mathbf{a}}{q}\right)
% \sum_{2^r\ll j\ll \min(2^r,Q^{\varepsilon}2^{d\ell}/Q)}
\sum_{j\asymp 2^r}\vert \widehat{b}(\delta j) \vert .
\]
Observe
\[
\sum_{\mathbf{a}\in\mathbb{Z}^{n-1}}
\rho^{[\p]}_{\ell\p}\left(\frac{\mathbf{a}}{q}\right)\ll
(2^{-\ell\p}q +1)^{n-1}.
\]
Thus we infer
\begin{align*}
%T_{1} 
\mathfrak{N}^{\rho}
(\delta,Q, \ell,\p, \r) 
\ll%\sum_{\ell\leq L}
\delta 2^r\sum_{q \in \bZ}
\om(\frac{q}{Q}) (2^{-\ell\p}q +1)^{n-1} 
 \ll 
\delta 2^r Q (2^{-\ell\p}Q +1)^{n-1}.
\end{align*}
Since currently $Q\leq Q^{\varepsilon}2^{d\ell-r}$, summing up in $r\in \sB(\delta, Q, \ell)$, we get
\begin{equation}
    \label{eq bl}
    \sum_{r\in\sB(\delta, Q, \ell)}\mathfrak{N}^{\rho}(\delta,Q, \ell,\p, \r)\ll 2^{d\ell}Q^{\varepsilon-1}\delta Q(2^{-\ell\p}Q+1)^{n-1}.
\end{equation}
Recall that when $\p=1$, then 
%$2^{\ell}\leq Q^{\frac{2}{d}}\leq Q$. 
$2^{\ell}\leq 2^{\fL_{\1}}\leq Q^{1-\varepsilon}$. Thus, 
$$\sum_{r\in\sB(\delta, Q, \ell)}\mathfrak{N}^{\rho}(\delta,Q, \ell,\1, \r)\ll 2^{\ell(d-(n-1))}\delta Q^{n-1+\varepsilon}.$$
%$$\ll 2^{\ell(d-\p(n-1))}\delta Q^{n-1+\varepsilon}.$$
Using the fact that $2^{\ell}\leq 2^{\fL_1}\leq 2^{L}\leq \left(\delta^{-1}Q\right)^{\frac{1}{d}}$, we see that if $n-1 \leq d$, then 
\begin{align*}
\sum_{r\in\sB(\delta, Q, \ell)}\mathfrak{N}^{\rho}(\delta,Q, \ell,\1, \r)&\ll%\sum_{\ell\leq\frac{\log(Jq)}{d\log2}}
2^{L(d-(n-1))}\delta Q^{n-1+\varepsilon} 
\ll
(\delta^{-1}Q)^{\frac{d-(n-1)}{d}}\delta Q^{n-1+\varepsilon} 
\\&=\left(\frac{\delta}{Q}\right)^{\frac{(n-1)}{d}}Q^{n+\varepsilon}.    
\end{align*}
This proves (i). To prove (ii), we set $\p=\d1$ in \eqref{eq bl} and use the fact that $2^{\ell(\d1)}\leq 2^{(\d1)\fL_{\d1}}\leq Q^{1-\varepsilon}$ (recall \eqref{eq: 2^Lp}), to infer that
$$\sum_{r\in\sB(\delta, Q, \ell)}\mathfrak{N}^{\rho}(\delta,Q, \ell,\d1, \r)\ll 2^{\ell(d-(\d1)(n-1))}\delta Q^{n-1+\varepsilon}.$$
This establishes part (ii). Note that since $\frac{d}{\d1}\leq 2\leq n-1$ for $n\geq 3$, the expression above has a non-positive power in $2^{\ell}$.
\end{proof}

\section{From Counting Rational Points to Oscillatory Integrals} \label{sec: counting rational points to oscillatory}
Throughout this section, the weight function 
%$\rho$ 
can either be 
$\rho^{[\1]}$ or $\rho^{[\d1]}$.
As before, we analyze both cases simultaneously 
by considering $\rho^{[\p]}$ with $\p\in \{1, \d1\}$. 
The implicit constants 
occurring in this section are allowed to depend on 
%the choice of $\p$ 
$\rho$ without further mention. 
We recall
\begin{equation*}
    %\label{eq: rel points orig}
    \mathfrak{N}^{\rho}(\delta,Q, \ell,\p, \r) =
    \sum_{(j, q,\ba)\in \bZ^2 \times \bZn}\om(\frac{q}{Q})
    \om(\frac{j}{2^r})
   % \sum_{\mathbf{a}\in\mathbb{Z}^{n-1}}
    \rho^{[\p]}_{\ell\p}\left(\frac{\mathbf{a}}{q}\right)\delta  \widehat{b}(j\delta)
    e(jqf_{\p}(\mathbf{a}/q)).
\end{equation*}
Define 
\begin{equation*}
%\label{def: E}
    \fE^{\rho}(\delta,Q, \ell, \p)
    := \sum_{r\in \sG(\delta, Q, \ell)} \fN^{\rho}(\delta,Q, \ell, \p, \r)
\end{equation*}
for $\ell \in \{0,\ldots, \fL_p(\delta,Q)\}$.
Lemma \ref{lem: contr bad set 1} 
ensures that for all $Q\geq 1$, $\delta\in (0, 1/2)$ and \ellponecond, we have
\begin{align}
    \label{eq frakN p1}
    \mathfrak{N}^{\rho}(\delta,Q, \ell, \1) & =
    c_{\rho,\om, b, \1}
    2^{-\ell(n-1)}\delta Q^{n}
    + 
    \fE^{\rho}(\delta,Q, \ell, \1)
    \\ + & 
    \begin{cases}
   % c_{\rho,\om, b, \1}
    %2^{-\ell(n-1)}\delta Q^{n}+
    %\sum_{r\in \sG(\delta, Q, \ell)} \fN^{\rho}(\delta,Q, \ell, \1, \r)
    O(2^{-\ell(n-1-d)}\delta Q^{n-1+\varepsilon}), &\text{ for } 2\leq d\leq n-1.\\
    %c_{\rho,\om, b, \1}
    %2^{-\ell(n-1)}\delta Q^{n}+
    %\sum_{r\in \sG(\delta, Q, \ell)} \fN^{\rho}(\delta,Q, \ell, \1, \r)
    O\bigg( 
    \left(\frac{\delta}{Q}\right)^{\frac{n-1}{d}}
    Q^{n(1+\varepsilon)}\bigg) &\text{ for } d \geq n-1.
    \end{cases}\nonumber
\end{align}
Similarly, for $Q\geq 1$, $\delta\in (0, 1/2)$ and \ellpdcond, we have
\begin{align}
    \label{eq frakN d1}
    \mathfrak{N}^{\rho}(\delta,Q, \ell, \d1) & =
    c_{\rho,\om, b, \d1}
    2^{-\ell(\d1)(n-1)}\delta Q^{n}\\ & +
    %\sum_{r\in \sG(\delta, Q, \ell)} \fN^{\rho}(\delta,Q, \ell, \d1, \r)
    \fE^{\rho}(\delta,Q, \ell, \d1)
    +O(2^{-\ell((\d1)(n-1)-d)}\delta 
    Q^{n-1+\varepsilon})\nonumber.
\end{align}
Hence we now focus on bounding 
$\fN^{\rho}(\delta,Q, \ell, \p, \r)$ for $\p\in \{1, d-1\}$, 
\ellpcond, and $r \in \sG(\delta, Q, \ell)$.
The Poisson summation formula for $\bZn$ yields
\[
\sum_{\mathbf{a}\in\mathbb{Z}^{n-1}}\rho^{[\p]}_{\ell\p}\left(\frac{\mathbf{a}}{q}\right)e(jqf_{\p}(\mathbf{a}/q))=\sum_{\mathbf{k}\in\mathbb{Z}^{n-1}}\int_{\mathbb{R}^{n-1}}\rho^{[\p]}_{\ell\p}(\frac{\mathbf{x}}{q})e(qjf_{\p}(\mathbf{x}/q)-\left\langle \mathbf{k},\mathbf{x}\right\rangle )\,\mathrm{d}\mathbf{x}
\]
where $\langle \cdot,\cdot \rangle$ denotes the 
canonical Euclidean inner-product in $\bR^{n-1}$.
After a change of variables, $\mathbf{x}\mapsto q\mathbf{x}$, the
integral equals 
\[
q^{n-1}\int_{\mathbb{R}^{n-1}}\rho^{[\p]}_{\ell\p}(\mathbf{x})e(qj(f_{\p}(\mathbf{x})-\left\langle \mathbf{k}/j,\mathbf{x}\right\rangle ))\,\mathrm{d}\mathbf{x}.
\]
Changing variables again via $\mathbf{x}\mapsto2^{\ell\p}\mathbf{x}$
to normalise the amplitude function to $\rho^{[\p]}$, we conclude that 
\[
\sum_{\mathbf{a}\in\mathbb{Z}^{n-1}}
\rho^{[\p]}_{\ell\p}\left(\frac{\mathbf{a}}{q}\right)
e(jqf_{\p}(\mathbf{a}/q))=q^{n-1}2^{-\ell\p(n-1)}
\sum_{\mathbf{k}\in\mathbb{Z}^{n-1}}I(\ell,\p,q,j,\mathbf{k})
\]
where 
\[
I(\ell,\p,q,j,\mathbf{k}):=
\int_{\mathbb{R}^{n-1}}
%\rho^{[\p]}_{\ell\p}(2^{-\ell\p}\mathbf{x})\,
\rho^{[\p]}(\mathbf{x})\,
e(qj(f_{\p}(2^{-\ell\p}\mathbf{x})-
2^{-\ell\p}\left\langle \mathbf{k}/j,\mathbf{x}\right\rangle ))\,
\mathrm{d}\mathbf{x}.
\]
Because $f_\p$ is homogeneous
of degree $\frac{d}{\p}$, we have 
\[
qj(f_{\p}(2^{-\ell\p}\mathbf{x})-2^{-\ell\p}\left\langle 
\mathbf{k}/j,\mathbf{x}\right\rangle )=
qj2^{-d\ell}(f_{\p}(\mathbf{x})-
2^{(d-\p)\ell}\left\langle 
\mathbf{k}/j,\mathbf{x}\right\rangle ).
\]
Thus,
\begin{equation}
I(\ell,\p,q,j,\mathbf{k})=\int_{\mathbb{R}^{n-1}}\rho^{[\p]}(\mathbf{x})\,e(qj2^{-d\ell}(f_{\p}(\mathbf{x})-2^{(d-\p)\ell}\left\langle \mathbf{k},\mathbf{x}\right\rangle ))\,\mathrm{d}\mathbf{x}.
\label{eq: osc int}
\end{equation}
The upshot is that
\begin{align*}
&\mathfrak{N}^{\rho}(\delta,Q, \ell, \p) \leq \\
& c_{\rho, \om, b, \p}
2^{-\ell\p(n-1)}
\Big( \delta Q^{n}+\sum_{r\in \mathscr{G}(\delta, Q, \ell)}
\sum_{(j, q)\in \bZ^2 }\om(\frac{q}{Q})
    \om(\frac{j}{2^r})q^{n-1}
    %2^{-\ell\p(n-1)}
    \delta\widehat{b}(j\delta)
    \sum_{\mathbf{k}\in\mathbb{Z}^{n-1}}I(\ell,\p,q,j,\mathbf{k})
\Big).
%\label{eq:nwl}
\end{align*}
Recall from \eqref{eq W1def} and \eqref{eq Wd1def} that we consider $\rho^{[\1]}=\rho$, resp. $\rho^{[\d1]}$,
 supported in 
$$\mathscr{W}_{\1}:=
\sB(\bx_1, \tfrac{\epsilon_{\bx_1}}2),
\qquad \mathrm{resp.}\,\,\,
\mathscr{W}_{\d1}:=\nabla f_{\1}(\mathscr{W}_{\1}).
$$
Further, we set
$$\mathscr{V}_{\1}:=\sB(\bx_1, {\epsilon_{\bx_1}}),\, \qquad \mathscr{V}_{\d1}:=\nabla f_{\1}(\mathscr{V}_{\1}).$$
Then for $\p\in \{\1, \d1\}$, using the fact that $\nabla f_{\1}: \mathscr{V}_{\1}\to \mathscr{V}_{\d1}$ is a diffeomorphism with inverse $\nabla f_{\d1}$, we get
$$\mathscr{W}_{\p}\subseteq \mathscr{V}_{\p},\, \qquad\mathscr{W}_{d-\p}=\nabla f_{\p}(\mathscr{W}_{\p}),\, \qquad \mathscr{V}_{d-\p}=\nabla f_{\p}(\mathscr{V}_{\p}).$$
Finally, we define
\[
\gamma_{\ell, \p}:=2^{-(d-\p)\ell}
\frac{\mathrm{dist}(\partial\mathscr{V}_{d-\p},
\partial\mathscr{W}_{d-\p})}{2^{10}}.
\]
For a fixed choice of $0\leq \ell\leq \Lc(\delta, Q),$ $q\leq Q$, and $j\leq J$, we
denote by $\mathscr{K}_{1}(\ell,\p,j)$ the set of critical $2^{(d-\p)\ell}\mathbf{k}/j$
located on the gradient manifold $\mathscr{W}_{\p}$, by $\mathscr{K}_{2}(\ell,\p,j)$
the set of critical $2^{(d-\p)\ell}\mathbf{k}/j$ located near (but
not on) the gradient manifold $\mathscr{W}_{\p}$, and finally by $\mathscr{K}_{3}(\ell,\p,j)$
the set of critical $2^{(d-\p)\ell}\mathbf{k}/j$
which are somewhat distant from
the gradient manifold $\mathscr{W}_{\p}$; more precisely, we let 
\begin{align*}
\mathscr{K}_{1}(\ell,\p,j) & :=\{\mathbf{k}\in\mathbb{Z}^{n-1}:\,
\mathrm{dist}(\mathbf{k}/j,2^{-(d-\p)\ell}\mathscr{W}_{d-\p}) =0\},
%\label{def: K_1,ell,q}
\\
\mathscr{K}_{2}(\ell,\p,j) & :=\{\mathbf{k}\in\mathbb{Z}^{n-1}:\,
\mathrm{dist}(\mathbf{k}/j,2^{-(d-\p)\ell}\mathscr{W}_{d-\p})\in(0,\gamma_{\ell, \p})\},\\
\mathscr{K}_{3}(\ell,\p,j) & :=\{\mathbf{k}\in\mathbb{Z}^{n-1}:\,
\mathrm{dist}(\mathbf{k}/j,2^{-(d-\p)\ell}\mathscr{W}_{d-\p})\geq\gamma_{\ell, \p}\}.
\end{align*}
Each $\mathscr{K}_{i}(\ell,\p,j)$ contributes to 
$\fN^{\rho}(\delta,Q, \ell, \p, \r)$ the term
\begin{equation}\label{def Ni}
N^{\rho}_i(\delta,Q, \ell, \p, \r):=2^{-\ell\p(n-1)}
\sum_{q,j \in \bZ} 
\om(\frac{q}{Q}) 
\om(\frac{j}{2^r})
q^{n-1}
\delta 
\widehat{b}(j\delta)
\sum_{\mathbf{k}\in\mathscr{K}_{i}(\ell,\p,j)}
I(\ell,\p,q,j,\mathbf{k})
\end{equation}
where $1\leq i\leq 3$.
We show now that $N_{3}^{\rho}(\delta,Q, \ell, \p, \r)$
(the ``non-stationary phase regime'') is not contributing all that
much. After that we demonstrate that also $N_{2}^{\rho}(\delta,Q, \ell, \p, \r)$
(the ``intermediate stationary phase regime'') is readily controllable.
This then leaves us with understanding $N_{1}^{\rho}(\delta,Q, \ell, \p, \r)$
(the ``proper stationary phase regime'') at which point we utilize a duality
argument like in \cite{Huang rational points} but use a multi-term stationary expansion.

\subsection{Non-stationary Phase Contribution}

\begin{lem}[Non-stationary phase]
\label{lem: non stationary phase regime}
Uniformly for $0\leq \ell\leq \Lc(\delta,Q)$,
$r\in \sG(\delta,Q,\ell)$,
and $\p\in \{1,d-1\}$,
we have 
\[
N^{\rho}_3(\delta,Q, \ell, \p, \r)\ll2^{-\ell\p(n-1)}.
\]
\end{lem}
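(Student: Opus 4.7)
The approach is to apply the Non-Stationary Phase Principle (Lemma \ref{lem: non-stationary phase}) to each oscillatory integral $I(\ell,\p,q,j,\mathbf{k})$ from \eqref{eq: osc int} with oscillation parameter $\lambda := qj 2^{-d\ell}$ and phase $\phi(\mathbf{x}) := f_\p(\mathbf{x}) - 2^{(d-\p)\ell}\langle \mathbf{k}/j, \mathbf{x}\rangle$. The key geometric input is that, for $\mathbf{x} \in \mathrm{supp}(\rho^{[\p]}) \subseteq \mathscr{W}_\p$, one has $\nabla f_\p(\mathbf{x}) \in \mathscr{W}_{d-\p}$, so membership $\mathbf{k} \in \mathscr{K}_3(\ell, \p, j)$ translates directly into the uniform lower bound
\[
\|\nabla\phi(\mathbf{x})\|_2 \;=\; \|\nabla f_\p(\mathbf{x}) - 2^{(d-\p)\ell}\mathbf{k}/j\|_2 \;\geq\; 2^{(d-\p)\ell}\gamma_{\ell,\p},
\]
which is a positive constant depending only on $f_1$. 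Since $r \in \sG(\delta, Q, \ell)$ also forces $\lambda \geq Q^\varepsilon/4$, Lemma \ref{lem: non-stationary phase} is poised to produce arbitrarily fast polynomial decay in $\lambda$.

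To obtain usable decay in $\mathbf{k}$ as well, I would dichotomise $\mathscr{K}_3$ into a \emph{near} regime $|\mathbf{k}|/j \leq 2M \cdot 2^{-(d-\p)\ell}$ (with $M$ the diameter of $\mathscr{W}_{d-\p}$) and its \emph{far} counterpart. In the near regime only the uniform lower bound above is used, yielding $|I| \ll_A \lambda^{-A}$ with at most $O((1+j 2^{-(d-\p)\ell})^{n-1})$ eligible $\mathbf{k}$. In the far regime one has the sharper $\|\nabla\phi\|_2 \gtrsim 2^{(d-\p)\ell}|\mathbf{k}|/j$; to apply Lemma \ref{lem: non-stationary phase} with constants \emph{independent} of $\mathbf{k}$ I would rescale by $T := 1 + 2^{(d-\p)\ell}|\mathbf{k}|/j$, so that $\|\phi/T\|_{C^{A+1}(\mathrm{supp}\,\rho^{[\p]})} = O(1)$ while $\|\nabla(\phi/T)\|_2 \gtrsim 1$, producing $|I| \ll_A (\lambda T)^{-A} \asymp (q|\mathbf{k}| 2^{-\p\ell})^{-A}$. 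For any $A > n-1$, both regimes collapse into the common estimate $\sum_{\mathbf{k}\in\mathscr{K}_3}|I| \ll_A q^{-A} j^{n-1-A} 2^{\ell[dA-(d-\p)(n-1)]}$.

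Summing against the weight $q^{n-1}\delta|\widehat b(j\delta)|$ over $q \asymp Q$ and $|j| \asymp 2^r$ (bounding $|\widehat b|$ trivially and using $\sum_{q \asymp Q} q^{n-1-A} \ll Q^{n-A}$), and invoking the crucial lower bound $Q 2^r \geq Q^\varepsilon 2^{d\ell}$ valid on $\sG(\delta, Q, \ell)$ to control $(Q 2^r)^{n-A}$, should yield an estimate of the shape
\[
|N^{\rho}_3(\delta,Q, \ell, \p, r)| \;\ll_A\; 2^{-\ell\p(n-1)} \cdot \delta \cdot Q^{\varepsilon(n-A)} \cdot 2^{\ell[\p(n-1)+d]}.
\]
Using $\ell \leq \fL_\p(\delta, Q)$ to bound $2^\ell \leq Q^{(1-\varepsilon)/\p}$ and choosing $A$ large in terms of $n, d, \p, \varepsilon$ converts $Q^{\varepsilon(n-A)}$ into arbitrary polynomial decay, dominating all remaining factors and delivering the claimed $\ll 2^{-\ell\p(n-1)}$.

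The main technical obstacle lies in the second paragraph: the implicit constant in Lemma \ref{lem: non-stationary phase} depends on $\|\phi\|_{C^{A+1}}$, which grows linearly in $|\mathbf{k}|$ because of the linear term in $\phi$, so a single direct application of the lemma cannot be uniform over $\mathbf{k} \in \mathscr{K}_3$. The rescaling trick above is the cleanest repair; alternatively one could re-derive the non-stationary phase bound via the transport vector field $\nabla\phi/\|\nabla\phi\|_2^2$ and directly verify that its $\mathbf{x}$-derivatives remain bounded uniformly in $\mathbf{k}$ on $\mathrm{supp}(\rho^{[\p]})$, but this is less economical.
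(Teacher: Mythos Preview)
Your approach is correct and follows essentially the same route as the paper: normalize the phase so that $\|\nabla\phi\|_2$ is bounded below by an absolute constant while $\|\phi\|_{C^{A+1}}$ stays bounded, apply Lemma~\ref{lem: non-stationary phase}, and sum. The paper achieves the normalization in one stroke by dividing through by $\mathrm{dist}(\mathbf{k}, j\,2^{-(d-\p)\ell}\mathscr{W}_{d-\p})$ rather than your near/far split with $T=1+2^{(d-\p)\ell}|\mathbf{k}|/j$; this lets the $\mathbf{k}$-sum be compared directly to the integral $\int (q\,2^{-\ell \p}\,\mathrm{dist}(\mathbf{y}, j\,2^{-(d-\p)\ell}\mathscr{W}_{d-\p}))^{-A}\,\mathrm{d}\mathbf{y}$ and finishes using only $q\,2^{-\ell \p}\gg Q^{\varepsilon}$, without separately invoking $qj2^{-d\ell}\gg Q^{\varepsilon}$ or $2^{\ell}\le Q^{(1-\varepsilon)/\p}$.

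One small imprecision: your ``common estimate'' $\sum_{\mathbf{k}\in\mathscr{K}_3}|I|\ll q^{-A}j^{\,n-1-A}2^{\ell[dA-(d-\p)(n-1)]}$ is only what both regimes give when $j\,2^{-(d-\p)\ell}\gtrsim 1$. In the opposite case the near part has $O(1)$ lattice points and the correct (larger) bound is $(qj\,2^{-d\ell})^{-A}$. This does not damage the conclusion, since $(qj\,2^{-d\ell})^{-A}\ll Q^{-\varepsilon A}$ and the remaining $q,j$-sums contribute at most $Q^{n+\varepsilon}$, so choosing $A$ large still yields $\ll 2^{-\ell \p(n-1)}$.
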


\begin{proof}
Recall that,
as soon as $Q$ is sufficiently, 
we have $qj>2^{\ell d}$ for each $j\asymp 2^r$ 
because $r\in\sG(\delta,Q,\ell)$. We also recall 
the definition of $I(\ell, \p, q, j, \bk)$ from \eqref{eq: osc int}. The phase function of this integral can be re-expressed as
$$q2^{-\ell\p}[2^{-(d-\p)\ell}jf_\p(\bx)-\langle\bx, \bk\rangle].$$
% \[
% q>\max(Q^{-1}2^{\ell d},Q^{1-\frac{1}{n}}).
% \]
Let
\[
\phi_{3}^{j,\mathbf{k}, \ell, \p}(\mathbf{x}):=\frac{2^{-(d-\p)\ell}jf_{\p}(\mathbf{x})-\left\langle \mathbf{x},\mathbf{k}\right\rangle }{\mathrm{dist}(\mathbf{k},j2^{-(d-\p)\ell}\mathscr{W}_{d-\p})},
\,\,\mathrm{and}\,\,
\lambda_{3}(j,\mathbf{k}, \ell, \p):=
q2^{-\ell\p}\mathrm{dist}(\mathbf{k},
j2^{-(d-\p)\ell}\mathscr{W}_{d-\p}).
\]
Notice 
\[
\Vert\nabla\phi_{3}^{j,\mathbf{k}, 
\ell, \p}(\mathbf{x})\Vert_2
=\left\Vert \frac{2^{-(d-\p)\ell}j
\nabla f_{\p}(\mathbf{x})
-\mathbf{k}}{\mathrm{dist}(\mathbf{k},
j2^{-(d-\p)\ell}
\mathscr{W}_{d-\p})}\right\Vert_2
\geq1.
\]
Fix a large integer $l \geq 1$.
Now we verify that 
$\Vert \phi_{3}^{j,\mathbf{k}, \ell, \p}
\Vert _{C^{l}}$
is 
uniformly bounded in $j, \ell$ 
and $\mathbf{k}\in\mathscr{K}_{3}(\ell, \p, j)$.

Take a large constant $C>1$. First suppose
$\Vert\mathbf{k}\Vert_{2}\geq 
Cj2^{-(d-\p)\ell}$. 
Since $\mathscr{W}_{d-\p}$ (and hence $2^{-(d-\p)\ell}\mathscr{W}_{d-\p}$) is 
contained in a ball of bounded radius around the origin, we have

\[
\mathrm{dist}(\frac{\mathbf{k}}{\Vert\mathbf{k}\Vert_{2}},\frac{j}{\Vert\mathbf{k}\Vert_{2}}2^{-(d-\p)\ell}\mathscr{W}_{d-\p})\geq\mathrm{dist}(\frac{\mathbf{k}}{\Vert\mathbf{k}\Vert_{2}},\mathbf{0})-\mathrm{dist}(\mathbf{0},\frac{j}{\Vert\mathbf{k}\Vert_{2}}2^{-(d-\p)\ell}\mathscr{W}_{d-\p})\geq\frac{1}{2}.
\]
Thus for $\mathbf{x}\in\mathrm{supp}(\rho^{[\p]})$, we infer
\[
|\phi_{3}^{j,\mathbf{k}, \ell, \p}(\mathbf{x})|\ll \Bigl\Vert\frac{j2^{-(d-\p)\ell}}{\Vert\mathbf{k}\Vert_{2}}f_{\p}(\mathbf{x})-\left\langle \mathbf{k}/\Vert\mathbf{k}\Vert_{2},\mathbf{x}\right\rangle \Bigr\Vert_{2}\leq\frac{1}{C}\left\Vert f_{\p}(\mathbf{x})\right\Vert _{2}+\left\Vert \mathbf{x}\right\Vert _{2}\ll1.
\]
So $|\phi_{3}^{j, \mathbf{k}, \ell, \p}(\bx)|$
%$\Vert \phi_{3}^{j,
% \mathbf{k}, \ell, \p}\Vert _{C^l}$ 
is uniformly bounded for
all $\mathbf{k}, j$, 
and $\ell$ in this regime.

On the other hand, if $\Vert\mathbf{k}\Vert_{2}<Cj2^{-(d-\p)\ell}$
then 
\[|\phi_{3}^{j,\mathbf{k}, \ell, \p}(\mathbf{x})|=
\frac{\vert2^{-(d-\p)\ell}jf_{\p}(\mathbf{x})-\left\langle \mathbf{k},\mathbf{x}\right\rangle \vert}{\mathrm{dist}(\mathbf{k},j2^{-(d-\p)\ell}\mathscr{W}_{d-\p})}\ll\frac{\vert2^{-(d-\p)\ell}j\vert+\Vert\mathbf{k}\Vert_{2}}{j2^{-(d-\p)\ell}}\ll1+C\ll1.
\]
The exact argument, used for $\nabla \phi_3^{j, \bk, \ell, \p}$ instead of the phase function itself, also shows that the gradient is uniformly bounded from above as well.  
Further, any derivative of $\phi_{3}^{j,\mathbf{k}, \ell, \p}$ of order greater than one is of the form
\[
\frac{j2^{-(d-\p)\ell}}{\mathrm{dist}(\mathbf{k},j2^{-(d-\p)\ell}\mathscr{W}_{d-\p})}f_{\p}^{(\boldsymbol{\alpha})}(\mathbf{x})
\]
where $\boldsymbol{\alpha}\in\mathbb{N}^{n-1}$ 
is a multi-index of length at least $2$. 
By the construction of $\mathscr{K}_{3,\ell,j}$,
we have $$
2^{-(d-\p)\ell}/\mathrm{dist}(\mathbf{k}/j,2^{-(d-\p)\ell}
\mathscr{W}_{d-\p})\leq2^{-(d-\p)\ell}\gamma_{\ell, \p}^{-1}
$$
and since $\gamma_{\ell, \p}\gg
2^{-(d-\p)\ell}$, we deduce that 
\[
\Vert\phi_{3}^{j,\mathbf{k}, \ell, \p}\Vert_{C^{l}}\ll1
\]
uniformly in $\mathbf{k}\in
\mathscr{K}_{3}(\ell,\p,j)$ and $j$. 
Thus
we can use Lemma \ref{lem: non-stationary phase} with 
$\lambda_{3}=q2^{-\ell\p} \mathrm{dist}(\mathbf{k},j2^{-(d-\p)\ell}\mathscr{W}_{d-\p})$
and $A\geq n+1$ to obtain
\[
I(\ell,\p,q,j,\mathbf{k})\ll\lambda_{3}^{-A}=(2^{-\ell\p}q\mathrm{dist}(\mathbf{k},j2^{-(d-\p)\ell}\mathscr{W}_{d-\p}))^{-A}.
\]
Defining $\mathfrak{K}_{3,\ell,j}
:=\{\mathbf{y}\in\mathbb{R}^{n-1}:
\,\mathrm{dist}(\mathbf{y}, j
2^{-(d-\p)\ell}\mathscr{W}_{d-\p})
\geq j \gamma_{\ell, \p}\}$,
the above implies that
\[
\sum_{\mathbf{k}\in \mathscr{K}_{3}(\ell,\p,j)}
I(\ell,\p,q,j,\mathbf{k})
\ll\int_{\mathfrak{K}_{3,\ell,j}}\,(2^{-\ell\p}
q\mathrm{dist}(\mathbf{y},
j2^{-(d-\p)\ell}\mathscr{W}_{d-\p}))^{-A}\,
\mathrm{d}\mathbf{y}\ll(2^{-\ell\p}q)^{-A}
\]
where the implied constant is independent of $\ell$ and $j$. Thus,
\[
N^{\rho}_3(\delta,Q, \ell, \p, \r)\ll2^{-\ell\p(n-1)}
\sum_{q\ll Q}\sum_{j\ll J}q^{n-1}\delta 
%\widehat{b}(j\delta)
(2^{-\ell\p}q)^{-A}.
\]
Because $r\in\mathscr{G}(\delta, Q, \ell)$ we have $2^{-\ell\p}q\gg Q^{\varepsilon}$.
Thus we can take $A$ large so that $(2^{-\ell\p}q)^{-A}\leq Q^{-n-1}$.
Hence 
\[
N^{\rho}_3(\delta,Q, \ell, \p, \r)
\ll2^{-\ell\p(n-1)}\sum_{q\ll Q}
\sum_{j\ll J}q^{-2}
\delta
%\widehat{b}(j\delta)
\ll2^{-\ell\p(n-1)}.
\]
\end{proof}
Next we bound $N^{\rho}_2(\delta,Q, \ell, \p, \r)$. 

\subsection{Intermediate Regime Contribution}
%Recall $c_{\rho, \om, \p}$ from (\ref{def: c_lQ}). 
We set
\begin{equation}
\phi^{j, \mathbf{k}, \ell, \p}(\mathbf{x}):=
f_{\p}(\mathbf{x})-2^{(d-\p)\ell}\left\langle \mathbf{k}/j,\mathbf{x}
\right\rangle,
\,\, \mathrm{and}
\,\,\lambda:=2^{-d\ell}qj.\label{def: critical points phase function-1}
\end{equation}
The unique stationary point of 
$\phi^{j, \mathbf{k}, \ell, \p}$ is 
\begin{equation}
    \label{eq: def xlkj}
    \mathbf{x}_{j,\mathbf{k},\ell, \p}:=(\nabla f_{\p})^{-1}(2^{(d-\p)\ell}\mathbf{k}/j).
\end{equation}
Because $(\nabla f_{\p})^{-1}$ 
is homogeneous of degree $\frac{\p}{d-\p}$, 
we see that
\begin{equation*}
\mathbf{x}_{j,\mathbf{k},\ell, \p}=2^{\ell\p}
(\nabla f_{\p})^{-1}(\mathbf{k}/j).
%\label{eq: w_ell k j}
\end{equation*}
Hence, 
\[
\rho^{[\p]}(2^{-\ell\p}\mathbf{x}_{j,\mathbf{k},\ell, \p})=\rho^{[\p]}(2^{-\ell\p}2^{\ell\p}(\nabla f_{\p})^{-1}(\mathbf{k}/j))=\rho^{[d-\p]}(\mathbf{k}/j).
\]
\begin{lem}[Intermediate regime]
\label{lem: intermediate regime}
Uniformly for $0\leq \ell\leq \fL_{\p}$
and $r\in \sG(\delta, Q, \ell)$, 
we have 
\[
N^{\rho}_2(\delta,Q, \ell, \p, \r)\ll 2^{-\ell\p(n-1)}.
\]
\end{lem}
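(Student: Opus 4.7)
The plan is to apply the Non-Stationary Phase Principle (Lemma~\ref{lem: non-stationary phase}) to each oscillatory integral $I(\ell,\p,q,j,\mathbf{k})$ appearing in $N^{\rho}_2(\delta,Q,\ell,\p,r)$. The key observation is that although $\mathbf{y}_0:=2^{(d-\p)\ell}\mathbf{k}/j$ lies close to the dual region $\mathscr{W}_{d-\p}$ when $\mathbf{k}\in\mathscr{K}_2(\ell,\p,j)$, the \emph{strict} positivity of $\mathrm{dist}(\mathbf{y}_0,\mathscr{W}_{d-\p})$ built into the definition of $\mathscr{K}_2$ forces the stationary point $\mathbf{x}_{j,\mathbf{k},\ell,\p}$ of $\phi^{j,\mathbf{k},\ell,\p}$ outside $\mathscr{W}_{\p}$, and hence outside $\supp\rho^{[\p]}$. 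I therefore expect the phase to be genuinely non-stationary on the support of the amplitude, with a gradient bounded below by a positive constant that does not depend on any of the summation parameters.

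To implement this, I would first argue that since $\rho^{[\p]}$ has compact support in the open set $\mathscr{W}_{\p}$, its image $K:=\nabla f_{\p}(\supp\rho^{[\p]})$ is a compact subset of the open set $\mathscr{W}_{d-\p}$, so that $d_0:=\mathrm{dist}(K,\partial\mathscr{W}_{d-\p})>0$ depends only on $\rho$ and $f_{\1}$. For any $\mathbf{k}\in\mathscr{K}_2(\ell,\p,j)$, the point $\mathbf{y}_0$ lies outside $\mathscr{W}_{d-\p}$; for $\mathbf{x}\in\supp\rho^{[\p]}$, any segment from $\nabla f_{\p}(\mathbf{x})\in K$ to $\mathbf{y}_0$ is forced to cross $\partial\mathscr{W}_{d-\p}$, which yields the uniform lower bound
$$\|\nabla\phi^{j,\mathbf{k},\ell,\p}(\mathbf{x})\|=\|\nabla f_{\p}(\mathbf{x})-\mathbf{y}_0\|\geq d_0.$$
Because $\mathbf{y}_0\in\mathscr{V}_{d-\p}$, a bounded set, the $C^{A+1}$-norm of $\phi^{j,\mathbf{k},\ell,\p}$ on $\supp\rho^{[\p]}$ is also uniformly bounded in $\mathbf{k}$, $j$, $\ell$, and $\p$.

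Feeding these two inputs into Lemma~\ref{lem: non-stationary phase} with oscillatory parameter $\lambda=qj2^{-d\ell}$ should yield $|I(\ell,\p,q,j,\mathbf{k})|\ll_A \lambda^{-A}$ for every integer $A\geq 1$. Since $r\in\mathscr{G}(\delta,Q,\ell)$ guarantees $\lambda\gg Q^{\varepsilon}$, this simplifies to $|I|\ll_A Q^{-A\varepsilon}$. For the counting side, a volume estimate will produce $\#\mathscr{K}_2(\ell,\p,j)\ll j^{n-1}$, exploiting that $\mathbf{k}/j$ is confined to a bounded thickening of the bounded set $2^{-(d-\p)\ell}\mathscr{W}_{d-\p}$.

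Assembling these estimates with $|\widehat{b}(j\delta)|\ll 1$, summing over $q\asymp Q$, $j\asymp 2^r\leq Q^{\varepsilon}\delta^{-1}$, and $\mathbf{k}\in\mathscr{K}_2(\ell,\p,j)$, and invoking $\delta>Q^{\varepsilon-1}$, I expect an upper bound of the shape
$$|N^{\rho}_2(\delta,Q,\ell,\p,r)|\ll 2^{-\ell\p(n-1)}Q^{2n-1+\varepsilon-A\varepsilon}.$$
Choosing $A$ sufficiently large in terms of $n$ and $\varepsilon$ absorbs the residual polynomial in $Q$ and delivers the claimed bound. The hard part, to my mind, is the verification of the uniform gradient lower bound: this is precisely the geometric payoff of defining $\mathscr{K}_2$ so as to exclude points $\mathbf{y}_0$ lying on (the closure of) the gradient manifold, and it is what allows one to trade the potentially tiny distance $\mathrm{dist}(\mathbf{y}_0,\mathscr{W}_{d-\p})$ for the macroscopic distance $d_0$.
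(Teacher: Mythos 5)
Your proposal is correct, and it takes a genuinely different route from the paper's. The paper handles the intermediate regime with the \emph{stationary} phase principle (Theorem~\ref{thm: stationary phase higher order terms}): it observes that the unique critical point $\mathbf{x}_{j,\mathbf{k},\ell,\p}=(\nabla f_\p)^{-1}(\mathbf{y}_0)$ lies inside $\mathscr{V}_\p$ but outside $\mathrm{supp}\,\rho^{[\p]}$, so every coefficient $(\mathfrak{D}_{\tau,\phi}\rho^{[\p]})(\mathbf{x}_{j,\mathbf{k},\ell,\p})$ in the multi-term expansion vanishes; the entire contribution then comes from the error term, giving $|I|\ll\lambda^{-T}$. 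To make Theorem~\ref{thm: stationary phase higher order terms} applicable uniformly, the paper must verify uniform bounds on $\|\phi\|_{C^{3T+1}}$, $|\det H_\phi|$, and the quantity $\|\mathbf{x}-\mathbf{x}_0\|_2/\|\nabla\phi(\mathbf{x})\|_2$ (the last via Taylor expansion and shrinking $\epsilon_{\bx_1}$). You instead apply the \emph{non-stationary} phase principle (Lemma~\ref{lem: non-stationary phase}) with a uniform gradient lower bound obtained by a clean separation-of-compacts argument: $\mathbf{y}_0\notin\overline{\mathscr{W}_{d-\p}}$, while $\nabla f_\p(\mathrm{supp}\,\rho^{[\p]})$ sits at a fixed positive distance $d_0$ from $\partial\mathscr{W}_{d-\p}$, so any segment between them crosses the boundary and $\|\nabla\phi\|\geq d_0$ on the support.

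Both routes deliver $|I|\ll_A\lambda^{-A}\ll Q^{-A\varepsilon}$, and your assembly computation (volume bound $\#\mathscr{K}_2\ll 2^{r(n-1)}$, $\delta\geq Q^{\varepsilon-1}$, then $A$ large) is correct and mirrors the paper's. What your route buys is avoiding the heavier machinery: no need to compute a stationary-phase expansion, verify the non-degeneracy of the Hessian, or control the ratio $\|\mathbf{x}-\mathbf{x}_0\|/\|\nabla\phi(\mathbf{x})\|$. The small trade-off, which you should make explicit, is that your argument needs $\mathrm{supp}\,\rho^{[\p]}$ to be compactly contained in the \emph{open} set $\mathscr{W}_\p$ (so $d_0>0$); the paper is phrased with $\rho$ merely ``supported in'' $\mathscr{W}_\p$, and its stationary-phase route degrades gracefully even as the critical point approaches the support. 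In practice this is harmless: the concrete envelopes $P_\kappa^{[\p]}$ used in the bootstrap are, by the construction in \S\ref{subsec: envelope}, supported inside $\mathscr{B}(\bx_1, 5\epsilon_{\bx_1}/16)$, comfortably inside $\mathscr{W}_1=\mathscr{B}(\bx_1,\epsilon_{\bx_1}/2)$, so $d_0$ is indeed bounded below uniformly. With that caveat spelled out, your proof is complete and simpler.
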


\begin{proof} 
Fix $j$, $\ell$, and $\mathbf{k}\in\mathscr{K}_{2}(\ell,\p,j)$.
We claim
$\mathbf{x}_{j,\mathbf{k},\ell, \p}
\notin\mathscr{W}_{\p}$.
To see this, suppose 
$\mathbf{x}_{j,\mathbf{k},\ell, \p}$ 
was an element
of $\mathscr{W}_{\p}$. 
Then \eqref{eq: def xlkj} gives $$\mathbf{k}/j=2^{-(d-\p)\ell}
\nabla f_{\p}(\mathbf{x}_{j,\mathbf{k},
\ell, \p})\in2^{-(d-\p)\ell}
\nabla f_{\p}(\mathscr{W}_{\p})=2^{-(d-\p)\ell}
\mathscr{W}_{d-\p}.$$
We conclude that $\mathrm{dist}(\mathbf{k}/j,
2^{-(d-\p)\ell}\mathscr{W}_{d-\p})=0$.
This contradicts the assumption $\mathbf{k}\in\mathscr{K}_{2}(\ell,\p,j)$.
So,
$\mathbf{x}_{j,\mathbf{k},\ell, \p}\notin\mathscr{W}_{\p}$. 

We plan to use Theorem \ref{thm: stationary phase higher order terms} (stationary phase) to estimate $I(\ell,\p,q,j,\mathbf{k})$ in this regime. Take a large integer $T\geq1$ to be specified 
in due course.
Because $\mathbf{x}_{j,\mathbf{k},\ell, \p}
\notin\mathscr{W}_{\p}$,
each of the functions
$\mathfrak{D}_{\tau,\phi}\rho^{[\p]}$ (arising from Theorem \ref{thm: stationary phase higher order terms} with $u=\rho^{[\p]}$ and $\phi=\phi^{j,\bk,\ell,\p}$)
vanishes when evaluated at 
$\mathbf{x}_{j,\mathbf{k},\ell, \p}$. 
Further, as $r\in\mathscr{G}(\delta, Q, \ell)$, 
we are guaranteed $\lambda = 2^{-d\ell} qj
\gg Q^{\varepsilon}$ for each $j\asymp 2^r$.

To apply Theorem \ref{thm: stationary phase higher order terms} for $I(\ell,\p,q,j,\mathbf{k})$, we also require
the phase function $\phi^{j, \bk, \ell, \p}$ and all its derivatives to be bounded (independently of $j, \bk$ and $\ell$) on the support of the amplitude function $\rho^{[\p]}$. Any derivative of $\phi^{j,\mathbf{k}, \ell, \p}$ of order greater than one is of the form
$f_{\p}^{(\boldsymbol{\alpha})}(\mathbf{x})$
where $\boldsymbol{\alpha}\in\mathbb{N}^{n-1}$ is a multi-index of length at least $2$, and is hence bounded for $\bx\in \mathscr{W}_{\p}$ which is contained in a compact set bounded away from the origin. The gradient of the phase function is given by $$\nabla \phi^{j,\mathbf{k}, \ell, \p}(\bx)=\nabla f_{\p}(\bx)-2^{(d-\p)\ell}\frac{\bk}{j}.$$
As $$\textrm{dist}(\bk/j, 2^{-(d-\p)\ell}\mathscr{W}_{d-\p})\leq \gamma_{\ell, \p},$$
we have
$$\textrm{dist}(2^{(d-\p)\ell}\bk/j, \mathscr{W}_{d-\p})\leq 2^{(d-\p)\ell}\gamma_{\ell, \p}=\frac{\mathrm{dist}(\partial\mathscr{V}_{d-\p},
\partial\mathscr{W}_{d-\p})}{2^{10}}\ll 1.$$
In other words, $2^{(d-\p)\ell}\bk/j$ lies in a 
$\frac{\mathrm{dist}(\partial\mathscr{V}_{d-\p},
\partial\mathscr{W}_{d-\p})}{2^{10}}$ thickening of 
$\mathscr{W}_{d-\p}$. Since $\mathscr{W}_{d-\p}$ 
is a bounded set itself (independent of $\bk, \ell$ and $j$) 
and $\nabla f_{\p}(\bx)$ 
is also uniformly bounded on $\mathscr{W}_{\p}$, 
we conclude that 
$$\|\nabla \phi^{j,\mathbf{k}, \ell, \p}(\bx)\|_2\leq \|
\nabla f_{\p}(\bx)\|_2+\left\|2^{(d-\p)\ell}
\frac{\bk}{j}\right\|_2\ll 1,$$
with the implicit constant independent of $\bk, j$ and $\ell$.

One can use a similar argument to show that 
$| \phi^{j,\mathbf{k}, \ell, \p}(\bx)|$ is also 
bounded from above on $\mathscr{W}_{\p}$. 
This verifies that the derivatives of 
$\phi^{j,\mathbf{k}, \ell,\p}$ are bounded on 
$\sW_{\p}$. The same is true for the derivatives of 
$\rho^{[\p]}$ (with the implicit constant again 
depending on $\rho^{[\p]}$).

Moreover, $$\det\,
H_{\phi^{j,\mathbf{k}, \ell,\p}}
(\bx_{j,\mathbf{k},\ell, \p})=\det\, H_{f_{\p}}
(\bx_{j,\mathbf{k},\ell, \p})=(\det\, H_{f_{d-\p}}
(2^{(d-\p)\ell}\bk/j))^{-1}.$$
Again, as $2^{(d-\p)\ell}\bk/j$ lies in a $2^{(d-\p)\ell}\gamma_{\ell, \p}=\frac{\mathrm{dist}(\partial\mathscr{V}_{d-\p},
\partial\mathscr{W}_{d-\p})}{2^{10}}$ thickening of $\mathscr{W}_{d-\p}$, we conclude that $$\det\, H_{f_{d-\p}}
(2^{(d-\p)\ell}\bk/j)\asymp 1.$$

Thus, we may finally apply
Theorem \ref{thm: stationary phase higher order terms}, with $\lambda=qj2^{-d\ell}\gg Q^{\varepsilon}$.
Taking into account, 
as remarked before, that the stationary phase coefficients 
$\mathfrak{D}_{\tau,\phi}\rho^{[\p]}$
vanishes in the present situation when evaluated at 
$\mathbf{x}_{j,\mathbf{k},\ell, \p}$,
we conclude that 
\begin{equation}
    \label{eq: intermediate decay}
    I(\ell,\p,q,j,\mathbf{k})= O(\lambda^{-T(n-1)})=O(Q^{-T\varepsilon (n-1)}).
\end{equation}

To show that the implied constant above is uniform
in the choice of the parameters, we need to show
that $\Vert \phi^{j, \bk, \ell, \p} \Vert_{C^{3T+1}}$-norms
are all $\ll_{T} 1$ uniformly, 
and the expression 
$\frac{\Vert\mathbf{x}-\mathbf{x}_{j,\mathbf{k},\ell, \p}\Vert_2}
{\Vert\nabla \phi^{j, \bk, \ell, \p}(\mathbf{x})\Vert_2}$ has a uniform upper bound for $\bx\in \mathscr{W}_{\p}$. We have already shown the former.
Denote by $\Vert H_{f_{\p}}(\mathbf{x})\Vert_{2}$
the spectral norm of $H_{f_{\p}}(\mathbf{x})$, 
that is the operator norm
induced by the Euclidean norm. To establish the latter, we first let
\begin{equation}
    \label{eqn:Ladef}
    \Lambda_1:=\Lambda_1(\p):=\sup_{\bx\in \mathscr{W}_{\p}}\Vert H_{f_{\p}}^{-1}(\mathbf{x})\Vert_{2}<\infty,\qquad\,\Lambda_2:=\Lambda_2(\p):=\sup_{\substack{\bx\in\mathscr{W}_{\p}\\\vert \boldsymbol{\alpha}\vert \leq 3}}
    \Vert f_{\p}^{(\boldsymbol{\alpha})}(\mathbf{x})\Vert_{2}<\infty.
    %\underset{\Vert\mathbf{x}\Vert_{2}
    % \in[\frac{1}{2},\frac{3}{2}]}{\mathbf{x}\in\mathbb{R}^{n-1}}}
\end{equation}
Notice $\Lambda_1$ and $\Lambda_2$ are are independent of $\ell, j$ and $\mathbf{k}$.  
By making a translation, if necessary, we may assume without loss
of generality that $\phi^{j, \bk, \ell, \p}(\mathbf{x}_{j,\mathbf{k},\ell, \p})=0$. We now use the Taylor expansion of $\phi^{j, \bk, \ell, \p}$ around $\xlkj$ and \eqref{eqn:Ladef} to conclude
$$\Vert \nabla \phi(\bx) - H_{f_{\p}}(\xlkj)\cdot(\bx-\xlkj) \Vert_2 
\leq C\Lambda_2\|\bx-\xlkj\|_2^2.$$ Here $C>0$ is an absolute constant.    
Dividing both sides by $\|\bx-\xlkj\|_2$ and using the triangle inequality gives
\begin{align*}
\frac{\|\nabla \phi(\bx)\|_2}{\|\bx-\xlkj\|_2}&\geq\left\|H_{f_{\p}}(\xlkj)\cdot\frac{(\bx-\xlkj)}{\|\bx-\xlkj\|_2^2}\right\|_2-C\Lambda_2\|\bx-\xlkj\|_2\\&\geq \Lambda_1^{-1}-C\Lambda_2\|\bx-\xlkj\|_2. 
\end{align*}
By choosing $\epsilon_{\bx_1}$ to be small enough, it can be arranged that $$\|\bx-\xlkj\|_2< C^{-1}\Lambda_2^{-1}\Lambda_1^{-1}/100$$ for all $\bx\in\mathscr{W}_{\p}$ (note that decreasing $\epsilon_{\bx_1}$ decreases both the supremums in \eqref{eqn:Ladef} and thus the right hand side above only gets bigger). 
% so that the rightmost expression is bounded from below by $\frac{99}{100}\Lambda_1^{-1}$. 
This lets us conclude that 
$$\frac{\|\nabla \phi(\bx)\|_2}{\|\bx-\xlkj\|_2}\geq \frac{99}{100}\Lambda_1^{-1},$$
thus establishing the desired upper bound on the reciprocal.

Having shown that the implied constant in \eqref{eq: intermediate decay} is independent of $\ell, j$ and $\bk$, we proceed by  pointing out $\#\mathscr{K}_{2}(\ell,\p,j)
=O((2^{-\ell(d-\p)}j+1)^{n-1})$. 
Thus, choosing $T>\tfrac{3n}{n-1}\varepsilon^{-1}$, we get
\begin{align*}
N^{\rho}_2(\delta,Q, \ell, \p, \r)
&\ll 2^{-\ell\p(n-1)}
\sum_{q\leq Q}q^{n-1}\sum_{j\leq J}
\delta 
\vert \widehat{b}(j\delta)\vert
\#\mathscr{K}_{2}(\ell,j, \p)
Q^{-3n}
\\&\ll
2^{-\ell\p(n-1)}
\sum_{q\leq Q}q^{n-1}Q^{-n}
\ll2^{-\ell\p(n-1)}    
\end{align*}
as required.
\end{proof}

It is time to combine several of the bounds we have obtained.

\begin{prop}\label{prop: gathering bounds}
(i) For all $Q\geq 1$, $\delta\in (0, 1/2)$,
and \ellponecond, we have
\begin{align*}
\fN^{\rho}(\delta,Q,\ell,\1) &=
c_{\rho,\om, b, \1} 2^{-\ell (n-1)} 
\delta Q^{n}
+ \sum_{r\in \sG(\delta, Q, \ell)} N_1^{\rho}(\delta,Q,\ell,\1, \r)
+ O(2^{-\ell (n-1)} (\log J + 1))  \\
&+\begin{cases}
2^{-\ell((n-1)-d)}\delta Q^{n-1+\varepsilon}, & \mathrm{if}\,2
\leq d\leq n-1,\\
%\delta Q^{n-1}, & \mathrm{if}\,\frac{d}{\p}=n-1,\\
\left(\frac{\delta}{Q}\right)^{
\frac{n-1}{d}}Q^{n+\varepsilon}, 
& \mathrm{if}\,n-1 \leq d.  
\end{cases}
\end{align*}
(ii) For all $Q\geq 1$, $\delta\in (0, 1/2)$ and
\ellpdcond, we have
\begin{align*}
\fN^{\rho}(\delta,Q,\ell,\d1) &=
c_{\rho,\om, b, \d1} 2^{-\ell (\d1) (n-1)} \delta Q^{n}
+ \sum_{r\in \sG(\delta, Q, \ell)} N_1^{\rho}(\delta,Q,\ell,\d1, \r)
\\
& + O\Big(2^{-\ell(\d1) (n-1)} (\log J + 1)) 
+2^{-\ell((\d1)(n-1)-d)}\delta Q^{n-1+\varepsilon}\Big).
\end{align*}
(iii) For all $Q\geq 1$, $\delta\in (0, 1/2)$ and $\ell\in (\fL_{\d1}, \fL_{\1})\cap \mathbb{Z}$, we have
$$\fN^{\rho}(\delta,Q,\ell,\d1) = O(Q^{1+(n-1)\varepsilon}).$$
The implied constants in all estimates above are independent of $\delta, Q$ and $\ell$.
\end{prop}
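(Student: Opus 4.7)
The proof is essentially an assembly of the estimates established earlier in the section. The plan is as follows. I would begin from the identities \eqref{eq frakN p1} and \eqref{eq frakN d1} already derived via the zero-mode computation and Lemma \ref{lem: contr bad set 1}, which express $\fN^{\rho}(\delta,Q,\ell,\p)$ as the probabilistic main term $c_{\rho,\om,b,\p} 2^{-\ell\p(n-1)}\delta Q^n$, plus the ``good'' contribution $\fE^{\rho}(\delta,Q,\ell,\p) = \sum_{r\in\sG(\delta,Q,\ell)} \fN^{\rho}(\delta,Q,\ell,\p,r)$, plus the error stemming from the bad set $\sB(\delta,Q,\ell)$. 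The two case distinctions in (i) (namely $d\leq n-1$ versus $d\geq n-1$) are inherited directly from the case split in Lemma \ref{lem: contr bad set 1}(i), while the cleaner single bound in (ii) comes from Lemma \ref{lem: contr bad set 1}(ii).

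Next, I would refine $\fE^{\rho}$ using the critical point trichotomy introduced after \eqref{eq: osc int}. Namely, for each $r\in \sG(\delta,Q,\ell)$, the decomposition $\fN^{\rho}(\delta,Q,\ell,\p,r) = N_1^{\rho} + N_2^{\rho} + N_3^{\rho}$ is obtained by partitioning the dual integer lattice $\bk \in \bZ^{n-1}$ according to the sets $\sK_1, \sK_2, \sK_3$ of critical points (see \eqref{def Ni}). By Lemma \ref{lem: non stationary phase regime} (non-stationary regime) and Lemma \ref{lem: intermediate regime} (intermediate regime), both $N_2^{\rho}(\delta,Q,\ell,\p,r)$ and $N_3^{\rho}(\delta,Q,\ell,\p,r)$ are $O(2^{-\ell\p(n-1)})$ uniformly in $r \in \sG(\delta,Q,\ell)$. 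By \eqref{def: good set}, the set $\sG(\delta,Q,\ell)$ has at most $\log J + 1$ elements, so summing yields the error term $O(2^{-\ell\p(n-1)}(\log J + 1))$, leaving $\sum_{r\in \sG} N_1^{\rho}(\delta,Q,\ell,\p,r)$ as the unprocessed stationary-phase contribution. Combining all this with the bad-set contribution of Lemma \ref{lem: contr bad set 1} establishes parts (i) and (ii).

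For part (iii), where $\ell \in (\fL_{\d1},\fL_{\1}) \cap \bZ$, I would exploit non-negativity: since $\om, \rho^{[\d1]}_{(\d1)\ell}, b_\delta$ are all non-negative, every summand in \eqref{eqn: frak nl} is non-negative, so $\fN^{\rho}(\delta,Q,\ell,\d1) \geq 0$ for each $\ell$. The second estimate of Lemma \ref{lem: tail terms} applied with $\p = \d1$ gives
\[
\sum_{\ell > \fL_{\d1}} \fN^{\rho}(\delta,Q,\ell,\d1) \ll Q^{1+(n-1)\varepsilon},
\]
and by non-negativity the same bound holds \emph{individually} for any $\ell > \fL_{\d1}$, which is exactly the assertion of (iii).

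I do not anticipate any serious obstacle in executing this plan, as the proposition is a bookkeeping result that repackages previously established lemmas in a form suitable for the inductive bootstrapping in Section \ref{sec: bootstrapping}. The only subtlety worth attention is keeping the logarithmic factor $\log J + 1$ explicit, rather than absorbing it into $Q^{\varepsilon}$: although this factor is harmless for the final theorem, during the induction the powers of $Q^{\varepsilon}$ must be tracked with some care to ensure that the repeated application of these estimates does not accumulate into an unmanageable error, so it is preferable to keep genuinely logarithmic losses separate from the polynomial-in-$Q^{\varepsilon}$ losses.
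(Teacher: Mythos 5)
Your proof takes essentially the same route as the paper: parts (i) and (ii) follow by inserting $\fN^{\rho}(\delta,Q,\ell,\p,r) = N_1^{\rho}(\delta,Q,\ell,\p,r) + O(2^{-\ell\p(n-1)})$ (from Lemmas \ref{lem: non stationary phase regime} and \ref{lem: intermediate regime}) into the decompositions \eqref{eq frakN p1} and \eqref{eq frakN d1}, then summing over the $O(\log J + 1)$ elements of $\sG(\delta,Q,\ell)$; part (iii) is obtained from the tail estimate in Lemma \ref{lem: tail terms} together with non-negativity of the summands, exactly as the paper does. One small imprecision: Lemma \ref{lem: tail terms} controls $\sum_{\ell > L_{\d1}(Q)}$ and not $\sum_{\ell > \fL_{\d1}(\delta,Q)}$ as you wrote, but this is harmless because whenever the interval $(\fL_{\d1},\fL_1)$ is non-empty one must have $\fL_{\d1}=L_{\d1}$ (since $L<L_{\d1}$ would force $\fL_{\d1}=\fL_1=L$), so any $\ell$ in that interval automatically satisfies $\ell > L_{\d1}(Q)$ and the lemma applies as you intend.
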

\begin{proof}
Let $0\leq \ell\leq \fL_{\p}(\delta, Q)$,
and $r\in \sG(\delta, Q, \ell)$. 
By combining 
Lemma \ref{lem: non stationary phase regime}
and Lemma \ref{lem: intermediate regime} 
we infer 
$$ 
\fN^{\rho}(\delta,Q,\ell,\p,\r) 
= N_1^{\rho}(\delta,Q,\ell,\p ,\r) + O(2^{-\ell \p (n-1)})
$$
for $\p\in \{\1, \d1\}$, 
all $Q\geq 1$, $\delta\in (0, 1/2)$.
Plugging this into the relation \eqref{eq frakN p1} for $\p=\1$ and into the relation \eqref{eq frakN d1} for $\p=\d1$
gives the estimates in parts (i) and (ii). 

For deduce part (iii), we first 
note that $\fL_{\d1}(\delta, Q)<\fL_{1}(\delta, Q)$ 
if and only if $$\ell>L_{\d1}(Q)=\frac{(1-\varepsilon)\log Q}{(\d1)\log 2 }.$$ The desired estimate is then a direct consequence of the second estimate in Lemma \ref{lem: tail terms} for $\p=\d1$.
\end{proof}

\subsection{Stationary Phase Regime}
Finally, we come to the most difficult case when $\mathbf{k}\in\mathscr{K}_{1}(\ell,\p,j)$. 
The next lemma records a stationary phase expansion of 
$N_{1}^{\rho}(\delta,Q, \ell, \p,r)$,
which is a consequence of 
Theorem \ref{thm: stationary phase higher order terms}.
Again, all our constants will be uniform  
in the parameters $\delta,Q,\ell,\p$, and $r$.

From now on we make frequent use of the cut-off parameter \index{stationary_phase_cut_off@$t:= \Big\lfloor \frac{n-1}{\varepsilon} \Big\rfloor+1$:
truncation parameter used in the stationary phase expansion}
\begin{equation}\label{def: t}
    t:= \Big\lfloor 10 \frac{n-1}{\varepsilon} \Big\rfloor+1 .
\end{equation}
\begin{lem}
\label{lem: N1 contribution after stationary phase} 

Let $\p\in\{1, \d1\}$, $Q\geq 1$, $\delta\in (0, 1/2)$ and \ellpcond. Suppose $r\in\mathscr{G}(\delta, Q, \ell)$, and let $\sigma$ be the signature of $H_{f_\p}$. 
Then 
\begin{align}\label{eq: N1 stationary phase expansion}
& N_{1}^{\rho}(\delta,Q, \ell, \p, \r) =
%\nonumber\\
2^{-\ell\p(n-1)}\sum_{q,j\in \bZ}
\om(\frac{q}{Q}) \om(\frac{j}{2^r})
q^{n-1}
\sum_{\mathbf{k}\in\mathscr{K}_{1}(\ell,\p,j)}
\Bigg[\Bigg(
\frac{ \delta \widehat{b}(j\delta) %\widetilde{w}(\mathbf{k}/j)  
e(\sigma)}
{\vert\det(H_{f_{\p}}(\mathbf{x}_{j,\mathbf{k},\ell, \p}))
\vert^{\frac{1}{2}}}\cdot 
\\&
\cdot 
\sum_{0\leq \tau \leq t}
\frac{ (\mathfrak{D}_{\tau,\phi}\rho^{[\p]})
%(\mathbf{x}_{j,\mathbf{k},\ell, \p})
((\nabla f_\p)^{-1}(2^{\ell(d-\p)}\mathbf{k}/j))}{(2^{-d\ell}qj)^{\frac{n-1}{2}+\tau}}
%\frac{\widetilde{w}(\mathbf{k}/j,\tau,\p,\ell,\rho)}
%{(2^{-d\ell}qj)^{\frac{n-1}{2}+\tau}} 
e(-jqf_{d-\p}(\mathbf{k}/j)) \Bigg)+O(Q^{-10(n-1)})\Bigg], \nonumber
\end{align}
where the implied constant is uniform in $\ell, \bk,r$,
and the differential operator $\mathfrak{D}_{\tau,\phi}$
is as defined in Theorem \ref{thm: stationary phase higher order terms}.
\end{lem}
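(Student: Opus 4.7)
The plan is to apply the multi-term stationary phase expansion (Theorem~\ref{thm: stationary phase higher order terms}) to each oscillatory integral $I(\ell,\p,q,j,\bk)$ with $\bk\in\sK_{1}(\ell,\p,j)$ and $r\in\sG(\delta,Q,\ell)$, and then to assemble the resulting expansions, summed against the weights $\om(q/Q)\om(j/2^r)q^{n-1}\delta\widehat{b}(j\delta)$, into the form claimed in \eqref{eq: N1 stationary phase expansion}. Recall from \eqref{def: critical points phase function-1} that $\phi^{j,\bk,\ell,\p}(\bx)=f_{\p}(\bx)-2^{(d-\p)\ell}\langle \bk/j,\bx\rangle$ and $\lambda=qj2^{-d\ell}$, and the unique critical point is $\xlkj=(\nabla f_{\p})^{-1}(2^{(d-\p)\ell}\bk/j)$.

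First, I would verify the hypotheses of the stationary phase theorem. By the defining property of $\sK_{1}(\ell,\p,j)$, the vector $\bk/j$ lies in $2^{-(d-\p)\ell}\sW_{d-\p}$, so $\xlkj\in \sW_{\p}$, and in particular in (the interior of) $\supp(\rho^{[\p]})$. Since $f_{\p}\in\sH_{d/\p}^{\bzero}(\bRn)$ and $\sW_{\p}$ is bounded away from the origin, $\det H_{\phi^{j,\bk,\ell,\p}}(\xlkj)=\det H_{f_{\p}}(\xlkj)\neq 0$, and by connectedness the signature $\sigma$ is the same constant throughout $\sW_{\p}$. The uniform $C^{3t+1}$-bounds on $\phi^{j,\bk,\ell,\p}$ and the uniform lower bound on $\|\nabla\phi^{j,\bk,\ell,\p}(\bx)\|_{2}/\|\bx-\xlkj\|_{2}$ were established in the proof of Lemma~\ref{lem: intermediate regime}; those arguments apply verbatim here and provide constants independent of $(q,j,\bk,\ell)$.

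The key algebraic step is to identify the phase at the critical point with the dual function $f_{d-\p}$. Using the homogeneity identity $\xlkj=2^{\ell\p}(\nabla f_{\p})^{-1}(\bk/j)$ together with $f_{\p}(2^{\ell\p}\by)=2^{\ell d}f_{\p}(\by)$, the quantity
\begin{equation*}
\lambda\,\phi^{j,\bk,\ell,\p}(\xlkj)= qj\,2^{-d\ell}\bigl[f_{\p}(\xlkj)-2^{(d-\p)\ell}\langle \bk/j,\xlkj\rangle\bigr]
\end{equation*}
collapses, after applying \eqref{eq Ldual2}, to $-qjf_{d-\p}(\bk/j)$, which yields the factor $e(-jqf_{d-\p}(\bk/j))$. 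For the prefactor, $|\det(\lambda H_{\phi}(\xlkj))|^{1/2}=(qj\,2^{-d\ell})^{(n-1)/2}|\det H_{f_{\p}}(\xlkj)|^{1/2}$, which combined with the $\lambda^{-\tau}$-decay of the $\tau$-th correction produces the denominators $(qj\,2^{-d\ell})^{(n-1)/2+\tau}$ appearing in \eqref{eq: N1 stationary phase expansion}.

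Finally, for the error: with $t$ as in \eqref{def: t}, Theorem~\ref{thm: stationary phase higher order terms} gives a residual of size $C\|\rho^{[\p]}\|_{C^{2t}}\lambda^{-t}$. Because $r\in\sG(\delta,Q,\ell)$ forces $\lambda=qj\,2^{-d\ell}\gg Q^{\varepsilon}$, and $C$ is uniform by the preceding paragraph, the choice $t\geq 10(n-1)/\varepsilon$ gives $\lambda^{-t}\ll Q^{-10(n-1)}$, which is exactly the error claimed. Inserting this pointwise expansion into \eqref{def Ni} and pulling out the prefactor $2^{-\ell\p(n-1)}$ produces \eqref{eq: N1 stationary phase expansion}. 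The main technical concern is uniformity of all implicit constants across $(q,j,\bk,\ell)$, and this is precisely the point that has been prepared for in the earlier regime analysis; with that in hand, the present lemma is essentially a bookkeeping exercise on top of a direct application of the stationary phase principle.
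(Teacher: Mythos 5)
The proposal is correct and follows essentially the same route as the paper's own proof: verify the hypotheses of Theorem~\ref{thm: stationary phase higher order terms} uniformly in $(q,j,\bk,\ell)$, compute the phase at the critical point via the Legendre identity \eqref{eq Ldual2} to obtain $\lambda\phi(\xlkj)=-qjf_{d-\p}(\bk/j)$, identify the Hessian determinant prefactor and the $\lambda^{-\tau}$ decay, and use $\lambda\gg Q^{\varepsilon}$ together with the choice of $t$ in \eqref{def: t} to control the error by $Q^{-10(n-1)}$. Your correctly signed computation of $\lambda\phi(\xlkj)$ is consistent with the exponential $e(-jqf_{d-\p}(\bk/j))$ in the lemma statement (the intermediate display in the paper's proof has a harmless sign typo), and your reference back to the uniformity estimates prepared in the proof of Lemma~\ref{lem: intermediate regime} mirrors exactly what the paper does when it re-derives the bounds on $\|\phi\|_{C^{3t+1}(\sW_\p)}$ and on $\|\bx-\xlkj\|_2/\|\nabla\phi(\bx)\|_2$.
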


\begin{proof}
Recalling \eqref{def Ni}, we have
$$
N_{1}^{\rho}(\delta,Q, \ell, \p, \r) = 
2^{-\ell\p(n-1)}
\sum_{q,j \in \bZ} 
\om(\frac{q}{Q}) 
\om(\frac{j}{2^r})
q^{n-1}
\delta 
\widehat{b}(j\delta)
\sum_{\mathbf{k}\in\mathscr{K}_{1}(\ell,\p,j)}
I(\ell,\p,q,j,\mathbf{k}),
$$
where $I(\ell, \p, q, j, \bk)$ are given by 
\begin{align*}
I(\ell,\p,q,j,\mathbf{k})&=\int_{\mathbb{R}^{n-1}}\rho^{[\p]}(\mathbf{x})\,e(qj2^{-d\ell}(f_{\p}(\mathbf{x})-2^{(d-\p)\ell}\left\langle \mathbf{k},\mathbf{x}\right\rangle ))\,\mathrm{d}\mathbf{x}
\\&=\int_{\mathbb{R}^{n-1}}\rho^{[\p]}(\mathbf{x})\,e(qj2^{-d\ell}\phi^{j, \mathbf{k}, \ell,\p}(\bx))\,\mathrm{d}\mathbf{x}.    
\end{align*}

Fixing $\ell, \p, j$ and $\mathbf{k}$, we suppress the dependence on these parameters and refer to $\phi^{j, \mathbf{k}, \ell,\p}$ by $\phi$. All the implied constants in the proof will be independent of these parameters. By making a translation, if necessary, we may assume without loss
of generality that $\phi(\mathbf{x}_{j,\mathbf{k},\ell, \p})=0$. Observe that $\phi$ as in (\ref{def: critical points phase function-1})
satisfies $H_{\phi}=H_{f_{\p}}$. Using Taylor expansion of $\phi$ around
$\mathbf{x}_{j,\mathbf{k},\ell, \p}$ for 
$\bx\in \mathscr{W}_{\p}\supseteq \textrm{supp}\, \rho^{[\p]}$, we have
\[
\phi(\mathbf{x})=\frac{1}{2}\langle 
H_{f_{\p}}(\mathbf{x}_{j,\mathbf{k},\ell, \p})(\mathbf{x}-\mathbf{x}_{j,\mathbf{k},\ell, \p}),\mathbf{x}-
\mathbf{x}_{j,\mathbf{k},\ell, \p}\rangle
+O(\sup_{\substack{ \mathbf{x}\in\mathscr{W}_{\p} \\
\vert \boldsymbol{T}\vert \leq 3}}
\Vert f_{\p}^{(\mathbf{T})}(\mathbf{x})\Vert_{2}\,
\Vert\mathbf{x}-\mathbf{x}_{j,\mathbf{k},\ell, \p}\Vert_{2}^{3}).
\]
Here the implied constant is independent of 
$\ell$, $j, \mathbf{k}$, because 
%for $\ell$ large enough, $\rho^{[\p]}_{\ell\p}(2^{-\ell\p}\mathbf{x})=\rho(\mathbf{x})$ 
$\mathscr{W}_{\p}\supseteq \mathrm{supp}(\rho^{[\p]})$ is contained in a compact set bounded away from the origin and hence $$\sup_{\substack{ \mathbf{x}\in\mathscr{W}_{\p} \\
\vert \boldsymbol{T}\vert \leq 3}}
\Vert f_{\p}^{(\mathbf{T})}(\mathbf{x})\Vert_{2}\ll 1.$$ 
By our assumption 
on $f_{\p}$, the matrix 
$H_{f_{\p}}(\mathbf{x})$ is invertible as long as
$\mathbf{x}\neq\mathbf{0}$. 
Recall from \eqref{eqn:Ladef} the quantities
\begin{equation*}
    %\label{eqn:Ladef}
    \Lambda_1:=\Lambda_1(\p):=\sup_{\bx\in \mathscr{W}_{\p}}\Vert H_{f_{\p}}^{-1}(\mathbf{x})\Vert_{2}<\infty,\qquad\,\Lambda_2:=\Lambda_2(\p):=\sup_{\substack{\bx\in\mathscr{W}_{\p}\\\vert \boldsymbol{T}\vert \leq 3}}
    \Vert f_{\p}^{(\boldsymbol{T})}(\mathbf{x})\Vert_{2}<\infty,
    %\underset{\Vert\mathbf{x}\Vert_{2}
    % \in[\frac{1}{2},\frac{3}{2}]}{\mathbf{x}\in\mathbb{R}^{n-1}}}
\end{equation*}
which are independent of $\ell, j$ and $\mathbf{k}$. 

We intend to use the stationary phase expansion from Theorem \ref{thm: stationary phase higher order terms} to deal with the integrals $I(\ell, \p, q, j, \bk)$ occurring in the expansion of $N_{1}^{\rho}(\delta,Q, \ell, \p, \r)$. Since the eigenvalues of 
a matrix depend continuously on it, the condition $\det\, H_{f_{\p}}\asymp 1$ prevents the eigenvalues
of $H_{f_{\p}}$ from changing signs for all $\mathbf{x}\in\mathscr{W}_{\p}$.
 That is the signature $\sigma$ remains the same for all $H_{f_{\p}}(\mathbf{x}_{j,\mathbf{k},\ell, \p})$. Like before, in order to use Theorem \ref{thm: stationary phase higher order terms},  
we also need to verify that the phase function $\phi$ and its derivatives are bounded from above on $\mathscr{W}_{\p}$. 
To this effect, we use the Taylor expansion of $\phi$ around $\xlkj$, along with the observations that $\phi(\xlkj)=0$ and $\nabla\phi(\xlkj)=0$, to obtain
\[
\Vert\phi(\mathbf{x})\Vert_{2}\ll\sup_{\bz\in \mathscr{W}_{\p}}\Vert H_{f_{\p}}(\mathbf{z})\Vert_{2}\Vert\mathbf{x}-\mathbf{x}_{j,\mathbf{k},\ell, \p}\Vert_{2}^{2}+\Lambda_2(\p)\Vert\mathbf{x}-\mathbf{x}_{j,\mathbf{k},\ell, \p}\Vert_{2}^{3}\ll 1.
\]
The above implies that $\phi$ is bounded on the set $\mathscr{W}_\p$.
Similarly, using the Taylor expansion for $\nabla\phi$ around $\mathbf{x}=\mathbf{x}_{j,\mathbf{k},\ell, \p}$,
i.e. 
\[
\nabla\phi(\mathbf{x})=H_{f_{\p}}(\mathbf{x}_{j,\mathbf{k},\ell, \p})(\mathbf{x}-\mathbf{x}_{j,\mathbf{k},\ell, \p})+O(\Vert\mathbf{x}-\mathbf{x}_{j,\mathbf{k},\ell, \p}\Vert_{2}^{2}),
\]
we can infer that 
\[
\Vert\nabla\phi(\mathbf{x})\Vert_{2}\ll1
\]
on $\mathscr{W}_{\p}$.
Furthermore, observe that any derivative of the phase function $\phi$, of order $2$ or higher,
is independent of $\ell$. Thus, by again using the fact that $\mathscr{W}_\p$ is contained in a compact set bounded away from the origin, 
we can conclude that $\vert \phi^{(\boldsymbol{\alpha})}(\mathbf{x})\vert=\vert f^{(\boldsymbol{\alpha})}(\mathbf{x})\vert\ll_{\boldsymbol{\alpha}}1$
for any $\mathbf{x}\in\mathscr{W}_\p$. 
This verifies that the derivatives of $\phi$ are bounded in the set $\sW_{\p}$. The same is true for the derivatives of $\rho^{[\p]}$ (with the implicit constant again depending on $\rho^{[\p]}$). 
To show that the expression $\frac{\Vert\mathbf{x}-\xlkj\Vert_2}{\Vert\nabla\phi(\mathbf{x})\Vert_2}$ has a uniform upper bound for $\bx\in \mathscr{W}_{\p}$, we argue exactly as in the proof of Lemma \ref{lem: intermediate regime}, using the Taylor expansion of $\phi$ around $\xlkj$ and to obtain
$$\Vert \nabla \phi(\bx) - H_{f_{\p}}(\xlkj)\cdot(\bx-\xlkj) \Vert_2 
\leq C\Lambda_2\|\bx-\xlkj\|_2^2.$$ Here $C>0$ is an absolute constant.    
Dividing both sides by $\|\bx-\xlkj\|_2$,  using the triangle inequality and by choosing $\epsilon_{\bx_1}$ to be small enough, 
so that $$\|\bx-\xlkj\|_2< C^{-1}\Lambda_2^{-1}\Lambda_1^{-1}/100$$ for all $\bx\in\mathscr{W}_{\p}$, 
we can conclude that 
$$\frac{\|\nabla \phi(\bx)\|_2}{\|\bx-\xlkj\|_2}\geq \frac{99}{100}\Lambda_1^{-1}.$$
This implies the desired upper bound on the reciprocal of the term on the left.

Thus Theorem \ref{thm: stationary phase higher order terms} is applicable
with $u:=\rho^{[\p]}$, $\lambda:=2^{-d\ell}qj$,
and $t$ as in \eqref{def: t}. Hence,
\[
I(\lambda,u,\phi,t):=\frac{e(\lambda\phi(\mathbf{x}_{j,\mathbf{k},\ell, \p})+\sigma)}{(\vert\det(\lambda H_{\phi}(\mathbf{x}_{j,\mathbf{k},\ell, \p}))\vert)^{\frac{1}{2}}}\sum_{0\leq \tau\leq t}\frac{(\mathfrak{D}_{\tau,\phi}\rho^{[\p]})(\mathbf{x}_{j,\mathbf{k},\ell, \p})}{\lambda^{\tau}},
\]
satisfies
\[
\vert I(\ell,\p,q,j,\mathbf{k}) - I(\lambda,u,\phi,t) \vert
\leq C\Vert \rho^{[\p]}\Vert_{C^{2t}(\mathbb{R}^{n-1})}
\lambda^{-t}.
\]
Here the implied constant $C$ depends on $\Vert\phi\Vert_{C^{3t+1}(\mathscr{W_{\p}})}$ and the upper bound on 
%finitely many derivatives of $\rho$ and on 
$\frac{\Vert\mathbf{x}-\xlkj\Vert_2}
{\Vert\nabla\phi(\mathbf{x})\Vert_2}.$
Since $\lambda>Q^{\varepsilon}$ and $t$ was chosen as in \eqref{def: t},
we conclude the bound 
\begin{equation*}
    %\label{eq:stexp}
    I(\ell,\p,q,j,\mathbf{k})-I(\lambda,u,\phi,t)\ll Q^{-10(n-1)}
\end{equation*}
where the implied constant is independent of $\ell$
and $j$. 
Recalling  \eqref{eq: def xlkj}, we note that
\begin{equation}
    \label{eqn: st rhostar sub}
    (\mathfrak{D}_{\tau,\phi}\rho^{[\p]})(\mathbf{x}_{j,\mathbf{k},\ell, \p})=(\mathfrak{D}_{\tau,\phi}\rho^{[\p]})((\nabla f_\p)^{-1}(2^{\ell(d-\p)}\mathbf{k}/j)).
\end{equation}
Furthermore, as $(\nabla f_\p)^{-1}=\nabla f_{d-\p}$ is homogeneous of degree $\frac{\p}{d-\p}$ and $f_\p$ of degree $\frac{d}{\p}$, the phase function (\ref{def: critical points phase function-1})
at the critical point simplifies to 
\begin{align*}
\phi(\mathbf{x}_{j,\mathbf{k},\ell, \p}) &
=f_{\p}(\mathbf{x}_{j,\mathbf{k},\ell, \p})-2^{(d-\p)\ell}\left\langle \mathbf{x}_{j,\mathbf{k},\ell, \p},
\mathbf{k}/j\right\rangle \\
 & =f_{\p}((\nabla f_{\p})^{-1}(2^{(d-\p)\ell}
 \mathbf{k}/j))-2^{(d-\p)\ell}\langle 
 (\nabla f_{\p})^{-1}(2^{(d-\p)\ell}\mathbf{k}/j),
 \mathbf{k}/j\rangle \\
 & =f_{\p}(2^{\ell\p}(\nabla f_{\p})^{-1}(\mathbf{k}/j))
 - 2^{(d-\p)\ell}\left\langle 2^{\ell\p}(\nabla f_{\p})^{-1}(\mathbf{k}/j),\mathbf{k}/j\right\rangle \\
 & =2^{\ell d}\left(f_{\p}((\nabla f_{\p})^{-1}
 (\mathbf{k}/j))-\left\langle 
 (\nabla f_{\p})^{-1}(\mathbf{k}/j),
 \mathbf{k}/j\right\rangle \right)=
 -2^{\ell d}f_{d-\p}(\bk/j).
\end{align*}
Thus 
\begin{equation}
    \label{eqn: st phase star sub}
    \lambda\phi^{j,\bk, \ell, \p}(\mathbf{x}_{j,\mathbf{k},\ell, \p})=2^{-\ell d}jq2^{\ell d}\tilde{f}_{\p}(\mathbf{k}/j)=jq f_{d-\p}(\mathbf{k}/j).
\end{equation}
Substituting \eqref{eqn: st rhostar sub}, 
and \eqref{eqn: st phase star sub} 
into the expression for $I(\lambda, u,\phi,t)$ gives the desired conclusion.
\end{proof}

\section{Enveloping and The Duality Principle}\label{sec: envelope}
Lemma \ref{lem: N1 contribution after stationary phase} offers a passage between the ``flat'' manifold (with defining function of degree $d$) and the dual ``rough'' manifold with defining function of degree $\frac{d}{d-1}$. 
It will turn out, as one might expect, 
that the main contribution to 
$ N^{\rho}_1(\delta,Q, \ell, \p, \r)$
comes from the complex weight function 
arising from choosing
$\tau=0$ in the 
innermost summation of 
\eqref{eq: N1 stationary phase expansion}.
%$\widetilde{w}(\mathbf{k}/j,0,\p,\ell)$.
For $\tau \geq 1$, the terms
are dampened by a negative power of 
$2^{-d\ell} q j > Q^\varepsilon$.

The issue though is that the amplitude 
functions $\mathfrak{D}_{\tau,\phi}\rho^{[\p]}$,
arising from the multi-term 
stationary phase expansion in 
Lemma \ref{lem: N1 contribution after stationary phase}, and as defined in 
\eqref{def: differential operator with phase function dependence}, cannot be interpreted as weights on this dual manifold (associated to $f_{d-\p}$) since they are not positive 
as soon as $\tau>0$. 
However, observe that 
the coefficients of 
the operators $\mathfrak{D}_{\tau,\phi}$ 
are bounded, 
since the entries 
of the Hessian $H_\phi(\xlkj)$, as well as all 
the derivatives (including of order zero) 
of the function 
\[g_{\xlkj}(\mathbf{x}):=\phi(x)-\phi(\xlkj)-\frac{1}{2}\langle H_\phi(\xlkj)(\mathbf{x}-\xlkj), \mathbf{x}-\xlkj\rangle,\]
depend only on derivatives of $\phi$ of order two or higher, which are all bounded from above and below in the support of $\rho^{[\p]}$; in particular, all the coefficients are independent of $\ell$. Thus, the functions $\mathfrak{D}_{\tau,\phi}\rho^{[\p]}$ can be majorised by a \emph{positive}, smooth envelope $\zeta_0^{[\p]}$ satisfying
\[
\vert\sum_{\tau=1}^t(\mathfrak{D}_{\tau,\phi}\rho^{[\p]})(\mathbf{x})\vert\leq\zeta_0^{[\p]}(\mathbf{x})
\]
for all $\mathbf{x}\in\mathbb{R}^{n-1}$ (see Lemma \ref{lem: env} for a precise construction). This procedure will be undertaken at every step of the forthcoming induction argument, leading us to define a finite sequence of envelopes $\{P_\kappa^{[\p]}\}_{\kappa=1}^t$, with an intrinsic order (depending on the step at which each envelope is introduced) and increasing width of support. At the same time, we need to make sure that each of these envelopes is supported in the set $\mathscr{W}_{\1}$, or $\mathscr{W}_{\d1}$, as defined in equations \eqref{eq W1def} and \eqref{eq Wd1def}.
(These are the sets where the derivatives of $f_\p$ are bounded from above, the Hessian $H_{f_\p}$ is invertible and $\nabla f_{\p}$ is invertible as a map.)
This requires us to maintain strict control over the rate at which each envelope decreases to zero at the periphery of its support, at the cost of a blow up in the size of its derivatives. However, as we shall see, we only require a finite number of envelopes since our induction will involve at most $O(\varepsilon^{-1})$ many steps (in fact $O(\log (\varepsilon^{-1})$ for $n\geq 4$). Thus we ultimately only lose a constant depending on $\varepsilon$.
%\Niclas{Implemented/addressed Shuntaro's Comments until here.}
\subsection{The Enveloping Argument}\label{subsec: envelope} In this subsection, we envelope (`majorise') the amplitude functions $\mathfrak{D}_{\tau,\phi}\rho^{[\p]}$,
arising from the multi-term stationary phase expansion in Lemma \ref{lem: N1 contribution after stationary phase}, by smooth
but \emph{positive} replacements which can act as weights. We shall do so in a manner that
reduces the analytic complexity --- estimating several bunches of
amplitude functions arising through the differential operators in
Theorem \ref{thm: stationary phase higher order terms}, by the very
same majorant. Doing so inductively, and keeping track of the decay
attached to each higher order term in the stationary phase expansion
allows us to envelope efficiently. As already mentioned, we shall need to envelope
only $O_{\varepsilon}(1)$-many times, as we may discard any term
with too large decay since it will turn out to be negligible for the
final count. The following lemma is a key tool in this endeavour. 

\begin{lem}[Enveloping lemma]
\label{lem: env}
Let $g:\bRn \rightarrow \bR$ be a smooth function which is supported 
in the ball $ \sB (\bc,r)$.
Fix an integer $M\geq 1$ and any $\eta \in (0,r)$.
Then there exists a non-negative function 
$\cE_{g}^{M,\eta}: \bRn \rightarrow \bR_{\geq 0}$
having the following two properties. \\
Firstly, for any multi-index $\boldsymbol{\alpha}\in \bZ^{n-1}_{\geq 0}$
with $\vert \boldsymbol{\alpha} \vert \leq M$
the function $\cE_{g}^{M,\eta}$ is a point-wise majorant to 
$g^{(\boldsymbol{\alpha})}$, i.e.
\begin{equation}\label{eq: point wise majorant}
    \vert g^{(\boldsymbol{\alpha})}(\bx)\vert \leq \cE_{g}^{M,\eta}(\bx)
    \,\,\, \mathrm{for\,any\,}\bx \in \bR^{n-1}.
\end{equation}
Secondly, $\cE_{g}^{M,\eta}$ has only marginally larger support than $g$
in the sense that 
\begin{equation}\label{eq: enlarged support}
\supp{(\cE_{g}^{M,\eta})} \subseteq 
\sB (\bc,r + \eta).   
\end{equation}
\end{lem}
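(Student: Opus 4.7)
The plan is to reduce the construction to a standard smooth plateau function, exploiting the fact that nothing in the statement forces the majorant to reflect the finer behaviour of $g$ near the boundary of its support (in particular, no bound on derivatives of $\cE_{g}^{M,\eta}$ is requested). First I would note two elementary observations: since $g$ is smooth with $\supp(g)\subseteq \sB(\bc,r)$, every derivative $g^{(\boldsymbol{\alpha})}$ with $|\boldsymbol{\alpha}|\leq M$ is itself smooth and supported in $\overline{\sB}(\bc,r)$; in particular
\[
A := \max_{0\leq |\boldsymbol{\alpha}|\leq M}\;\sup_{\bx\in \bR^{n-1}} |g^{(\boldsymbol{\alpha})}(\bx)|
\]
is a finite non-negative real number.

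Next I would construct a smooth plateau function $\chi\colon \bR^{n-1}\to [0,1]$ satisfying $\chi\equiv 1$ on $\overline{\sB}(\bc,r)$ and $\supp(\chi)\subseteq \sB(\bc,r+\eta)$. This is standard: fix a non-negative radial bump $\varphi \in C^{\infty}_{c}(\bR^{n-1})$ with $\supp(\varphi)\subseteq \sB(\bzero,\eta/4)$ and $\int \varphi = 1$, and set
\[
\chi(\bx) := \bigl(\varphi * \ind_{\sB(\bc,\,r+\eta/2)}\bigr)(\bx).
\]
Convolution with a bump of radius $\eta/4$ enlarges the support of the indicator by at most $\eta/4$, so $\supp(\chi)\subseteq \sB(\bc,r+3\eta/4)\subseteq \sB(\bc,r+\eta)$; symmetrically, every $\bx\in \overline{\sB}(\bc,r)$ is at distance at least $\eta/2>\eta/4$ from the complement of $\sB(\bc,r+\eta/2)$, whence $\chi(\bx)=\int \varphi = 1$ there.

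Finally I would define
\[
\cE_{g}^{M,\eta}(\bx) := A\,\chi(\bx),
\]
which is smooth, non-negative, and visibly satisfies \eqref{eq: enlarged support}. To verify \eqref{eq: point wise majorant}, fix a multi-index $\boldsymbol{\alpha}$ with $|\boldsymbol{\alpha}|\leq M$. If $\bx\in \overline{\sB}(\bc,r)$, then $\cE_{g}^{M,\eta}(\bx)=A\geq |g^{(\boldsymbol{\alpha})}(\bx)|$ by the definition of $A$; if $\bx\notin \overline{\sB}(\bc,r)$, then $g^{(\boldsymbol{\alpha})}(\bx)=0\leq \cE_{g}^{M,\eta}(\bx)$ since every derivative of $g$ is supported in $\overline{\sB}(\bc,r)$. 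This completes the argument. There is no serious obstacle here; the only point worth emphasising is that the lemma demands only a pointwise bound on a bounded number of derivatives of $g$, so a single uniform constant times a plateau suffices and no mollified or multi-scale construction is needed.
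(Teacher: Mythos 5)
Your proof is correct and takes essentially the same approach as the paper: set $A=\Vert g\Vert_{C^M}$ and multiply by a smooth plateau function obtained by mollifying the indicator of $\sB(\bc,r+\eta/2)$ with a bump of radius comparable to $\eta$. The paper's choice of mollifier radius $\eta/(10(n-1))$ versus your $\eta/4$ is immaterial; both yield the required support containment and the constant value on $\overline{\sB}(\bc,r)$.
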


\begin{proof}
The statement is geometrically obvious. 
For the convenience of the reader,
we include the details. 
Recalling \eqref{def: C^k norm}, put
$Y:= \Vert g \Vert_{C^M(\sB (\bc,r))} $.
For simplicity of the exposition we write 
$\cE$ in place of $\cE_{g}^{M,\eta}$.
Fix an even smooth function
$B: \mathbb{R}^{n-1} \rightarrow \mathbb{R}_{\geq0}$
such that $\Vert B\Vert_{L^{1}(\mathbb{R}^{n-1})}=1$.
Further, we stipulate that $B$ is supported in the unit ball
$\sB (\bzero,1)$.
Given $\vartheta>0$, we define 
$B_{\vartheta}(\mathbf{x}):=\vartheta^{-(n-1)}B(\vartheta^{-1}\mathbf{x})$.
Denote by $*$ the convolution of two square integrable functions
$g_1,g_2: \bR^{n-1} \rightarrow \bR$, that is 
$$
(g_1*g_2)(\bx) := \int_{\bRn} g_1(\bx - \by) g_2(\by) \rd \by.
$$
We claim the smooth function 
$$
\cE := Y \cdot 
(\mathds{1}_{\sB (\bc,r + \frac{\eta}{2})} * B_{\vartheta_0})
\,\,\, \mathrm{where} \,\, 
\vartheta_0:= \frac{\eta}{10(n-1)} 
$$
satisfies \eqref{eq: point wise majorant} 
and \eqref{eq: enlarged support}. We begin to verify the former.
Notice
$ \vert g^{(\boldsymbol{\alpha})}(\bx)\vert  \leq Y$
for any $\bx \in \supp (g) \subseteq \sB (\bc,r+ \eta)$.
Hence \eqref{eq: point wise majorant} follows if we show
$ \mathds{1}_{\sB (\bc,r + \frac{\eta}{2})} 
* B_{\vartheta_0}$ is equal to one
on the ball $\sB (\bc,r + \frac{\eta}{3})$.
We begin by writing
\begin{align*}
\mathds{1}_{\sB (\bc,r + \frac{\eta}{2})} * B_{\vartheta_0} (\bx)
 = \int_{\sB (\bc,r + \frac{\eta}{2})} 
B_{\vartheta_0} (\bx-\by) \rd \by 
= \int_{\sB (\bc,r + \frac{\eta}{2}) - \bx} 
B_{\vartheta_0} (-\by) \rd \by .
\end{align*} Observe 
that $B_{\vartheta_0}$ is supported inside 
the ball $\sB (\bzero,\vartheta_0)$. 
Further, for any $\bx \in \sB (\bc, r + \frac{\eta}{3})$, the triangle inequality implies
$\sB (\bzero,\vartheta_0)  
\subseteq \sB (\bc,r + \frac{\eta}{2}) - \bx $.
By using the fact that $B$ is non-negative, even and $L^1$-normalised, we infer 
$$
\mathds{1}_{\sB (\bc,r + \frac{\eta}{2})} * B_{\vartheta_0} (\bx) = \int_{\sB (\bzero,\vartheta_0)} 
B_{\vartheta_0} (-\by) \rd \by =1 
\quad \mathrm{for}\quad 
\bx \in \sB (\bc, r + \frac{\eta}{3}).
$$
Finally, the property \eqref{eq: enlarged support} follows
from noticing that 
$\sB (\bc,r + \frac{\eta}{2}) 
- \bx \cap \sB (\bzero,\vartheta_0) = \emptyset$
for any $\bx$ outside $\sB (\bc,r + \eta)$
which, in turn, forces 
$\mathds{1}_{\sB (\bc,r + \frac{\eta}{2})} * B_{\vartheta_0} (\bx)$ to vanish.
\end{proof}
Recall the quantity $ t$ from \eqref{def: t}, and the operators $\mathfrak{D}_{\tau,\phi}$
arising from the multi-term stationary phase expansion in Lemma \ref{lem: N1 contribution after stationary phase}, and as defined in \eqref{def: differential operator with phase function dependence}. The coefficients of these operators are uniformly bounded in the set $\mathscr{W}_{\p}$. 
Let $M$ be the order of the highest derivative which occurs in the expression $\sum_{\tau=0}^t\mathfrak{D}_{\tau, \phi}$. Let 
\begin{equation*}
    \varrho_0:=\varrho
\end{equation*}
be the non-negative smooth weight supported in the ball $\mathscr{B}(\bx_{\1}, \tfrac{\epsilon_{\bx_1}}4)$ which was introduced in \S\ref{subsec local}, satisfying properties \eqref{eq varrho prop} and \eqref{eq varrho pou}.
We define a finite sequence of envelopes $\{\varrho_\kappa\}_{1\leq \kappa\leq K(\varepsilon)}$ recursively in the following way:
\begin{align}
    \label{eq varrho}
    \varrho_\kappa:=c_\kappa\mathcal{E}^{M, 2^{-\kappa-4}\epsilon_{\bx_1}}_{\varrho_{\kappa-1}},
\end{align}
where $c_\kappa$ is a positive constant (depending only on 
$\varrho_0$, $\kappa$, $\varepsilon$, and $n$), chosen so that 
\[\sum_{\tau=1}^t|\mathfrak{D}_{\tau, \phi}\varrho_{\kappa-1}|\leq \varrho_\kappa.\]
Since
$$\frac{\epsilon_{\bx_1}}{4}+\sum_{i=1}^{\kappa}2^{-i-4}{\epsilon_{\bx_1}}\leq \frac{\epsilon_{\bx_1}}{4}+2^{-4}{\epsilon_{\bx_1}}\sum_{i\geq 1}2^{-i}< \frac{\epsilon_{\bx_1}}{2},$$
it follows that $\varrho_{\kappa}$ is supported in the set $\mathscr{W}_{\1}=\mathscr{B}(\bx_1, \tfrac{\epsilon_{\bx_1}}{2})$ for each $\kappa\in \mathbb{N}$. Following our notation, we shall denote each such $\varrho_\kappa$ by $\varrho_\kappa^{[\1]}$.

Notice that when $\tau=0$, we have 
$\mathfrak{D}_{\tau,\phi}u(x)=u(x)$ for any amplitude function $u$. 
What this implies for our induction argument is 
that the contribution from all the envelopes up to step $\kappa$ 
%given by $Q^{\kappa\varepsilon}\varrho_\kappa(\xlkj)$, 
will also be present in addition to new terms which arise at step $\kappa+1$. Keeping this in mind and to keep track of the contributions from the different envelopes at each step, for $\kappa\in\{0, 1, \ldots, K(\varepsilon)\}$ (where $K(\varepsilon)\in\mathbb{Z}_{\geq 1}$ will be fixed later), we define a sequence of weights supported in $\mathscr{W}_{\1}$ in the following way
\begin{equation}
\label{eq: def kappa weight}
P_\kappa^{[\1]}(\mathbf{x}):=\sum_{\mu=0}^{\min\{\kappa,t-1\}} Q^{-\mu\varepsilon}\binom{\kappa}{\mu}\varrho^{[\1]}_\mu(\mathbf{x}). 
\end{equation}

We shall also need weights supports on the dual domain $\mathscr{W}_{\d1}$, given by
\begin{equation}
\label{eq: def dual kappa weight}
P_\kappa^{[\d1]}(\mathbf{y}):=P_\kappa^{[\1]}((\nabla f_{\1})^{-1}(\mathbf{y}))=\sum_{\mu=0}^{\min\{\kappa,t-1\}} Q^{-\mu\varepsilon}\binom{\kappa}{\mu}\varrho^{[\1]}_\mu((\nabla f_{\1})^{-1}(\mathbf{y})). 
\end{equation}
Observe that the number of summands is bounded above by $t$ 
and we only need to keep track of terms of the form 
$Q^{-\kappa\varepsilon}\varrho_\kappa$ for 
$\kappa\in \{0,1,\ldots, t-1\}$. This is because as 
$Q^{-\kappa\varepsilon} \ll Q^{1-n}$ 
for $\kappa\geq t$, which is an acceptable error 
in view of \eqref{eq: N1 stationary phase expansion}. 
On the other hand, the size of the binomial coefficients 
is of the order of $\kappa !$. 

The following lemma establishes that the weight 
$P_{\kappa+1}$ dominates the sum of functions 
obtained when the operator 
$\sum_{\tau=0}^tQ^{-\tau\varepsilon}
\mathfrak{D}_{\tau, \phi}$ 
acts on $P_{\kappa}$. 
\begin{lem}
\label{lem: binomial coeff}
Let $K=K(\varepsilon)$ be a positive natural number and $\p\in\{\1, \d1\}$. If $\kappa\in \{0, 1, \ldots, K(\varepsilon)\}$, then
\[\sum_{\tau=0}^tQ^{-\tau\varepsilon}|\mathfrak{D}_{\tau, \phi}P_{\kappa}^{[\p]}|\leq P_{\kappa+1}^{[\p]}+O(Q^{1-n}).\]
\end{lem}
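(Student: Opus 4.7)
The plan is to prove this by direct computation, leaning on three ingredients: (i) that $\mathfrak{D}_{0,\phi}$ is the identity, so the $\tau=0$ contribution returns $P_\kappa^{[\p]}$ unchanged; (ii) the defining envelope property $\sum_{\tau=1}^{t}\lvert \mathfrak{D}_{\tau,\phi}\varrho_{\mu}^{[\p]}\rvert \leq \varrho_{\mu+1}^{[\p]}$ that is built into the recursive construction \eqref{eq varrho}; and (iii) Pascal's identity $\binom{\kappa}{\mu}+\binom{\kappa}{\mu-1}=\binom{\kappa+1}{\mu}$ to collapse two sums over $\mu$ into one. I will carry out the argument for $\p=1$; the case $\p=d-1$ is identical after one replaces every $\varrho_\mu^{[1]}$ by its dual $\varrho_\mu^{[d-1]}$ (which has the same envelope property on $\mathscr{W}_{d-1}$ by the symmetric construction).

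First I would expand the left-hand side using linearity of $\mathfrak{D}_{\tau,\phi}$ in the amplitude and the triangle inequality, splitting off the $\tau=0$ piece:
\begin{equation*}
\sum_{\tau=0}^{t}Q^{-\tau\varepsilon}\lvert \mathfrak{D}_{\tau,\phi}P_\kappa^{[1]}\rvert
\;\leq\; P_\kappa^{[1]}+\sum_{\mu=0}^{\min\{\kappa,t-1\}} Q^{-\mu\varepsilon}\binom{\kappa}{\mu}\sum_{\tau=1}^{t}Q^{-\tau\varepsilon}\lvert \mathfrak{D}_{\tau,\phi}\varrho_{\mu}^{[1]}\rvert.
\end{equation*}
Using $Q^{-\tau\varepsilon}\leq Q^{-\varepsilon}$ for $\tau\geq 1$ together with the envelope inequality, the inner $\tau$-sum is at most $Q^{-\varepsilon}\varrho_{\mu+1}^{[1]}$. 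Substituting and re-indexing with $\nu=\mu+1$, the right-hand side becomes
\begin{equation*}
P_\kappa^{[1]}+\sum_{\nu=1}^{\min\{\kappa+1,t\}}Q^{-\nu\varepsilon}\binom{\kappa}{\nu-1}\varrho_{\nu}^{[1]}.
\end{equation*}

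Next I would merge the two sums using Pascal's identity. For $1\leq \nu\leq \min\{\kappa,t-1\}$ the coefficient of $Q^{-\nu\varepsilon}\varrho_{\nu}^{[1]}$ becomes $\binom{\kappa}{\nu}+\binom{\kappa}{\nu-1}=\binom{\kappa+1}{\nu}$, while the $\nu=0$ term has coefficient $1=\binom{\kappa+1}{0}$. Comparing with \eqref{eq: def kappa weight}, these terms reconstruct $P_{\kappa+1}^{[1]}$, provided the upper summation limits match. This is automatic whenever $\kappa+1\leq t-1$.

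The main (mild) obstacle is the boundary case when $\nu=t$ appears in the second sum, i.e.\ when $\kappa\geq t-1$: a leftover term
\begin{equation*}
Q^{-t\varepsilon}\binom{\kappa}{t-1}\varrho_{t}^{[1]}(\mathbf{x})
\end{equation*}
sits outside $P_{\kappa+1}^{[1]}$. To dispose of it I would use the choice $t=\lfloor 10(n-1)/\varepsilon\rfloor+1$ from \eqref{def: t}, which yields $Q^{-t\varepsilon}\leq Q^{-10(n-1)}\ll Q^{1-n}$; since $\kappa\leq K(\varepsilon)$ and $\varrho_t^{[1]}$ is a smooth, compactly supported function of sup-norm depending only on $\varepsilon$ and $n$ (its construction in \eqref{eq varrho} halts after finitely many iterations and stays inside $\mathscr{W}_1$), this contribution is absorbed in the $O(Q^{1-n})$ error term. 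This completes the proof.
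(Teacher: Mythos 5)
Your proof is correct and follows essentially the same route as the paper's: the only difference is that you peel off the $\tau=0$ contribution before expanding $P_\kappa^{[\p]}$ whereas the paper expands $P_\kappa^{[\p]}$ first and bounds $\sum_{\tau=0}^t Q^{-\tau\varepsilon}|\mathfrak{D}_{\tau,\phi}\varrho_\mu|\leq\varrho_\mu+Q^{-\varepsilon}\varrho_{\mu+1}$ term-by-term, but the re-indexing, Pascal's identity, and the absorption of the leftover $Q^{-t\varepsilon}\binom{\kappa}{t-1}\varrho_t^{[\p]}$ term into $O(Q^{1-n})$ via the choice of $t$ are identical to the paper's argument.
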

\begin{proof}
We fix $\p$ and suppress the dependence on it, 
referring to $P_{\kappa}^{[\p]}$ or $\varrho_{\mu}^{\p}$ 
by writing $P_{\kappa}$ or $\varrho_{\mu}$. We have
\begin{align*}
\sum_{\tau=0}^tQ^{-\tau\varepsilon}\Big|\mathfrak{D}_{\tau, \phi}P_{\kappa}\Big|&\leq \sum_{\mu=0}^{\min\{\kappa,t-1\}} Q^{-\mu\varepsilon}\binom{\kappa}{\mu}\sum_{\tau=0}^tQ^{-\tau\varepsilon}\Big|\mathfrak{D}_{\tau, \phi}\varrho_\mu\Big|\\
&\leq \sum_{\mu=0}^{\min\{\kappa,t-1\}} Q^{-\mu\varepsilon}\binom{\kappa}{\mu}(\varrho_\mu+Q^{-\varepsilon}\varrho_{\mu+1}).    
\end{align*}

Setting $\kappa'=\min\{\kappa,t-1\}$, we can express the rightmost expression in the following way
\begin{align*}
\sum_{\mu=0}^{\kappa'} Q^{-\mu\varepsilon}\binom{\kappa}{\mu}(\varrho_\mu+Q^{-\varepsilon}\varrho_{\mu+1})&=\varrho_0+\sum_{\mu=1}^{\kappa'}Q^{-\mu\varepsilon}\Big[\binom{\kappa}{\mu-1}+\binom{\kappa}{\mu}\Big]\varrho_{\mu}+Q^{-(\kappa'+1)\varepsilon}\binom{\kappa}{\kappa'}\varrho_{\kappa'+1}\\
&=\varrho_0+\sum_{\mu=1}^{\kappa'}Q^{-\mu\varepsilon}\binom{\kappa+1}{\mu}\varrho_{\mu}+Q^{-(\kappa'+1)\varepsilon}\binom{\kappa}{\kappa'}\varrho_{\kappa'+1}.
\end{align*}
Now we have
\begin{align*}
&\varrho_0+\sum_{\mu=1}^{\kappa'}Q^{-\mu\varepsilon}\binom{\kappa+1}{\mu}\varrho_{\mu}+Q^{-(\kappa'+1)\varepsilon}\binom{\kappa}{\kappa'}\varrho_{\kappa'+1}\\
&\leq \begin{cases}
\sum_{\mu=0}^{\kappa+1}Q^{-\mu\varepsilon}
\binom{\kappa+1}{\mu}\varrho_{\mu}, &\text{ when } \kappa'=\kappa,\\
\sum_{\mu=0}^{t-1}Q^{-\mu\varepsilon}\binom{\kappa+1}{\mu}
\varrho_{\mu}+Q^{-(n-1)}\binom{\kappa}{t-1}\varrho_{t},
\,\, &\text{ when }\kappa'=t-1.
\end{cases}
\end{align*}
Thus in both of the cases above, we conclude that
\[\sum_{\tau=0}^t 
Q^{-\tau\varepsilon}\Big| \mathfrak{D}_{\tau, \phi}P_{\kappa}\Big|\leq P_{\kappa+1}+O(Q^{1-n}).\]
\end{proof}

\subsection{Duality via an Uncertainty Principle}\label{subsec: duality} Now, we can link the right hand side of 
\eqref{eq: N1 stationary phase expansion}
to the number of rational points on the 
dual hypersurface. 
This is the content of the 
Duality Principle formulated in the next lemma.
Throughout this section, we fix $K:=K(\varepsilon)\in\mathbb{Z}_{\geq 1}$.
\begin{lem}[Duality Principle]\label{lem: N1 after stationary phase and geom sum}
Let $\p\in\{1, \d1\}$. 
Suppose $\kappa\in 
\{0, 1, \ldots, K(\varepsilon)-1\}$. 
If $A>1$, then
\begin{align*}
& N_{1}^{P_{\kappa}^{[\p]}}(\delta,Q, \ell, \p, \r)\\ 
& \ll_A
 2^{(\frac{d}{2}-\p) \ell (n-1)} 
 (1+ (2^r\delta)^{A})^{-1}
Q^{\frac{n+1}{2}} \delta 
2^{-r \frac{n-1}{2}}\cdot \Big(\sum_{0\leq 
i \leq  \frac{\log (Q/\delta)}{100}}  
\frac{\fN^{{P}^{[d-\p]}_{\kappa+1}}(2^{i+1}/Q, 
2^r,\ell, d-\p)}{2^{Ai}}
\\ 
& + 2^r Q^{1-n} \max(1, 2^{-\ell (d-\p)(n-1)}2^{r(n-1)})\Big) 
+ 2^{-\p \ell (n-1)} 
(1+ (2^r\delta)^{A})^{-1} Q^{-5(n-1)}\delta 2^{rn},
\end{align*}
uniformly in $Q\geq 1$, 
$\delta\in (0, 1/2)$, 
\ellpcond,
and $r\in\sG(\delta,Q,\ell)$.
\end{lem}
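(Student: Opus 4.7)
\smallskip

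\noindent\textbf{Proof proposal.} The plan is to start from the multi-term stationary phase expansion proved in Lemma \ref{lem: N1 contribution after stationary phase} applied with the weight $\rho^{[\p]} = P_{\kappa}^{[\p]}$. This produces, modulo an error of size $O(Q^{-10(n-1)})$ for each $(q,j,\mathbf{k})$, a main term of the form
\[
\frac{\delta\,\widehat{b}(j\delta)\,e(\sigma)}{|\det H_{f_\p}(\mathbf{x}_{j,\mathbf{k},\ell,\p})|^{1/2}}\sum_{0\leq \tau\leq t}\frac{(\mathfrak{D}_{\tau,\phi}P_{\kappa}^{[\p]})((\nabla f_\p)^{-1}(2^{\ell(d-\p)}\mathbf{k}/j))}{(2^{-d\ell}qj)^{\frac{n-1}{2}+\tau}}\,e(-jq f_{d-\p}(\mathbf{k}/j)).
\]
Since $r\in\sG(\delta,Q,\ell)$ ensures $2^{-d\ell}qj\gg Q^\varepsilon$, the $\tau$-th term is at most $Q^{-\tau\varepsilon}$ times the $\tau=0$ one (up to bounded factors). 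Hence the enveloping estimate of Lemma \ref{lem: binomial coeff} applied pointwise yields
\[
\Big|\sum_{\tau=0}^{t}Q^{-\tau\varepsilon}(\mathfrak{D}_{\tau,\phi}P_{\kappa}^{[\p]})(\mathbf{x}_{j,\mathbf{k},\ell,\p})\Big|\leq P_{\kappa+1}^{[\p]}(\mathbf{x}_{j,\mathbf{k},\ell,\p})+O(Q^{1-n}),
\]
and the duality of weights established in \S\ref{subsec dual weight} gives
\[
P_{\kappa+1}^{[\p]}(\mathbf{x}_{j,\mathbf{k},\ell,\p})=P_{\kappa+1}^{[\p]}(2^{\ell\p}(\nabla f_\p)^{-1}(\mathbf{k}/j))=P_{\kappa+1}^{[d-\p]}(\mathbf{k}/j).
\]
Taking absolute values, pulling out the common $(2^{-d\ell}qj)^{-(n-1)/2}$ factor, and using $|\det H_{f_\p}|\asymp 1$ on the support, reduces the estimation of $N_{1}^{P_{\kappa}^{[\p]}}$ to bounding
\[
2^{-\ell\p(n-1)}\,(2^{-d\ell}Q\,2^r)^{-\frac{n-1}{2}}\,\delta\,|\widehat{b}(j\delta)|\cdot\Big|\sum_{q\in\bZ}\om(q/Q)\,q^{n-1}\,e\!\big(-jqf_{d-\p}(\mathbf{k}/j)\big)\Big|
\]
summed over $j\asymp 2^r$ and $\mathbf{k}\in\sK_1(\ell,\p,j)$, with weight $P_{\kappa+1}^{[d-\p]}(\mathbf{k}/j)$. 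The prefactor simplifies to $2^{(d/2-\p)\ell(n-1)}Q^{(n+1)/2}\delta\,2^{-r(n-1)/2}$, accounting for the overall scaling appearing in the claim.

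Next I would apply the smooth Poisson summation of Lemma \ref{lem: smooth geometric summation} to the $q$-sum with $\Omega(t):=\om(t)t^{n-1}$. This yields
\[
\sum_{q}\om(q/Q)\,q^{n-1}\,e(-jqf_{d-\p}(\mathbf{k}/j))=Q^{n}\,\widehat{\Omega}\!\big(Q\,\|jf_{d-\p}(\mathbf{k}/j)\|\big)+O_A(Q^{-A}),
\]
and decomposing the right-hand side dyadically according to the size of $\|jf_{d-\p}(\mathbf{k}/j)\|$, the rapid decay of $\widehat{\Omega}$ confines the substantial contribution to $\|jf_{d-\p}(\mathbf{k}/j)\|\leq 2^{i+1}/Q$ for $0\leq i\leq \tfrac{1}{100}\log(Q/\delta)$, with the $i$-th layer carrying the harmless weight $2^{-Ai}$. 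At this point the inner $(j,\mathbf{k})$-sum
\[
\sum_{j}\om(j/2^r)\sum_{\mathbf{k}}P_{\kappa+1}^{[d-\p]}(\mathbf{k}/j)\,b_{2^{i+1}/Q}\!\big(\|jf_{d-\p}(\mathbf{k}/j)\|\big)
\]
is exactly (up to harmless absorption of $|\widehat{b}(j\delta)|$ in the $(1+(2^r\delta)^A)^{-1}$ factor via the rapid decay of $\widehat b$) a dyadic piece of $\fN^{P^{[d-\p]}_{\kappa+1}}(2^{i+1}/Q,2^r,\ell,d-\p)$, with the roles of $\delta$ and $Q$ interchanged and the Legendre dual weight $P_{\kappa+1}^{[d-\p]}$ playing the role of $\rho^{[d-\p]}$. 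This produces the main term of the asserted bound.

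The two remaining terms are absorbed as follows. The $O(Q^{1-n})$ loss coming from the envelope inequality of Lemma \ref{lem: binomial coeff}, summed trivially over $q\asymp Q$, $j\asymp 2^r$, and over $\mathbf{k}\in\sK_1(\ell,\p,j)$, contributes at most $Q^{1-n}\cdot 2^r\cdot\max(1,(2^{-(d-\p)\ell}2^r)^{n-1})$ times the common prefactor, giving the $2^rQ^{1-n}\max(1,2^{-\ell(d-\p)(n-1)}2^{r(n-1)})$ summand (the max arises because $\#\sK_1(\ell,\p,j)$ is bounded, up to constants, by the volume of the scaled gradient image). The $O(Q^{-10(n-1)})$ error per triple from Lemma \ref{lem: N1 contribution after stationary phase}, summed trivially over $q\asymp Q$, $j\asymp 2^r$, and over the roughly $O(2^{rn})$ lattice points $\mathbf{k}$ (controlled by the diameter of the support of $\rho^{[\p]}$), delivers the final $2^{-\p\ell(n-1)}(1+(2^r\delta)^A)^{-1}Q^{-5(n-1)}\delta\,2^{rn}$ term, with another factor $(1+(2^r\delta)^A)^{-1}$ inserted for free from the decay of $\widehat b$.

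The main obstacle, and the one I would be most careful about, is the bookkeeping when passing from the envelope majorant back to a counting function: one has to verify that $P_{\kappa+1}^{[d-\p]}$ truly is admissible as a weight in the counting function $\fN^{P^{[d-\p]}_{\kappa+1}}$ (i.e.\ is supported where $\nabla f_{d-\p}$ is invertible and satisfies the admissibility of \S\ref{subsec smooth}), together with establishing the cardinality bound on $\sK_1(\ell,\p,j)$ used for the trivial error. Both follow from the construction in \S\ref{subsec: envelope}, specifically the fact that each $\varrho_\kappa$ is supported in $\sB(\bx_1,\epsilon_{\bx_1}/2)\subset\sW_{\1}$, so that the induction stays within an admissible family; but this is precisely the point where the careful radii choices $2^{-\kappa-4}\epsilon_{\bx_1}$ in \eqref{eq varrho} pay off.
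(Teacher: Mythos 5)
Your proposal has the same structural shape as the paper's proof---stationary phase expansion, $q$-Poisson summation, enveloping/duality of weights, and a dyadic decomposition according to the size of $Q\Vert jf_{d-\p}(\mathbf{k}/j)\Vert$---so the strategy is correct. However, there is a genuine ordering issue in the write-up which, if read literally, leaves a gap.

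You apply the enveloping bound $\big|\sum_{\tau} Q^{-\tau\varepsilon}(\mathfrak D_{\tau,\phi}P_\kappa^{[\p]})\big|\leq P_{\kappa+1}^{[\p]}+O(Q^{1-n})$ \emph{before} running Poisson summation in $q$, and then claim the remaining $q$-sum is $\sum_q\om(q/Q)q^{n-1}e(-jqf_{d-\p}(\mathbf k/j))$. But in the stationary phase expansion of $N_1^{P_\kappa^{[\p]}}$, the $\tau$-th term carries the factor $(2^{-d\ell}qj)^{-(n-1)/2-\tau}$, which depends on $q$. If you first take absolute values of the $\tau$-sum (replacing $(2^{-d\ell}qj)^{-\tau}$ by the upper bound $Q^{-\tau\varepsilon}$ and invoking Lemma~\ref{lem: binomial coeff}), you have converted a precise $q$-dependent expression into a pointwise majorant; the oscillatory $q$-sum cannot then be computed by Poisson, since Poisson requires the exact smooth weight, not a bound on its modulus. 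The correct order---as in the paper---is to interchange the $\tau$- and $q$-sums, apply Lemma~\ref{lem: smooth geometric summation} to the $q$-sum \emph{for each $\tau$ separately} with the $\tau$-dependent smooth weight $g_\tau(t):=\om(t)t^{\frac{n-1}{2}-\tau}$, and only afterward take absolute values and sum over $\tau$ with the envelope inequality applied to the stationary-phase coefficients. Your choice of Poisson weight $\Omega(t)=\om(t)t^{n-1}$, paired with the extracted constant $(2^{-d\ell}Q\,2^r)^{-(n-1)/2}$, double-counts the $q^{(n-1)/2}$ power; the arithmetic happens to give the right $Q^{(n+1)/2}$ because $q\asymp Q$, but this is a sleight of hand rather than a clean Poisson argument.

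A few smaller slips: the duality evaluation should read $P_{\kappa+1}^{[\p]}(\mathbf x_{j,\mathbf k,\ell,\p})=P_{\kappa+1}^{[d-\p]}(2^{\ell(d-\p)}\mathbf{k}/j)$, with the dilation $2^{\ell(d-\p)}$ which is exactly what makes the resulting $(j,\mathbf k)$-sum recognizable as the weight $\rho^{[d-\p]}_{(d-\p)\ell}$ inside $\fN^{P^{[d-\p]}_{\kappa+1}}(2^{i+1}/Q,2^r,\ell,d-\p)$; you also need to explicitly absorb the $O_A(Q^{-A})$ error coming from the $q$-Poisson step and the far range $Q\Vert jf_{d-\p}(\mathbf k/j)\Vert>(Q/\delta)^{1/100}$ (both are routine but must be mentioned: the paper estimates these as $E_2$ and $E_{4,\mathrm{far}}$). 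All of this is repairable by reordering, but as written the proposal skips the step where the $q$-oscillation is actually exploited.
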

\begin{proof}
We rewrite the right hand side of 
\eqref{eq: N1 stationary phase expansion} as 
\begin{align}
\label{eq st phase first term}
E&:= 2^{-\ell\p(n-1)}\sum_{q,j\in \bZ}
\om(\frac{q}{Q}) \om(\frac{j}{2^r})
q^{n-1}\delta \widehat{b}(j\delta)
\sum_{\mathbf{k}\in\mathscr{K}_{1}(\ell,\p,j)}
\Bigg[
\sum_{0\leq \tau\leq t}
\frac{b_\kappa(\mathbf{k}/j,\tau,\p,\ell)}
{(2^{-d\ell}qj)^{\frac{n-1}{2}+\tau}} 
e(-jqf_{d-\p}(\mathbf{k}/j))
\\&+ O(Q^{-10(n-1)})
\Bigg],\nonumber
\end{align} where
\begin{equation}\label{def: wtildetau}
b_\kappa(\mathbf{k}/j,\tau,\p,\ell):=
\frac{ e(\sigma)(\mathfrak{D}_{\tau,\phi}P_\kappa^{[\p]})(\mathbf{x}_{j,\mathbf{k},\ell, \p})}{\vert\det(H_{f_{\p}}(\mathbf{x}_{j,\mathbf{k},\ell, \p}))
\vert^{\frac{1}{2}}}.
\end{equation}
Next, we consider the contribution to $E$
arising through the 
$O$-term in the rectangular brackets. 
To this end, we first notice that 
$\sK_1(\ell,\p,j)$ is contained in a cube in $\bRn$ 
whose side-length are of size
$2^{-\ell (d-\p)}j\asymp 2^{-\ell (d-\p)}2^{r}$. 
As a result,
\begin{equation}\label{eq size of K1}
\# \sK_1(\ell,\p,j) \ll (1+ 2^{-\ell (d-\p)}j)^{n-1} \ll 
\max(1, 2^{-\ell (d-\p)(n-1)}2^{r(n-1)}).
\end{equation}
Let
$$
E_0:=2^{-\p \ell (n-1)} 
\delta 2^r (1+ (2^r\delta)^{A})^{-1} Q^{-5(n-1)}2^{r(n-1)}
$$
for fixed $A>0$. 
We use the rapid decay of $\widehat{b}$
in the form 
$$
\widehat{b}(t) \ll_A \frac{1}{1+\vert t\vert^A}.
$$
Using \eqref{eq size of K1} in the slackened form
$
\# \sK_1(\ell,\p,j) \ll 2^{r(n-1)},
$
we conclude that 
$$
\sum_{q,j\in \bZ}
\om(\frac{q}{Q}) \om(\frac{j}{2^r})
q^{n-1}\delta \widehat{b}(j\delta)
\sum_{\mathbf{k}\in\mathscr{K}_{1}(\ell,\p,j)}
Q^{-10(n-1)}
\ll  2^{r(n-1)} 
\delta 2^r (1+ (2^r\delta)^{A})^{-1} Q^{-5(n-1)}.
$$
Therefore, $E$ equals
$$
O(E_0) +
2^{-\ell\p(n-1)}\sum_{q,j\in \bZ}
\om(\frac{q}{Q}) \om(\frac{j}{2^r})
q^{n-1}\delta \widehat{b}(j\delta)
\sum_{\substack{
\mathbf{k}\in\mathscr{K}_{1}(\ell,\p,j)\\
0\leq \tau\leq t}}
\frac{b_\kappa(\mathbf{k}/j,\tau,\p,\ell)}
{(2^{-d\ell}qj)^{\frac{n-1}{2}+\tau}} 
e(-jqf_{d-\p}(\mathbf{k}/j)).
$$
Fix $0\leq \tau \leq t$ and set
$g_\tau(x):= \om(x) x^{\frac{n-1}{2}-\tau}$. 
By using Lemma \ref{lem: smooth geometric summation}
we infer
\begin{align*}
\sum_{q\in \bZ}
\om(\frac{q}{Q}) 
q^{n-1}
\frac{e(-jqf_{d-\p}(\mathbf{k}/j))}
{q^{\frac{n-1}{2}+\tau}}
& =
Q^{\frac{n-1}{2}-\tau}
\sum_{q\in \bZ}
g_\tau(\frac{q}{Q})
e(-jqf_{d-\p}(\mathbf{k}/j))\\
& =
Q^{\frac{n-1}{2}-\tau}\left[Q\widehat{g_\tau}
(Q \Vert jf_{d-\p}(\mathbf{k}/j)\Vert)
+ O_A(\Vert g_\tau \Vert_{C^A} Q^{-A})\right]
\end{align*}
where we used in the last step that $\Vert -x \Vert=\Vert x\Vert$.
Since $t$ and $A$ are finite constants,
$\Vert g_\tau \Vert_{C^A}$ is uniformly
bounded for all $0\leq \tau \leq t$.
Take $A> 100 (n-1)/\varepsilon$. 
Summing the above relation over $j\asymp 2^r$ and $0\leq \tau\leq t$,
%while using \Niclas{True but not enough to fix it.}
%\begin{equation}
%\sum_{j\in \bZ} \om(\frac{j}{2^r})\delta \widehat{b}(j\delta) 
%\ll \delta 2^{r} \max_{j\asymp 2^r} \vert \widehat{b}(j\delta)\vert 
%\ll \frac{\delta 2^{r}}{1+\delta 2^r}
%\ll 1,
%\end{equation}
we conclude that 
\begin{equation}
\label{dec E1}
E=
% 2^{-\ell\p(n-1)}
% \sum_{j\in \bZ}
% \sum_{\substack{0\leq \tau\leq t}}
% 2^{d\ell (\frac{n-1}{2}+\tau)}
% Q^{\frac{n+1}{2}-\tau}
% \om(\frac{j}{2^r})
% \frac{\delta \widehat{b}(j\delta)}{j^{\frac{n-1}{2}+\tau}}
% \sum_{\mathbf{k}\in\mathscr{K}_{1}(\ell,\p,j)}
% b_\kappa(\mathbf{k}/j,\tau,\p,\ell)
% \big[\widehat{g_\tau}(Q \Vert jf_{d-\p}(\mathbf{k}/j)\Vert)
% + O(Q^{1-n})\big]
 E_1 + O(E_0+E_2),
\end{equation}
where
\begin{align}
    \label{def E1}
    E_1:= &2^{-\ell\p(n-1)}
\sum_{j\in \bZ}
\sum_{\substack{0\leq \tau\leq t}}
2^{d\ell (\frac{n-1}{2}+\tau)}
Q^{\frac{n+1}{2}-\tau}
\om(\frac{j}{2^r})
\frac{\delta \widehat{b}(j\delta)}{j^{\frac{n-1}{2}+\tau}}
\\&\times\sum_{\mathbf{k}\in\mathscr{K}_{1}(\ell,\p,j)}
b_\kappa(\mathbf{k}/j,\tau,\p,\ell)
\widehat{g_\tau}(Q \Vert jf_{d-\p}(\mathbf{k}/j)\Vert),\nonumber
\end{align}
and 
\begin{align*}
E_2:=
2^{-\ell\p(n-1)}
\sum_{j\in \bZ}
\om(\frac{j}{2^r})
\sum_{\substack{0\leq \tau\leq t \\
\mathbf{k}\in\mathscr{K}_{1}(\ell,\p,j)}}
Q^{1-n}\cdot  
2^{d\ell (\frac{n-1}{2}+\tau)}
Q^{\frac{n+1}{2}-\tau}
\vert b_\kappa(\mathbf{k}/j,\tau,\p,\ell)\vert
\frac{\delta \vert 
\widehat{b}(j\delta)\vert }{j^{\frac{n-1}{2}+\tau}}.
\end{align*}
We first deal with $E_2$, and verify that
\begin{equation}\label{eq E2 estimate}
E_2 \ll U:=2^{(\frac{d}{2}-\p)\ell (n-1)}
\frac{\delta Q^{-\frac{n-3}{2}}2^{-r \frac{n-3}{2}}}
{1+ (2^r\delta)^{A}} 
\max(1, 2^{-\ell (d-\p)(n-1)}2^{r(n-1)}).
\end{equation}
Let $M:=
\max(1, 2^{-\ell (d-\p)(n-1)}2^{r(n-1)})$.
Upon using $
b_\kappa(\mathbf{k}/j,\tau,\p,\ell)=O(1)$,
we infer 
$$
E_2 \ll 
2^{-\ell\p(n-1)}
\sum_{j\in \bZ}
Q^{1-n}\sum_{\substack{0\leq \tau\leq t}}
2^{d\ell (\frac{n-1}{2}+\tau)}
Q^{\frac{n+1}{2}-\tau}
\om(\frac{j}{2^r})
\frac{\delta \vert \widehat{b}(j\delta)\vert }{2^{r\frac{n-1}{2}+\tau}}M.
$$
Further, bearing in mind that
$(2^{-d\ell} Q 2^{r})^{-\tau} \ll  Q^{-\varepsilon \tau}$,
it is clear that the regime $\tau=0$ 
contributes more than the regime $0<\tau \leq t$.
Hence 
$$ 
E_2 \ll
2^{-\ell\p(n-1)}
2^{d\ell (\frac{n-1}{2})}
Q^{1-n} Q^{\frac{n+1}{2}}
\sum_{j \in \bZ}
\frac{\delta \om(\frac{j}{2^r})2^{-r\frac{n-1}{2}}}{1+ (2^r\delta)^{A}}M.
$$
This implies \eqref{eq E2 estimate}.
Next, we turn our attention to $E_1$. 
By
$(2^{-d\ell} Q 2^{r})^{-\tau} \ll  
Q^{-\varepsilon \tau}$,
we obtain
\begin{align}\label{eq: bassa}
& E_1 \ll 
\\&2^{-\ell\p(n-1)}2^{d\ell (\frac{n-1}{2})}
Q^{\frac{n+1}{2}}
\sum_{j\in \bZ}
\om(\frac{j}{2^r})
\frac{\delta \vert \widehat{b} (j\delta) \vert}{2^{r(\frac{n-1}{2})}}
\sum_{\substack{\mathbf{k}\in\mathscr{K}_{1}(\ell,\p,j)\\
0\leq \tau\leq t}}Q^{-\varepsilon \tau}
\vert b_\kappa(\mathbf{k}/j,\tau,\p,\ell)
\widehat{g_\tau}(Q \Vert jf_{d-\p}(\mathbf{k}/j)\Vert)
\vert.\nonumber
\end{align}
We turn our attention to estimating the innermost sum
which is at most
$$
\max_{0\leq \tau \leq t}
\vert
\widehat{g_\tau}(Q \Vert jf_{d-\p}(\mathbf{k}/j)\Vert)
\vert
\cdot 
\sum_{0\leq \tau\leq t} Q^{-\varepsilon \tau}
\vert b_\kappa(\mathbf{k}/j,\tau,\p,\ell)
\vert.
$$
Since $\bx_{j,\bk,\ell,\p} \in \mathscr{W}_{\p}$, we can bound
$\vert \det H_{f_\p}(\bx_{j,\bk,\ell,\p}) \vert^{-1}$
from above (uniformly in $j,\ell$ and $p$).
We recall \eqref{def: wtildetau} and infer using
Lemma \ref{lem: binomial coeff} that 
\begin{align*}
\sum_{0\leq \tau \leq t}
Q^{-\varepsilon \tau} \vert 
b_\kappa(\mathbf{k}/j,\tau,\p,\ell)
\vert
&=
\frac{1}
{\vert \det(H_{f_{\p}}(\mathbf{x}_{j,\mathbf{k},\ell, \p}))
\vert^{\frac{1}{2}}}
\sum_{0\leq \tau \leq t}
Q^{-\varepsilon \tau}
\vert(\mathfrak{D}_{\tau,\phi}P_{\kappa}^{[\p]})
(\mathbf{x}_{j,\mathbf{k},\ell, \p})\vert
\\&\ll
P_{\kappa+1}^{[\p]}(\mathbf{x}_{j,\mathbf{k},\ell, \p})+ Q^{1-n}.    
\end{align*}

Using the definition of $\xlkj$ from \eqref{eq: def xlkj},
%and the fact that 
%$(\nabla f_\p)^{-1}$ is homogeneous of degree $\frac{d}{d-\p}$ 
%whenever $f$ is homogeneous of degree $\frac{d}{\p}$, 
we get
\[
P_{\kappa+1}^{[\p]}(\mathbf{x}_{j,\mathbf{k},\ell, \p})=
{P}_{\kappa+1}^{[d-\p]}(2^{\ell(d-\p)}\mathbf{k}/j).
\]
The upshot is that the inner most sum in \eqref{eq: bassa} equals
\begin{align*}
& 
\sum_{0\leq \tau\leq t} Q^{-\varepsilon \tau}
\vert b_\kappa(\mathbf{k}/j,\tau,\p,\ell)
\widehat{g_\tau}(Q \Vert jf_{d-\p}(\mathbf{k}/j)\Vert)
\vert\\
& \ll
\max_{0\leq \tau \leq t}
\vert
\widehat{g_\tau}(Q \Vert jf_{d-\p}(\mathbf{k}/j)\Vert)
\vert 
\cdot
{P}_{\kappa+1}^{[d-\p]}(2^{\ell(d-\p)}\mathbf{k}/j)
+ Q^{1-n}.
\end{align*}
In view of \eqref{eq: bassa}, we have 
\begin{equation}
    \label{eq E1E3}
    E_1 \ll E_3
\end{equation}
where
\begin{align*}
&E_3:=2^{(\frac{d}{2}-\p) \ell (n-1)}
Q^{\frac{n+1}{2}}
\sum_{j\in \bZ}
\om(\frac{j}{2^r})
\frac{\delta (1+ (2^r\delta)^{A})^{-1}}{2^{r\frac{n-1}{2}}}
\\
&
\sum_{\mathbf{k}\in\mathscr{K}_{1}(\ell,\p,j)}
\Big( 
\max_{0\leq \tau \leq t}
\vert
\widehat{g_\tau}(Q \Vert jf_{d-\p}(\mathbf{k}/j)\Vert)
\vert 
\cdot
{P}_{\kappa+1}^{[d-\p]}(2^{\ell(d-\p)}\mathbf{k}/j)
+ Q^{1-n}
\Big).    
\end{align*}
To proceed, the further decomposition
\begin{equation}\label{eq E4 decomp}
E_3 =
2^{(\frac{d}{2}-\p) \ell (n-1)} 
Q^{\frac{n+1}{2}} 
(1+ (2^r\delta)^{A})^{-1}\delta 2^{-r \frac{n-1}{2}}
\big( E_4 + E_5 \big)
\end{equation}
is useful where
$$
E_4 := 
\sum_{j\in \bZ}
\om(\frac{j}{2^r})
\sum_{\mathbf{k}\in\mathscr{K}_{1}(\ell,\p,j)}
\max_{0\leq \tau \leq t}
\vert
\widehat{g_\tau}(Q \Vert jf_{d-\p}(\mathbf{k}/j)\Vert)
\vert 
{P}_{\kappa+1}^{[d-\p]}(2^{\ell(d-\p)}\mathbf{k}/j),
$$
and
$$
E_5:=
\sum_{j\in \bZ}
\om(\frac{j}{2^r})
\sum_{\mathbf{k}\in\mathscr{K}_{1}(\ell,\p,j)}
Q^{1-n}.
$$
We now turn to bounding $E_5$.
In view of \eqref{eq size of K1} we deduce 
\begin{equation} \label{eq: II case 1}
E_5 \ll 
\sum_{j\in \bZ} 
\om(\frac{j}{2^r})
Q^{1-n} (\#\sK_1(\ell,\p,j))
\ll 
2^r
Q^{1-n} 
\max(1, 2^{-\ell (d-\p)(n-1)}2^{r(n-1)}) .
\end{equation}
Thus, 
\begin{equation}\label{eq X interm uncert}
E_3\ll 
2^{(\frac{d}{2}-\p) \ell (n-1)} 
Q^{\frac{n+1}{2}} 
(1+ (2^r\delta)^{A})^{-1}\delta 2^{-r \frac{n-1}{2}}
E_4 + U
\end{equation}
where we recall that $U$ denotes the right hand
of \eqref{eq E2 estimate}.
It remains to analyse $E_4$.
To this end, we use the rapid decay of $ \widehat{g_\tau}$ 
to conclude
$$
\sum_{\mathbf{k}\in\mathscr{K}_{1}(\ell,\p,j)}
\max_{0\leq \tau \leq t}
\vert
\widehat{g_\tau}(Q \Vert jf_{d-\p}(\mathbf{k}/j)\Vert)
\vert 
\cdot
{P}_{\kappa+1}^{[d-\p]}(2^{\ell(d-\p)}\mathbf{k}/j)
\ll 
\sum_{\mathbf{k}\in \bZn}
\frac{
{P}_{\kappa+1}^{[d-\p]}(2^{\ell(d-\p)}\mathbf{k}/j)}{1+
( Q \Vert jf_{d-\p}(\mathbf{k}/j)\Vert) ^A}.
$$
Thus 
$$
E_4 \ll 
\sum_{(j,\bk) \in \bZ\times \bZn}
\om(\frac{j}{2^r})
\frac{{P}_{\kappa+1}^{[d-\p]}(2^{\ell(d-\p)}\mathbf{k}/j)}{1+
( Q \Vert jf_{d-\p}(\mathbf{k}/j)\Vert) ^A}.
$$
Next, we (approximately) decompose the sum on the right hand side 
in according to the size of $Q \Vert jf_{d-\p}(\mathbf{k}/j)\Vert$.
Let 
$$
E_{4,\mathrm{far}}:=
\sum_{\substack{(j,\bk) \in \bZ\times \bZn
\\ Q \Vert jf_{d-\p}(\mathbf{k}/j)\Vert >2^{\frac{\log (Q/\delta)}{100}}}}
\om(\frac{j}{2^r})
\frac{%\widetilde{w}(\mathbf{k}/j)
{P}_{\kappa+1}^{[d-\p]}(2^{\ell(d-\p)}\mathbf{k}/j)}{1+
( Q \Vert jf_{d-\p}(\mathbf{k}/j)\Vert) ^A},
$$
and 
$$
E_{4,\mathrm{near}}:=
\sum_{\substack{(j,\bk) \in \bZ\times \bZn
\\ Q \Vert jf_{d-\p}(\mathbf{k}/j)\Vert \leq 2^{\frac{\log (Q/\delta)}{100}}}}
\om(\frac{j}{2^r})
\frac{%\widetilde{w}(\mathbf{k}/j)
{P}_{\kappa+1}^{[d-\p]}(2^{\ell(d-\p)}\mathbf{k}/j)}{1+
( Q \Vert jf_{d-\p}(\mathbf{k}/j)\Vert) ^A},
$$
so that
$$
E_4 \ll E_{4,\mathrm{far}}+ E_{4,\mathrm{near}}.
$$ 
We estimate 
$$
E_{4,\mathrm{far}} \ll 
\sum_{\substack{(j,\bk) \in \bZ\times \bZn
%\\ Q \Vert jf_{d-\p}(\mathbf{k}/j)\Vert \in [2^{i},2^{i+1}]
}}
\om(\frac{j}{2^r})
\frac{%\widetilde{w}(\mathbf{k}/j)
{P}_{\kappa+1}^{[d-\p]}(2^{\ell(d-\p)}\mathbf{k}/j)}{1+(Q/\delta)^A}
\ll 
\frac{\delta^{-n}}{(Q/\delta)^A}
\ll 1.
$$
Recalling \eqref{eq X interm uncert} and using the above, we get
\begin{equation}\label{eq X inter uncert 2}
E_3 \ll 
2^{(\frac{d}{2}-\p) \ell (n-1)} 
Q^{\frac{n+1}{2}} 
(1+ (2^r\delta)^{A})^{-1}\delta 2^{-r \frac{n-1}{2}}
E_{4,\mathrm{near}}+ U.
\end{equation}
To estimate $E_{4,\mathrm{near}}$ efficiently,
the remaining range of $j$ and $\mathbf{k}$
can be estimated via
\begin{align*}
%& 
E_{4,\mathrm{near}}
%\sum_{-\infty < i \leq  \frac{\log (Q/\delta)}{100}}
%\sum_{\substack{(j,\bk) \in \bZ\times \bZn
%\\ Q \Vert jf_{d-\p}(\mathbf{k}/j)\Vert >  2^i}}
%\om(\frac{j}{2^r})
%\frac{
%{P}_{\kappa+1}^{[d-\p]}(2^{\ell(d-\p)}\mathbf{k}/j)}{1+
%( Q \Vert jf_{d-\p}(\mathbf{k}/j)\Vert) ^A}
%\\& 
&\ll 
\sum_{\substack{(j,\bk) \in \bZ\times \bZn
\\ Q \Vert jf_{d-\p}(\mathbf{k}/j)\Vert < 1}}
\om(\frac{j}{2^r})
{P}_{\kappa+1}^{[d-\p]}(2^{\ell(d-\p)}\mathbf{k}/j)
\\&+ 
\sum_{0< i \leq  \frac{\log (Q/\delta)}{100}} 
\sum_{\substack{(j,\bk) \in \bZ\times \bZn
\\ Q \Vert jf_{d-\p}(\mathbf{k}/j)\Vert \in [2^i,2^{i+1})}}
\om(\frac{j}{2^r})
%\widetilde{w}(\mathbf{k}/j)
\frac{{P}_{\kappa+1}^{[d-\p]}(2^{\ell(d-\p)}\mathbf{k}/j)}{2^{Ai}}.
\end{align*}
For each $0< i \leq  \frac{\log (Q/\delta)}{100}$, 
we widen the interval $[2^i,2^{i+1})$
to $[0,2^{i+1})$. Thus, the second term above is 
at most 
$$ 
\sum_{0< i \leq  \frac{\log (Q/\delta)}{100}}  
\frac{\fN^{{P}_{\kappa+1}}(2^{i+1}/Q, 
2^r,\ell, d-\p)}{2^{Ai}}.
$$
The first term can be bounded by 
$\fN^{{P}_{\kappa+1}}(1/Q, 2^r,\ell, d-\p) $.
Plugging the above into \eqref{eq X inter uncert 2}, we obtain
\begin{equation}
\label{eq X inter uncert 3}
    E_3\ll 2^{(\frac{d}{2}-\p) \ell (n-1)}
Q^{\frac{n+1}{2}} \delta 
\frac{2^{-r \frac{n-1}{2}}}
{1+ (2^r\delta)^{A}}
\sum_{0\leq i \leq  \frac{\log (Q/\delta)}{100}}  
\frac{\fN^{{P}_{\kappa+1}}(2^{i+1}/Q, 
2^r,\ell, d-\p)}{2^{Ai}}
+ U.
\end{equation}
% \begin{equation}\label{eq: intermediate dyadic sum}
%  N_{1}^{P_{\kappa}}(\delta,Q, \ell, \p, \r)
%  \ll 2^{(\frac{d}{2}-\p) \ell (n-1)}
% Q^{\frac{n+1}{2}} \delta 
% \frac{2^{-r \frac{n-1}{2}}}
% {1+ (2^r\delta)^{A}}
% \Big(\sum_{i \leq  \frac{\log (Q/\delta)}{100}}  
% \frac{\fN^{{P}_{\kappa+1}}(2^{i+1}/Q, 
% 2^r,\ell, d-\p)}{2^{Ai}}
% + II\Big).
% \end{equation}
We thus have the chain of inequalities
\begin{align*}
E  &\stackrel{\eqref{dec E1}}{\ll}E_0 + E_1+E_2
\stackrel{\eqref{eq E2 estimate}}{\ll} E_0+ E_1+ U
\stackrel{\eqref{eq E1E3}}{\ll}
E_0+ E_3 + U
\\&\stackrel{\eqref{eq X inter uncert 3}}{\ll}
2^{(\frac{d}{2}-\p) \ell (n-1)}
Q^{\frac{n+1}{2}} \delta 
\frac{2^{-r \frac{n-1}{2}}}
{1+ (2^r\delta)^{A}}
\sum_{0\leq i \leq  \frac{\log (Q/\delta)}{100}}  
\frac{\fN^{{P}_{\kappa+1}}(2^{i+1}/Q, 
2^r,\ell, d-\p)}{2^{Ai}}
+ U+E_0.
\end{align*}
Plugging in the expression for $U$, we conclude that
$N_{1}^{P_{\kappa}}(\delta,Q, \ell, \p, \r)$
is no more than a constant times
\begin{align*}
2^{(\frac{d}{2}-\p) \ell (n-1)} &
Q^{\frac{n+1}{2}} \delta \frac{2^{-r \frac{n-1}{2}}}
{1+ (2^r\delta)^{A}}
\Bigg(\sum_{0\leq i \leq  
\frac{\log (Q/\delta)}{100}}  
\frac{\fN^{{P}_{\kappa+1}}(2^{i+1}/Q, 
2^r,\ell, d-\p)}{2^{Ai}}
+ 2^r Q^{1-n} M \Bigg) + \\
+E_0
\end{align*}
Recalling the definitions of 
$E_0$ and $M$ completes the proof.
\end{proof}

\subsection{Stationary Phase Weights}
\label{subsec strange terms}
As stated and already evident from Lemma \ref{lem: N1 after stationary phase and geom sum}, the stationary phase principle 
connects the rational point count around a hypersurface
to the same for its dual. Going deeper into this process to set up a bootstrapping argument (inspired from the one in \cite{Huang rational points} in the case of non-vanishing curvature),
we shall encounter
a weighted sum of 
pruning terms on the 
`dual manifold' (such terms were not present in \cite{Huang rational points}).

Although these terms,
at the first glance, 
seem cumbersome 
we shall see 
that they are actually
harmless. 
To this end, we need
to bound various dyadic 
summations and play
the $\ell$-variables 
out against the $Q$
and $\delta$ variables. 
This maneuver is somewhat 
delicate in several of the cases.
For later convenience,
we gather the required 
technicalities here. 
The reader might skip over 
the present bounds 
on a first reading and return 
at a later point.\\
\\
Our task is to bound 
\begin{equation}\label{def: Xpfrak}
\fX_\p(\delta,Q,\ell)
:=
2^{(\frac{d}{2}-\p) 
\ell (n-1)}
Q^{\frac{n+1}{2}} 
\sum_{(i,r)\in \sX} 
\frac{\fN^{{P}_{\kappa+1}}(2^{i+1}/Q, 
2^r,\ell, d-\p)}
{2^{iA}(1+ (2^r\delta)^{A})}
\delta 2^{-r \frac{n-1}{2}} 
\end{equation}
where 
$\sX:=\sX(Q,\ell, d-\p)$ is
the set of integer pairs 
$(i,r)\in [0,\frac{\log Q}{100}]\times [R_{-},R]$,
with $R_{-}$ as defined in \eqref{eq: R Rminus},
such that 
$$
\ell >
\min\left\{\frac{\log 
(Q2^{r(1+\varepsilon)-i})}
{d\log 2}, 
\frac{(1-\varepsilon)r}{d-\p}\right\}.
$$
\begin{prop} \label{prop: pruning and stationary phase}
%Let $\beta_\kappa \in (n-1,n)$,
Let $\fX_\p$
be the function defined in 
\eqref{def: Xpfrak}.
If $2 \leq d\leq n-1 $, then the estimates
\begin{align*}
\fX_\d1(\delta,Q,\ell)
& \ll 
2^{-\frac{d}{2}\ell(n-1)}
2^{\ell d}
(\delta 
Q^{n - \varepsilon\frac{n-1}{2}}+ 
Q^{\frac{n-1}{2}}
\delta^{-\frac{n-1}{2}}),\\ 
\fX_\1(\delta,Q,\ell)
& \ll 
2^{-\ell(n-1)} 2^{\ell d}
(\delta 
Q^{n -
\varepsilon\frac{n-1}{2}}+ 
Q^{\frac{n-1}{2}}
\delta^{-\frac{n-1}{2}})
\end{align*}
hold true for all $\delta\in (0, 1/2), Q\geq 1$ and all $0\leq \ell \leq \Lc(\delta, Q)$, with the implied constants being uniform in these parameters.
\end{prop}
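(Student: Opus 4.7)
The plan is to establish a uniform slice-wise upper bound for $\fN^{P_{\kappa+1}}(2^{i+1}/Q, 2^r, \ell, d-\p)$ throughout $(i,r)\in\sX$, then perform the outer sum over $\sX$ against the rapid-decay factors $2^{-iA}$ and $(1+(2^r\delta)^A)^{-1}$. The defining inequality for $\sX$ asserts that, with respect to the modified parameters $(\delta', Q'):=(2^{i+1}/Q, 2^r)$, our $\ell$ exceeds at least one of the pruning thresholds $L(\delta', Q')$ or $L_{d-\p}(Q')$. The lattice-point argument used in the proof of Lemma~\ref{lem: tail terms} applies slice-wise and yields
\[
\fN^{P_{\kappa+1}}(\delta', Q', \ell, d-\p)\;\ll\; 2^r\bigl(2^{r-(d-\p)\ell}+1\bigr)^{n-1}
\;\ll\; \max\bigl\{2^r,\;2^{rn}\,2^{-(d-\p)(n-1)\ell}\bigr\},
\]
with the implicit constant depending only on $P_{\kappa+1}$, hence on $\varepsilon$ through the bound $\kappa\le K(\varepsilon)$ on the envelope depth.

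\textbf{Executing the summation.} Substituting this estimate into the definition of $\fX_\p$ separates the bound into a sum over $i$ (which is $O(1)$ by absolute convergence of $\sum_i 2^{-iA}$) and a one-variable dyadic sum over $r$ weighted by $2^{-r(n-1)/2}/(1+(2^r\delta)^A)$. Choosing $A$ sufficiently large (say $A>10n$) localises the $r$-sum near the critical scale $2^r\asymp\delta^{-1}$, where $2^{-r(n-1)/2}\asymp\delta^{(n-1)/2}$; for $n\ge 4$ a routine dyadic splitting controls the $r$-sum by the contribution from this scale up to an $O(1)$ factor, while for $n=3$ one picks up a logarithmic factor that is harmlessly absorbed into the $Q^{\varepsilon}$-savings below.

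\textbf{Assembling.} The two terms of the max-bound produce the two summands in the claimed estimate. The regime $2^{rn}2^{-(d-\p)(n-1)\ell}$, evaluated at $2^r\asymp\delta^{-1}$ and multiplied by the prefactor $2^{(d/2-\p)\ell(n-1)}Q^{(n+1)/2}\delta$, yields (after the exponent arithmetic $(d/2-\p)(n-1)-(d-\p)(n-1)=-(n-1)(d/2)$ for $\p=d-1$, and $-(n-1)$ for $\p=1$, plus a $+d\ell$ from a boundary match) the term $Q^{(n-1)/2}\delta^{-(n-1)/2}$ with the correct $\ell$-prefactor. The other regime, $\max=2^r$, produces $\delta Q^{n-\varepsilon(n-1)/2}$: when $(i,r)\in\sX$ hits this regime via the second tail condition $\ell>(1-\varepsilon)r/(d-\p)$, one has $2^r 2^{-(d-\p)\ell}\le 2^{r\varepsilon}\le Q^{\varepsilon}$, and the $Q^{\varepsilon(n-1)/2}$ saving arises from distributing this $Q^{\varepsilon}$-loss across $n-1$ lattice directions and the square-root present in the prefactor $Q^{(n+1)/2}$; alternatively, when the first tail condition $2^{d\ell}>Q\cdot 2^{r(1+\varepsilon)-i}$ is active the same saving is provided directly by the extra $2^{r\varepsilon}$ factor.

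\textbf{Main obstacle.} The principal difficulty is not any single estimate but the careful bookkeeping of which of the two tail conditions in the definition of $\sX$ is active on different sub-regions of $(i,r)$-space, and matching the $\varepsilon$-savings at the boundary where both conditions become relevant; this is where the upper constraint $d\le n-1$ is used, to guarantee that the exponent $d-n+1\le 0$ arising for $\p=d-1$ keeps the bound $\ll 2^{(d-n+1)\ell}$ harmless at the upper end $\ell=\Lc(\delta,Q)$. No new analytic input is required beyond the lattice-point estimates of \S\ref{subsec: prun phys} and elementary dyadic summation.
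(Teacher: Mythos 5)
The broad strategy — reduce to a slice-wise trivial lattice bound $\fN^{P_{\kappa+1}}(2^{i+1}/Q,2^r,\ell,d-\p)\ll 2^r\bigl(2^{r-(d-\p)\ell}+1\bigr)^{n-1}$ and then sum against the rapid-decay factors — is the right spirit, and your handling of the regime where the second tail condition $\ell>(1-\varepsilon)r/(d-\p)$ is active (the paper's $\sX_2$) does point at the correct mechanism: there $2^r2^{-(d-\p)\ell}\le 2^{\varepsilon r}$, and writing $2^{(d/2-\p)\ell(n-1)}Q^{(n+1)/2}2^{-r(n-1)/2}=2^{-\p\ell(n-1)}Q^n(2^{-d\ell}Q2^r)^{-(n-1)/2}$ together with $2^{-d\ell}Q2^r\ge Q^\varepsilon$ (valid since $r\ge R_-$) produces the $Q^{-\varepsilon(n-1)/2}$ saving, matching Lemma~\ref{lem: Xfrac decompo bound}, \eqref{eq: X2 bound}.

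However, there is a genuine gap in your treatment of the region where the \emph{first} tail condition $\ell>L(\delta',Q')$ (i.e.\ $2^{d\ell}>Q2^{r(1+\varepsilon)-i}$, the paper's $\sX_1$) is what drives membership in $\sX$. You assert that ``the sum over $i$ ... is $O(1)$ by absolute convergence of $\sum_i 2^{-iA}$.'' This discards the crucial arithmetic consequence of \eqref{eq: strange case}: on $\sX_1$ one has $2^i>2^{-d\ell}2^rQ$, which combined with $r\ge R_-$ forces $2^i>Q^\varepsilon$. The $i$-sum therefore gains the factor $Q^{-\varepsilon A}$ rather than just $O(1)$, and it is precisely this $Q^{-\varepsilon A}$, with $A$ large, that absorbs the residual power $Q^{(n+1)/2+\cdots}$ in the paper's proof of \eqref{eq: X1 bound}. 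Carrying out your computation without this saving produces, in the ``$\max=2^{rn}2^{-(d-\p)(n-1)\ell}$'' branch at $2^r\asymp\delta^{-1}$, the quantity $2^{-\frac{d}{2}\ell(n-1)}Q^{(n+1)/2}\delta^{-(n-1)/2}$ — not the $Q^{(n-1)/2}\delta^{-(n-1)/2}$ you state. The proffered ``$+d\ell$ from a boundary match'' does not close this: comparing against the target $2^{-\frac{d}{2}\ell(n-1)}2^{\ell d}Q^{(n-1)/2}\delta^{-(n-1)/2}$ requires $Q\le 2^{\ell d}$, and while $\sX_1$ does force $\ell>0$, it only gives $2^{\ell d}>Q2^{r(1+\varepsilon)-i}$, which for $i$ near its maximum $\log Q/100$ and $r$ near $0$ is as weak as $2^{\ell d}\gg Q^{29/30}$, leaving an uncontrolled gap of order $Q^{1/30}$. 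In short: without exploiting $2^i>Q^\varepsilon$ on $\sX_1$, the $X_1$-part of the estimate simply does not close.

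A secondary inaccuracy: you write that $d\le n-1$ is used to make $2^{(d-n+1)\ell}$ harmless. In fact the underlying bounds (Lemma~\ref{lem: Xfrac decompo bound}) hold for every $d\ge 2$, and the proof of the present proposition only needs the elementary slackenings $2^{-\frac{d}{2}\ell(n-1)}\le 2^{-\ell(n-1)}2^{\ell d}$ and $2^{-(d-1)\ell(n-1)}\le 2^{-\frac{d}{2}\ell(n-1)}$, both of which hold for all $d\ge 2$. The hypothesis $d\le n-1$ in the statement is there because the regime $d>n-1$ is handled by the companion Proposition~\ref{prop: pruning and stationary phase d large}, which packages the same $X_1$-bound with different $X_2$-bounds (\eqref{eq: X2 bound d large}, \eqref{eq: X2 bound d large 2}) adapted to the appearance of the flat term.
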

To prove Proposition \ref{prop: pruning and stationary phase}, 
we use Lemma \ref{lem: tail terms}
to bound $ \fN^{{P}_{\kappa+1}}(2^{i+1}/Q, 
2^r,\ell, d-\p)$.
Thus $ \fX_\p$
naturally decomposes 
into two different quantities.
To define these, we let 
$$
\cT_{1,\p}(\delta',Q')
:=\Big(\frac{\delta'}
{Q'}\Big)^{\frac{\p(n-1)}{d}}
(Q')^n,
\quad
\mathrm{and}
\quad 
\cT_{2,\p}(\delta',Q'):= (Q')^{1+(n-1)\varepsilon}.
$$
Define $\sX_1:=\sX_1(\delta, Q, \ell)$ to be the subset of 
$(i,r) \in \sX$ for which
\begin{equation}
\label{eq: strange case}
    \frac{\log (Q2^{-i+r(1+\varepsilon)})}{d\log 2} <  \ell.
\end{equation}
%\todo{Make one of the inequalities an equality}
Furthermore, put
$\sX_2 := \sX\setminus \sX_1$. Then $\sX_2:=\sX_2(\delta, Q, \ell, d-\p)$ is the subset of 
$(i,r) \in \sX$ for which
\begin{equation}
\label{eq: real strange case}
    (1-\varepsilon)\frac{r}{d-\p}< \ell\leq \frac{\log (Q2^{-i+r(1+\varepsilon)})}{d\log 2}.
\end{equation}
Let $A>1$ be a constant
which is large (in terms
of $d$ and $n$). 
For $m=1,2$, let
$$
X_{m,\p} (\delta, Q,\ell)
:= \sum_{(i,r)\in \sX_m} 
\frac{2^{(\frac{d}{2}-\p) 
\ell (n-1)} }
{2^{iA}(1+ (2^r\delta)^{A})}
Q^{\frac{n+1}{2}} \delta 2^{-r \frac{n-1}{2}}
\cT_{m,d-\p}(2^{i+1}/Q,2^r).
$$
Notice that 
Lemma \ref{lem: tail terms} tells us 
\begin{equation}\label{eq: decomp Xfrak}
    \fX_\p(\delta,Q,\ell)
    \ll X_{1,\p}(\delta,Q,\ell)
    + X_{2,\p}(\delta,Q,\ell).
\end{equation}
The following simple lemma 
is useful to make the 
strong uncertainty principle 
precise in our context.
\begin{lem}
Let $v>0$, $\delta\in (0,1)$, 
and $A>0$ be large. 
Suppose $R$ is as in 
\eqref{def: R}. Then 
\begin{equation}\label{eq: uncertaintly prin}
\sum_{0\leq r\leq R}
\frac{2^{vr}}{1+(2^{r}\delta)^A}
\ll \delta^{-v}.
\end{equation}  
\end{lem}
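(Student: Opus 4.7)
The plan is to split the sum at the critical threshold $r_0 := \lfloor \log(\delta^{-1})/\log 2\rfloor$, where the factor $2^r\delta$ transitions from being small to being large. On the two resulting dyadic regimes the denominator $1+(2^r\delta)^A$ behaves qualitatively differently, so we estimate them separately and take the larger of the two bounds.

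For $0\leq r\leq r_0$ we simply drop the denominator (using $1+(2^r\delta)^A\geq 1$) and sum the geometric series:
\[
\sum_{0\leq r\leq r_0}\frac{2^{vr}}{1+(2^r\delta)^A}\leq \sum_{0\leq r\leq r_0}2^{vr}\ll 2^{vr_0}\asymp \delta^{-v},
\]
where the implied constant depends only on $v$. For $r_0<r\leq R$, we use $1+(2^r\delta)^A\geq (2^r\delta)^A$ to bound each summand by $2^{(v-A)r}\delta^{-A}$. Choosing $A>v$ (which we may, since the hypothesis states $A$ is large), the geometric series is decreasing, so
\[
\sum_{r_0<r\leq R}\frac{2^{vr}}{1+(2^r\delta)^A}\leq \delta^{-A}\sum_{r>r_0}2^{(v-A)r}\ll \delta^{-A}\cdot 2^{(v-A)r_0}\asymp \delta^{-A}\cdot \delta^{A-v}=\delta^{-v}.
\]
Adding the two contributions produces the claimed bound, and the argument is uniform in $R$ (indeed, one need not even use the upper cut-off $R$, only $A>v$). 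There is no real obstacle here; the only point worth highlighting is that the constant in $\ll$ depends on $v$ and on the implicit lower bound we require on $A$ (which, for the applications in Proposition \ref{prop: pruning and stationary phase}, can be freely enlarged).
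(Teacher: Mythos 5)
Your proof is correct and follows essentially the same strategy as the paper: split at the threshold $r_0\approx\log(\delta^{-1})/\log 2$, drop the denominator on the lower range, and use $1+(2^r\delta)^A\geq(2^r\delta)^A$ on the upper range. Your observation that the cut-off $R$ is superfluous (the tail already converges once $A>v$) is accurate and slightly cleaner than the paper's index-shift argument, which still invokes the explicit bound $R\leq\log(\delta^{-1})/\log 2+0.01\log Q$.
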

\begin{proof}
Clearly,
$$
\sum_{0\leq r\leq \frac{\log (1/\delta)}{\log 2}}
\frac{2^{vr}}{1+(2^{r}\delta)^A}
\leq \sum_{0\leq r\leq \frac{\log (1/\delta)}{\log 2}}
2^{vr}
\ll \delta^{-v}.
$$
The tail term can be bounded by
$$
\sum_{\frac{\log (1/\delta)}{\log 2}
< r \leq \frac{\log (1/\delta)}{\log 2} +
\varepsilon \log Q}
\frac{2^{vr}}{1+(2^{r}\delta)^A}
\ll
\sum_{\frac{\log (1/\delta)}{\log 2}
< r \leq \frac{\log (1/\delta)}{\log 2} +
0.01 \log Q}
\frac{2^{(v-A)r}}{\delta^A}
$$
and an index shift reveals that the right hand side is equal to
$$
\sum_{0< r \leq 
0.01 \log Q}
\frac{2^{(v-A)(r+\frac{\log (1/\delta)}{\log 2})}}{\delta^A}
=
\delta^{-v} \sum_{0< r \leq 
0.01 \log Q}
2^{(v-A)r}
\ll \delta^{-v}.
$$
Thus \eqref{eq: uncertaintly prin} follows. 
\end{proof}
The following lemma 
contains the bulk 
of work to prove Proposition
\ref{prop: pruning and stationary phase}.
\begin{lem}\label{lem: Xfrac decompo bound}
Let $\p\in\{1, \d1\}$, $Q\geq 1$, $\delta\in (0, 1/2)$,
$d\geq2$, and \ellpdcond. Then
\begin{equation}\label{eq: X1 bound}
    X_{1,\p}(\delta, Q,\ell) \ll
2^{-\frac{d}{2}\ell (n-1)} 
\delta^{-\frac{n-1}{2}},
\end{equation}
and
\begin{equation}\label{eq: X2 bound}
    X_{2,\p}
    (\delta, Q,\ell)\ll
2^{-\p
\ell (n-1)}
2^{\ell d}
\delta 
Q^{n - \varepsilon\frac{n-1}{2}}.
\end{equation}
\end{lem}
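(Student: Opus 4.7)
The plan is to prove \eqref{eq: X1 bound} and \eqref{eq: X2 bound} by substituting the explicit expressions for $\cT_{m,d-\p}$ into $X_{m,\p}(\delta, Q, \ell)$ and carefully analysing the resulting double dyadic sum over $(i,r) \in \sX_m$. The key ingredients are the rapid decay in $i$ from the factor $2^{-iA}$ (with $A$ taken arbitrarily large), the $\delta$-localisation in $r$ from $(1+(2^r\delta)^A)^{-1}$, the geometric summation estimate \eqref{eq: uncertaintly prin}, and the Knapp-cap constraints defining $\sX_m$.

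I first address \eqref{eq: X1 bound}. Setting $\alpha := (d-\p)(n-1)/d$ and $\gamma := (n+1)/2 - \alpha$ and substituting $\cT_{1,d-\p}(2^{i+1}/Q, 2^r) = (2^{i+1}/(Q \cdot 2^r))^{\alpha} \cdot 2^{rn}$, a careful collection of powers of $2^i$, $Q$ and $2^r$ reduces the problem to estimating
\[
2^{(\tfrac{d}{2}-\p)\ell(n-1)}\, \delta\, Q^{\gamma} \sum_{i\geq 0} \frac{1}{2^{i(A-\alpha)}} \sum_{r \in \sR(i)} \frac{2^{r\gamma}}{1+(2^r\delta)^A},
\]
where $\sR(i) = [R_-, R_{\sX_1}(i)]$ with $R_{\sX_1}(i) := (d\ell + i - \log_2 Q)/(1+\varepsilon)$ coming from the defining inequality of $\sX_1$. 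Taking $A$ sufficiently large makes the $i$-sum $O(1)$, so only $i = O(1)$ contributes. The inner $r$-sum is handled via a case analysis: either the $\delta$-decay is binding (effective cutoff $r \approx \log_2 \delta^{-1}$) and \eqref{eq: uncertaintly prin} gives $O(\delta^{-\gamma})$ when $\gamma > 0$ (and $O(1)$ otherwise), or the $\sX_1$-constraint is binding ($r \approx R_{\sX_1}(i)$) and the sum is a geometric series evaluated at that endpoint. Once that is in hand, the $\sX_1$-inequality in the converse direction is used to trade the surviving $Q^\gamma$ factor against powers of $2^{d\ell}$ and $\delta$; combined with $\ell \leq \fL_{d-1}$ (yielding $2^{(d-1)\ell} \leq Q^{1-\varepsilon}$), this neutralises all residual $Q$-dependence and produces the target \eqref{eq: X1 bound}.

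For \eqref{eq: X2 bound}, substituting $\cT_{2,d-\p}(\cdot, 2^r) = 2^{r(1+(n-1)\varepsilon)}$ reduces the task to bounding
\[
2^{(\tfrac{d}{2}-\p)\ell(n-1)}\, Q^{(n+1)/2}\, \delta \sum_{i\geq 0} \frac{1}{2^{iA}} \sum_{r \in \sR'(i)} \frac{2^{r[(3-n)/2 + (n-1)\varepsilon]}}{1+(2^r\delta)^A},
\]
where $\sR'(i)$ is the $r$-slice of $\sX_2$, which imposes $r < (d-\p)\ell/(1-\varepsilon)$ together with $r \geq R_{\sX_1}(i)$. Again the $i$-sum is $O(1)$. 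For $n \geq 4$ the $r$-exponent $(3-n)/2 + (n-1)\varepsilon$ is negative (for $\varepsilon$ small), so the $r$-sum is $O(1)$; for $n = 3$ the exponent equals $2\varepsilon$, and \eqref{eq: uncertaintly prin} bounds the $r$-sum by $\delta^{-2\varepsilon} \leq Q^{2\varepsilon(1-\varepsilon)}$ via $\delta > Q^{\varepsilon-1}$. Using $\ell \leq \fL_{d-1}$ to control the residual factor of $2^{\ell}$ and comparing exponents against the target then yields \eqref{eq: X2 bound}.

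The principal obstacle is the detailed case-analysis of the $r$-summation in the proof of \eqref{eq: X1 bound}: one must determine, as a function of $\delta, Q, \ell, i, \p$ and of the sign of $\gamma$, whether the $\delta$-decay or the $\sX_1$-constraint is binding, and then orchestrate the algebraic manipulations so that the a priori $Q^{\gamma}$ and $\delta^{-\gamma}$ factors conspire, via the Knapp-cap inequality defining $\sX_1$, to produce the claimed $\delta^{-(n-1)/2}$ with no residual $Q$-dependence.
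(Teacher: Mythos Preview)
Your proposal has genuine gaps in both parts.

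For \eqref{eq: X1 bound}: the claim ``only $i=O(1)$ contributes'' is wrong, and missing the right replacement is fatal. The defining inequality of $\sX_1$, namely $2^i>2^{-d\ell}Q\,2^{r(1+\varepsilon)}\geq 2^{-d\ell}Q\,2^r$, combined with $r>R_-$ (so $2^r>2^{d\ell}Q^{\varepsilon-1}$), forces $2^i>Q^{\varepsilon}$ for \emph{every} $(i,r)\in\sX_1$. In particular your slice $\sR(i)$ is empty whenever $i=O(1)$, so no such $i$ contributes at all. The paper exploits $2^i>Q^{\varepsilon}$ directly: it yields $2^{-iA}\leq Q^{-\varepsilon A}$, and since $A$ is taken arbitrarily large this negative power of $Q$ swallows every other polynomial factor (the $i$-sum over $[0,\tfrac{\log Q}{100}]$ then costs only $Q^{(d-\p)(n-1)/(30d)}$). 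If instead you merely bound the inner $r$-sum by $O(\delta^{-\gamma})$ via \eqref{eq: uncertaintly prin} and the $i$-sum by $O(1)$, you obtain $2^{(\frac{d}{2}-\p)\ell(n-1)}\delta^{1-\gamma}Q^{\gamma}$; after invoking $2^{\ell d}\leq Q/\delta$ this still exceeds the target $2^{-\frac{d}{2}\ell(n-1)}\delta^{-(n-1)/2}$ by a factor $Q^{(n+1)/2}$, and nothing in the ``$\sX_1$-inequality in the converse direction'' can remove it.

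For \eqref{eq: X2 bound}: bounding the $r$-sum by $O(1)$ for $n\geq4$ is too crude and does not yield the claimed estimate in general. Concretely, your bound becomes $2^{(\frac{d}{2}-\p)\ell(n-1)}Q^{(n+1)/2}\delta$, and the ratio to the target $2^{-\p\ell(n-1)+\ell d}\delta Q^{n-\varepsilon(n-1)/2}$ is $2^{\ell d(n-3)/2}Q^{-(1-\varepsilon)(n-1)/2}$; using $2^{(d-1)\ell}\leq Q^{1-\varepsilon}$ this is $\leq 1$ only when $2d\geq n-1$, so the argument fails for instance at $d=2$, $n=6$. The paper avoids this by first rewriting $2^{(\frac{d}{2}-\p)\ell(n-1)}Q^{(n+1)/2}2^{-r(n-1)/2}=2^{-\p\ell(n-1)}Q^{n}(2^{-d\ell}Q\,2^r)^{-(n-1)/2}$ and using $2^{-d\ell}Q\,2^r\geq Q^{\varepsilon}$ (from $r\geq R_-$), and then by inserting the $\sX_2$ constraint $2^{(d-\p)\ell}>2^{(1-\varepsilon)r}$ into the remaining $\sum 2^{r(1+(n-1)\varepsilon)}$ to produce the factor $2^{\ell d}$ with a convergent residual $r$-series. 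You never use this second constraint, which is precisely where the $2^{\ell d}$ in the target comes from.
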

\begin{proof}
First, we focus on bounding
$X_{1,\p}(\delta, Q,\ell)$ from above.
Due to 
\eqref{eq: strange case},
we know
$2^i>2^{-\ell d} 2^r Q$.
Since $r\in \sG(\delta, Q, \ell)$,
by \eqref{def: good set} we are guaranteed that
$2^i > Q^\varepsilon$.
Therefore, 
\begin{align*}
&X_{1,\p}(\delta, Q,\ell)\\
& \ll 
 \sum_{(i,r)\in [0,\frac{\log Q}{100}]\times [R_{-},R]} 
\frac{
2^{(\frac{d}{2}-\p) 
\ell (n-1)} }
{Q^{\varepsilon A}
(1+ (2^r\delta)^{A})}
Q^{\frac{n+1}{2}} \delta 2^{-r \frac{n-1}{2}}
\cT_{1,d-\p}(2^i /Q,2^r)\\
& \ll 
2^{(\frac{d}{2}-\p)
\ell (n-1)} 
Q^{\frac{n+1}{2}} 
\delta 
 \sum_{(i,r)\in [0,\frac{\log Q}{100}]\times [R_{-},R]} 
\frac{2^{-r \frac{n-1}{2}}}
{Q^{\varepsilon A}
(1+ (2^r\delta)^{A})}
\Big(\frac{2^{i-r}}
{Q}\Big)^{\frac{(d-\p)(n-1)}{d}}
2^{rn}\\
& \ll 
2^{(\frac{d}{2}-\p)
\ell (n-1)} 
Q^{\frac{n+1}{2}
-\varepsilon A 
-\frac{(d-\p)(n-1)}{d}} 
\delta 
 \sum_{(i,r)\in 
 [0,\frac{\log Q}{100}]\times 
 [R_{-},R]} 
\frac{2^{r \big( 
n- \frac{n-1}{2} - 
\frac{(d-\p) (n-1)}{d}\big)}}
{1+ (2^r\delta)^{A}}
2^{i\frac{(d-\p)(n-1)}{d}}.
\end{align*}
Next, observe $100 \log 2\geq 30$ and hence
$ 2^{\frac{\log Q}{100}}\leq 
Q^\frac{1}{30}$. Thus, 
$$
\sum_{0\leq i \leq 
\frac{\log Q}{100}} 
2^{i\frac{(d-\p)(n-1)}{d}}
\ll 
Q^{\frac{(d-\p)(n-1)}{30d}}.
$$
Now let $\p = \d1$. Then
$ 
n- \frac{n-1}{2} - 
\frac{\1 (n-1)}{d} >0$,
since $d\geq 2$.
Thus, we can apply
\eqref{eq: uncertaintly prin} 
to conclude 
\[
 \sum_{(i,r)\in 
 [0,\frac{\log Q}{100}]\times 
 [R_{-},R]} 
\frac{2^{r (n- \frac{n-1}{2} - 
\frac{n-1}{d})}}
{1+ (2^r\delta)^{A}}
2^{i\frac{n-1}{d}}
\ll 
\delta^{-( 
n- \frac{n-1}{2} - 
\frac{n-1}{d})}
Q^{\frac{n-1}{30d}}.
\]
The upshot is that
\begin{align*}
X_{1, \d1} & \ll 
2^{-(\frac{d}{2}-\1) 
\ell (n-1)} 
Q^{\frac{n+1}{2}
-\varepsilon A 
-\frac{n-1}{d}} 
\delta 
\cdot
\delta^{-( 
n- \frac{n-1}{2} - 
\frac{n-1}{d})}
Q^{\frac{n-1}{30d}} \\
& =
2^{-(\frac{d}{2}-\1)
\ell (n-1)} 
Q^{\frac{n-1}{30d}
-\varepsilon A} 
\delta 
\Big( \frac{Q}{\delta} 
\Big)^{\frac{n+1}{2} - 
\frac{n-1}{d}}.
\end{align*}
Upon using $2^{\ell (n-1)} \leq \left(\delta^{-1}{Q}\right)^{\frac{n-1}{d}}$ (recall \eqref{eq: 2^L}),
we have
$$
X_{1, \d1} \ll 
2^{-\frac{d}{2}\ell (n-1)} 
Q^{\frac{n-1}{30d}
-\varepsilon A} 
\delta 
\Big( \frac{Q}{\delta} 
\Big)^{\frac{n+1}{2}}
= 
2^{-\frac{d}{2}\ell (n-1)} 
Q^{\frac{n-1}{30d}
+ \frac{n+1}{2}
-\varepsilon A} 
\delta^{-\frac{n-1}{2}}.
$$
Because $A$ is assumed to be large,
this readily implies the estimate
\eqref{eq: X1 bound} 
if $\p = \d1$.\\
\\
Now let $\p = \1$.
Presently, 
$$ 
n- \frac{n-1}{2} - 
\frac{(d-\1) (n-1)}{d}= 
n- \frac{n-1}{2} - 
(1-\frac{1}{d})(n-1)
=(n-1) ( \frac{1}{d}- 
\frac{1}{2}) + 1\leq 1.
$$ 
So, 
$$
 \sum_{(i,r)\in 
 [0,\frac{\log Q}{100}]\times 
 [R_{-},R]} 
\frac{2^{r ( 
n- \frac{n-1}{2} - 
\frac{(\d1) (n-1)}{d})}}
{1+ (2^r\delta)^{A}}
2^{i\frac{(\d1)(n-1)}{d}}
\ll 
\delta^{-1}
Q^{\frac{(\d1)(n-1)}{30d}}.
$$
Since $\ell\leq \fL_{\1}(\delta, Q)$ (in particular, recall \eqref{eq: 2^Lp} for $\p=\1$), we have
$2^{\ell} \leq Q^{1-\varepsilon}$. Consequently,
\begin{align*}
X_{1, \1} 
&\ll2^{\frac{d}{2} \ell (n-1)} 2^{-\ell (n-1)}
Q^{\frac{n+1}{2}-\varepsilon A -\frac{(\d1)(n-1)}{d}} 
\delta \cdot \delta^{-1} Q^{\frac{(\d1)(n-1)}{30d}}\\
& \ll 
2^{-\frac{d}{2} 
\ell (n-1)} 2^{(\d1)\ell(n-1)}
Q^{\frac{n+1}{2}
-\varepsilon A 
-\frac{(\d1)(n-1)}{d}}  
Q^{\frac{(\d1)(n-1)}
{30d}}
\\
&\ll 2^{-\frac{d}{2} 
\ell (n-1)} Q^{(1-\varepsilon)(\d1)(n-1)}
Q^{\frac{n+1}{2}
-\varepsilon A 
-\frac{(\d1)(n-1)}{2d}} 
\\
& \ll 
2^{-\frac{d}{2} \ell (n-1)} 
Q^{ 2(\d1)(n+1)
-\varepsilon A}.
\end{align*}
Again as $A$ is assumed to be large, the above implies the estimate
\eqref{eq: X1 bound} for $\p = \1$. 
\\
Next, we shall bound $X_{2, \p}$.
We write 
$$
2^{(\frac{d}{2}-\p) 
\ell (n-1)}
Q^{\frac{n+1}{2}}
2^{-r \frac{n-1}{2}}
= 
2^{-\p
\ell (n-1)}
Q^{n}
(2^{-d\ell}
Q
2^{r})^{-\frac{n-1}{2}}.
$$
Because $r\geq R_{-}$ (recall \eqref{eq: R Rminus}),
we have $2^{-{d} \ell}
Q2^{r} \geq Q^{\varepsilon}$.
Hence, 
$$
X_{2, \p}\ll 
2^{-\p
\ell (n-1)}
Q^{n - \varepsilon\frac{n-1}{2}}
\delta
 \sum_{(i,r)\in 
 [0,\frac{\log Q}{100}]
 \times [R_{-},R]} 
\frac{\cT_{2,d-\p}(2^i /Q,2^r)}
{2^{iA}
(1+ (2^r\delta)^{A})}.
$$
Further, the summation 
on the right hand side equals
$$
\sum_{(i,r)\in 
 [0,\frac{\log Q}{100}]
 \times [R_{-},R]} 
\frac{2^{r(1+(n-1)\varepsilon)}}
{2^{iA}
(1+ (2^r\delta)^{A})}
\ll 
2^{\ell d}
\sum_{R_{-}\leq r \leq R} 
2^{-\ell d}
\frac{2^{r(1+(n-1)\varepsilon)}}
{1+ (2^r\delta)^{A}}.
$$
Due to \eqref{eq: real strange case} being true, we know 
$2^{\ell (d-\p)} 
>2^{(1-\varepsilon)r}$. 
Therefore, we have
$$
\sum_{(i,r)\in 
 [0,\frac{\log Q}{100}]
 \times [R_{-},R]} 
\frac{2^{r(1+(n-1)\varepsilon)}}
{2^{iA}
(1+ (2^r\delta)^{A})}
\ll 
2^{\ell d}
\sum_{R_{-}\leq r \leq R} 
\frac{2^{r[1+(n-1)\varepsilon
- (1-\varepsilon)\frac{d}{d-\p}]}}
{1+ (2^r\delta)^{A}}.
$$
As $\frac{d}{d-\p}>1$ 
and since $\varepsilon$ 
is assumed to be small, we conclude that
$$ 1+(n-1)\varepsilon
- (1-\varepsilon)\frac{d}{d-\p}< 0.
$$ 
So, the previous $r$-sum 
is $O(1)$. Thus
$$
X_{2, \p} \ll 
2^{-\p
\ell (n-1)}
Q^{n -
\varepsilon\frac{n-1}{2}}
\delta 2^{\ell d}.
$$
This produces the required
bound \eqref{eq: X2 bound},
and thus the proof is complete.
\end{proof}
Now we are in a position
to prove Proposition 
\ref{prop: pruning and stationary phase}.
\begin{proof}[Proof of Proposition \ref{prop: pruning and stationary phase}]
By combining \eqref{eq: decomp Xfrak} and Lemma \ref{lem: Xfrac decompo bound}, we see that
\begin{align*}
\fX_\d1 (\delta,Q,\ell) 
& \ll 2^{-\frac{d}{2}\ell (n-1)} 
\delta^{-\frac{n-1}{2}}
+ 2^{-(\d1) \ell (n-1)}
2^{\ell d}
\delta 
Q^{n - \varepsilon\frac{n-1}{2}}  \\
& \leq 2^{-\frac{d}{2}\ell(n-1)}
2^{\ell d}
\big( 
\delta^{-\frac{n-1}{2}}
+ \delta 
Q^{n - \varepsilon\frac{n-1}{2}} 
\big) \\
& \leq
2^{-\frac{d}{2}\ell(n-1)}
2^{\ell d}
(\delta 
Q^{n -
\varepsilon\frac{n-1}{2}}+ 
Q^{\frac{n-1}{2}}
\delta^{-\frac{n-1}{2}}).
\end{align*}
Similarly, by slackening 
or inserting the appropriate
powers of $2^\ell$ and $Q$
we obtain 
\begin{align*}
\fX_\1 (\delta,Q,\ell) 
& \ll 2^{-\frac{d}{2}\ell (n-1)} 
\delta^{-\frac{n-1}{2}}
+ 2^{-\1 \ell (n-1)}
2^{\ell d}
\delta 
Q^{n - \varepsilon\frac{n-1}{2}}  \\
& \leq 2^{-\ell(n-1)} 2^{\ell d}
\big( 
\delta^{-\frac{n-1}{2}}
+ \delta 
Q^{n - \varepsilon\frac{n-1}{2}} 
\big) \\
& \leq
2^{-\ell(n-1)} 2^{\ell d}
(\delta 
Q^{n -
\varepsilon\frac{n-1}{2}} + 
Q^{\frac{n-1}{2}}
\delta^{-\frac{n-1}{2}}).
\end{align*}
Thus the proof is complete.
\end{proof}
Next, we embark 
on proving the counterpart 
of Proposition \ref{prop: pruning and stationary phase}
for the regime $d>n-1$.

\begin{prop} \label{prop: pruning and stationary phase d large}
Let $\fX_\p$
be the function defined in 
\eqref{def: Xpfrak}.
If $d>n-1 $, then the estimates
\begin{align*}
\fX_\d1(\delta,Q,\ell)
& \ll 
2^{-\frac{d}{2}\ell(n-1)}
2^{\ell (n-1)}
(\delta^{-\frac{n-1}{2}}
+ \delta Q^{\frac{n+1}{2}} ),\\ 
\fX_\1(\delta,Q,\ell)
& \ll 
 Q^{n} 
\Big(\frac{\delta}{Q}
\Big)^{\frac{n-1}{d}}
\delta^{-2\varepsilon(n-1)} + 
Q^{\frac{n-1}{2}}
\delta^{-\frac{n-1}{2}}
\end{align*}
hold uniformly
for all $\delta\in (0, 1/2), Q\geq 1$ 
and all $0\leq \ell \leq \Lc(\delta, Q)$.
\end{prop}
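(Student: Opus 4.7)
The proof will follow closely the template of Proposition \ref{prop: pruning and stationary phase}: namely, I will split $\fX_\p(\delta,Q,\ell)\ll X_{1,\p}(\delta,Q,\ell)+X_{2,\p}(\delta,Q,\ell)$ via \eqref{eq: decomp Xfrak}, where the two pieces correspond respectively to the ranges \eqref{eq: strange case} and \eqref{eq: real strange case}. The goal is then to re-examine the estimates of Lemma \ref{lem: Xfrac decompo bound} in the regime $d>n-1$, which differs from the previous regime only through sign changes in several exponents appearing inside the $r$-sum \eqref{eq: uncertaintly prin} and through the different behaviour of $2^{\ell d}$ versus $2^{\ell(n-1)}$.

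For $X_{2,\p}$ the estimate $X_{2,\p}\ll 2^{-\p\ell(n-1)}2^{\ell d}\delta Q^{n-\varepsilon(n-1)/2}$ carries over essentially verbatim: the critical quantity $1+(n-1)\varepsilon-(1-\varepsilon)\tfrac{d}{d-\p}$ controlling the $r$-sum is still negative because $\tfrac{d}{d-\p}>1$ and $\varepsilon$ is small. For $\p=\d1$, I will split $2^{\ell d}=2^{\ell(n-1)}\cdot 2^{\ell(d-(n-1))}$ and use the constraint $2^\ell\leq(\delta^{-1}Q)^{1/d}$ coming from $\ell\leq L(\delta,Q)$ to absorb the second factor. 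For $\p=\1$, the factor $2^{\ell(d-(n-1))}$ is increasing in $\ell$, so I will maximise over $\ell\leq\fL_{\1}$ by taking whichever of $2^{\ell d}\leq\delta^{-1}Q$ or $2^{p\ell}\leq Q^{1-\varepsilon}$ is tighter; a short case analysis will confirm that the result is absorbed into $Q^n(\delta/Q)^{(n-1)/d}\delta^{-2\varepsilon(n-1)}$, with the $\delta^{-2\varepsilon(n-1)}$ absorbing the standard $Q^{k\varepsilon}$-type losses.

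For $X_{1,\p}$ I use that \eqref{eq: strange case} forces $2^i>2^{-\ell d}2^rQ\geq Q^{\varepsilon}$ (since $r\in\sG(\delta,Q,\ell)$), which together with the rapid decay $2^{-iA}$ saves an arbitrary power of $Q$. Then the $r$-sum is evaluated through \eqref{eq: uncertaintly prin}: for $\p=\d1$ the exponent $n-\tfrac{n-1}{2}-\tfrac{n-1}{d}=\tfrac{n+1}{2}-\tfrac{n-1}{d}$ is positive because $d>n-1\geq 2$, producing a factor $\delta^{-((n+1)/2-(n-1)/d)}$; combining with $2^\ell\leq(\delta^{-1}Q)^{1/d}$ rewritten as $1\leq(\delta^{-1}Q)^{(n-1)/(2d)}\cdot 2^{-(n-1)\ell/2}$ yields the term $2^{-\frac{d}{2}\ell(n-1)}2^{\ell(n-1)}\delta^{-(n-1)/2}$. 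For $\p=\1$, the exponent $\tfrac{n+1}{2}-\tfrac{(d-1)(n-1)}{d}$ may change sign depending on how large $d$ is relative to $n$, but in either subcase the $r$-sum is bounded and the resulting expression contributes at most $Q^{(n-1)/2}\delta^{-(n-1)/2}$ uniformly in $\ell$.

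The main technical obstacle will lie in the $X_{1,\1}$ analysis, where one has to pin down the exact power of $\delta$ emerging from \eqref{eq: uncertaintly prin} and balance it against the constraint on $\ell$ to rule out any residual $2^\ell$-factor in the final bound. I expect that, as in the proof of Lemma \ref{lem: Xfrac decompo bound}, dyadically slackening the $i$-sum by $Q^{1/30}$ and then invoking the $\ell\leq L$ constraint just tightly enough will produce the geometric term $Q^n(\delta/Q)^{(n-1)/d}$ together with a bounded $\delta^{-2\varepsilon(n-1)}$ loss. The bookkeeping is delicate because both the range $\sX_1$ and the threshold $\fL_{\1}$ depend on $\delta$ and $Q$ in intertwined ways, but the outcome will mirror the way the geometric Knapp-cap term was already extracted in \S\ref{subsec: prun phys}.
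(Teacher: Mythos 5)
Your decomposition of $\fX_\p$ into $X_{1,\p}+X_{2,\p}$ matches the paper, and your treatment of $X_{1,\p}$ is essentially the same: the paper's estimate \eqref{eq: X1 bound}, namely $X_{1,\p}\ll 2^{-\frac{d}{2}\ell(n-1)}\delta^{-\frac{n-1}{2}}$, is proved in Lemma~\ref{lem: Xfrac decompo bound} for all $d\geq 2$ and is reused unchanged in the $d>n-1$ regime, so that portion of your plan is sound.

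The treatment of $X_{2,\p}$, however, has a genuine gap. You propose to start from the bound \eqref{eq: X2 bound},
\[
X_{2,\p}\ll 2^{-\p\ell(n-1)}\,2^{\ell d}\,\delta\,Q^{\,n-\varepsilon\frac{n-1}{2}},
\]
and then repair the $2^{\ell}$-powers afterwards using $2^{\ell}\leq(\delta^{-1}Q)^{1/d}$ or $2^{\p\ell}\leq Q^{1-\varepsilon}$. This cannot work because \eqref{eq: X2 bound} already carries a $Q^{n}$-factor, whereas the target bounds in the proposition have only $Q^{\frac{n+1}{2}}$ (in the $\p=\d1$ case) and $(\delta/Q)^{\frac{n-1}{d}}Q^{n}\delta^{-2\varepsilon(n-1)}$ (in the $\p=\1$ case). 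The ratio between what you start with and what you need contains an excess factor of roughly $Q^{\frac{n-1}{2}}$ that survives even at $\ell=0$; no amount of juggling the $\ell$-constraints can remove it, since the constraints only control positive powers of $2^{\ell}$. In particular, for $\p=\d1$ the ratio of the right sides is $2^{d\frac{3-n}{2}\ell}\,Q^{\frac{(n-1)(1-\varepsilon)}{2}}$, which is not $\ll 1$; for $\p=\1$, bounding the ratio using $2^{\ell}\leq(\delta^{-1}Q)^{1/d}$ leaves $\delta^{2\varepsilon(n-1)}Q^{1-\varepsilon(n-1)/2}$, again unbounded when $\delta\gg Q^{\varepsilon-1}$.

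The reason \eqref{eq: X2 bound} is too lossy is that its proof deliberately trades $Q^{\frac{n+1}{2}}2^{-r\frac{n-1}{2}}$ for $Q^{n}(2^{-d\ell}Q2^{r})^{-\frac{n-1}{2}}$ and then throws away $(2^{-d\ell}Q2^{r})^{-\frac{n-1}{2}}\leq Q^{-\varepsilon\frac{n-1}{2}}$ — a maneuver that is efficient when $d\leq n-1$ but wasteful when $d>n-1$. For $d>n-1$, one must go back to the definition \eqref{def: Xpfrak}. For $\p=\d1$, the exponent of $2^{r}$ in the integrand is $1+(n-1)\varepsilon-\frac{n-1}{2}$, which is $\leq 0$ for $n\geq 4$ (and only $2\varepsilon$ for $n=3$), so the $r$-sum and $i$-sum are dominated by $r=i=0$ and the $Q^{\frac{n+1}{2}}$-factor survives intact. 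For $\p=\1$, the decisive step — which your plan misses entirely — is to invoke the \emph{right}-hand inequality of \eqref{eq: real strange case}, namely $2^{\ell d}\leq Q\,2^{r(1+\varepsilon)-i}$, and substitute it into $2^{(\frac{d}{2}-1)\ell(n-1)}=(2^{\ell d})^{(\frac12-\frac1d)(n-1)}$ \emph{inside} the $(i,r)$-sum. This substitution produces the $Q$-exponent $n-\frac{n-1}{d}$ and the $r$-exponent $1-\frac{n-1}{d}+\varepsilon\,(\tfrac12-\tfrac1d+1)(n-1)$; the latter is positive precisely because $d>n-1$, so \eqref{eq: uncertaintly prin} yields $\delta^{-(1-\frac{n-1}{d})}\delta^{-\varepsilon(\dots)}$, and multiplying back gives $(\delta/Q)^{\frac{n-1}{d}}Q^{n}\delta^{-2\varepsilon(n-1)}$. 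Note this uses \eqref{eq: real strange case} as an \emph{upper} bound on $2^{\ell d}$, whereas \eqref{eq: X2 bound} only exploits the \emph{lower} bound $2^{\ell(d-\p)}>2^{(1-\varepsilon)r}$ — the two inequalities play entirely different roles, and you need the one your sketch ignores.
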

In order to show 
the above proposition,
we need the next lemma.
\begin{lem}
We have
\begin{equation}\label{eq: X2 bound d large}
    X_{2,\d1}
    (\delta, Q,\ell)\ll
    2^{- \frac{d}{2}\ell (n-1)}
    2^{\ell (n-1)}
    \delta Q^{\frac{n+1}{2}},
\end{equation}
and 
\begin{equation}\label{eq: X2 bound d large 2}
    X_{2,\1}(\delta, Q,\ell)
    \ll 
\Big(\frac{\delta}{Q}
\Big)^{\frac{n-1}{d}}
Q^{n} \delta^{-2\varepsilon(n-1)}.
\end{equation}
\end{lem}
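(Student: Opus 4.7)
Both bounds are established by following the template of the proof of Lemma \ref{lem: Xfrac decompo bound}, with modifications tailored to the regime $d>n-1$. Unfolding the definition of $X_{2,\p}$ using $\cT_{2,d-\p}(2^{i+1}/Q,2^r)=(2^r)^{1+(n-1)\varepsilon}$, we arrive at
\[
X_{2,\p}(\delta,Q,\ell)=2^{(\frac{d}{2}-\p)\ell(n-1)}\,Q^{(n+1)/2}\,\delta\sum_{(i,r)\in\sX_2}\frac{2^{rv}}{2^{iA}(1+(2^r\delta)^A)},
\]
where $v:=1+(n-1)\varepsilon-(n-1)/2$. In both cases the $i$-sum is trivially bounded by $\sum_{i\ge 0}2^{-iA}=O(1)$, so the analysis reduces to estimating the $r$-sum.

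To prove \eqref{eq: X2 bound d large}, take $\p=d-1$, so the prefactor reads $2^{-\frac{d-2}{2}\ell(n-1)}=2^{-\frac{d}{2}\ell(n-1)}\cdot 2^{\ell(n-1)}$. For $n\ge 4$ and $\varepsilon$ sufficiently small one has $v\le-\tfrac{1}{2}+(n-1)\varepsilon<0$, so the $r$-sum is dominated by a convergent geometric series and is $O(1)$; combining with the prefactor immediately yields \eqref{eq: X2 bound d large}. The remaining case $n=3$ (where $v=2\varepsilon>0$) is addressed by invoking the $\sX_2$-constraint $r<\ell/(1-\varepsilon)$ in conjunction with the smooth geometric summation estimate \eqref{eq: uncertaintly prin}; the resulting minor $\delta^{-O(\varepsilon)}$ discrepancy is consistent with the overall $Q^{O(\varepsilon)}$-tolerance propagated through the framework.

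To prove \eqref{eq: X2 bound d large 2}, take $\p=1$; the positive prefactor $2^{\frac{d-2}{2}\ell(n-1)}$ is controlled using $\ell\le L(\delta,Q)=\log(\delta^{-1}Q)/(d\log 2)$, yielding $2^{\frac{d-2}{2}\ell(n-1)}\le(\delta^{-1}Q)^{(d-2)(n-1)/(2d)}$. Applying \eqref{eq: uncertaintly prin} to the $r$-sum produces the bound $\delta^{-v}$. Therefore
\[
X_{2,\1}(\delta,Q,\ell)\ll(\delta^{-1}Q)^{(d-2)(n-1)/(2d)}\,Q^{(n+1)/2}\,\delta^{1-v}.
\]
Using the identity $1-v=(n-1)(\tfrac12-\varepsilon)$, a direct algebraic computation shows the right-hand side equals $\delta^{(n-1)/d-(n-1)\varepsilon}\,Q^{n-(n-1)/d}$, which, since $\delta<1$ implies $\delta^{-(n-1)\varepsilon}\le\delta^{-2(n-1)\varepsilon}$, is dominated by the target $(\delta/Q)^{(n-1)/d}Q^n\delta^{-2\varepsilon(n-1)}$.

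The principal obstacle is the careful algebraic bookkeeping required to align all exponents of $\delta$, $Q$, and $2^\ell$ coming from (i) the prefactor, (ii) the bound $\ell\le L$, and (iii) the $r$-sum estimate into the precise target forms; the edge case $n=3$ of \eqref{eq: X2 bound d large}, where the naive application of \eqref{eq: uncertaintly prin} is sub-optimal and must be supplemented by the geometry of $\sX_2$, is the most delicate point.
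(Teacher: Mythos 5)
Your proof of \eqref{eq: X2 bound d large 2} contains a genuine gap for $n\ge4$. You bound the prefactor $2^{(\frac{d}{2}-1)\ell(n-1)}$ using only the crude constraint $\ell\le L(\delta,Q)$, which gives $2^{\ell d}\le\delta^{-1}Q$; then you apply \eqref{eq: uncertaintly prin} to the $r$-sum to produce $\delta^{-v}$. But \eqref{eq: uncertaintly prin} requires $v>0$, and here $v=1+(n-1)\varepsilon-\tfrac{n-1}{2}<0$ once $n\ge4$ and $\varepsilon$ is small. For those $n$ the $r$-sum is only $O(1)$, not $\delta^{-v}$ (note $\delta^{-v}<1<O(1)$ here), and the resulting bound is $\delta^{1-\frac{n-1}{2}+\frac{n-1}{d}}Q^{n-\frac{n-1}{d}}$. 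Comparing $\delta$-exponents, this is dominated by the target $\delta^{\frac{n-1}{d}-2(n-1)\varepsilon}Q^{n-\frac{n-1}{d}}$ if and only if $1-\tfrac{n-1}{2}\ge -2(n-1)\varepsilon$, which holds only for $n=3$; for $n\ge4$ your bound strictly exceeds the target.

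The point you are missing is that the paper does \emph{not} bound the prefactor by $\ell\le L$. Instead it exploits the $\sX_2$ constraint \eqref{eq: real strange case}, which gives $2^{\ell d}\le Q\,2^{(1+\varepsilon)r-i}$ for every $(i,r)\in\sX_2$. Writing $\tfrac{d}{2}-1=d(\tfrac12-\tfrac1d)$ and substituting, the prefactor becomes $(Q\,2^{(1+\varepsilon)r-i})^{(\frac12-\frac1d)(n-1)}$, which transfers the troublesome $\ell$-dependence onto $r$ and $i$. After this transfer the total $r$-exponent is $1-\tfrac{n-1}{d}+\varepsilon(\tfrac12-\tfrac1d+1)(n-1)$, which is \emph{positive} for $d>n-1$ and $\varepsilon$ small; now \eqref{eq: uncertaintly prin} legitimately applies and yields the stated target. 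This exponent-transfer step is the crux of the argument; without it, the estimate fails for $n\ge4$.

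A smaller issue: for \eqref{eq: X2 bound d large} with $n=3$ you acknowledge a residual $\delta^{-O(\varepsilon)}$ discrepancy and claim it is absorbed by the overall $\varepsilon$-tolerance, but the lemma as stated has no such slack, so this step is not established as written (the paper's own ``dominated by $r=0=i$'' is equally terse here, but for $n\ge4$ the $r$-exponent really is negative and the claim is clean).
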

\begin{proof}
Notice that
$$
X_{2,\d1}(\delta, Q,\ell)
\leq 
 \sum_{(i,r)\in 
 [0,\frac{\log Q}{100}]\times [R_{-},R]}
\frac{2^{(\frac{d}{2}-(\d1)) 
\ell (n-1)} }
{2^{iA}(1+ (2^r\delta)^{A})}
Q^{\frac{n+1}{2}} \delta 2^{-r \frac{n-1}{2}}
2^{r(1+(n-1)\varepsilon)}.
$$
Inspecting the exponents 
yields that the $r$-summation and the $i$-summation
are dominated by the contribution 
from $r=0=i$. Thus,
$$
X_{2,\d1}(\delta, Q,\ell)
\ll 
2^{(\frac{d}{2}-(\d1)) 
\ell (n-1)} 
Q^{\frac{n+1}{2}} \delta 
$$
which is \eqref{eq: X2 bound d large}.
It remains to show \eqref{eq: X2 bound d large 2}. 
We have
\begin{align*}
X_{2,\1}(\delta, Q,\ell)
& \leq 
 \sum_{(i,r)\in 
 [0,\frac{\log Q}{100}]\times [R_{-},R]}
\frac{2^{(\frac{d}{2}-\1) 
\ell (n-1)} }
{2^{iA}(1+ (2^r\delta)^{A})}
Q^{\frac{n+1}{2}} \delta 2^{-r \frac{n-1}{2}}
2^{r(1+(n-1)\varepsilon)}.
\end{align*}
For $\p=\1$, \eqref{eq: real strange case} gives
$$
\frac{\log 
(Q2^{r(1+\varepsilon)-i})}{d\log 2} 
\geq 
\ell \geq (1-\varepsilon)
\frac{r}{d-\1};
$$
in other words
$$
Q^{\frac{1}{d}}
2^{\frac{r(1+\varepsilon)-i}{d}}
\geq 
2^\ell \geq 
2^{(1-\varepsilon)
\frac{r}{d-\1}}.
$$
So, $2^{\ell d}
\leq Q 
2^{(1+\varepsilon)r-i}$.
Writing
$\frac{d}{2}-1=
d(\frac{1}{2}-\frac{1}{d})$,
we infer have for each fixed 
pair of $i,r$ that
\begin{align*}
&2^{(\frac{d}{2}-\1) \ell (n-1)}
Q^{\frac{n+1}{2}} \delta 
2^{r(1+(n-1)\varepsilon)-r \frac{n-1}{2}}
\\& \leq 
(Q 
2^{(1+\varepsilon)r-i}
)^{(\frac{1}{2}-\frac{1}{d}) 
(n-1)} 
Q^{\frac{n+1}{2}} \delta 
2^{r(1+(n-1)\varepsilon)-r \frac{n-1}{2}}
\\
& = 
2^{-i(\frac{1}{2}
-\frac{1}{d})(n-1)}
Q^{\frac{n+1}{2} + 
(\frac{1}{2}-\frac{1}{d})(n-1)} \delta 
2^{r(1+(n-1)\varepsilon)-r \frac{n-1}{2} +
(1+\varepsilon)r
(\frac{1}{2}-\frac{1}{d}) 
(n-1)}.
\end{align*}
In the last expression above, we 
shall trivially bound the 
in the $i$-aspect, i.e. use
$2^{-i(\frac{1}{2}
-\frac{1}{d})(n-1)} \leq 1$.
To analyse the expression in the $Q$-aspect, 
we observe that
$$
\frac{n+1}{2} + 
(\frac{1}{2}-\frac{1}{d})(n-1)
= 
\frac{n+1+n-1}{2} -\frac{n-1}{d}
=
n -\frac{n-1}{d}.
$$
Finally, in order 
to analyse the $r$-aspect, we 
make use of the identity 
\begin{align*}
 & 1+(n-1)\varepsilon- \frac{n-1}{2} +
(1+\varepsilon)
(\frac{1}{2}-\frac{1}{d}) 
(n-1)
\\& =
  1- \frac{n-1}{2} +
(\frac{1}{2}-\frac{1}{d}) 
(n-1) +\varepsilon
\Big[
n-1+
(\frac{1}{2}-\frac{1}{d}) 
(n-1)
\Big]\\
& =
1
-\frac{n-1}{d}
+
\varepsilon
(\frac{1}{2}-\frac{1}{d}+1) 
(n-1).
\end{align*}
Consequently,
\begin{align*}
X_{2,\1}(\delta, Q,\ell)
& \leq 
 \sum_{(i,r)\in 
 [0,\frac{\log Q}{100}]\times [R_{-},R]}
\frac{Q^{\frac{n+1}{2} + 
(\frac{1}{2}-\frac{1}{d})(n-1)} \delta }
{2^{iA}(1+ (2^r\delta)^{A})}
2^{r[1+(n-1)\varepsilon
- \frac{n-1}{2} +
(1+\varepsilon)
(\frac{1}{2}-\frac{1}{d}) 
(n-1)]}\\
& \leq 
 \sum_{(i,r)\in 
 [0,\frac{\log Q}{100}]\times [R_{-},R]}
\frac{
Q^{n -\frac{n-1}{d}} \delta}
{2^{iA}(1+ (2^r\delta)^{A})}
2^{r[1-\frac{n-1}{d}
+ \varepsilon
(\frac{1}{2}-\frac{1}{d}+1) 
(n-1)]}.
\end{align*}
Since $d>n-1$ and $\varepsilon>0$
is small, the exponent
in the $2^r$-power
reveals itself to be positive.
Hence, 
\begin{align*}
X_{2,\1}(\delta, Q,\ell)
& \ll 
Q^{n -\frac{n-1}{d}} \delta\cdot
\delta^{-[1-\frac{n-1}{d}
+ \varepsilon
(\frac{1}{2}-\frac{1}{d}+1) 
(n-1)]} \\
& =
Q^{n -\frac{n-1}{d}}
\delta^{\frac{n-1}{d}}
\delta^{-\varepsilon
(\frac{1}{2}-\frac{1}{d}+1) (n-1)}
\\
& =  
\Big(\frac{\delta}{Q}
\Big)^{\frac{n-1}{d}}Q^{n}
\delta^{-\varepsilon
(\frac{1}{2}-\frac{1}{d}+1) (n-1)}.
\end{align*}
Because $\frac{1}{2}-\frac{1}{d}+1 < 2$,
this produces the required bound.
\end{proof}
Now proving Proposition \ref{prop: pruning and stationary phase d large} is a simple matter. 
\begin{proof}[Proof of Proposition \ref{prop: pruning and stationary phase d large}]
Let us consider the case $\p=\d1$ first.
By combining \eqref{eq: decomp Xfrak},
\eqref{eq: X1 bound},
and \eqref{eq: X2 bound d large} we see 
\begin{align*}
\fX_\d1 (\delta,Q,\ell) 
& \ll 2^{-\frac{d}{2}\ell (n-1)} 
\delta^{-\frac{n-1}{2}}
+ 
2^{- \frac{d}{2}\ell (n-1)}
2^{\ell (n-1)}
\delta Q^{\frac{n+1}{2}}  \\
& \leq 2^{-\frac{d}{2}\ell(n-1)}
2^{\ell (n-1)}
\big( 
\delta^{-\frac{n-1}{2}}
+ \delta Q^{\frac{n+1}{2}} 
\big),
\end{align*}
as required. It remains to bound 
$\fX_\1(\delta,Q,\ell)$.
By combining \eqref{eq: decomp Xfrak},
\eqref{eq: X1 bound},
and \eqref{eq: X2 bound d large 2}, we obtain
\begin{align*}
\fX_\1 (\delta,Q,\ell) 
& \ll 2^{-\frac{d}{2}\ell (n-1)} 
\delta^{-\frac{n-1}{2}}
+  
\Big(\frac{\delta}{Q}
\Big)^{\frac{n-1}{d}}Q^{n} 
\delta^{-2\varepsilon(n-1)} \\
& \leq 
\delta^{-\frac{n-1}{2}}
+ 
\Big(\frac{\delta}{Q}
\Big)^{\frac{n-1}{d}}Q^{n} 
\delta^{-2\varepsilon(n-1)},
%  \\
% & \leq
% 2^{-\ell(n-1)} 2^{\ell d}
% (\delta Q^{n} + 
% Q^{\frac{n-1}{2}}
% \delta^{-\frac{n-1}{2}}).
\end{align*}
as required.
\end{proof}
\section{The Bootstrapping Procedure}\label{sec: bootstrapping}
\subsection{The regime 
$d\leq n-1$}\label{subsec: d small}
We now describe the basic bootstrapping argument used to bring down the exponent of $Q$ in the error term from $\beta_{\kappa+1}$ to $\beta_{\kappa}$, up to an $\varepsilon$ of loss.  Given a starting estimate on the number of rational points in the neighborhood of a fixed dyadic piece of the original manifold, 
the first theorem below yields a slightly better 
estimate for the corresponding piece on the dual manifold. 
Moreover, the next theorem establishes the converse, 
thus paving the way for a bootstrapping process. In our case this process needs to be carried out individually at each dyadic scale and for two different types of hypersurfaces (flat versus rough). We shall see that the complementary terms (negative powers of $2^{\ell}$) which arise out of the two regimes combine in a multiplicative way to just offset the loss of decay due to lack of curvature at each scale. There is also a critical dependence of the spatial scale $\ell$ on $\delta$ which plays into the induction on scales (using monotonicity) in the $\delta$ parameter.

Throughout this subsection and the next, 
we abbreviate by 
$$
    \const = \const (\Vert \om \Vert_{\sC^t},n,d)>1
$$
a constant which solely depends on 
on $\Vert \om \Vert_{\sC^t},n,d$ (and is in particular independent
of $\delta, Q$ and $\ell$).
Moreover, we first fix a $K:=K(\varepsilon)\in \mathbb{Z}_{\geq 1}$ and then a 
$$ 
\kappa\in \{0, 1, \ldots, 
K-1\}.
$$ Let $P_\kappa^{[\1]}, P_{\kappa+1}^{[\1]}$ \index{pkappa@$P_{\kappa+1}^{[\p]}:$ envelope of order $\kappa+1 \geq 0$}
be as defined in \eqref{eq: def kappa weight}, 
and let $\beta_{\kappa+1}$ be a fixed real number in $(n-1, n]$. Further, recall the definition of $\fL_{\p}(\delta, Q)$ from \eqref{def Ls}.

\begin{thm}[Flat to Rough Manifold]
\label{thm: og to dual sdeg}
Suppose there exists a constant $C_{\kappa+1}>1$
so that for all $\delta\in(0,1/2)$ and $Q\geq1$, we have
\[
\mathfrak{N}^{P_{\kappa+1}}(\delta,Q, \ell, \1)
\leq 
C_{\kappa+1}
2^{-\ell(n-1-d)}
\big(\delta Q^{n}
+
Q^{\beta_{\kappa+1}+n\varepsilon}
%+(\delta^{-1}Q)^{\frac{1}{d}}
\big)
\]
whenever \ellponecond.
Then there exists a constant $C_{\kappa}>0$, depending on $C_{\kappa+1}, P_\kappa$ and $\varepsilon$, such that for all $Q\geq 1$ and $\delta\in (0, 1/2]$, we have 
\begin{equation}
\label{eq concl boots 1}
\mathfrak{N}^{P_{\kappa}}(\delta,Q, \ell, \d1)\leq 
C_{\kappa}2^{-\frac{d}{2}\ell(n-1)}2^{\ell d}\left(\delta Q^{n}
+Q^{\beta_\kappa+n\varepsilon}\right),    
\end{equation}
for all \ellpdcond, and with 
\[\beta_{\kappa}:=n-\frac{n-1}{2\beta_{\kappa+1}-n+1}.\]
\end{thm}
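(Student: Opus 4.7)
My plan is to begin with Proposition \ref{prop: gathering bounds}(ii), which decomposes $\mathfrak{N}^{P_\kappa}(\delta,Q,\ell,\d1)$ as the probabilistic main term $c_{P_\kappa,\omega,b,\d1}\,2^{-\ell(\d1)(n-1)}\delta Q^n$ plus the stationary-phase sum $\sum_{r\in\sG(\delta,Q,\ell)} N_{1}^{P_\kappa}(\delta,Q,\ell,\d1,r)$, modulo lower-order errors of the form $2^{-\ell(\d1)(n-1)}(\log J+1)$ and $2^{-\ell((\d1)(n-1)-d)}\delta Q^{n-1+\varepsilon}$. Since $2\leq d\leq n-1$ the main term is readily absorbed into $2^{-\frac{d}{2}\ell(n-1)}2^{\ell d}\,\delta Q^n$ via the elementary inequality $(\d1)(n-1)\geq \tfrac{d}{2}(n-1)-d$, and both error terms are dominated similarly. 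The task is thus reduced to controlling the stationary-phase sum.

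Next, I invoke the Duality Principle (Lemma \ref{lem: N1 after stationary phase and geom sum}) with $\p=\d1$, so that the dual count involves $\fN^{P_{\kappa+1}}(2^{i+1}/Q,2^r,\ell,\1)$, to which the hypothesis on the flat side can be applied. A careful bookkeeping of the exponents in $2^\ell$ is essential here: the duality prefactor $2^{(\tfrac{d}{2}-(\d1))\ell(n-1)}$ combines with the hypothesis prefactor $2^{-\ell(n-1-d)}$ to produce exactly $2^{-\tfrac{d}{2}\ell(n-1)+\ell d}$, which is precisely the spatial factor asked for in the conclusion. Only the $Q$- and $\delta$-aspects remain to be analysed.

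I then split the pairs $(i,r)$ into the \emph{good} regime, where $\ell\leq \fL_\1(2^{i+1}/Q,2^r)$ so that the induction hypothesis applies, and the complementary \emph{bad} regime $\sX$ as defined in \S\ref{subsec strange terms}. In the good regime, the hypothesis gives $\fN^{P_{\kappa+1}}\leq C_{\kappa+1}\,2^{-\ell(n-1-d)}\bigl(\tfrac{2^{i+1}}{Q}2^{rn}+2^{r(\beta_{\kappa+1}+n\varepsilon)}\bigr)$; summing in $i$ (via the $2^{-Ai}$ factor) and in $r$ via the smoothed uncertainty estimate \eqref{eq: uncertaintly prin} of Lemma \ref{lem: smooth geometric summation} produces the two contributions
\[
Q^{\tfrac{n-1}{2}}\delta^{-\tfrac{n-1}{2}}\qquad\text{and}\qquad Q^{\tfrac{n+1}{2}}\delta^{\tfrac{n+1}{2}-\beta_{\kappa+1}-n\varepsilon}.
\]
For the bad regime I simply invoke Proposition \ref{prop: pruning and stationary phase}, which gives
$\fX_{\d1}(\delta,Q,\ell)\ll 2^{-\tfrac{d}{2}\ell(n-1)}2^{\ell d}\bigl(\delta Q^{n-\varepsilon(n-1)/2}+ Q^{(n-1)/2}\delta^{-(n-1)/2}\bigr)$, both of whose terms are directly dominated by the desired right-hand side.

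The final, and most delicate, step is to verify that the two good-regime terms are bounded by $\delta Q^n+Q^{\beta_\kappa+n\varepsilon}$. The first, $Q^{(n-1)/2}\delta^{-(n-1)/2}$, is at most $Q^{n-1-\varepsilon(n-1)/2}$ for $\delta\geq Q^{\varepsilon-1}$, which lies below $Q^{\beta_\kappa+n\varepsilon}$ since the formula forces $\beta_\kappa>n-1$. The second term $Q^{(n+1)/2}\delta^{(n+1)/2-\beta_{\kappa+1}-n\varepsilon}$ is the origin of the specific formula for $\beta_\kappa$: as a decreasing function of $\delta$ (since $\beta_{\kappa+1}>(n+1)/2$), it meets $\delta Q^n$ exactly at the crossover $\delta^\ast:=Q^{-(n-1)/(2\beta_{\kappa+1}+2n\varepsilon-(n-1))}$, where both equal $Q^{\,n-(n-1)/(2\beta_{\kappa+1}+2n\varepsilon-(n-1))}$. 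A direct manipulation, exploiting $n\geq 3$ and $\beta_{\kappa+1}>n-1$, shows that this common value is bounded by $Q^{\beta_\kappa+n\varepsilon}$ with $\beta_\kappa=n-(n-1)/(2\beta_{\kappa+1}-(n-1))$, thereby absorbing the $2n\varepsilon$ correction into $n\varepsilon$. Hence for $\delta\geq\delta^\ast$ the term is dominated by $\delta Q^n$, and for $\delta<\delta^\ast$ by $Q^{\beta_\kappa+n\varepsilon}$. The main obstacle is precisely this algebraic balancing, together with the careful tracking of the various $\varepsilon$-losses that accumulate from both the uncertainty principle and the pruning of the bad regime $\sX$.
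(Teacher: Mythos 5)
The overall architecture — Proposition \ref{prop: gathering bounds}(ii), then the Duality Principle (Lemma \ref{lem: N1 after stationary phase and geom sum}), split of $(i,r)$ into the bad set $\sX$ (handled by Proposition \ref{prop: pruning and stationary phase}) and the good regime where the hypothesis applies, and the summation via \eqref{eq: uncertaintly prin} to arrive at the two terms $Q^{(n-1)/2}\delta^{-(n-1)/2}$ and $Q^{(n+1)/2}\delta^{(n+1)/2-\beta_{\kappa+1}-n\varepsilon}$ — matches the paper's proof closely. The bookkeeping of the $2^{\ell}$-exponents (combining $2^{(d/2-(d-1))\ell(n-1)}$ with $2^{-\ell(n-1-d)}$ to obtain $2^{-\frac d2\ell(n-1)}2^{\ell d}$) is also correct.

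However, there is a genuine gap in the final absorption step. The theorem's conclusion must hold for \emph{all} $\delta\in(0,1/2]$ — this is essential, because the bootstrapping feeds back a dual parameter $\delta'=2^{i+1}/Q$ that can be as small as $\asymp Q^{-1}$, with no lower cut-off at $Q^{\varepsilon-1}$. Your argument explicitly invokes $\delta\geq Q^{\varepsilon-1}$ to dominate $Q^{(n-1)/2}\delta^{-(n-1)/2}$, and more seriously, your claim that the second term $Q^{(n+1)/2}\delta^{(n+1)/2-\beta_{\kappa+1}-n\varepsilon}$ is bounded by $Q^{\beta_\kappa+n\varepsilon}$ for $\delta<\delta^\ast$ is simply false: both of these terms are decreasing in $\delta$ and diverge as $\delta\to 0$, so they exceed $Q^{\beta_\kappa+n\varepsilon}$ by an unbounded margin once $\delta$ drops below the crossover. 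The paper handles this by a monotonicity argument: for $\delta<Q^{\beta_\kappa-n}$ one bounds $\mathfrak{N}^{P_\kappa}(\delta,Q,\ell,d-1)\leq\mathfrak{N}^{P_\kappa}(Q^{\beta_\kappa-n},Q,\ell,d-1)$, then applies the intermediate estimate at the fixed scale $\delta=Q^{\beta_\kappa-n}$, with a further case split on whether $2^{\ell d}\gtrless Q^{n-\beta_\kappa+1}$ (in the first case using the trivial volume bound, in the second the intermediate estimate). Without this device your proof does not establish the conclusion in the full range $\delta\in(0,1/2]$, and the induction would not close.
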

\begin{proof}
Proposition \ref{prop: gathering bounds}
tells us that for all $Q\geq 1$ and $\delta\in (0, 1/2)$,  $\fN^{P_{\kappa}}(\delta,Q,\ell,\d1)$
equals 
\begin{align*}
c_{\kappa} 2^{-\ell (\d1) (n-1)} \delta Q^{n}
&+ \sum_{r\in \mathscr{G}(\delta, Q, \ell)} N_1^{P_{\kappa}}(\delta,Q,\ell,\d1, \r)
\\&+ O(2^{-\ell (\d1) (n-1)} \log J 
+ 2^{-\ell((\d1)(n-1)-d)}
\delta Q^{n-1+\varepsilon}),    
\end{align*}
whenever \ellpdcond. 
Thus we can write
\begin{equation*}
%\label{eq: low d N1 decomp}
 \fN^{P_{\kappa}}(\delta,Q,\ell,\d1)
 \ll 
2^{-\ell  (\d1)(n-1)} 2^{\ell d} 
( \delta Q^{n}+ \log J)
+ 
\sum_{r\in \mathscr{G}(\delta, Q, \ell)} 
N_1^{P_{\kappa}}(\delta,Q,\ell,\d1, \r).  
\end{equation*} 
Let 
\begin{equation}\label{def B}
B(\delta,Q):=
\delta^{-\frac{n-1}{2}} Q^{\frac{n-1}{2}}
+ (\delta^{-1} Q^{-5})^{n-1}.
\end{equation}
We claim that for all $\delta\in(0,1/2)$ and $Q\geq1$,
\begin{align}\label{eq: bound on N1 sum 1}
&\sum_{r\in \mathscr{G}(\delta, Q, \ell)} 
N_1^{P_{\kappa}}(\delta,Q,\ell,\d1, \r)
\\&\ll 
2^{-\frac{d}{2}\ell(n-1)}2^{\ell d}
\left(\delta Q^{n - \varepsilon\frac{n-1}{2}}
+Q^{\frac{n+1}{2}}
\delta^{\frac{n+1}{2}
-\beta_{\kappa+1}-n\varepsilon}
+ B(\delta,Q)
\right)\nonumber
\end{align}
whenever $1\leq \ell\leq \fL_{\d1}(\delta, Q)$, with an implicit constant which is independent of $\delta, Q$ and $\ell$.

Let us first see how the above claim implies estimate \eqref{eq concl boots 1}. Since $d\geq 2$, we have $\frac{d}{2}\leq \d1$. Slackening the $\ell$-powers and absorbing the term 
$\log J \leq \log Q + \log \frac{1}{\delta}$
into the right hand side of 
\eqref{eq: bound on N1 sum 1}, we conclude that for all $\delta\in(0,1/2)$ and $Q\geq1$, we have 
\begin{equation}
    \label{boots intermediate 1}
    \mathfrak{N}^{{P}_{\kappa}}(\delta,Q, \ell, \d1)
\leq 
C_{\kappa}'
2^{-\frac{d}{2}\ell(n-1)}2^{\ell d}
\left(\delta Q^{n - \varepsilon\frac{n-1}{2}}
+Q^{\frac{n+1}{2}}
\delta^{\frac{n+1}{2}-\beta_{\kappa+1}-n\varepsilon}
+ B(\delta,Q)
\right)
\end{equation}
whenever
\ellpdcond \,for a large constant $C_{\kappa}'$ depending on $P_{\kappa}$ and $\varepsilon$, satisfying $C_{\kappa}'>\const \cdot C_{\kappa+1}+c_\kappa$.

Fixing $Q$, $\delta$ and \ellpdcond \, now, we distinguish two cases, depending on the size of $\delta$ with respect to the scale $Q^{\beta_{\kappa}-n}\in (Q^{-1}, 1]$,
where $$\beta_{\kappa}:=n-\frac{n-1}{2\beta_{\kappa+1}-n+1}.$$
Rearranging terms in the above equality, we get
\begin{equation}
    \label{eq beta alt}
    n-\beta_{\kappa}=\frac{n-1}{2\beta_{\kappa+1}-n+1}.
\end{equation}

In the case when $\delta\geq Q^{\beta_{\kappa}-n}$, we have
\[
    B(\delta,Q)+Q^{\frac{n+1}{2}}
    \delta^{\frac{n+1}{2}-
    \beta_{\kappa+1}-n\varepsilon}\leq 
    2Q^{(n-\beta_{\kappa})(\frac{n-1}{2})}
    Q^{\frac{n-1}{2}}
    +Q^{\frac{n+1}{2}}Q^{(n-\beta_{\kappa})
    (-\frac{n+1}{2}+\beta_{\kappa+1}+n\varepsilon)}.
\]
For the power of $Q$ in the first term, we have the estimate
\[\frac{n-1}{2}+(n-\beta_{\kappa})(\frac{n-1}{2})\leq n-1.\]
Further, simplifying the power in the second term gives
\begin{align*}
&\frac{n+1}{2}+(n-\beta_{\kappa})(-\frac{n+1}{2}+\beta_{\kappa+1}+n\varepsilon)\\&\leq \frac{n+1}{2}+(n-\beta_{\kappa})(-\frac{n-1}{2}+\beta_{\kappa+1})-(n-\beta_{\kappa})+n\varepsilon  \\
&=\frac{n+1}{2}+\frac{n-1}{2\beta_{\kappa+1}-n+1}(-\frac{n+1}{2}+\beta_{\kappa+1})-(n-\beta_{\kappa})+n\varepsilon\\
&=\frac{n+1}{2}+\frac{n-1}{2}-(n-\beta_{\kappa})+n\varepsilon=\beta_{\kappa}+n\varepsilon,
\end{align*}
where we used \eqref{eq beta alt} to get the second equality.

It follows that when $\delta \geq Q^{\beta_{\kappa}-n}$, we have
\[\delta Q^n+B(\delta,Q)
+Q^{\frac{n+1}{2}}
\delta^{\frac{n+1}{2}-\beta_{\kappa+1}-n\varepsilon}
\leq \delta Q^n+2 Q^{n-1}
+Q^{\beta_{\kappa}+n\varepsilon}
\leq \delta Q^n+3Q^{\beta_{\kappa}+n\varepsilon}.\]
Combining the above with 
\eqref{boots intermediate 1}, we conclude that
$$
    \mathfrak{N}^{{P}_{\kappa}}(\delta,Q, \ell, \d1)
\leq 
C_{\kappa}'
2^{-\frac{d}{2}\ell(n-1)}2^{\ell d}
\left(\delta 
Q^{n}+2Q^{\beta_{\kappa}+n\varepsilon}
\right),
$$
which implies \eqref{eq concl boots 1} 
for $\delta \geq Q^{\beta_{\kappa}-n}$
with $C_\kappa=3C_\kappa'$. 

We now turn to the case when $\delta\leq Q^{\beta_{\kappa}-n}$. Using the monotonicity of $\mathfrak{N}^{P_{\kappa}}(\delta,Q, \ell, \d1)$ as a function of $\delta$, we can  bound 
$$
\mathfrak{N}^{P_{\kappa}}(\delta,Q, \ell, \d1)\leq  \mathfrak{N}^{P_{\kappa}}(Q^{\beta_{\kappa}-n},Q, \ell, \d1).$$
In the case when $2^{\ell d}\geq Q^{n-\beta_\kappa+1}$, we use the trivial bound 
\begin{equation*}
    %\label{eq b inf trivial bd 2a}
    \mathfrak{N}^{P_{\kappa}}(Q^{\beta_{\kappa}-n},Q, \ell, \d1)\leq B_\kappa 2^{-\ell(d-1)(n-1)}Q^n+1\leq 2B_\kappa 2^{-\ell(d-1)(n-1)}Q^n,
\end{equation*}
where $B_\kappa>1$ is a positive constant depending on $P_\kappa$ but independent of $\delta, Q$ and $\ell$, and the last inequality follows from the fact that  $2^{\ell(\d1)(n-1)}\leq 2^{\fL_{\d1}(d-1)(n-1)}\leq Q$ (on account of \eqref{eq: 2^Lp}). 
Note that since $2^{\ell d}\geq Q^{n-\beta_\kappa+1}\geq Q$ in this regime, we can bound
\begin{align*}
    %\label{eq b inf trivial bd 2}
    \mathfrak{N}^{P_{\kappa}}(Q^{\beta_{\kappa}-n},Q, \ell, \d1)
    %\leq B_\kappa 2^{-\ell(d-1)(n-1)}Q^n
    &\leq B_\kappa 2^{-\ell\frac{d}{2}(n-1)}2^{\ell d}2^{-\ell d} Q^n\leq  B_\kappa 2^{-\ell\frac{d}{2}(n-1)}2^{\ell d} Q^{n-1}
    \\&\leq B_\kappa 2^{-\ell\frac{d}{2}(n-1)}2^{\ell d}Q^{\beta_\kappa+n\varepsilon}.
\end{align*}
This implies the desired conclusion \eqref{eq concl boots 1} for $\mathfrak{N}^{P_{\kappa}}(\delta,Q, \ell, \d1)$ when $2^{\ell d}\geq Q^{n-\beta_\kappa+1}$, with $C_\kappa=B_\kappa$.

Finally, when $2^{\ell d}\leq Q^{n-\beta_\kappa+1}$, we use \eqref{boots intermediate 1} with $\delta$ replaced by $Q^{\beta_\kappa-n}$ to estimate
\begin{align*}
&\mathfrak{N}^{P_{\kappa}}(\delta,Q, \ell, \d1)\\&\leq  \mathfrak{N}^{P_{\kappa}}(Q^{\beta_{\kappa}-n},Q, \ell, \d1)\\ &\leq C_{\kappa}'
2^{-\frac{d}{2}\ell(n-1)}2^{\ell d}\left(Q^{\beta_{\kappa}-n}
Q^{n}+2 Q^{(n-\beta_{\kappa})(\frac{n-1}{2})}Q^{\frac{n-1}{2}}
+Q^{\frac{n+1}{2}}
Q^{(n-\beta_{\kappa})
(-\frac{n+1}{2}+\beta_{\kappa+1}+n\varepsilon)}\right)
\\
&\leq C_{\kappa}'
2^{-\frac{d}{2}\ell(n-1)}
2^{\ell d}\left(Q^{\beta_{\kappa}}
+ 2Q^{n-1}+Q^{\beta_{\kappa}+n\varepsilon}\right)
\leq 4C_{\kappa}'
2^{-\frac{d}{2}\ell(n-1)}2^{\ell d}
Q^{\beta_{\kappa}+n\varepsilon}.
\end{align*}
Thus we obtain \eqref{eq concl boots 1} again with $C_\kappa=4C_\kappa'$.

To finish the proof, it now remains to prove claim \eqref{eq: bound on N1 sum 1}.

\textbf{Proof of Claim \eqref{eq: bound on N1 sum 1}:}
Recall the set $\sX=\sX(Q,\ell, \1)$, defined in \S\ref{subsec strange terms}, consisting of all integer pairs 
$(i,r)\in [0,\frac{\log Q}{100}]\times [R_{-},R]$ such that 
$$\ell >\min\left\{\frac{\log (Q2^{r(1+\varepsilon)-i})}{d\log 2}, 
\frac{(1-\varepsilon)r}{\1}\right\}=\fL_{\1}(2^i/Q, 2^r).$$ 
Let 
\begin{equation}\label{def T add}
T(\delta,Q,\ell,\d1, r):=
2^{-(\d1) \ell (n-1)} 
\delta 2^r (1+ (2^r\delta)^{A})^{-1} Q^{-5(n-1)}2^{r(n-1)}.
\end{equation}
In view of
Lemma \ref{lem: N1 after stationary phase and geom sum}, we have
\begin{align}
    \label{eq: N1 bd 1}
    &\sum_{r\in 
    \mathscr{G}(\delta, Q, \ell)} 
    N_1^{P_{\kappa}}(\delta,Q,\ell,\d1, \r)
    \\& \ll_A \fX_\d1(\delta,Q,\ell)
    + 2^{(\frac{d}{2}-(\d1)) 
\ell (n-1)}
\delta Q^{\frac{n+1}{2}} 
\Big(\sum_{(i,r)} 
\frac{\fN^{{P}_{\kappa+1}}(2^{i+1}/Q, 
2^r,\ell, \1)}
{2^{iA}(1+ (2^r\delta)^{A})
2^{r \frac{n-1}{2}}}\nonumber\\
& + 
2^r Q^{1-n} \max(1, 2^{-\ell 
(d-(\d1))(n-1)}2^{r(n-1)})\Big) +  
\sum_{r\in \mathscr{G}(\delta, Q, \ell)} 
    T(\delta,Q,\ell,\d1, r)\nonumber
\end{align}
where the summation on the right 
is taken over all integer pairs
$(i,r) $ 
in $[0,\frac{\log Q}{100}]\times \mathscr{G}(\delta, Q, \ell)$
with $\ell\leq \fL_{\1}(2^i/Q, 2^r)$.
Here $\fX_\d1(\delta,Q,\ell)$ is the contribution coming from $\sX=\sX(Q,\ell, \1)$ (see \eqref{def: Xpfrak}).
Notice that 
$$
\sum_{r\in \mathscr{G}(\delta, Q, \ell)} 
    T(\delta,Q,\ell,\d1, r)
\ll
2^{-(\d1) \ell (n-1)} 
Q^{-5(n-1)}\delta^{-(n-1)}
\ll 2^{-(\d1) \ell (n-1)} 
B(\delta,Q).
$$
Clearly, $d\geq 2$ implies $\frac{d}{2}\leq d-1$,
and thus
\begin{equation}
\label{def d estimate}
2^{-(\d1) \ell (n-1)} 
B(\delta,Q)\leq 2^{- \frac{d}{2} \ell (n-1)}  2^{\ell d} B(\delta,Q).   
\end{equation}
Upon using this estimate,
we infer 
\begin{equation}
\label{eq Tterm1}
\sum_{r\in \mathscr{G}(\delta, Q, \ell)} T(\delta,Q,\ell,\d1, r)
\leq 2^{- \frac{d}{2} \ell (n-1)}  2^{\ell d} B(\delta,Q).    
\end{equation}
Moreover,
Proposition \ref{prop: pruning and stationary phase} already tells us that
\begin{equation}
    \label{eq alrd1}
    \fX_\d1(\delta,Q,\ell)
 \ll 
2^{-\frac{d}{2}\ell(n-1)}
2^{\ell d}
(\delta 
Q^{n - \varepsilon\frac{n-1}{2}}+ 
B(\delta,Q)),
\end{equation}
to establish the claim we now 
focus on bounding the two summands 
on the right in \eqref{eq: N1 bd 1} in the bracket.
Observe that the sum in the second term in \eqref{eq: N1 bd 1} now runs over a range of parameters where we can apply our assumption. We introduce the functions
\begin{align}\label{def: T1 T2}
&T_1(\delta',Q') 
:=\delta' (Q')^{n}, \qquad
\mathrm{and} \qquad
T_2(\delta',Q') :=
 (Q')^{\beta_{\kappa+1} + n \varepsilon},
\end{align}
to write our assumption in the form 
$$
\fN^{P_{\kappa+1}}(2^i/Q, 2^r,\ell, \1) 
\ll 
2^{-\ell (n-1-d)}
\big[
T_{1}(2^i/Q ,2^r) + 
T_{2}(2^i/Q ,2^r)\big],
$$
for $$\ell\leq \fL_{\1}(2^i/Q, 2^r).$$
A quick inspection shows that if $A>0$
is chosen sufficiently large, then
$$
\sum_{ 0\leq i\leq \frac{\log Q}{100}} 
\frac{T_{s}(\frac{2^i}{Q},2^r,\ell)}{2^{Ai}}
\ll T_{s}(\frac{1}{Q},2^r)
$$
for any choice of $s\leq 2$. 
As a result, for each fixed $r\in \mathscr{G}(\delta, Q, \ell)$, we have
$$
\sum_{ 0\leq i\leq \frac{\log Q}{100}} 
\frac{\fN^{P_{\kappa+1}}(\frac{2^i}{Q}, 2^r,\ell, \1)}{2^{Ai}}
\ll 
2^{-\ell (n-1-d)}
\big[
T_{1}(Q^{-1},2^r) + 
T_{2}(Q^{-1} ,2^r)\big].
$$
Combining the above with \eqref{eq: N1 bd 1}, \eqref{eq Tterm1} and \eqref{eq alrd1}, we get 
\begin{align}\label{eq: N1 intermediate bound 1}
& \sum_{r\in \mathscr{G}(\delta, Q, \ell)}N_{1}^{{P}_\kappa}(\delta,Q, \ell, \d1, \r) 
\\ &\ll_A
 \sum_{r\in \mathscr{G}(\delta, Q, \ell)}\frac{2^{(\frac{d}{2}-(\d1)) \ell (n-1)} }
{1+ (2^r\delta)^{A}}
Q^{\frac{n+1}{2}} \delta 2^{-r \frac{n-1}{2}}
\Big(
 2^{-\ell (n-1-d)}
\sum_{s\leq 2} 
T_{s}(Q^{-1} ,2^r)\nonumber\\
&+ 2^r Q^{1-n} (1+ 2^{-\ell}2^{r})^{n-1}\Big)
+
2^{-\frac{d}{2}\ell(n-1)}
2^{\ell d}
\delta 
Q^{n - \varepsilon\frac{n-1}{2}}+ 
2^{(\frac{d}{2}-\p) \ell (n-1)}  B(\delta,Q).\nonumber
\end{align}
Next, we claim that
\begin{equation}\label{eq: sum over r}
\sum_{s \leq 2}
\sum_{0\leq r \leq R}
T_{s}(\frac{1}{Q},2^r)
 \frac{2^{-r \frac{n-1}{2}}}{1+(\delta 2^r)^A}
 \ll 
 \sum_{s \leq 2}
 \delta^{\frac{n-1}{2}}
T_{s}(\frac{1}{Q},
\frac{1}{\delta}).
\end{equation}
To verify this, we first recall that
$r\leq R= \frac{\log J}{\log 2}=\frac{\log (\delta^{-1}Q^{\varepsilon})}{\log 2}$ for $r\in \mathscr{G}(\delta, Q, \ell)$.
Observe for $s=1,2$ that
$2^{-r\frac{n-1}{2}}
T_{s}(\frac{1}{Q},2^r,\ell)$
is of the form $2^{rv}$ times some function
depending on $\ell, Q$
where $v > 0$.
With \eqref{eq: uncertaintly prin}
at hand we deduce
$$
\sum_{ r\in \mathscr{G}(\delta, Q, \ell)}
2^{-r\frac{n-1}{2}}
\frac{T_{s}(\frac{1}{Q},2^r)}
{1+(2^{r}\delta)^A}
\ll \delta^{\frac{n-1}{2}}
T_{s}(\frac{1}{Q},
\frac{1}{\delta}).
$$
As a result,
we infer \eqref{eq: sum over r}. 
Combining \eqref{eq: N1 intermediate bound 1} and \eqref{eq: sum over r}
gives
\begin{align}\label{eq: sum N1 over r when d small}
& \sum_{r\in \mathscr{G}(\delta, Q, \ell)}
 N_{1}^{{P}_\kappa}(\delta,Q, \ell, \d1, \r) 
\\ & \ll 
2^{(\frac{d}{2}-(\d1)) \ell (n-1)} 2^{-\ell (n-1-d)}
Q^{\frac{n+1}{2}} \delta 
 \Big(
\sum_{s \leq 2}
 \delta^{\frac{n-1}{2}}
T_{s}(\frac{1}{Q},
\frac{1}{\delta})
+ 2^{\ell (n-1-d)} Y 
\Big)\nonumber\\
& + 
2^{-\frac{d}{2}\ell(n-1)}
2^{\ell d}
(\delta 
Q^{n - \varepsilon\frac{n-1}{2}}+ 
B(\delta,Q)) \nonumber\end{align}
where 
\begin{equation} \label{def: Y term}
Y:= Q^{1-n}  \sum_{r\in \mathscr{G}(\delta, Q, \ell)} 
\frac{2^{-r \frac{n-3}{2}}}
{1+ (2^r\delta)^{A}}
 (1+ 2^{-\ell}2^{r})^{n-1}.
\end{equation}
We claim that
\begin{equation}\label{eq: Y at most T1 T2}
2^{\ell (n-1-d)} Y \ll \delta^{\frac{n-1}{2}} T_{1}(\frac{1}{Q}, \frac{1}{\delta}).
% \sum_{s \leq 2}
%  \delta^{\frac{n-1}{2}}
% T_{s}(\frac{1}{Q},
% \frac{1}{\delta}).
\end{equation}
For justifying that, we begin with remarking
that for any $x>0$ the inequality 
$(1 + x)^{n-1}\ll 1+x^{n-1} $
holds true. 
Thus,
the estimate 
\eqref{eq: uncertaintly prin}
produces
\begin{equation}\label{eq: upper bound on Y}
Y \ll Q^{1-n} 
\big(
R
+ \delta^{\frac{n-3}{2}} 
2^{- \ell (n-1)}
\delta^{-(n-1)}
\big)
\ll 
Q^{1-n} 
\big(
\log(Q/\delta)
+ 
2^{- \ell (n-1)}
\delta^{-\frac{n+1}{2}} 
\big).
\end{equation}
To proceed, we notice 
\begin{align}
& \delta^{\frac{n-1}{2}} T_1(\frac{1}{Q},\frac{1}{\delta}) 
=  \delta^{\frac{n-1}{2}} Q^{-1} \delta^{-n}
= \delta^{-\frac{n+1}{2}} Q^{-1}, \label{eq: T1 things} \\
& 
\delta^{\frac{n-1}{2}} T_2(\frac{1}{Q},\frac{1}{\delta}) 
=
\delta^{\frac{n-1}{2}} 
\delta^{-(\beta_{\kappa+1} + n \varepsilon)}
= \delta^{-(\beta_{\kappa+1} 
-\frac{n-1}{2}+ n \varepsilon)}\label{eq: T2 things} .
\end{align}
Now to verify \eqref{eq: Y at most T1 T2},
we distinguish two cases. If 
$2^{-\ell(n-1)}\delta^{-\frac{n+1}{2}} \geq \log (Q/\delta)$,
then \eqref{eq: upper bound on Y} implies
$$
2^{\ell (n-1-d)} Y
\ll 
Q^{1-n} 
2^{-\ell d}
\delta^{-\frac{n+1}{2}}\leq \delta^{-\frac{n+1}{2}} Q^{-1}\stackrel{\eqref{eq: T1 things}}{=}\delta^{\frac{n-1}{2}} T_1(\frac{1}{Q},\frac{1}{\delta}).
$$
% Recalling that $\beta_{\kappa +1} - \frac{n-1}{2} \geq 
% n-1- \frac{n-1}{2}= \frac{n-1}{2}$ 
% and taking \eqref{eq: T2 things} into account yields
% $$
% 2^{\ell (n-1-d)} Y \ll
%  \delta^{-(\beta_{\kappa+1} 
% -\frac{n-1}{2}+ n \varepsilon)} 
% =
% \delta^{\frac{n-1}{2}} T_2(\frac{1}{Q},\frac{1}{\delta}).
% $$
Now suppose $2^{-\ell(n-1)}\delta^{-\frac{n+1}{2}} < \log (Q/\delta)$.
By replacing $\ell$ by $L=L(\delta, Q)$ from \eqref{eq: 2^L} and using that $d\geq 2$,
we derive
\begin{align*}
2^{\ell (n-1-d)} Y
\ll 
2^{L \cdot(n-1-d)} \left(\frac{Q}{\delta}\right) Q^{1-n}
\ll \left(\frac{Q}{\delta}\right)^{\frac{n-1-d}{d}+1} Q^{1-n}
&\leq 
\left(\frac{Q}{\delta}\right)^{\frac{n-3}{2}+1} Q^{1-n}
\\&= 
\delta^{-\frac{n-1}{2}}Q^{-\frac{n-1}{2}}.    
\end{align*}
Since $n\geq 3$, recalling \eqref{eq: T1 things} establishes \eqref{eq: Y at most T1 T2} in this case as well.
Observe $$2^{(\frac{d}{2}-(\d1)) \ell (n-1)} 2^{-\ell (n-1-d)}
 = 2^{-\frac{d}{2} \ell (n-1)}2^{\ell d}.$$ 
Combining \eqref{eq: sum N1 over r when d small}, \eqref{eq: Y at most T1 T2}, \eqref{eq: T1 things} and \eqref{eq: T2 things}
% \eqref{eq: Y at most T1 T2} into 
% \eqref{eq: sum N1 over r when d small}
yields
\begin{align*}
    &\sum_{r\in \mathscr{G}
    (\delta, Q, \ell)}N_{1}^{{P}_\kappa}
    (\delta, Q, \ell, \d1, \r) \\&\ll
    2^{-\frac{d}{2} \ell (n-1)}2^{\ell d}
Q^{\frac{n+1}{2}} \delta
 \Big(\delta^{-\frac{n+1}{2}} Q^{-1}
+
\delta^{-(\beta_{\kappa+1} -\frac{n-1}{2}
+ n \varepsilon)}
\Big) \\
&+2^{-\frac{d}{2} \ell (n-1)}2^{\ell d}
\Big(\delta Q^{n - \varepsilon\frac{n-1}{2}} + 
B(\delta,Q)\Big)\\
& \ll
2^{-\frac{d}{2} \ell (n-1)}2^{\ell d}
 \Big(\delta Q^{n - \varepsilon\frac{n-1}{2}}
 + B(\delta,Q)
 + Q^{\frac{n+1}{2}} 
\delta^{-(\beta_{\kappa+1}-\frac{n+1}{2}
+ n \varepsilon)}
\Big).
\end{align*}
Since this is exactly the 
bound \eqref{eq: bound on N1 sum 1},
the proof is complete.
\end{proof}

We now consider the reverse scenario, which shall be proven in an analogous way. 

\begin{thm}  [Rough to Flat Manifold] 
\label{thm: dual to og sdeg}
Suppose there exists a constant $C_{\kappa+1}>1$
so that for all $\delta\in(0,1/2)$ and $Q\geq1$, we have
\[
\mathfrak{N}^{{P}_{\kappa+1}}(\delta,Q, \ell, \d1)\leq 
C_{\kappa+1}
2^{-\frac{d}{2}\ell(n-1)}2^{\ell d}
\Big(\delta Q^{n}
+
Q^{\beta_{\kappa+1}+n\varepsilon}\Big),
\] 
whenever \ellpdcond.
Then there exists a constant $C_{\kappa}>0$, depending on $C_{\kappa+1}$, $P_\kappa$ and $\varepsilon$, such that for all $Q\geq 1$ and $\delta\in (0, 1/2]$, we have
\begin{equation}
\label{eq concl boots 2}
\mathfrak{N}^{P_{\kappa}}(\delta,Q, \ell, \1)\leq 
C_{\kappa}2^{-\ell(n-1-d)}\left(\delta Q^{n}
+Q^{\beta_\kappa+n\varepsilon}\right),    
\end{equation}
for all \ellponecond \, 
with \[\beta_{\kappa}:=n-\frac{n-1}{2\beta_{\kappa+1}-n+1}.\]
\end{thm}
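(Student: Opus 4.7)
The plan is to mirror the strategy of Theorem \ref{thm: og to dual sdeg}, but with the roles of the flat function $f_\1$ and its rough dual $f_{\d1}$ interchanged. First, I would invoke Proposition \ref{prop: gathering bounds}(i) (we are in the regime $d\le n-1$) to decompose, for $0\le \ell\le \fL_\1(\delta,Q)$,
\[
\fN^{P_\kappa}(\delta,Q,\ell,\1)=c_{\kappa}2^{-\ell(n-1)}\delta Q^{n}+\sum_{r\in\sG(\delta,Q,\ell)}N_{1}^{P_\kappa}(\delta,Q,\ell,\1,r)+O\bigl(2^{-\ell(n-1-d)}\delta Q^{n-1+\varepsilon}+2^{-\ell(n-1)}\log J\bigr).
\]
Both explicit terms and the error already carry the desired prefactor $2^{-\ell(n-1-d)}$ (after using $d\le n-1$ to slacken $2^{-\ell(n-1)}\le 2^{-\ell(n-1-d)}$), so everything reduces to bounding the sum over $r$.

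Next, for each $r\in\sG(\delta,Q,\ell)$ I would apply the Duality Principle (Lemma \ref{lem: N1 after stationary phase and geom sum}) with $\p=\1$. This converts $N_{1}^{P_\kappa}(\delta,Q,\ell,\1,r)$ into a weighted double sum over $(i,r)$ of counts $\fN^{P_{\kappa+1}}(2^{i+1}/Q,2^{r},\ell,\d1)$ on the \emph{dual} (rough) hypersurface, multiplied by the prefactor $2^{(\frac{d}{2}-\1)\ell(n-1)}Q^{(n+1)/2}\delta 2^{-r(n-1)/2}/(1+(2^{r}\delta)^{A})$. Crucially, the combination with the prefactor $2^{-\frac{d}{2}\ell(n-1)}2^{\ell d}$ coming from the assumption gives
\[
2^{(\frac{d}{2}-\1)\ell(n-1)}\cdot 2^{-\frac{d}{2}\ell(n-1)}2^{\ell d}=2^{-\ell(n-1-d)},
\]
which is exactly the factor the statement requires. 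I would then split the $(i,r)$-sum into the regime where the hypothesis applies, i.e. $\ell\le \fL_\d1(2^{i+1}/Q,2^{r})$, and the complementary ``strange'' regime $\sX(Q,\ell,\d1)$. On the former, I would substitute the assumption, using the functions $T_1(\delta',Q'):=\delta'(Q')^{n}$ and $T_2(\delta',Q'):=(Q')^{\beta_{\kappa+1}+n\varepsilon}$ as in Theorem \ref{thm: og to dual sdeg}, and then collapse the $i$-sum by taking $A$ large and the $r$-sum via the smoothed uncertainty inequality \eqref{eq: uncertaintly prin}. On the latter (``strange'') regime I would invoke Proposition \ref{prop: pruning and stationary phase} to bound $\fX_\1(\delta,Q,\ell)$ by $2^{-\ell(n-1)}2^{\ell d}\bigl(\delta Q^{n-\varepsilon(n-1)/2}+Q^{(n-1)/2}\delta^{-(n-1)/2}\bigr)$. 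Together with the $N_1$-tail term $T(\delta,Q,\ell,\1,r)$ (cf.\ \eqref{def T add}), these produce an intermediate bound of the shape
\[
\fN^{P_\kappa}(\delta,Q,\ell,\1)\ll 2^{-\ell(n-1-d)}\Bigl(\delta Q^{n-\varepsilon(n-1)/2}+Q^{(n+1)/2}\delta^{(n+1)/2-\beta_{\kappa+1}-n\varepsilon}+B(\delta,Q)\Bigr),
\]
with $B(\delta,Q)$ as in \eqref{def B}.

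The final step is the bootstrapping arithmetic, which is identical to the one appearing in Theorem \ref{thm: og to dual sdeg}. I would distinguish the cases $\delta\ge Q^{\beta_\kappa-n}$ and $\delta< Q^{\beta_\kappa-n}$; in the first regime the elementary identity $(n-\beta_\kappa)(2\beta_{\kappa+1}-n+1)=n-1$ obtained by rearranging the recursion for $\beta_\kappa$ forces the middle summand above to be bounded by $Q^{\beta_\kappa+n\varepsilon}$, while $B(\delta,Q)\ll Q^{n-1}$. In the second regime I would use monotonicity of $\delta\mapsto \fN^{P_\kappa}(\delta,Q,\ell,\1)$ to replace $\delta$ by $Q^{\beta_\kappa-n}$ and reduce to the first regime, with the sub-case $2^{\ell d}\ge Q^{n-\beta_\kappa+1}$ handled by the trivial volume bound (exploiting $2^{\ell(n-1)}\le Q$ from $\ell\le \fL_\1(Q)$).

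The main obstacle will be the careful bookkeeping inside the ``strange'' regime $\sX(Q,\ell,\d1)$: the constraint $\ell > \fL_\d1(2^{i+1}/Q,2^r)$ mixes the three dyadic parameters $(i,r,\ell)$ nontrivially, and one must check that the bounds from Proposition \ref{prop: pruning and stationary phase} combine with the two prefactors so as to produce no worse than $2^{-\ell(n-1-d)}Q^{\beta_\kappa+n\varepsilon}$; the recursion for $\beta_\kappa$ is tuned precisely so that this is possible, and verifying this is the crux of the argument.
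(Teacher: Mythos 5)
Your proposal is correct and follows essentially the same route as the paper: Proposition \ref{prop: gathering bounds}(i), the Duality Principle with $\p=\1$, the prefactor identity $2^{(\frac{d}{2}-1)\ell(n-1)}\cdot 2^{-\frac{d}{2}\ell(n-1)}2^{\ell d}=2^{-\ell(n-1-d)}$, the split between the good regime and the ``strange'' set $\sX(Q,\ell,\d1)$ via Proposition \ref{prop: pruning and stationary phase}, and the same bootstrapping case analysis on $\delta$ versus $Q^{\beta_\kappa-n}$ with the monotonicity trick. The only detail left implicit is the treatment of the $2^rQ^{1-n}\max(1,\ldots)$ contribution (the analogue of the $Z$-term in the paper, cf.\ \eqref{def: Z} and \eqref{eq: Z at most sum}), which the paper absorbs into $\delta^{\frac{n-1}{2}}T_1(1/Q,1/\delta)$ by a two-case estimate; your ``intermediate bound'' tacitly assumes this, and your citation should be \eqref{def T backsl 1} rather than \eqref{def T add} for the $\p=\1$ tail term.
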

\begin{proof}
We again take Proposition \ref{prop: gathering bounds} as our starting point.
So, we know 
$\fN^{P_\kappa}(\delta,Q,\ell,\1)$
equals 
$$
c_{\kappa} 2^{-\ell (n-1)} \delta Q^{n}
+ \sum_{r\in \mathscr{G}(\delta, Q, \ell)} N_1^{P_{\kappa}}(\delta,Q,\ell,\1, \r)
+ O(2^{-\ell (n-1)} \log J 
+ 2^{-\ell(n-1-d)}
\delta Q^{n-1+\varepsilon})
$$
for all $Q\geq 1$ and 
$\delta\in (0, 1/2)$,
whenever $1\leq \ell\leq \fL_{\1}(\delta, Q)$.
Since
$2^{-\ell(n-1-d)}
\delta Q^{n-1+\varepsilon}
\ll 2^{-\ell(n-1-d)}\delta Q^n$,
we have
\begin{equation*}%\label{eq: low d N1 decomp 2}
 \fN^{P_\kappa}(\delta,Q,\ell,\1)
 \ll 
2^{-\ell(n-1-d)}  
( \delta Q^{n}+ \log J)
+ \sum_{r\in \mathscr{G}
(\delta, Q, \ell)} 
N_1^{P_\kappa}(\delta,Q,\ell,\1, \r) 
\end{equation*} 
for all $Q$, $\delta$ and $\ell$ in the aforementioned ranges.

For ease of exposition, 
we define 
\begin{equation}\label{def U and BdeltaQ}
U:=\sum_{r\in \mathscr{G}
(\delta, Q, \ell)} 
N_1^{P_\kappa}(\delta,Q,\ell,\1, \r) \quad
\mathrm{and}\quad 
B(\delta,Q):=\delta^{-\frac{n-1}{2}}
Q^{\frac{n-1}{2}} + (\delta^{-1}Q^{-5})^{n-1}.
\end{equation}
We claim that for all $Q\geq 1$ and $\delta\in (0, 1/2)$, we have
\begin{equation}\label{eq: bound on N1 sum 2}
U
\ll 
2^{-\ell(n-1-d)}
\left(\delta Q^n+B(\delta,Q)+Q^{\frac{n+1}{2}}
\delta^{\frac{n+1}{2}
-\beta_{\kappa+1}-n\varepsilon}
\right),
\end{equation}
for $\ell\in \{0, 1, \ldots, \fL_{1}(\delta, Q)\}$, 
with the implicit constant independent of 
$\delta, Q$ and $\ell$.
We again assume 
claim \eqref{eq: bound on N1 sum 2} for now and see how it implies 
\eqref{eq concl boots 2}.

The term 
$\log J \leq \log Q + \log \frac{1}{\delta}$
can be absorbed into the right hand side of 
\eqref{eq: bound on N1 sum 2} to get
\begin{align}
%\[
\label{boots intermediate 2}
\mathfrak{N}^{P_{\kappa}}(\delta,Q, \ell, \1)
&\leq C_{\kappa}' 
2^{-\ell(n-1-d)}
\left(\delta 
Q^{n}+B(\delta,Q)
+Q^{\frac{n+1}{2}}
\delta^{\frac{n+1}{2}
-\beta_{\kappa+1}-n\varepsilon}\right),
\end{align}
for a large constant $C_{\kappa}'$ depending on $P_{\kappa}$ and $\varepsilon$, satisfying $C_{\kappa}'>\const \cdot C_{\kappa+1}+c_\kappa$.

Fixing $Q$, $\delta$ and $\ell \in \{0, 1, \ldots, \fL_{\1}(\delta, Q)\}$ now, we again distinguish two cases, depending on the size of $\delta$ with respect to the scale $Q^{\beta_{\kappa}-n}\in (Q^{-1}, 1]$. 

The proof of \eqref{eq concl boots 2} when $\delta\geq Q^{\beta_{\kappa}-n}$, or when $\delta\leq Q^{\beta_{\kappa}-n}$ and $2^{\ell d}\leq Q^{n-\beta_\kappa-1}$, being independent of the powers of $2^\ell$, follows from \eqref{boots intermediate 2} in the same way as the proof of \eqref{eq concl boots 1} from \eqref{boots intermediate 1}, with $C_\kappa=4C_\kappa'$.

Thus we focus on the scenario when $\delta\leq Q^{\beta_\kappa-n}$ and $2^{\ell d}\geq Q^{n-\beta_\kappa+1}\geq Q$. We can use the monotinicity of $\mathfrak{N}^{P_{\kappa}}(\delta,Q, \ell, \1)$ as a function of $\delta$ to bound 
$$ \mathfrak{N}^{P_{\kappa}}(\delta,Q, \ell, \1)\leq \mathfrak{N}^{P_{\kappa}}(Q^{\beta_{\kappa}-n},Q, \ell, \1).$$
We now combine the trivial estimate on $\mathfrak{N}^{P_{\kappa}}(Q^{\beta_{\kappa}-n},Q, \ell, \1)$, noting that \eqref{eq: 2^Lp} implies  $$2^{\ell(n-1)}\leq 2^{\fL_{\1}(n-1)}\leq Q,$$  with the lower bound on $2^{\ell d}$, to get
\begin{equation*}
    %\label{eq trivial bd 2}
    \mathfrak{N}^{P_{\kappa}}(Q^{\beta_{\kappa}-n},Q, \ell, \1)\leq B_\kappa 2^{-\ell(n-1)}Q^n=B_\kappa 2^{-\ell(n-1-d)}2^{-\ell d}Q^n\leq B_\kappa 2^{-\ell(n-1-d)}Q^{n-1}.
\end{equation*}
Here $B_\kappa$ is a positive constant depending on $P_\kappa$ but independent of $\delta, Q$ and $\ell$.
This implies \eqref{eq concl boots 2} for $\mathfrak{N}^{P_{\kappa}}(\delta,Q, \ell, \d1)$ when $2^{\ell d}\geq Q^{n-\beta_\kappa+1}$ with $C_\kappa=B_\kappa$. 

To conclude the proof, it only remains to establish \eqref{eq: bound on N1 sum 2}. 

\textbf{Proof of Claim \eqref{eq: bound on N1 sum 2}:}
Recall the set $\sX=\sX(Q,\ell, \d1)$, defined in \S\ref{subsec strange terms}, consisting of all integer pairs 
$(i,r)\in [0,\frac{\log Q}{100}]\times [R_{-},R]$ such that 
$$\ell >\min\left\{\frac{\log (Q2^{r(1+\varepsilon)-i})}{d\log 2}, 
\frac{(1-\varepsilon)r}{\d1}\right\}=\fL_{\d1}(2^i/Q, 2^r).$$ 
Using Lemma \ref{lem: N1 after stationary phase and geom sum}, we conclude
\begin{align}
    \label{eq: N1 bd 2}
   & U  \ll \fX_\1(\delta,Q,\ell)  +
2^{(\frac{d}{2}-\1 ) 
\ell (n-1)}
\delta Q^{\frac{n+1}{2}} \cdot 
\\
&\cdot \Bigg(\sum_{(i,r)} 
\frac{\fN^{{P}_{\kappa+1}}(2^{i+1}/Q, 
2^r,\ell, \d1)}
{2^{iA}(1+ (2^r\delta)^{A})}
2^{-r \frac{n-1}{2}} \nonumber + 2^r Q^{1-n} \max(1, 2^{-\ell (d-(\d1))(n-1)}2^{r(n-1)})\Bigg)+ \nonumber\\
& + \sum_{r\in \mathscr{G}(\delta, Q, \ell)} 
    T(\delta,Q,\ell,\1, r)\nonumber
\end{align}
where 
\begin{equation}\label{def T backsl 1}
    T(\delta,Q,\ell, \1, r):= 
2^{- \ell (n-1)} 
\delta 2^r (1+ (2^r\delta)^{A})^{-1} Q^{-5(n-1)}2^{r(n-1)} 
\end{equation}is as defined in analogy 
with
\eqref{def T add} (but now with $\p= \1$ instead of $\d1$), 
and the sum on the right hand side of \eqref{eq: N1 bd 2}
is taken over all integer pairs
$(i,r) $ 
in $[0,\frac{\log Q}{100}]\times 
\mathscr{G}(\delta, Q, \ell)$
so that $\ell\leq \fL_{\d1}(2^i/Q, 2^r)$.
Here $\fX_\1(\delta,Q,\ell)$ is the contribution coming from $\sX=\sX(Q,\ell, \d1)$ (see \eqref{def: Xpfrak}).
Notice that 
\begin{equation}
    \label{eq Tterm2}
    \sum_{r\in \mathscr{G}(\delta, Q, \ell)} 
    T(\delta,Q,\ell, r)
\ll
2^{-\ell (n-1)} 
Q^{-5(n-1)}\delta^{-(n-1)}
\leq 2^{-\ell (n-1)} 2^{\ell d} B(\delta,Q).
\end{equation}
Since Proposition \ref{prop: pruning and stationary phase} already tells us that
\begin{equation}
    \label{eq alrd2}
    \fX_\1(\delta,Q,\ell)
 \ll 
2^{-\ell(n-1)}
2^{\ell d}
(\delta Q^{n} + 
Q^{\frac{n-1}{2}}
\delta^{-\frac{n-1}{2}}),
\end{equation}
to establish the claim we now focus on bounding the two summands on the right in the bracket 
in \eqref{eq: N1 bd 2}.
Recalling 
$T_1(\delta,Q)$ and $ T_2(\delta,Q)$
from \eqref{def: T1 T2},
we have
$$
\fN^{P_{\kappa+1}}(\delta, Q,\ell, \d1) 
\ll 
2^{-\frac{d}{2}\ell(n-1)}2^{\ell d}
\big[
T_{1}(\delta ,Q) + 
T_{2}(\delta ,Q)\big],
$$
for $$\ell\leq \fL_{\d1}(2^i/Q, 2^r).$$
A quick inspection shows that if $A>0$
is chosen sufficiently large, then
$$
\sum_{ 0\leq i\leq \frac{\log Q}{100}} 
\frac{T_{s}(\frac{2^i}{Q},2^r)}{2^{Ai}}
\ll T_{s}(\frac{1}{Q},2^r)
$$
for any choice of $s=1,2$.
As a result, for each fixed $r\in \mathscr{G}(\delta, Q, \ell)$, we have
$$
\sum_{ 0\leq i\leq \frac{\log Q}{100}} 
\frac{\fN^{P_{\kappa+1}}(\frac{2^i}{Q}, 2^r,\ell, \d1)}{2^{Ai}}
\ll 
2^{-\ell \frac{d}{2}(n-1)}2^{\ell d}
\big[
T_{1}(Q^{-1},2^r) + 
T_{2}(Q^{-1} ,2^r)\big].
$$
Hence combining
Lemma \ref{lem: N1 after stationary phase and geom sum} with the estimates \eqref{eq: N1 bd 2}, \eqref{eq Tterm2} and \eqref{eq alrd2} yields 
\begin{align*}
%\label{eq: N1 intermediate bound 2}
U &\ll_A
 \sum_{r\in \mathscr{G}(\delta, Q, \ell)}\frac{2^{(\frac{d}{2}-\1) \ell (n-1)} }
{1+ (2^r\delta)^{A}}
Q^{\frac{n+1}{2}} \delta 2^{-r \frac{n-1}{2}}
\Big(
 2^{-\ell \frac{d}{2}(n-1)}2^{\ell d}
\sum_{s\leq 2} 
T_{s}(Q^{-1} ,2^r)\\
&+ Q^{1-n} (1+ 2^{-(\d1)\ell}2^{r})^{n-1}\Big)
+2^{-\ell(n-1-d)}
(\delta Q^{n} + B(\delta,Q)).\nonumber
\end{align*}

Hence we infer
\begin{align}\label{eq: sum N1 over r when d small dual 2}
 U & \ll 
2^{(\frac{d}{2}-\1) \ell (n-1)} 
Q^{\frac{n+1}{2}} \delta 
 \Big(
 2^{-\ell \frac{d}{2}(n-1)}2^{\ell d}
\sum_{s \leq 2}
 \delta^{\frac{n-1}{2}}
T_{s}(\frac{1}{Q},
\frac{1}{\delta})
+ Z
\Big) \\
& +2^{-\ell(n-1-d)}
\Big(\delta Q^{n} 
 + B(\delta,Q)\Big),\nonumber
\end{align}
where 
\begin{equation}\label{def: Z}
Z:= Q^{1-n}  \sum_{R_{-} \leq r \leq R} 
\frac{2^{-r \frac{n-3}{2}}}
{1+ (2^r\delta)^{A}}
 (1+ 2^{-(\d1)\ell}2^{r})^{n-1}.
\end{equation}
Notice $ 2^{-\ell \frac{d}{2}(n-1)}2^{\ell d} 
= 2^{-\ell d \frac{n-3}{2}}$.
We claim that 
\begin{equation}\label{eq: Z at most sum}
2^{\ell d \frac{n-3}{2}} Z
\ll 
 \delta^{\frac{n-1}{2}}
T_{1}(\frac{1}{Q},
\frac{1}{\delta}).
\end{equation}
By using \eqref{eq: uncertaintly prin}
we infer
\begin{align}\label{eq: estimate Y}
Z &\ll Q^{1-n} 
\big(
R
+ \delta^{\frac{n-3}{2}} 
2^{- \ell (\d1)(n-1)}
\delta^{-(n-1)}
\big) \\
& \ll 
Q^{1-n} 
\big(
\log (Q/\delta)
+ 
2^{- \ell (\d1)(n-1)}
\delta^{-\frac{n+1}{2}} 
\big).\nonumber
\end{align}
Recall the equations \eqref{eq: T1 things} and 
\eqref{eq: T2 things}.
Now we verify \eqref{eq: Z at most sum}
by distinguishing two cases. Suppose, for the moment,
$2^{-\ell (\d1)(n-1)}\delta^{-\frac{n-1}{2}} \geq \log (Q/\delta)$.
In view of the identity 
$ \ell d \frac{n-3}{2} - \ell (\d1)(n-1) 
= \ell (n-1) (1-\frac{d}{2}) - \ell d\leq 0$, we conclude
$$
2^{\ell d \frac{n-3}{2}} Z
\ll 
Q^{1-n} 
2^{\ell d \frac{n-3}{2}}
2^{- \ell (\d1)(n-1)}
\delta^{-\frac{n+1}{2}} 
\leq 
Q^{1-n} 
\delta^{-\frac{n+1}{2}} \leq 
\delta^{\frac{n+1}{2}} T_1(\frac{1}{Q},\frac{1}{\delta}).
$$
Now suppose $2^{-\ell (\d1)(n-1)}\delta^{-\frac{n-1}{2}} < \log (Q/\delta)$.
Then 
$$
2^{\ell (d-1) \frac{n-3}{2}} Z
\ll 
\left(\frac{Q}{\delta}\right)^{\frac{(n-3)(d-1)}{d}+1} Q^{1-n}
\leq 
\left(\frac{Q}{\delta}\right)^{\frac{n-3}{2}+1} Q^{1-n}
$$
Hence, 
$$
2^{\ell (d-1) \frac{n-3}{2}} Z
\leq 
\delta^{-\frac{n-1}{2}} 
Q^{-\frac{n-1}{2}}
\stackrel{\eqref{eq: T1 things}}{\leq }
\delta^{\frac{n-1}{2}} T_1(\frac{1}{Q},\frac{1}{\delta}).
$$
% Because $\beta_{\kappa +1 } - \frac{n-1}{2}
% \geq n - 1 - \frac{n-1}{2} \geq \frac{n-3}{2},$
% we infer in the present case
% $$
% 2^{\ell d\frac{n-3}{2}} Z \ll \delta^{-\frac{(n-3)}{2}}
% \ll 
% \delta^{\frac{n-1}{2}} T_2(\frac{1}{Q},\frac{1}{\delta}).
% $$
As a result \eqref{eq: Z at most sum} holds.
Observe that $2^{(\frac{d}{2}-\1) \ell (n-1)} 
2^{-\ell \frac{d}{2}(n-1)}2^{\ell d}
 = 2^{-\ell (n-1-d)}$. 
Thus, combining \eqref{eq: sum N1 over r when d small dual 2} and 
\eqref{eq: Z at most sum}, 
%\eqref{eq: T1 things} and \eqref{eq: T2 things}
% \eqref{eq: Y at most T1 T2} into 
% \eqref{eq: sum N1 over r when d small}
yields
\begin{align*}
    U &\ll_A
    2^{-\ell (n-1-d)}
Q^{\frac{n+1}{2}} \delta
 \big(\delta^{-\frac{n+1}{2}} Q^{-1}
+
\delta^{-(\beta_{\kappa+1} -\frac{n-1}{2}+ n \varepsilon)}
\big) 
+2^{-\ell(n-1-d)}\big(\delta Q^{n} + B(\delta,Q)\big)\\
& \ll
2^{-\ell(n-1-d)}
 \big(\delta Q^n+
B(\delta,Q)+
Q^{\frac{n+1}{2}} 
\delta^{-(\beta_{\kappa+1}-\frac{n+1}{2}
+ n \varepsilon)}
\big).
\end{align*}
This implies \eqref{eq: bound on N1 sum 2} and completes the proof. 
\end{proof}

\subsection{The regime 
$d\geq n-1$}\label{subsec: d large}
Now we prove the counterparts 
of Theorem \ref{thm: og to dual sdeg} 
and Theorem \ref{thm: dual to og sdeg}
if $ d\geq n-1$.
\begin{thm}[Flat to Rough Manifold]
\label{thm: og to dual ldeg}
Suppose there exists a constant $C_{\kappa+1}>1$
so that for all $\delta\in(0,1/2)$ and $Q\geq1$, we have
\[
\mathfrak{N}^{P_{\kappa+1}}(\delta,Q, \ell, \1)
\leq 
 C_{\kappa+1} \Big(\delta Q^{n}
+ Q^{\beta_{\kappa+1}+n\varepsilon}
%+(\delta^{-1}Q)^{\frac{1}{d}}
+ 
\left(\frac{\delta}{Q} \right)^{\frac{n-1}{d}}
Q^{n(1+\varepsilon)}
\Big)
\]
whenever  \ellponecond.
Then there exists a constant $C_{\kappa}>0$, depending on $C_{\kappa+1}$, $P_\kappa$ and $\varepsilon$, such that for all $Q\geq 1$ and $\delta\in (0, 1/2]$, we have
\begin{equation}
    \label{eq boots concl 3}
\mathfrak{N}^{P_{\kappa}}(\delta,Q, \ell, \d1)\leq 
C_{\kappa}2^{-\frac{d}{2}\ell(n-1)}2^{\ell(n-1)}\left(\delta Q^{n}
+Q^{\beta_\kappa'+n\varepsilon}\right),
\end{equation}
for all $$\ell\in \{0, 1, \ldots, \fL_{\d1}(\delta, Q)\},$$
with \[\beta_{\kappa}':=\max\left(n-\frac{n-1}{2\beta_{\kappa+1}-n+1}, n-\frac{2(n-1)}{d}\right).\]
\end{thm}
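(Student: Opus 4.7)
\smallskip

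\textbf{Proof Proposal.} The plan is to mirror the structure of the proof of Theorem \ref{thm: og to dual sdeg}, adapted to the regime $d \geq n-1$ by using Proposition \ref{prop: pruning and stationary phase d large} in place of Proposition \ref{prop: pruning and stationary phase}, and by tracking the extra ``flat'' term $(\delta/Q)^{(n-1)/d} Q^{n(1+\varepsilon)}$ appearing in the hypothesis.

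First, I will invoke Proposition \ref{prop: gathering bounds}(ii) to reduce
$$
\mathfrak{N}^{P_\kappa}(\delta,Q,\ell,\d1)
\ll 2^{-(\d1)\ell(n-1)}2^{\ell d}(\delta Q^n + \log J)
+ \sum_{r \in \sG(\delta,Q,\ell)} N_1^{P_\kappa}(\delta,Q,\ell,\d1,r),
$$
and apply the Duality Principle (Lemma \ref{lem: N1 after stationary phase and geom sum}) with $\p = \d1$ to bound the $N_1$-sum in terms of a weighted sum of $\fN^{P_{\kappa+1}}(2^{i+1}/Q, 2^r, \ell, \1)$, plus the ``stationary phase pruning'' contribution $\fX_{\d1}(\delta,Q,\ell)$ and the trivial tail $T(\delta,Q,\ell,\d1,r)$. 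The latter two are estimated directly: Proposition \ref{prop: pruning and stationary phase d large} gives $\fX_{\d1} \ll 2^{-\frac{d}{2}\ell(n-1)}2^{\ell(n-1)}(\delta^{-(n-1)/2} + \delta Q^{(n+1)/2})$, while the tail is absorbed into $B(\delta,Q)$ as in \eqref{def B}.

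Next I will substitute the induction hypothesis, which now contains three terms. Set $T_1(\delta',Q'):= \delta'(Q')^n$, $T_2(\delta',Q'):= (Q')^{\beta_{\kappa+1}+n\varepsilon}$ and the new summand $T_3(\delta',Q'):= (\delta'/Q')^{(n-1)/d}(Q')^{n(1+\varepsilon)}$. The $i$-summation collapses via the $2^{-iA}$ factor as in the small-$d$ case, reducing matters to $\sum_{r\in\sG}\frac{2^{-r(n-1)/2}}{1+(2^r\delta)^A}\big(T_1+T_2+T_3\big)(Q^{-1},2^r)$. The $T_1$ and $T_2$ contributions are handled exactly as in the proof of Theorem \ref{thm: og to dual sdeg} using \eqref{eq: uncertaintly prin}, and they produce, after multiplication by $2^{(\frac{d}{2}-(\d1))\ell(n-1)}Q^{(n+1)/2}\delta$, bounds involving $\delta Q^n$ and $Q^{(n+1)/2}\delta^{(n+1)/2-\beta_{\kappa+1}-n\varepsilon}$, both premultiplied by $2^{-\frac{d}{2}\ell(n-1)}2^{\ell(n-1)}$ after rearrangement.

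The main new feature is the $T_3$ contribution. A direct calculation shows that the exponent of $2^r$ in the corresponding sum is $n(1+\varepsilon)-(n-1)/d-(n-1)/2 > 0$ (for $d \geq n-1$ and $\varepsilon$ small), so \eqref{eq: uncertaintly prin} applies and yields a term of the form $2^{-\frac{d}{2}\ell(n-1)}2^{\ell(n-1)} Q^{n-2(n-1)/d + O(\varepsilon)}$. This is precisely what motivates the second entry $n - 2(n-1)/d$ in the definition of $\beta_\kappa'$.

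Combining these estimates yields a preliminary bound
$$
\mathfrak{N}^{P_\kappa}(\delta,Q,\ell,\d1) \leq C_\kappa' \, 2^{-\frac{d}{2}\ell(n-1)}2^{\ell(n-1)} \Big(\delta Q^n + B(\delta,Q) + Q^{(n+1)/2}\delta^{(n+1)/2 - \beta_{\kappa+1} - n\varepsilon} + Q^{n - 2(n-1)/d + O(\varepsilon)}\Big)
$$
valid for all $(\delta,Q)$ and $0 \leq \ell \leq \fL_{\d1}(\delta,Q)$. I will then bifurcate on the size of $\delta$ relative to the threshold $Q^{\beta_\kappa' - n}$. For $\delta \geq Q^{\beta_\kappa'-n}$, the algebraic manipulation in the $\beta$-regime---identical to the one used in Theorem \ref{thm: og to dual sdeg}, applied separately to each of the two max arguments defining $\beta_\kappa'$---shows that the bracketed quantity is $\ll \delta Q^n + Q^{\beta_\kappa'+n\varepsilon}$. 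For $\delta < Q^{\beta_\kappa'-n}$, I will pass to $\delta' = Q^{\beta_\kappa'-n}$ by monotonicity and split according to whether $2^{\ell d}$ is above or below $Q^{n-\beta_\kappa'+1}$, using the trivial volume bound $\mathfrak{N}^{P_\kappa}(\delta',Q,\ell,\d1) \ll 2^{-(\d1)\ell(n-1)}Q^n$ together with the factor $2^{-\frac{d}{2}\ell(n-1)}2^{\ell(n-1)} = 2^{\ell(n-1)(1-d/2)}$ in the target. This is the step where one must use $d \geq n-1$ to carry out the trivial estimate cleanly, and it is the one I expect to be the main technical obstacle, since the interplay between the prefactor $2^{\ell(n-1)(1-d/2)}$ and the loss $2^{-\ell(\d1)(n-1)}$ behaves differently than in the $d \leq n-1$ setting.
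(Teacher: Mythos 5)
Your proposal follows the same high-level route as the paper: reduce via Proposition \ref{prop: gathering bounds}(ii), invoke the Duality Principle (Lemma \ref{lem: N1 after stationary phase and geom sum}) with $\p=\d1$, replace Proposition \ref{prop: pruning and stationary phase} by Proposition \ref{prop: pruning and stationary phase d large}, track a third summand $T_3$ from the hypothesis, and close by bifurcating on the size of $\delta$. Two points need attention.

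First, your ``preliminary bound'' replaces the $T_3$-contribution by the $\delta$-free quantity $Q^{n-2(n-1)/d+O(\varepsilon)}$, and you assert the resulting estimate is ``valid for all $(\delta,Q)$.'' This is not quite right. After summing the $r$-range via the uncertainty estimate \eqref{eq: uncertaintly prin}, the $T_3$-contribution (including all prefactors) comes out as
$2^{(\frac d2 - (\d1))\ell(n-1)}\,(\delta/Q)^{\frac{n-1}{d}-\frac{n-1}{2}}\,Q$,
which \emph{does} depend on $\delta$, and since $\frac{n-1}{d}-\frac{n-1}{2}\le 0$ it blows up as $\delta\to 0$. The $\delta$-free form $Q^{n-2(n-1)/d}$ is only an upper bound for this once $\delta\ge Q^{-1}$. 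So the claim should be stated in the $\delta$-dependent form (as the paper's \eqref{eq: bound on N1 sum 3} does, via the last summand $(\delta/Q)^{\frac{n-1}{d}-\frac{n-1}{2}}Q$), and only reduced to $Q^{n-2(n-1)/d}$ after fixing $\delta$ at the bifurcation threshold (which indeed sits above $Q^{-1}$ since $\beta_\kappa'>n-1$). Since in the end you only apply the preliminary bound at $\delta\ge Q^{\beta_\kappa'-n}\ge Q^{-1}$, the defect is repairable, but as written the preliminary estimate is false for small $\delta$.

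Second, in the regime $\delta<Q^{\beta_\kappa'-n}$ and $2^{\ell d}\ge Q^{n-\beta_\kappa'+1}$ you correctly identify the trivial volume estimate as the crux and leave it open. The computation is short: one compares $2^{-\ell(\d1)(n-1)}Q^n$ with the target $2^{-\frac d2 \ell(n-1)}2^{\ell(n-1)}Q^{\beta_\kappa'}$ and notes that the ratio reduces to $2^{-\frac d2 \ell(n-1)}Q^{n-\beta_\kappa'}$; plugging in $2^{\ell d}\ge Q^{n-\beta_\kappa'+1}$ gives $2^{\frac d2\ell(n-1)}\ge Q^{(n-\beta_\kappa'+1)(n-1)/2}$, and the resulting inequality $(n-\beta_\kappa'+1)(n-1)/2\ge n-\beta_\kappa'$ reduces to $(n-3)(n-\beta_\kappa')+(n-1)\ge 0$, which holds for $n\ge 3$. (The paper performs an equivalent manipulation at the threshold $\beta_\kappa$ rather than $\beta_\kappa'$; either choice of threshold works here, since the $T_2$ evaluation at $\delta=Q^{\beta_\kappa'-n}$ is still dominated by $Q^{\beta_\kappa+n\varepsilon}\le Q^{\beta_\kappa'+n\varepsilon}$ because the $\delta$-power on $T_2$ is negative and $\beta_\kappa'\ge\beta_\kappa$.) You should carry this out explicitly; as it stands your proof has a real gap at exactly the step you flagged.
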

\begin{proof}
The method of proof is similar to the proof of Theorem 
\ref{thm: og to dual sdeg}. However,
the $\ell$-aspect of the estimates and the presence of the  term $(\frac{\delta}{Q})^{\frac{n-1}{d}}
Q^{n(1+\varepsilon)}$ in the assumption make certain arguments significantly different.
We shall briefly indicate how
derive the required estimates from the previously 
presented machinery. 

Using Proposition \ref{prop: gathering bounds} for $\p=\d1$, noting that $\frac{d}{\d1}\leq 2\leq n-1$, we again have that
for each $Q\geq 1$, $\delta\in (0, 1/2)$ and all \ellpdcond , $\fN^{P_{\kappa}}(\delta,Q,\ell,\d1)$
equals 
\begin{align}
\label{eq: b4 l slack}
& c_{\kappa} 2^{-\ell (\d1) (n-1)} \delta Q^{n}
+ \sum_{r\in \mathscr{G}(\delta, Q, \ell)} 
N_1^{P_{\kappa}}(\delta,Q,\ell,\d1, \r)\\
&
+ O(2^{-\ell (\d1) (n-1)} \log J  
+ 2^{-\ell((\d1)(n-1)-d)}
\delta Q^{n-1+\varepsilon})\nonumber.    
\end{align}
Since $n\geq 3$, we have the inequality
$$d-(\d1)(n-1)\leq (n-1)-\frac{d}{2}(n-1).$$
This observation lets us deduce that
$$2^{-\ell((\d1)(n-1)-d)}
\delta Q^{n-1+\varepsilon}\leq 2^{-\ell \frac{d}{2}(n-1)}2^{\ell(n-1)} \delta Q^{n}.$$
Thus, slackening the $\ell$-powers in \eqref{eq: b4 l slack}, we obtain
\begin{equation*}%\label{eq: low d N1 decomp}
 \fN^{P_{\kappa}}(\delta,Q,\ell,\d1)
 \ll 
2^{-\ell  \frac{d}{2}(n-1)}  2^{\ell(n-1)}
( \delta Q^{n}+ \log J)
+ \sum_{r\in \mathscr{G}(\delta, Q, \ell)} N_1^{P_{\kappa}}(\delta,Q,\ell,\d1, \r).
\end{equation*} 
For the ease of exposition, we define $U:=\sum_{r\in \mathscr{G}(\delta, Q, \ell)} N_1^{P_{\kappa}}(\delta,Q,\ell,\d1, \r)$.
Let $B(\delta,Q)$ be as in \eqref{def U and BdeltaQ}.
For each $Q\geq 1$, $\delta\in (0, 1/2)$ and for all $\ell\in\{0, 1, \ldots, \fL_{\d1}(\delta, Q)\}$, we now claim that
\begin{equation}\label{eq: bound on N1 sum 3}
U
\ll 
2^{-\frac{d}{2}\ell(n-1)}2^{\ell (n-1)}
\left(\delta Q^{\frac{n+1}{2}}
+B(\delta,Q)+Q^{\frac{n+1}{2}}
\delta^{\frac{n+1}{2}
-\beta_{\kappa+1}-n\varepsilon}+\Big( \frac{\delta}{Q} \Big)^{\frac{n-1}{d} 
-\frac{n-1}{2}} Q
\right).
\end{equation}
Like before, we postpone the proof of the above claim to later and proceed with the argument to establish \eqref{eq boots concl 3}. The term $\log J \leq \log Q + \log \frac{1}{\delta}$
can be readily absorbed into the right hand side of 
\eqref{eq: bound on N1 sum 3} to get 
\begin{align}
\label{boots intermediate 3}
\mathfrak{N}^{{P}_{\kappa}}(\delta,Q, \ell, \d1)
& \leq 
C_{\kappa}'
2^{-\frac{d}{2}\ell(n-1)}2^{\ell (n-1)}
 \Bigg(\delta 
Q^{n} +B(\delta,Q) \\
& +Q^{\frac{n+1}{2}}
\delta^{\frac{n+1}{2}-
\beta_{\kappa+1}-n\varepsilon}+
\Big( \frac{\delta}{Q} \Big)^{\frac{n-1}{d} 
-\frac{n-1}{2}} Q
\Bigg) \nonumber
\end{align}
%\end{equation}
for each $\delta\in(0,1/2)$, 
$Q\geq1$ and for all 
\ellpdcond ,
for a large constant $C_{\kappa}'$ depending on $P_{\kappa}$ and $\varepsilon$, satisfying $C_{\kappa}'>\const \cdot C_{\kappa+1}+c_\kappa$.
We set
\[\beta_{\kappa}:=n-\frac{n-1}{2\beta_{\kappa+1}-n+1}.\]
Fixing $Q$, $\delta$ and \ellpdcond \, now, we distinguish between two cases, depending on the size of $\delta$ with respect to the scale $Q^{\beta_{\kappa}-n}\in (Q^{-1}, 1]$.

In the case when $\delta\geq Q^{\beta_{\kappa}-n}$, we have
\begin{align*}
& B(\delta,Q)+Q^{\frac{n+1}{2}}\delta^{\frac{n+1}{2}-\beta_{\kappa+1}-n\varepsilon}+\Big( \frac{\delta}{Q} \Big)^{\frac{n-1}{d} -\frac{n-1}{2}} Q \\&\leq 2Q^{(n-\beta_{\kappa})(\frac{n-1}{2})}Q^{\frac{n-1}{2}}+Q^{\frac{n+1}{2}}Q^{(n-\beta_{\kappa})(-\frac{n+1}{2}+\beta_{\kappa+1}+n\varepsilon)}+Q^{(\beta_{\kappa}-n-1)(\frac{n-1}{d}-\frac{n-1}{2})}Q.  
\end{align*}
Arguing exactly as before in the proof of Theorem \ref{thm: og to dual sdeg}, we can conclude that 
\begin{align*}
\delta Q^n+B(\delta,Q)
+Q^{\frac{n+1}{2}}\delta^{\frac{n+1}{2}
-\beta_{\kappa+1}-n\varepsilon}
& \leq \delta Q^n+3Q^{n-1}+Q^{\beta_{\kappa}+n\varepsilon}\\
& \leq \delta Q^n+4Q^{\beta_{\kappa}+n\varepsilon}
\leq \delta Q^n+4Q^{\beta_{\kappa}'+n\varepsilon}.
\end{align*}
Further, since $Q^{\beta_\kappa-n-1}\geq Q^{-2}$ and the power $\frac{n-1}{d}-\frac{n-1}{2}\leq 0$, we have
\[Q^{(\beta_\kappa-n-1)(\frac{n-1}{d}-\frac{n-1}{2})}Q\leq Q^{2(\frac{n-1}{2}-\frac{n-1}{d})}Q=Q^{n-\frac{2(n-1)}{d}}\leq Q^{\beta_\kappa'}.\]
Combining the two estimates above and using \eqref{boots intermediate 3}, we get
$$\mathfrak{N}^{
{P}_{\kappa}}(\delta,Q, \ell, \d1)
\leq 
C_{\kappa}
2^{-\frac{d}{2}\ell(n-1)}2^{\ell (n-1)}
\left(\delta Q^n+Q^{\beta_{\kappa}'+n\varepsilon}\right),$$
which is the desired estimate with $C_\kappa=5C_{\kappa}'$ for $\delta \geq Q^{\beta_{\kappa}-n}$. 

When $\delta\leq Q^{\beta_{\kappa}-n}$, we use the monotinicity of $\mathfrak{N}^{P_{\kappa}}(\delta,Q, \ell, \d1)$ as a function of $\delta$ to bound 
$$
\mathfrak{N}^{P_{\kappa}}(\delta,Q, \ell, \d1)\leq  \mathfrak{N}^{P_{\kappa}}(Q^{\beta_{\kappa}-n},Q, \ell, \d1).$$
In the case when 
$2^{\ell d}\geq Q^{n-\beta_\kappa+1}$, 
noting that $2^{\ell(\d1)(n-1)}\leq 2^{\fL_{\d1}(d-1)(n-1)}\leq Q^{n-1}$ 
(due to \eqref{eq: 2^Lp}), we combine the lower bound on $2^{\ell d}$ with the trivial estimate to get
\begin{equation}
    \label{eq b inf trivial bd 1}
    \mathfrak{N}^{P_{\kappa}}(Q^{\beta_{\kappa}-n},Q, \ell, \d1)\leq B_\kappa 2^{-\ell(d-1)(n-1)}Q^n\leq B_\kappa 2^{-\ell\frac{d}{2}(n-1)}2^{\ell (n-1)} Q^{-(n-\beta_{\kappa}+1)(\frac{1}{2}+\frac{n-1}{d})}Q^n,
\end{equation}
where $B_\kappa$ is a positive constant depending on $P_\kappa$ but independent of $\delta, Q$ and $\ell$.
Since $n-\beta_\kappa+1\geq 1$ and
$$\frac{1}{2}+\frac{n-1}{d}\geq \min\left\{1, \frac{2(n-1)}{d}\right\},$$
we can conclude from \eqref{eq b inf trivial bd 1} that
\begin{align*}
\mathfrak{N}^{P_{\kappa}}(Q^{\beta_{\kappa}-n},Q, \ell, \d1)
& \leq B_\kappa 2^{-\ell\frac{d}{2}(n-1)}
2^{\ell (n-1)}\max\left\{Q^{n-1}, Q^{n-\frac{2(n-1)}{d}}
\right\}\\
& \leq B_\kappa 2^{-\ell\frac{d}{2}(n-1)}
2^{\ell (n-1)}Q^{\beta_\kappa'}.
\end{align*}
This implies \eqref{eq boots concl 3} for $\mathfrak{N}^{P_{\kappa}}(\delta,Q, \ell, \d1)$ when $2^{\ell d}\geq Q^{n-\beta_\kappa+1}$ (with $C_\kappa=B_\kappa$).

On the other hand, when $2^{\ell d}\leq Q^{n-\beta_\kappa+1}$, we use \eqref{boots intermediate 3} with $\delta$ replaced by $Q^{\beta_\kappa-n}$ to estimate
\begin{align*}
\mathfrak{N}^{P_{\kappa}}(\delta,Q, \ell, \d1)&\leq  \mathfrak{N}^{P_{\kappa}}(Q^{\beta_{\kappa}-n},Q, \ell, \d1)\\ &\leq C_{\kappa}'
2^{-\frac{d}{2}\ell(n-1)}2^{\ell (n-1)}\bigg(Q^{\beta_{\kappa}-n}
Q^{n}+ 2 Q^{(n-\beta_{\kappa})(\frac{n-1}{2})}Q^{\frac{n-1}{2}}
\\
&+Q^{\frac{n+1}{2}}Q^{(n-\beta_{\kappa})
(-\frac{n+1}{2}+\beta_{\kappa+1}+n\varepsilon)}
+Q^{(\beta_\kappa-n-1)(\frac{n-1}{d}
-\frac{n-1}{2})}Q\bigg)
\\
&\leq C_{\kappa}'
2^{-\frac{d}{2}\ell(n-1)}2^{\ell (n-1)}
\left(Q^{\beta_{\kappa}}+2 Q^{n-1}
+Q^{\beta_{\kappa}+n\varepsilon}+Q^{\beta_\kappa'}
\right)\\
& \leq 5C_{\kappa}'
2^{-\frac{d}{2}\ell(n-1)}2^{\ell(n-1)}Q^{\beta_{\kappa}'+n\varepsilon}.
\end{align*}
Thus we obtain \eqref{eq boots concl 3} again, with $C_\kappa=5C_{\kappa}'$.

To conclude the proof, it now remains to establish the

\textbf{Proof of Claim \eqref{eq: bound on N1 sum 3}:}
Using Lemma \ref{lem: N1 after stationary phase and geom sum}, we are ensured that
\begin{align}
    \label{eq: N1 bd 3}
    U & \ll \fX_\d1(\delta,Q,\ell)+
2^{(\frac{d}{2}-(\d1)) 
\ell (n-1)}
\delta Q^{\frac{n+1}{2}} 
\Big(\sum_{(i,r)} 
\frac{\fN^{{P}_{\kappa+1}}(2^{i+1}/Q, 
2^r,\ell, \1)}
{2^{iA}(1+ (2^r\delta)^{A}) 2^{r \frac{n-1}{2}}}\\
&+ 2^r Q^{1-n} 
\max(1, 2^{-\ell (d-(\d1))(n-1)}2^{r(n-1)})\Big)+ 
\sum_{r\in \mathscr{G}(\delta, Q, \ell)} 
    T(\delta,Q,\ell,\d1, r)\nonumber
\end{align}
where $T(\delta,Q,\ell,\d1, r)$
is as in \eqref{def T add} and the sum on the right 
is taken over all integer pairs
$(i,r) $ 
in $[0,\frac{\log Q}{100}]\times 
\mathscr{G}(\delta, Q, \ell)$
so that $\ell\leq \fL_{\1}(2^i/Q, 2^r)$.
Here $\fX_\d1(\delta,Q,\ell)$ is the contribution coming 
from $\sX=\sX(Q,\ell, \1)$ (see \eqref{def: Xpfrak}).
By slacking the $\ell$-powers via \eqref{def d estimate}, 
we observe that
\begin{equation}
    \label{eq Tterm3}
    \sum_{r\in \mathscr{G}(\delta, Q, \ell)} 
    T(\delta,Q,\ell,\d1, r)
    \ll 2^{-(\d1)\ell(n-1)}(\delta^{-1}Q^{-5})^{n-1}
    \ll 2^{-\frac{d}{2}\ell(n-1)}2^{\ell (n-1)} B(\delta,Q).
\end{equation}
Moreover, Proposition \ref{prop: pruning and stationary phase d large} tells us 
\begin{equation}
    \label{eq alrd3}
    \fX_\d1(\delta,Q,\ell)
 \ll 
2^{-\frac{d}{2}\ell(n-1)}
2^{\ell (n-1)}
(\delta^{-\frac{n-1}{2}}
+ \delta Q^{\frac{n+1}{2}}).
\end{equation}
Thus to establish the claim we now focus on bounding the two summands on the right in \eqref{eq: N1 bd 3}.
Recall the functions
$T_1(\delta,Q)$ and $ T_2(\delta,Q)$
from \eqref{def: T1 T2}.
Furthermore, we introduce
\begin{align*}
&T_3(\delta,Q) 
:=\left(\frac{\delta}{Q} \right)^{\frac{n-1}{d}}
Q^{n(1+\varepsilon)}
\end{align*}
in order to write the 
inequality from the assumption in the form
$$
\fN^{P_{\kappa+1}}(\delta, Q,\ell, \1) 
\ll 
\sum_{s\leq 3}
T_{s}(\delta ,Q).
$$
We observe that
$$
\sum_{ 0\leq i\leq \frac{\log Q}{100}} 
\frac{\fN^{P_{\kappa+1}}(\frac{2^i}{Q}, 2^r,\ell, \1)}{2^{Ai}}
\ll 
\sum_{s\leq 3}
T_{s}(\frac{1}{Q},2^r).
$$
Combining
Lemma \ref{lem: N1 after stationary phase and geom sum} with the estimates \eqref{eq: N1 bd 3}, \eqref{eq Tterm3} and \eqref{eq alrd3}, we conclude
that $  
\sum_{r\in \mathscr{G}
(\delta, Q, \ell)}
N_{1}^{P_{\kappa}}(\delta,Q, \ell, \d1, \r) $
is at most a constant times
\begin{align*}
& \frac{2^{(\frac{d}{2}-(\d1)) \ell (n-1)}}
{1+ (2^r\delta)^{A}}
Q^{\frac{n+1}{2}} \delta 2^{-r \frac{n-1}{2}} 
\Big(
\sum_{s\leq 3}
T_{s}(\frac{1}{Q},2^r)  
+ 2^r Q^{1-n} (1+ 2^{-\ell}2^{r})^{n-1}\Big)\\
& +2^{-\frac{d}{2}\ell(n-1)}
2^{\ell (n-1)}
(\delta^{-\frac{n-1}{2}}
+ \delta Q^{\frac{n+1}{2}}) + 2^{-\frac{d}{2}\ell(n-1)}2^{\ell (n-1)} B(\delta,Q).\nonumber
\end{align*}
Notice 
$$
2^{-r\frac{n-1}{2}} 
T_{3}(\frac{1}{Q},2^r)
= 2^{-r\frac{n-1}{2}}   \Big( \frac{Q^{-1}}{2^r} 
\Big)^{\frac{n-1}{d}} 2^{rn}
= Q^{-\frac{n-1}{d}}
2^{r [ n - (n-1) (\frac{1}{2} + \frac{1}{d})]}
$$
is an expression which we can sum 
over $r \leq R$ 
by appealing to \eqref{eq: uncertaintly prin}.
Thus,
$$
\sum_{r\leq R}
2^{-r\frac{n-1}{2}} 
T_{3}(\frac{1}{Q},2^r)
\ll 
\delta^{\frac{n-1}{2}}  
\Big( \frac{\delta}{Q}\Big)^{\frac{n-1}{d}} \delta^{-n}
= \Big( \frac{\delta}{Q} 
\Big)^{\frac{n-1}{d}} \delta^{-\frac{n+1}{2}}.
$$
By summing this quantity over 
$r\in \sG(\delta, Q, \ell)$,
we infer (by arguing as in the proof of 
Theorem \ref{thm: og to dual sdeg}) that
\begin{align*}
U \ll 
2^{-\frac{d}{2}\ell(n-1)}2^{\ell(n-1)}
& \Bigg[
Q^{\frac{n+1}{2}} \delta
\bigg(\delta^{\frac{n-1}{2}}
\sum_{s\leq 3}
T_{s}(\frac{1}{Q},\frac{1}{\delta})
+Y 
% +Q^{\frac{n+1}{2}+\frac{n-1}{d}} 
% \delta^{\frac{n-1}{d}-\frac{n-1}{2}}
\bigg)+
B(\delta,Q)
+ \delta Q^{\frac{n+1}{2}}
\Bigg],
\end{align*}
where $Y$ is as in \eqref{def: Y term}.
The proof of the bound \eqref{eq: Y at most T1 T2} 
implies, in particular,
$ Y \ll 
\delta^{\frac{n-1}{2}}
T_{1}(\frac{1}{Q},\frac{1}{\delta})$
and hence $Y$ can be absorbed 
into the second and third 
term on the right hand side. 
To simplify the expressions, we notice
$$
Q^{\frac{n+1}{2}} \delta \cdot 
\delta^{\frac{n-1}{2}}
T_{3}(\frac{1}{Q},\frac{1}{\delta})
= 
Q^{\frac{n+1}{2}} \delta
\cdot 
\Big( \frac{\delta}{Q} 
\Big)^{\frac{n-1}{d}}  \delta^{-\frac{n+1}{2}}
= 
\Big( \frac{\delta}{Q} \Big)^{\frac{n-1}{d} 
-\frac{n-1}{2}} Q.
$$
Therefore $U$
is at most a constant times
\begin{align*}
2^{-\frac{d}{2}\ell(n-1)}
2^{\ell(n-1)}
\Bigg(
 \delta Q^{\frac{n+1}{2}}
+B(\delta,Q)
+Q^{\frac{n+1}{2}}
\delta^{\frac{n+1}{2}-\beta_{\kappa+1}-n\varepsilon}
 +
\Big( \frac{\delta}{Q} \Big)^{\frac{n-1}{d} 
-\frac{n-1}{2}} Q
\Bigg).
\end{align*}
This establishes \eqref{eq: bound on N1 sum 3} and finishes the proof.
\end{proof}
We now turn to the bootstrapping procedure for the flat manifold. 
In this case, the value of $\beta$ remains stationery (we are not able to obtain an improved $\beta_{\kappa}$ from $\beta_{\kappa+1}$). However, this will not be a problem as we shall carry out out the induction procedure for the flat and rough hypersurfaces in tandem (as alternate steps). Since the latter always leads to an improvement, we shall still be able to bring down the exponent $\beta_{\kappa}$ in the error term to the required value in a constant many number of steps (depending on $\varepsilon$).
\begin{thm}  [Rough to Flat Manifold] 
\label{thm: dual to og ldeg}
Suppose that for all $Q\geq1$ and $\delta\in(0, 1/2)$,
\[
\mathfrak{N}^{{P}_{\kappa+1}}(\delta,Q, \ell, \d1)\leq 2^{-\frac{d}{2}\ell(n-1)}2^{\ell (n-1)}
\Big(c_{\kappa+1}\delta Q^{n}
+C_{\kappa+1}
Q^{\beta_{\kappa+1}+n\varepsilon}\Big)
,
\]
  holds true for all \ellpdcond .
Then there exists a constant $C_{\kappa}>0$, depending on $C_{\kappa+1}$, $P_\kappa$ and $\varepsilon$, such that for all $Q\geq 1$ and $\delta\in (0, 1/2]$, we have
\begin{equation}
\label{eq concl boots 4}
\mathfrak{N}^{P_{\kappa}}(\delta,Q, \ell, \1)\leq C_{\kappa}\left(2^{-\ell(n-1)}\delta Q^{n}
+Q^{\beta_{\kappa+1}+3n\varepsilon}+\left(\frac{\delta}{Q}\right)^{\frac{n-1}{d}}Q^{n(1+\varepsilon)}\delta^{-2\varepsilon(n-1)}\right).   
\end{equation}
whenever \ellponecond, and 
$$\beta_{\kappa+1}\geq n-\frac{2(n-1)}{d}.$$
\end{thm}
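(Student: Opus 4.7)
The plan is to mirror the proof of Theorem \ref{thm: dual to og sdeg}, but using Proposition \ref{prop: pruning and stationary phase d large} in place of Proposition \ref{prop: pruning and stationary phase} and the new hypothesis on $\mathfrak{N}^{P_{\kappa+1}}(\cdot,\d1)$. Start from Proposition \ref{prop: gathering bounds}(i) in the range $d\geq n-1$, which writes
\[
\fN^{P_\kappa}(\delta,Q,\ell,\1)
= c_\kappa\, 2^{-\ell(n-1)}\delta Q^n
+ \sum_{r\in \sG(\delta,Q,\ell)} N_1^{P_\kappa}(\delta,Q,\ell,\1,r)
+ O\!\left(2^{-\ell(n-1)}\log J + \tfrac{\delta}{Q})^{\frac{n-1}{d}}Q^{n+\varepsilon}\right).
\]
The $\log J$ piece is absorbed in $2^{-\ell(n-1)}\delta Q^n$ after picking up a harmless $Q^{n\varepsilon}$, and the last term is already of the required geometric shape, so the whole issue is bounding the stationary-phase sum $U:=\sum_r N_1^{P_\kappa}$.

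First I would apply the Duality Principle (Lemma \ref{lem: N1 after stationary phase and geom sum}) with $\p=1$ to convert $U$ into a weighted sum over $(i,r)$ of $\fN^{P_{\kappa+1}}(2^{i+1}/Q,2^r,\ell,\d1)$, separating off (i) the "good" pairs with $\ell\leq \fL_{\d1}(2^{i+1}/Q,2^r)$, where the induction hypothesis applies, (ii) the "bad" pairs collected in $\sX(Q,\ell,\d1)$, which are estimated by $\fX_\1(\delta,Q,\ell)$, and (iii) the trivial remainder $\sum_r T(\delta,Q,\ell,\1,r)$. Proposition \ref{prop: pruning and stationary phase d large} immediately yields
\[
\fX_\1(\delta,Q,\ell)\ll Q^n\!\left(\tfrac{\delta}{Q}\right)^{\frac{n-1}{d}}\delta^{-2\varepsilon(n-1)} + Q^{\frac{n-1}{2}}\delta^{-\frac{n-1}{2}},
\]
whose first summand is exactly the geometric term appearing in the conclusion, and whose second summand is dominated by $Q^{\beta_{\kappa+1}+3n\varepsilon}$ under the assumption $\beta_{\kappa+1}\geq n-\frac{2(n-1)}{d}$, because this forces $\beta_{\kappa+1}\geq \frac{n-1}{2}$ once $n\geq 3$ and $d\geq n-1$. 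The trivial piece $\sum_r T(\delta,Q,\ell,\1,r)$ is $O(2^{-\ell(n-1)}(\delta^{-1}Q^{-5})^{n-1})$, which is comfortably absorbed as well.

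For the good pairs, I substitute the hypothesis
\[
\fN^{P_{\kappa+1}}(2^{i+1}/Q,2^r,\ell,\d1)\ll 2^{-\frac{d}{2}\ell(n-1)}2^{\ell(n-1)}\!\left( c_{\kappa+1}\tfrac{2^{i+1}}{Q}\,2^{rn} + C_{\kappa+1}\,2^{r(\beta_{\kappa+1}+n\varepsilon)}\right).
\]
The $i$-summation is handled by choosing $A$ large, which replaces $2^i/Q$ by $1/Q$ up to constants (as in \eqref{def: T1 T2}), and the $r$-summation, weighted by $2^{-r\frac{n-1}{2}}(1+(2^r\delta)^A)^{-1}$, is controlled by the smooth-geometric estimate \eqref{eq: uncertaintly prin}. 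The prefactor $2^{(\frac{d}{2}-1)\ell(n-1)}\cdot 2^{-\frac{d}{2}\ell(n-1)}2^{\ell(n-1)} = 1$, so the two summands produce, respectively, a contribution of size $\delta Q^n$ (the "main term", absorbed into $2^{-\ell(n-1)}\delta Q^n$ after using the trivial bound $2^{\ell(n-1)}\leq Q$ from $\ell\leq \fL_\1$) and a contribution of size $Q^{\frac{n+1}{2}}\delta^{\frac{n+1}{2}-\beta_{\kappa+1}-n\varepsilon}$. The envelope book-keeping is closed by Lemma \ref{lem: binomial coeff}.

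The final step is the monotonicity-in-$\delta$ argument: one chooses the threshold $\delta_0:=Q^{\beta_{\kappa+1}+3n\varepsilon - n}$. For $\delta\geq \delta_0$ the term $Q^{\frac{n+1}{2}}\delta^{\frac{n+1}{2}-\beta_{\kappa+1}-n\varepsilon}$ is already $\leq Q^{\beta_{\kappa+1}+3n\varepsilon}$, and the term $B(\delta,Q)=\delta^{-(n-1)/2}Q^{(n-1)/2}+(\delta^{-1}Q^{-5})^{n-1}$ collapses into $O(Q^{\beta_{\kappa+1}+3n\varepsilon})$. For $\delta<\delta_0$, I replace $\delta$ by $\delta_0$ using monotonicity and apply the estimate just obtained. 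The main obstacle is the asymmetry coming from the fact that $\beta$ does not decrease in this step: one must verify that the hypothesis $\beta_{\kappa+1}\geq n-\frac{2(n-1)}{d}$ is exactly what guarantees that the second summand of $\fX_\1$ does not exceed $Q^{\beta_{\kappa+1}+3n\varepsilon}$ and that the geometric term $(\delta/Q)^{(n-1)/d}Q^{n(1+\varepsilon)}\delta^{-2\varepsilon(n-1)}$ is never overtaken by the (genuinely worse) intermediate term $(\delta/Q)^{(n-1)/d-(n-1)/2}Q$ that would otherwise appear, precisely the phenomenon that prevents further improvement of $\beta$ in this regime.
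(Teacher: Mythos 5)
Your overall skeleton matches the paper's: start from Proposition~\ref{prop: gathering bounds}(i), isolate $U:=\sum_r N_1^{P_\kappa}(\delta,Q,\ell,\1,r)$, apply the Duality Principle (Lemma~\ref{lem: N1 after stationary phase and geom sum}) to split into bad pairs (handled via Proposition~\ref{prop: pruning and stationary phase d large}), good pairs (handled via the hypothesis on $\fN^{P_{\kappa+1}}(\cdot,\d1)$), and a trivial remainder, and then run a monotonicity-in-$\delta$ case analysis. However, there is a genuine computational error in your treatment of the good-pair contribution. You claim that the $T_1$-piece (the ``main-term'' piece $\delta'(Q')^n$ of the hypothesis, after the $i$- and $r$-summations) produces a contribution of size $\delta Q^n$, and you propose to absorb this into $2^{-\ell(n-1)}\delta Q^n$ ``after using the trivial bound $2^{\ell(n-1)}\leq Q$.'' That absorption cannot work: it would cost you a factor of $Q$, since $\delta Q^n = 2^{\ell(n-1)}\cdot 2^{-\ell(n-1)}\delta Q^n \leq Q\cdot 2^{-\ell(n-1)}\delta Q^n$, so one would end up with a bound of order $Q\cdot 2^{-\ell(n-1)}\delta Q^n$, not $2^{-\ell(n-1)}\delta Q^n$. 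In fact, the computation does not produce $\delta Q^n$ at all. After the prefactor cancellation $2^{(\frac d2-1)\ell(n-1)}\cdot 2^{-\frac d2\ell(n-1)}2^{\ell(n-1)}=1$ (which you correctly observe), the $T_1$-piece contributes
\[
\delta\, Q^{\frac{n+1}{2}}\cdot\delta^{\frac{n-1}{2}}T_1(\tfrac1Q,\tfrac1\delta)
= \delta\, Q^{\frac{n+1}{2}}\cdot\delta^{\frac{n-1}{2}}\cdot Q^{-1}\delta^{-n}
= Q^{\frac{n-1}{2}}\,\delta^{-\frac{n-1}{2}},
\]
which is the first summand of $B(\delta,Q)$, not $\delta Q^n$. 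Analogously to the paper, this is then disposed of in the case $\delta\geq Q^{-1}$ via $Q^{\frac{n-1}{2}}\delta^{-\frac{n-1}{2}}\leq Q^{n-1}\leq Q^{\beta_{\kappa+1}}$, using $\beta_{\kappa+1}\geq n-1$ (not merely $\geq\frac{n-1}{2}$ as you argue — the $\delta^{-\frac{n-1}{2}}$ factor must also be controlled).

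Two further, smaller issues. First, your threshold $\delta_0 = Q^{\beta_{\kappa+1}+3n\varepsilon-n}$ exceeds $1$ when $\beta_{\kappa+1}$ is close to $n$, so it does not always give a valid cut-point in $(0,1/2)$; the paper simply uses $\delta_0 = Q^{-1}$, which works because the exponent $\frac{n+1}{2}-\beta_{\kappa+1}-n\varepsilon$ is non-positive for $\beta_{\kappa+1}\geq n-1$ and $n\geq 3$. Second, you invoke a comparison with the term $(\delta/Q)^{\frac{n-1}{d}-\frac{n-1}{2}}Q$ as ``the intermediate term that would otherwise appear'' — but that term lives in the other direction (Theorem~\ref{thm: og to dual ldeg}, flat to rough), not in the present proof; it is not generated here. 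You also leave unmentioned the $Z$-term (arising from the $2^rQ^{1-n}\max(1,\cdot)$ piece of the Duality Principle), which the paper controls via the separate estimate \eqref{eq: Z bound}; in a sketch this is a forgivable omission, but it is a step that must be supplied.
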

\begin{proof}
The proof is rather similar to the proof of Theorem 
\ref{thm: dual to og sdeg}, and we describe it briefly. Proposition \ref{prop: gathering bounds} for $d\geq n-1$
tells us that for each $Q\geq 1$, $\delta\in (0, 1/2)$ and all $\ell \in \{0, 1, \ldots, \fL_{\1}(\delta, Q)\}$, $\fN^{P_\kappa}(\delta,Q,\ell,\1)$
equals 
$$
c_{\kappa} 2^{-\ell (n-1)} \delta Q^{n}
+ \sum_{r\in \mathscr{G}(\delta, Q, \ell)} N_1^{P_{\kappa}}(\delta,Q,\ell,\1, \r)
+ O\left(2^{-\ell (n-1)} \log J 
+ \left(\frac{\delta}{Q}\right)^{\frac{n-1}{d}}Q^{n(1+\varepsilon)}\right).  
$$
Let us put  
$U:=\sum_{r\in \mathscr{G}(\delta, Q, \ell)} 
N_1^{P_{\kappa}}(\delta,Q,\ell,\1, \r)$, for ease of exposition.
Recall the abbreviation 
$B(\delta, Q)$ from \eqref{def U and BdeltaQ}.
We claim that for each $Q\geq 1$, each 
$\delta\in (0, 1/2)$ and for all 
$\ell\in \{0, 1, \ldots, \fL_{1}(\delta, Q)\}$, it is true that
\begin{equation}\label{eq: bound on N1 sum 4}
U
\ll  B(\delta,Q) + Q^{\frac{n+1}{2}}
\delta^{\frac{n+1}{2}
-\beta_{\kappa+1}-n\varepsilon}+\Big(\frac{\delta}{Q}
\Big)^{\frac{n-1}{d}}
Q^n\delta^{-2\varepsilon(n-1)}.
\end{equation}
We again assume claim \eqref{eq: bound on N1 sum 4} for now and see how it implies \eqref{eq concl boots 4}. The term 
$\log J \leq \log Q + \log \frac{1}{\delta}$
can be absorbed into the right hand side of 
\eqref{eq: bound on N1 sum 4} to get
\begin{align}
\label{boots intermediate 4}
\mathfrak{N}^{P_{\kappa}}(\delta,Q, \ell, \1)
&\leq C_{\kappa}'\Bigg(2^{-\ell(n-1)}\delta 
Q^{n}+ B(\delta,Q)
\\ 
& +Q^{\frac{n+1}{2}}\delta^{\frac{n+1}{2}-\beta_{\kappa+1}-n\varepsilon}+
\left(\frac{\delta}{Q}\right)^{\frac{n-1}{d}}Q^{n(1+\varepsilon)}\delta^{-2\varepsilon(n-1)}\Bigg)\nonumber,
\end{align}
for each $Q\geq 1$, $\delta\in (0, 1/2)$ and all $\ell \in \{0, 1, \ldots, \fL_{\1}(\delta, Q)\}$, for a large constant $C_{\kappa}'$ depending on $P_{\kappa}$ and $\varepsilon$, satisfying $C_{\kappa}'>\const \cdot C_{\kappa+1}+c_\kappa$.

Fixing $Q\geq 1$, $\delta\in (0, 1/2)$ and $\ell \in \{0, 1, \ldots, \fL_{\1}(\delta, Q)\}$, we distinguish two cases, depending on the size of $\delta$ with respect to the scale $Q^{-1}$. 
For $\delta\geq Q^{-1}$, we can directly bound
\[ 
B(\delta,Q)
+Q^{\frac{n+1}{2}}\delta^{\frac{n+1}{2}
-\beta_{\kappa+1}-n\varepsilon}\leq 2Q^{n-1}
+ Q^{\beta_{\kappa+1}+n\varepsilon}
\leq 3Q^{\beta_{\kappa+1}+n\varepsilon}.\]
Combining the above with \eqref{boots intermediate 4} implies \eqref{eq concl boots 4} with $C_\kappa=3C_\kappa'$.

We now assume that $\delta\leq Q^{-1}$. We can use the monotinicity of $\mathfrak{N}^{{P}_{\kappa}}(\delta,Q, \ell, \1)$ as a function of $\delta$ to bound 
$$
\mathfrak{N}^{{P}_{\kappa}}(\delta,Q, \ell, \1)\leq  \mathfrak{N}^{{P}_{\kappa}}(Q^{-1},Q, \ell, \1).$$
In the case when $2^{\ell d}\geq Q^{2}$, we use the trivial bound to obtain
\begin{equation*}
    %\label{eq b inf trivial bd 4}
   \mathfrak{N}^{{P}_{\kappa}}(Q^{-1},Q, \ell, \1)\leq B_\kappa 2^{-\ell(n-1)}Q^n\leq B_\kappa Q^\frac{2(n-1)}{d}Q^n=B_\kappa Q^{n-\frac{2(n-1)}{d}}\leq B_\kappa Q^{\beta_{\kappa+1}}.
\end{equation*}
Here $B_\kappa$ is a positive constant depending on $P_\kappa$ but independent of $\delta, Q$ and $\ell$. This is the required estimate \eqref{eq concl boots 4} with $C_{\kappa}'=B_\kappa$.

Finally, when $2^{\ell d}\leq Q^{2}$, we use \eqref{boots intermediate 4} with $\delta=Q^{-1}$ to estimate
\begin{align*}
\mathfrak{N}^{P_{\kappa}}(\delta,Q, \ell, \1)
&\leq \mathfrak{N}^{P_{\kappa}}(Q^{-1},Q, \ell, \1)
\\
& \leq C_{\kappa}'\left(2^{-\ell(n-1)}
2Q^{n-1}+Q^{n-1}+Q^{\beta_{\kappa+1}+n\varepsilon}
+Q^{n-\frac{2(n-1)}{d}+3n\varepsilon}\right)\\
& \leq 5 C_{\kappa}' Q^{\beta_{\kappa+1}+3n\varepsilon}.
\end{align*}
This implies the desired conclusion for $\mathfrak{N}^{{P}_{\kappa}}(\delta,Q, \ell, \1)$ (with $C_\kappa=5C_{\kappa}'$) if $2^{\ell d}\geq Q^{2}$.
It now remains to give the 

\textbf{Proof of Claim \eqref{eq: bound on N1 sum 4}:}
Lemma \ref{lem: N1 after stationary phase and geom sum} implies
\begin{align}
    \label{eq: N1 bd 4}
  U   \ll \fX_\1(\delta,Q,\ell) & +
2^{(\frac{d}{2}-(\1)) 
\ell (n-1)}
\delta Q^{\frac{n+1}{2}} 
\Big(\sum_{(i,r)} 
\frac{\fN^{{P}_{\kappa+1}}(2^{i+1}/Q, 
2^r,\ell, \d1)}
{2^{iA}(1+ (2^r\delta)^{A})}
2^{-r \frac{n-1}{2}} \\
& + 2^r Q^{1-n} \max(1, 2^{-\ell (d-(\d1))(n-1)}2^{r(n-1)})\Big)
+ \sum_{r\in \mathscr{G}(\delta, Q, \ell)} 
T(\delta,Q,\ell,\1, r).\nonumber
\end{align}
where $T(\delta,Q,\ell,\1, r)$ 
is as in \eqref{def T backsl 1}, and 
the sum on the right 
is taken over all integer pairs
$(i,r) $ 
in $[0,\frac{\log Q}{100}]\times 
\mathscr{G}(\delta, Q, \ell)$
so that $\ell\leq \fL_{\d1}(2^i/Q, 2^r)$.
Here $\fX_\1(\delta,Q,\ell)$ is the contribution 
coming from $\sX=\sX(Q,\ell, \d1)$ (see \eqref{def: Xpfrak}).
Notice that 
\begin{equation}
    \label{eq Tterm4}
    \sum_{r\in \mathscr{G}(\delta, Q, \ell)} 
    T(\delta,Q,\ell,\1, r)
\ll
2^{-\ell (n-1)} 
Q^{-5(n-1)}\delta^{-(n-1)}
\leq  B(\delta,Q).
\end{equation}
Moreover, Proposition \ref{prop: pruning and stationary phase d large} implies
\begin{equation}
    \label{eq alrd4}
    \fX_\1(\delta,Q,\ell)
 \ll 
\Big(\frac{\delta}{Q}
\Big)^{\frac{n-1}{d}}Q^{n} 
\delta^{-2\varepsilon(n-1)} + 
Q^{\frac{n-1}{2}}
\delta^{-\frac{n-1}{2}}.
\end{equation}
Thus to establish the claim we focus on bounding the two summands in the bracket on the right hand side in \eqref{eq: N1 bd 4}.
The functions
\begin{align}
&T_1(\delta,Q) 
:=\delta Q^{n}, \qquad
\mathrm{and} \qquad
T_2(\delta,Q) :=
 Q^{\beta_{\kappa+1} + n \varepsilon}\nonumber
\end{align}
allow us to write the inequality from the assumption in the form
$$
\fN^{P_{\kappa+1}}(\delta, Q,\ell, \d1) 
\ll 
2^{-\frac{d}{2}\ell(n-1)}2^{\ell (n-1)}
\big[
T_{1}(\delta ,Q) + 
T_{2}(\delta ,Q)\big].
$$
A quick inspection shows that if $A>0$
is chosen sufficiently large, then
$$
\sum_{ 0\leq i\leq \frac{\log Q}{100}} 
\frac{T_{s}(\frac{2^i}{Q},2^r,\ell)}{2^{Ai}}
\ll T_{s}(\frac{1}{Q},2^r)
$$
for any choice of $s=1,2$.
As a result, for each $r\in \mathscr{G}(\delta, Q, \ell)$, we have
$$
\sum_{ \substack{0\leq i\leq \frac{\log Q}{100}\\ 
\fL_{\d1}(2^i/Q, 2^r)\geq \ell}} 
\frac{\fN^{P_{\kappa+1}}(\frac{2^i}{Q}, 2^r,\ell, \d1)}{2^{Ai}}
\ll 
2^{-\frac{d}{2}\ell(n-1)}2^{\ell (n-1)}
\big[
T_{1}(Q^{-1},2^r) + 
T_{2}(Q^{-1} ,2^r)\big].
$$
Plugging the above, \eqref{eq Tterm4} and \eqref{eq alrd4} into \eqref{eq: N1 bd 4}
yields
\begin{align}\label{eq: N1 intermediate bound 4}
 U  &\ll_A
\sum_{r\in \mathscr{G}(\delta, Q, \ell)}
\frac{2^{(\frac{d}{2}-\1) \ell (n-1)} }
{1+ (2^r\delta)^{A}}
Q^{\frac{n+1}{2}} \delta 2^{-r \frac{n-1}{2}}
\Big(
2^{-\frac{d}{2}\ell(n-1)}2^{\ell (n-1)}
\sum_{s\leq 2} 
T_{s}(Q^{-1} ,2^r)\\
&+ 2^r  Q^{1-n} (1+ 2^{-(\d1)\ell}2^{r})^{n-1}\Big)+
\Big(\frac{\delta}{Q}
\Big)^{\frac{n-1}{d}}Q^{n} 
\delta^{-2\varepsilon(n-1)} + 
B(\delta,Q).\nonumber\end{align}
Recall \eqref{eq: sum over r} holds in the current 
context as well.
Combining \eqref{eq: sum over r}
and \eqref{eq: N1 intermediate bound 4} implies
\begin{align}\label{eq: sum N1 over r when d small dual}
 U  &\ll 
2^{(\frac{d}{2}-\1) \ell (n-1)} 
Q^{\frac{n+1}{2}} \delta 
 \Big(
 2^{-\ell \frac{d}{2}(n-1)}2^{\ell(n-1)}
\sum_{s \leq 2}
 \delta^{\frac{n-1}{2}}
T_{s}(\frac{1}{Q},
\frac{1}{\delta})
+ Z\Big)
\\&  + \Big(\frac{\delta}{Q}
\Big)^{\frac{n-1}{d}}Q^{n} 
\delta^{-2\varepsilon(n-1)} 
+ B(\delta,Q).\nonumber
\end{align}
Here $Z$ is as in \eqref{def: Z}; that is,
\begin{equation*}
Z= Q^{1-n}  \sum_{R_{-} \leq r \leq R} 
\frac{2^{-r \frac{n-3}{2}}}
{1+ (2^r\delta)^{A}}
 (1+ 2^{-(\d1)\ell}2^{r})^{n-1}.
\end{equation*}
Our aim is to show that $Z$ can be absorbed 
into the other term in the bracket 
on the right hand side of 
\eqref{eq: sum N1 over r when d small dual}. In particular, we shall establish that
\begin{equation}\label{eq: Z bound}
   2^{\ell \frac{d}{2}(n-1)}2^{\ell(n-1)} Z \ll 
 \delta^{\frac{n-1}{2}}
T_{1}(\frac{1}{Q},
\frac{1}{\delta}).
\end{equation}
To show \eqref{eq: Z bound}, using \eqref{eq: uncertaintly prin}, we infer
\begin{equation*}
Z \ll Q^{1-n} 
\big(
R
+ \delta^{\frac{n-3}{2}} 
2^{- \ell (\d1)(n-1)}
\delta^{-(n-1)}
\big)
\ll 
Q^{1-n} 
\big(
\log (Q/\delta)
+ 
2^{- \ell (\d1)(n-1)}
\delta^{-\frac{n+1}{2}} 
\big).
\end{equation*}
Suppose $\delta^{-\frac{n-1}{2}} 2^{-\ell(n-1) (\d1)}\geq \log (Q/\delta)$.
Then 
\begin{equation*}
2^{\ell \frac{d}{2}(n-1)}2^{\ell(n-1)}  Z 
\ll 
Q^{1-n} 
2^{- \ell \frac{\d1}{2}(n-1)}
\delta^{-\frac{n+1}{2}}
\leq \delta^{\frac{n-1}{2}} 
T_1 (\frac{1}{Q},\frac{1}{\delta})
\end{equation*}
where we used \eqref{eq: T1 things} in the last step.
Next suppose $\delta^{-\frac{n-1}{2}} 
2^{-\ell(n-1) (\d1)}< \log (Q/\delta)$.
Then, in view of $2^{\ell d} \ll 
Q/\delta$, we conclude that
\begin{align*}
2^{\ell (\frac{d}{2}-1)(n-1)}Z
& \ll 2^{\ell (\frac{d}{2}-1)(n-1)} 
\log \left(\frac{Q}{\delta}\right)
Q^{-(n-1)}
\ll 
\Big(\frac{Q}{\delta}\Big)^{(n-1)(\frac{1}{2}-\frac{1}{d}+o(1))} 
Q^{-(n-1)} \\
& \leq Q^{-\frac{n-1}{2}} \delta^{-\frac{n-1}{2}}
\leq \delta^{\frac{n-1}{2}} 
T_1 (\frac{1}{Q},\frac{1}{\delta})
\end{align*}
where we used $n\geq 3$ in the last step.
All in all, we have proved \eqref{eq: Z bound}.

Plugging \eqref{eq: Z bound}
into \eqref{eq: sum N1 over r when d small dual}, we get
\begin{align*}
U  & \ll 
Q^{\frac{n+1}{2}} \delta^{\frac{n+1}{2}}
 \Big(
 Q^{-1} \delta^{-n} +
\delta^{-(\beta_{\kappa+1} + n \varepsilon)}
\Big) +\Big(\frac{\delta}{Q}
\Big)^{\frac{n-1}{d}}Q^{n} 
\delta^{-2\varepsilon(n-1)} + 
B(\delta,Q) \\
& \ll
B(\delta,Q)
+
Q^{\frac{n+1}{2}} 
\delta^{-(\beta_{\kappa+1}-\frac{n+1}{2}
 + n \varepsilon)} +\Big(\frac{\delta}{Q}
\Big)^{\frac{n-1}{d}}Q^{n} 
\delta^{-2\varepsilon(n-1)}.
\end{align*}
Since this is exactly the 
bound \eqref{eq: bound on N1 sum 4},
the proof is complete.
\end{proof}

\section{Proof of Upper Bounds in Theorem \ref{thm: main smoothed}}\label{sec: upper bounds}
In this section, we use the mechanism developed in the last sections to establish upper bounds on the rational point count near hypersurfaces with homogeneous parametrizing functions. 
Recall that throughout 
our discussion from \S\ref{sec: preliminaries} on wards, 
we have fixed a function 
$f=f_{\1}\in 
\mathscr{H}_d^{
\mathbf{0}}(\mathbb{R}^{n-1})$, 
of homogeneity $d\geq 2$. 
We also recall its 
Legendre dual 
$f_{\d1}\in 
\mathscr{H}_{d/(d-1)}^{\mathbf{0}}
(\sD_{\d1})$, 
as defined in 
\S\ref{subsec hom func duality}, 
of homogeneity 
$\frac{d}{\d1}\in (1, 2]$. 
Finally, let $\varepsilon>0$ be a small fixed 
(as in 
Theorem \ref{thm: main smoothed}).
We now establish that there exists 
$k>0$, depending only on $n$, such that 
for  any $Q\geq 1$, we have
\begin{equation}
    \label{eq up bd f}
    N_{f_\1}(\delta,Q) \ll 
   \delta Q^n +
\Big(\frac{\delta}{Q}\Big)^{
\frac{n-1}{d}} Q^{n+k \varepsilon}
    + Q^{\max(n-1,n-\frac{2(n-1)}{d})+k\varepsilon},
\end{equation}
and 
\begin{equation}
\label{eq up bd fd}
 N_{f_{\d1}}(\delta,Q) \ll 
     \delta Q^n +
     Q^{\max(n-1,n-\frac{2(n-1)}{d})+k\varepsilon}
\end{equation}
uniformly in $\delta \in 
(Q^{\varepsilon-1},1/2)$.
The proof of the two bounds above shall proceed hand in hand.
\begin{remark}
The above estimates are enough to establish the desired upper bounds in Theorem \ref{thm: main}.  
This is because of the following. Firstly observe that 
the map $[2, \infty) \ni d\mapsto \frac{d}{d-1} 
\in(1,2]$ is a bijection.  
Next, for any fixed function 
$g_{\d1}\in \mathscr{H}_{d/(d-1)}^{\mathbf{0}}
(\mathbb{R}^{n-1})$, we can proceed as in \S\ref{sec: preliminaries}: first localizing to a small ball around $\bzero$ and then to a domain $\mathscr{D}_{\d1}$ which can be expressed as a union of dyadically scaled balls approaching the origin, such that $g_{\d1}$ is a diffeomorphism on $\mathscr{D}_{\d1}$. One can then define its Legendre dual $g_{\1}$ (on the dual domain $\mathscr{D}_{\1}$) which is homogeneous of degree $d$.
Replacing $f_{\p}$ by $g_{\p}$ for $\p\in \{\1, \d1\}$, we can then conclude the estimates \eqref{eq up bd f} and \eqref{eq up bd fd}; in particular, the latter implies the desired upper bound for $g_{\d1}$ by a compactness argument. 
\end{remark}
We now embark on proving \eqref{eq up bd f} and \eqref{eq up bd fd}. Let $K:=K(\varepsilon)\in \mathbb{Z}_{\geq 1}$ be a fixed parameter depending only on $\varepsilon$ (to be specified at the end). 
Recall the weight $\varrho$ introduced in \S \ref{subsec local}, supported in the ball $\mathscr{B}(\bx_{\1}, \tfrac{\epsilon_{\bx_{\1}}}4)$
and identically equal to one on the ball $\mathscr{B}(\bx_{\1}, \tfrac{\epsilon_{\bx_{\1}}}8)$. For $\p\in\{\1, \d1\}$ and $\kappa\in \{1,\ldots, K\}$, let $P_\kappa^{[\p]}$ be the weights (supported in $\sW_{\p}$), defined recursively in \eqref{eq: def kappa weight} and \eqref{eq: def dual kappa weight}, with $\varrho_0=\varrho$ and $\varrho_\kappa$ as defined in \eqref{eq varrho}.

For $Q\geq 1$, $\delta\in (0, 1/2)$ and $\ell\in \mathbb{Z}_{\geq 0}$, we define
\[\mathfrak{N}_{\kappa}(\delta,Q,\ell, \p):=\mathfrak{N}_f^{P_{\kappa}}(\delta,Q,\ell, \p),\qquad \mathfrak{N}_{\kappa}(\delta,Q,\ell, \d1):=\mathfrak{N}_{f_{\d1}}^{{P_{\kappa}}}(\delta,Q, \ell, \d1).\]
We shall use $k$ to denote a positive constant depending only on $n$, which may change from instance to instance
(e.g. $Q^{2k}\ll Q^{k}$).
We start with the trivial estimate: for each $Q\geq 1$, each $\delta\in (0, 1/2)$ and all $\ell \in \{0, 1, \ldots, \fL_{\1}(\delta, Q)\}$, we have
$$\mathfrak{N}_K(\delta, Q, \ell, \1)\leq C_{K}2^{-\ell(n-1)}Q^n.$$
Here $C_K$ depends on $P_K$ and therefore on $\varepsilon$. We now distinguish between two regimes.

\subsection{The regime $d<n-1$} In this regime, we use Theorem \ref{thm: og to dual sdeg}, with $\kappa=K-1$, $\beta_{\kappa+1}=\beta_K=n$, to conclude that for all $Q\geq 1$ and $\delta\in (0, 1/2]$, we have
$$\mathfrak{N}_K(\delta,Q,\ell, \d1)\leq C_{K-1}  2^{-\frac{d}{2}\ell(n-1)}2^{\ell d}\left(\delta Q^{n}
+Q^{\beta_{K-1}+n\varepsilon}\right), $$   
for all $$\ell\in \{0, 1, \ldots, \fL_{\d1}(\delta, Q)\},$$
where \[\beta_{K-1}=n-\frac{n-1}{n+1}.\]
Here the constant $C_{K-1}$ depends on $\varepsilon$ via $P_K^{[\p]}$ and $P_{K-1}^{[\p]}$.

Next, we feed the above estimate into Theorem \ref{thm: dual to og sdeg} with $\kappa=K-2$. This yields that for all $Q\geq 1$ and $\delta\in (0, 1/2]$, we have
$$
\mathfrak{N}_{K-2}(\delta,Q,\ell, \1)\leq 
C_{K-2}2^{-\ell(n-1-d)}\left(\delta Q^{n}
+Q^{\beta_{K-2}+n\varepsilon}\right),    
$$
for all  \ellponecond \,
with \[\beta_{K-2}:=n-\frac{n-1}{2\beta_{K-1}-n+1}.\]

With repeated applications of Theorems \ref{thm: og to dual sdeg} and \ref{thm: dual to og sdeg}, after $2i$ many steps, for all $Q\geq 1$ and $\delta\in (0, 1/2]$, we arrive at the estimates
\begin{equation}
    \label{eq kap est dual}
    {\mathfrak{N}}_{K-(2i-1)}(\delta,Q,\ell, \d1)\leq C_{K-(2i-1)}  2^{-\frac{d}{2}\ell(n-1)}2^{\ell d}\left(\delta Q^{n}
+Q^{\beta_{K-{(2i-1)}}+n\varepsilon}\right), 
\end{equation}
for all \ellpdcond; and
\begin{equation}
\label{eq kap est og}    
\mathfrak{N}_{K-2i}(\delta,Q,\ell, \1)\leq 
C_{K-2i}2^{-\ell(n-1-d)}\left(\delta Q^{n}
+Q^{\beta_{K-2i}+n\varepsilon}\right),
\end{equation}
for all \ellpdcond.
The constants $C_{K-(2i-1)}$ and $C_{K-2i}$ again depend on $\varepsilon$ (via $P_K^{[\p]}, \ldots, P_{K-(2i-2)}^{[\p]}$, and also $P_{K-(2i-1)}^{[\p]}$ in the case of the latter). The sequence $\{\beta_{\kappa}\}_{\kappa=K-2i}^{K}$ is recursively defined as $\beta_{\kappa}=n-\frac{n-1}{2\beta_{\kappa+1}-n+1}$. Rearranging, we have
$$\beta_{\kappa}-(n-1)=\frac{\beta_{\kappa+1}-(n-1)}{\beta_{\kappa+1}-\frac{n-1}{2}}.$$
Observe that
$$ \beta_{\kappa+1}-\frac{n-1}{2}\geq n-1-\frac{n-1}{2}=\frac{n-1}{2}.$$
and hence for $n\geq 4$, we conclude that
\begin{equation*}
    %\label{eq beta rec n4}
    \beta_{\kappa}-(n-1)\leq \frac{2}{3}(\beta_{\kappa+1}-(n-1))\leq \left(\frac{2}{3}\right)^{\kappa}(\beta_K-(n-1))=\left(\frac{2}{3}\right)^{\kappa}.
\end{equation*}
For $n\geq 4$, we set $$K(\varepsilon):=\lfloor \log \varepsilon^{-1}\rfloor.$$
After $K$ many steps, \eqref{eq kap est dual} and \eqref{eq kap est og} give
\begin{equation}
\label{eq K est og n4}     
{\mathfrak{N}}_{1}(\delta,Q,\ell, \d1)\leq C_{1}(\varepsilon)  2^{-\frac{d}{2}\ell(n-1)}2^{\ell d}\left(\delta Q^{n}
+Q^{n-1}Q^{k\varepsilon}\right), 
\end{equation}
for all \ellpdcond, and
\begin{equation}
\label{eq K est dual n4}  
\mathfrak{N}_{0}(\delta,Q,\ell, \1)\leq 
C_{0}(\varepsilon)2^{-\ell(n-1-d)}\left(\delta Q^{n}
+Q^{n-1}Q^{k\varepsilon}\right)
\end{equation}
for all \ellponecond.
For $n=3$ though, the recursive relation
$$\beta_{\kappa}-2=\frac{\beta_{\kappa+1}-2}{\beta_{\kappa+1}-1}$$ gives
\begin{equation*}
\beta_\kappa=2+\frac{1}{K-{\kappa+1}}.
\end{equation*}
In other words, the sequence $\{\beta_\kappa\}$ converges at a much slower rate for $n=3$, and in this case we set $$K(\varepsilon)=\lfloor\varepsilon^{-1}\rfloor$$ to obtain
\begin{equation}
\label{eq K est og n3}
{\mathfrak{N}}_1(\delta,Q,\ell, \d1)\leq C_{1}(\varepsilon) 2^{-\frac{d}{2}\ell(n-1)}2^{\ell d}\left(\delta Q^{n}
+Q^2Q^{k\varepsilon}\right),     
\end{equation}
for all \ellpdcond, and 
\begin{equation}
\label{eq K est dual n3}
\mathfrak{N}_0(\delta,Q,\ell, \1)\leq 
C_{0}(\varepsilon)2^{-\ell(n-1-d)}\left(\delta Q^{n}
+Q^2Q^{k\varepsilon}\right),    
\end{equation}
for all \ellponecond.
Recall that
$$\mathfrak{N}_0(\delta,Q,\ell, \1):=\mathfrak{N}^{\varrho_{0}}(\delta,Q,\ell, \1)=\mathfrak{N}^{\varrho}(\delta,Q,\ell, \1),$$ 
and
$$\mathfrak{N}^{{\varrho}}(\delta,Q, \ell, \d1)=\mathfrak{N}_{0}(\delta,Q, \ell, \d1)\ll {\mathfrak{N}}_{1}(\delta,Q,\ell, \d1)=\mathfrak{N}^{{P_{1}}}(\delta,Q, \ell, \d1).$$
Thus, summing up in $\ell$ and using estimates 
\eqref{eq K est og n4} and 
\eqref{eq K est dual n4} 
(or \eqref{eq K est og n3} 
and \eqref{eq K est dual n3} for $n=3$) gives
$$\sum_{0\leq \ell\leq  \fL_{\1}(\delta, Q)}\mathfrak{N}^{\varrho}(\delta,Q,\ell, \1)\ll \delta Q^n+Q^{n-1+k\varepsilon} $$
and
$$\sum_{0\leq \ell\leq  \fL_{\d1}
(\delta, Q)}\mathfrak{N}^{{\varrho}}(\delta,Q, \ell, \d1)
\ll \delta Q^n+Q^{n-1+k\varepsilon}.$$
Again, the implicit constants above depend on $\varepsilon$.
As a final step, we combine the above bounds 
with the tail estimates in Lemma \ref{lem: tail terms} 
to get
\[
N_{f_1}(\delta,Q)
\ll
\delta Q^n+Q^{n-1+k\varepsilon} +\left(\frac{\delta}{Q}\right)^{\frac{(n-1)}{d}}Q^{n},
\]
and
\[
{N}_{f_{\d1}}(\delta,Q)
\ll
\delta Q^n+Q^{n-1+k\varepsilon} +\left(\frac{\delta}{Q}\right)^{\frac{(\d1)(n-1)}{d}}Q^{n},
\]
for all $Q\geq 1$ and $\delta\in (0, 1/2)$. Since 
$$\max\left[\left(\frac{\delta}{Q}\right)^{\frac{(n-1)}{d}},\left(\frac{\delta}{Q}\right)^{\frac{(\d1)(n-1)}{d}}\right]Q^n\leq \delta Q^n$$
for $n\geq 3$ and $2\leq d<n-1$, this proves \eqref{eq up bd f} and \eqref{eq up bd fd} in the current regime.

\subsection{The regime $d \geq n-1$} Like before, we set $$K(\varepsilon):=\begin{cases}
2\lfloor \log \varepsilon^{-1}\rfloor,\,& n\geq 4.\\
2\lfloor \varepsilon^{-1}\rfloor,\,& n=3.
\end{cases}$$
Further, let $\{\beta_\kappa\}_{\kappa=0}^K$ be the sequence defined recursively, with $\beta_K=n$, and
$$\beta_{\kappa}=\begin{cases}
n-\frac{n-1}{2\beta_{\kappa+1}-n+1},\, & \kappa \text{ is odd}.\\
\beta_{\kappa+1},\, & \kappa \text{ is even}.\\
\end{cases}$$

We now use Theorems \ref{thm: og to dual ldeg} and \ref{thm: dual to og ldeg} successively; and our bootstrapping procedure differs from the case $d < n-1$ in two key respects. Firstly, the improvement in the value of $\beta_\kappa$ only happens in alternate steps. Secondly, while the relations 
$$   \beta_{\kappa}-(n-1)\leq \frac{2}{3}(\beta_{\kappa+2}-(n-1))$$
for $n\geq 4$, and
$$\beta_\kappa=\begin{cases}
2+\frac{1}{K-{\kappa+1}},\, & \kappa \text{ is odd}.\\
2+\frac{1}{K-{\kappa+2}},\, & \kappa \text{ is even}.\\
\end{cases}$$
are true (so that the $\{\beta_\kappa\}_{\kappa=0}^K$ still converge to $n-1$), the condition $\beta_\kappa\geq n-\frac{2(n-1)}{d}$ (as opposed to $\beta_\kappa\geq n-1$) is baked into our argument and determines the number of steps in our induction procedure when $d>2(n-1)$. 

In the case when $(n-1)<d\leq 2(n-1)$, we have $$n-\frac{2(n-1)}{d}\leq n-1\leq \beta_\kappa,\, \qquad  0\leq \kappa\leq K.$$ The first application (of Theorem \ref{thm: og to dual ldeg}  with $\kappa=K-1$, $\beta_{\kappa+1}=\beta_K=n$, gives us that for all $Q\geq 1$ and $\delta\in (0, 1/2]$, we have
$${\mathfrak{N}}_{K-1}(\delta,Q,\ell, \d1)\leq C_{K-1}(\varepsilon) 2^{-\frac{d}{2}\ell(n-1)}2^{\ell (n-1)}\left(\delta Q^{n}
+Q^{\beta_{K-1}+n\varepsilon}\right), $$   
for all $$\ell\in \{0, 1, \ldots, \fL_{\d1}(\delta, Q)\},$$
with \[\beta_{K-1}=\max \left(n-\frac{n-1}{n+1}, n-\frac{2(n-1)}{d}\right).\]
As in the previous regime, $C_{K-1}$ depends on $\varepsilon$ via $P_K^{[\p]}$ and $P_{K-1}^{[\p]}$.
Feeding the above estimate into Theorems \ref{thm: dual to og ldeg} with $\kappa=K-2$, we derive the conclusion that for all $Q\geq 1$ and $\delta\in (0, 1/2]$, we have
$$
\mathfrak{N}_{K-2}(\delta,Q,\ell, \1)\leq C_{K-2}(\varepsilon)\left(2^{-\ell(n-1)}\delta Q^{n}
+Q^{\beta_{K-2}+3n\varepsilon}+\left(\frac{\delta}{Q}\right)^{\frac{n-1}{d}}Q^{n(1+\varepsilon)}\delta^{-2\varepsilon(n-1)}\right).  $$
whenever \ellponecond.
Thus, arguing as before, after $K$ many steps, we obtain
\begin{equation}
{\mathfrak{N}}_{1}(\delta,Q,\ell, \d1)\leq C_{1}(\varepsilon)  2^{-\frac{d}{2}\ell(n-1)}2^{\ell (n-1)}\left(\delta Q^{n}
+Q^{k\varepsilon}\right),
\label{eq int 1}
\end{equation}
for all \ellpdcond, and
\begin{align*}
\mathfrak{N}^{\varrho}
(\delta,Q,\ell, \1) & 
= \mathfrak{N}_{0}(\delta,Q,\ell, \1) \\ 
&\leq C_{0}(\varepsilon)\left(2^{-\ell(n-1)}\delta Q^{n}
+Q^{n-1+k\varepsilon}+\left(\frac{\delta}{Q}\right)^{\frac{n-1}{d}}Q^{n(1+\varepsilon)}\delta^{-2\varepsilon(n-1)}\right).   
\end{align*}
whenever\ellponecond.
Summing up the above equation in $\ell$ gives
$$\sum_{0\leq \ell\leq  
\fL_{\1}(\delta, Q)}\mathfrak{N}^{\varrho}(\delta,Q,\ell, \1)
\ll \delta Q^n+Q^{n-1+k\varepsilon}+\left(\frac{\delta}
{Q}\right)^{\frac{n-1}{d}}Q^{n}
(\delta^{-1}Q)^{k\varepsilon},$$
for all $Q\geq 1$ and $\delta\in (0, 1/2)$. 
Since we
operate in the range 
$\delta \in (Q^{\varepsilon-1},1/2)$,
the bound 
$$
(\delta^{-1}Q)^{k\varepsilon}
\ll Q^{k\varepsilon}
$$
is true. 
Hence, we conclude from the above and
Lemma \ref{lem: tail terms}, that
\begin{equation}
    \label{eq mid d flat}
    {N}_{f_{\1}}(\delta,Q) \ll
\delta Q^n+Q^{n-1+k\varepsilon} +\left(\frac{\delta}{Q}\right)^{\frac{n-1}{d}}Q^{n+k\varepsilon}
\end{equation}

This establishes \eqref{eq up bd f} for $n-1 \leq d\leq 2(n-1)$. 

For the rough case, we sum \eqref{eq int 1} in $\ell$ and use the estimate $\mathfrak{N}^{{\varrho}}(\delta,Q, \ell, \d1)=\mathfrak{N}_{0}(\delta,Q, \ell, \d1)\ll {\mathfrak{N}}_{1}(\delta,Q,\ell, \d1)$ to get 
$$\sum_{0\leq \ell\leq  \fL_{\d1}
(\delta, Q)}
\mathfrak{N}^{{\rho}}(\delta,Q, \ell, \d1)
\ll \delta Q^n+Q^{n-1+k\varepsilon} 
$$
for all $Q\geq 1$ and $\delta\in (0, 1/2)$, whenever $d>2$. Combining the above with the estimate for tail terms in Lemma \ref{lem: tail terms}, we conclude that
\[
{N}_{f_{\d1}}^{{\rho}}(\delta,Q)
\ll
\delta Q^n+Q^{n-1+k\varepsilon} +\left(\frac{\delta}{Q}\right)^{\frac{(\d1)(n-1)}{d}}Q^{n}.
\] 
Since  
$$\left(\frac{\delta}{Q}\right)^{\frac{(\d1)(n-1)}{d}}Q^n\leq \delta Q^n$$
for $n\geq 3$ and $d>2$, this establishes \eqref{eq up bd fd} in the case when $n-1\leq d\leq 2(n-1)$ and $d\neq 2$.

In the case when $d=2$ (and hence $n=3$), the negative power of $2^\ell$ in front of the first term in \eqref{eq int 1} vanishes. Thus summing it up in $\ell$ would give a logarithmic loss in $Q$. However, this is no issue as when $d=2$, we have $f,\widetilde{f}\in \sH_{2}^{\bzero}(\bRn)$. Therefore, replacing $f$ by $\tilde{f}$ in \eqref{eq mid d flat} gives the desired estimate in this case. (Or one can simply refer to \cite{Huang rational points} since this is the case of non-vanishing curvature!)

Finally, for $d>2(n-1)$, there exists an odd number $\kappa(d)$ with $0<\kappa(d)<K$ such that
$$\beta_{\kappa(d)}-(n-1)<n-\frac{2(n-1)}{d}<\beta_{\kappa(d)+1}-(n-1).$$ This means that at the $(K-\kappa(d))$-th step, Theorem \ref{thm: og to dual ldeg} with $\kappa=\kappa(d)$ gives
$$
{\mathfrak{N}}_{\kappa(d)}(\delta,Q,\ell, \d1)\leq 
C_{\kappa(d)}(\varepsilon)2^{-\frac{d}{2}\ell(n-1)}2^{\ell(n-1)}\left(\delta Q^{n}
+Q^{n-\frac{2(n-1)}{d}+n\varepsilon}\right),
$$
whenever $$\ell\in \{0, 1, \ldots, \fL_{\d1}(\delta, Q)\}.$$
The next application of Theorem \ref{thm: dual to og ldeg} yields
\begin{align*}
&\mathfrak{N}_{\kappa(d)-1}(\delta,Q,\ell, \1)\\&\leq C_{\kappa(d)-1}(\varepsilon)\left(2^{-\ell(n-1)}\delta Q^{n}
+Q^{n-\frac{2(n-1)}{d}+3n\varepsilon}+\left(\frac{\delta}{Q}\right)^{\frac{n-1}{d}}Q^{n(1+\varepsilon)}\delta^{-2\varepsilon(n-1)}\right)    
\end{align*}
for all \ellponecond.
Summing up in $\ell$, we obtain
$$\sum_{0\leq \ell\leq  
\fL_{\1}(\delta, Q)}\mathfrak{N}^{\varrho}(\delta,Q,\ell, \1)
\ll \delta Q^n+Q^{n-\frac{2(n-1)}{d}+k\varepsilon}+
\left(\frac{\delta}{Q}\right)^{\frac{n-1}{d}}Q^{n}
(\delta^{-1}Q)^{k\varepsilon}$$
and
$$\sum_{0\leq \ell\leq  
\fL_{\d1}(\delta, Q)}\mathfrak{N}^{{\varrho}}
(\delta,Q, \ell, \d1)\ll 
\delta Q^n+Q^{n-\frac{2(n-1)}{d}+k\varepsilon},
$$
for all $Q\geq 1$ and $\delta\in (0, 1/2)$, with the implicit constants depending on $\varepsilon$.
Combining the above with Lemma \ref{lem: tail terms}, we get
\[
{N}_{f_\1}(\delta,Q)
\ll
\delta Q^n+Q^{n-\frac{2(n-1)}{d}
+k\varepsilon} +\left(\frac{\delta}{Q}
\right)^{\frac{n-1}{d}-k\varepsilon}Q^{n},
\]
and
\[
{N}_{f_{d-1}}(\delta,Q)
\ll
\delta Q^n+Q^{n-\frac{2(n-1)}{d}
+k\varepsilon} +\left(\frac{\delta}{Q}
\right)^{\frac{(\d1)(n-1)}{d}}Q^{n}.
\]
Since  
$$
\left(\frac{\delta}{Q}\right)^{\frac{(\d1)(n-1)}{d}}
Q^n\leq \delta Q^n$$
for $n\geq 3$ and $d\geq 2$ and $\delta \in (Q^{\varepsilon-1},1/2)$, we derive the desired estimates \eqref{eq up bd f} and \eqref{eq up bd fd} also in the case $d>2(n-1)$.

\section{Proof of Lower Bounds in Theorem \ref{thm: main smoothed}}\label{sec: lower bounds}
Now we establish the lower bounds for 
Theorem \ref{thm: main smoothed}.
More concretely, we are concerned with
demonstrating 
$$
N_f(\delta,Q) \gg \delta Q^n + 
Q^n \Big(\frac{\delta}{Q}
    \Big)^{\frac{n-1}{d}}
$$
for $\delta \in 
(Q^{\varepsilon -1},1/2)$.
This bound follows 
at once from the following
two simple auxiliary results.
\begin{prop}\label{prop: fourier lower bound}
Let $n\geq 3$. 
Suppose $f\in \sH_d^{\bzero}(\bRn)$,
where $d>1$.
If $\varepsilon>0$, then
\begin{equation*}
    N_{f}(\delta,Q) \gg \delta Q^n.
\end{equation*}
uniformly in $Q\geq 1$, and 
$\delta\in 
(Q^{\varepsilon -1},1/2)$.
\end{prop}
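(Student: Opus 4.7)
The strategy is to isolate the probabilistic main term from a single favourable dyadic piece via iterated Poisson summation, and control the remaining oscillatory error by stationary phase together with a lattice-point count.

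Since every summand in $N_f(\delta, Q)$ is non-negative, we may restrict to the $\ell=0$ piece: $N_f(\delta, Q) \geq \mathfrak{N}^{\varrho}(\delta, Q, 0, 1)$, retaining only $\mathbf{a}/q$ in the ball $\mathscr{B}(\mathbf{x}_1, \epsilon_{\mathbf{x}_1}/4)$, a region on which $\nabla f$ is a diffeomorphism. Expanding $b_\delta(\|\cdot\|)$ in Fourier series produces $\mathfrak{N}^{\varrho}(\delta, Q, 0, 1) = \sum_{j \in \mathbb{Z}} \delta\, \widehat{b}(j\delta)\, S(j)$ with $S(j) := \sum_{q, \mathbf{a}} \omega(q/Q)\, \varrho^{[1]}(\mathbf{a}/q)\, e(jq f(\mathbf{a}/q))$. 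For the zero mode, successive Poisson summations (in $\mathbf{a}$, using rapid decay of $\widehat{\varrho^{[1]}}$; and in $q$ via Lemma \ref{lem: smooth geometric summation}) yield $\delta\,\widehat{b}(0)\, S(0) = c\,\delta Q^n + O_A(\delta Q^{-A})$ with positive constant $c = \widehat{b}(0)\,\widehat{\varrho^{[1]}}(0)\,\int \omega(t)\, t^{n-1}\,dt$, since all three cut-offs are non-negative and non-trivial on their support.

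For $j \neq 0$, Poisson summation in $\mathbf{a}$ converts $S(j)$ into a sum over $\mathbf{k} \in \mathbb{Z}^{n-1}$ of the oscillatory integrals $I_q(j, \mathbf{k}) := \int \varrho^{[1]}(\mathbf{x})\, e(q[jf(\mathbf{x}) - \mathbf{k}\cdot \mathbf{x}])\, d\mathbf{x}$, weighted by $q^{n-1}\omega(q/Q)$. The stationary-point equation $\mathbf{k}/j = \nabla f(\mathbf{x}_\ast)$ confines the contributing $\mathbf{k}$ to $j \cdot \nabla f(\mathrm{supp}\, \varrho^{[1]})$, a set of cardinality $O(|j|^{n-1})$; non-critical $\mathbf{k}$ are negligible by Lemma \ref{lem: non-stationary phase}, while Theorem \ref{thm: stationary phase higher order terms} gives $I_q(j, \mathbf{k}) \asymp (jq)^{-(n-1)/2}\, e(-jq\, f_{d-1}(\mathbf{k}/j))$ at critical $\mathbf{k}$, where $f_{d-1}$ is the Legendre dual of $f$. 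A second application of Lemma \ref{lem: smooth geometric summation} to the ensuing $q$-sum introduces a factor $Q^{(n+1)/2}(1 + Q\|jf_{d-1}(\mathbf{k}/j)\|)^{-A}$, and a straightforward volume argument---using the non-vanishing of $\nabla f_{d-1}$ on its bounded domain---shows that the number of critical $\mathbf{k}$ satisfying the arithmetic constraint $\|jf_{d-1}(\mathbf{k}/j)\| \lesssim 1/Q$ is $O(\max(1, |j|^{n-1}/Q))$. This yields
\[
|S(j)| \ll \max\bigl(|j|^{-(n-1)/2}\, Q^{(n+1)/2},\ |j|^{(n-1)/2}\, Q^{(n-1)/2}\bigr).
\]

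Summing over $j \neq 0$ against $\delta|\widehat{b}(j\delta)|$, whose effective support is $|j| \leq 1/\delta$, bounds the total oscillatory contribution by $Q^{(n-1)/2}\delta^{-(n-1)/2} + \delta Q^{(n+1)/2}\log Q$ for $n \geq 3$. Both summands are $o(\delta Q^n)$ throughout the range $\delta > Q^{\varepsilon - 1}$, the first being $\delta Q^n \cdot Q^{-\varepsilon(n+1)/2}$. Combining with the main-term computation yields $\mathfrak{N}^{\varrho}(\delta, Q, 0, 1) \geq (c/2)\, \delta Q^n$ for $Q$ sufficiently large, whence the asserted lower bound by adjusting constants at the remaining bounded range of $Q$. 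The main technical subtlety is the refined lattice-point count: the crude estimate $O(|j|^{n-1})$ alone would only deliver $\delta \gg Q^{-(n-1)/(n+1)}$, strictly weaker than $\delta > Q^{\varepsilon - 1}$ when $n = 3$. The saving factor $1/Q$, procured via the second Poisson summation together with the non-degeneracy of the dual hypersurface, is exactly what makes the full conjectured range accessible.
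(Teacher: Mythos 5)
The paper disposes of this proposition in two lines: restrict the count to the $\ell = 0$ dyadic piece (where $N_f(\delta,Q) \geq \mathfrak{N}^{\varrho}(\delta,Q,0,\1)$), observe that on $\mathrm{supp}(\varrho^{[\1]})$ the Gaussian curvature is uniformly bounded away from zero, and then quote Theorem~\ref{thm: Huang} --- the $\asymp$ there already contains the lower bound. Your plan rebuilds this lower bound from scratch via Poisson summation and stationary phase; that is a genuinely different (and much heavier) route than the paper takes, and it contains a real gap.

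The gap is in the lattice-point count. You assert that ``a straightforward volume argument --- using the non-vanishing of $\nabla f_{d-1}$ --- shows that the number of critical $\mathbf{k}$ with $\|jf_{d-1}(\mathbf{k}/j)\| \lesssim 1/Q$ is $O(\max(1, |j|^{n-1}/Q))$.'' This is not established by a volume argument, and in fact it is false as stated. Take $f(\bx) = \|\bx\|_2^2/2$, so $f_{d-1}(\by) = \|\by\|_2^2/2$ and $jf_{d-1}(\bk/j) = \|\bk\|_2^2/(2j)$. When $2j < Q$ the constraint forces $2j \mid \|\bk\|_2^2$, and a standard count of representations by sums of squares gives $\asymp j^{n-2}$ such $\bk$ in the relevant annulus $\|\bk\|_2 \asymp j$ --- far exceeding $\max(1, j^{n-1}/Q)$ throughout the range $1 \ll j \ll Q$. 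More fundamentally, the quantity you are trying to bound is, up to rescaling, exactly the number of rationals with fixed denominator $j$ that lie within $1/(jQ)$ of the dual hypersurface; replacing ``count of lattice points'' by ``renormalised volume'' is precisely the probabilistic heuristic \eqref{eq: heuristic guess}, and proving that the heuristic gives a valid \emph{upper bound} is the entire content of Huang's bootstrap (and of the present paper). You cannot get it for free from $|\nabla f_{d-1}| \asymp 1$.

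As it happens, your overall strategy would still close if one inserted the weaker bound $O(j^{n-2} + j^{n-1}/Q)$: tracing through your $j$-sum, the contribution from $j^{n-2}$ is $\ll \delta^{(3-n)/2} Q^{(n+1)/2}$, which is $(\delta Q)^{(1-n)/2}\,\delta Q^n = o(\delta Q^n)$ in the range $\delta > Q^{\varepsilon-1}$. But even $O(j^{n-2} + j^{n-1}/Q)$ is an upper bound for rational points near a curved hypersurface and requires its own proof (the $j^{n-2}$ term, say, already needs an equidistribution statement for $jf_{d-1}(\bk/j) \bmod 1$). Since the dual piece $f_{d-1}$ restricted to $\sW_{d-1}$ has uniformly bounded curvature, that bound does follow from Theorem~\ref{thm: Huang} applied to the dual --- but then you are still leaning on Huang's theorem, just in a roundabout way. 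The paper's proof, citing Theorem~\ref{thm: Huang} once for the original curved piece, is the efficient version of what you are attempting.
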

Moreover, we require a lower bound 
that incorporates the `flat'-term.
The next proposition 
provides such a lower bound. 
In contrast to the previous proposition, 
it is based on the geometry of numbers
(which is why it is valid in a far wider 
range of $\delta$). In fact, the range 
for $\delta$ arising naturally from computing 
when the ($Q$-renormalized) 
volume of the Knapp Caps,
see \S\ref{sec: outline}, tends to infinity.
\begin{prop}\label{prop: geometry lower bound}
Let $n\geq 2$, and $1\leq c < n$ be integers.
Suppose 
$f_1,\ldots,f_c\in \sH_d^{\bzero}(\bR^{n-c})$,
where $d>1$.
Put $\bf:=(f_1,\ldots,f_c)$,
and $m:=n-c$.
If $\varepsilon>0$, then
    \begin{equation*}
    N_{\bf}(\delta,Q) \gg 
    Q^{m+1} \Big(\frac{\delta}{Q}
    \Big)^{\frac{m}{d}}
\end{equation*}
uniformly in $Q\geq 1$
and $\delta\in(Q^{\varepsilon-(d-1)},1/2)$.
\end{prop}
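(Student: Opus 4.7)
\medskip

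The plan is to implement the \emph{Knapp cap} heuristic described in Step~2 of Section~\ref{sec: outline} as an explicit counting argument. Because $f_i \in \sH_d^{\bzero}(\bR^m)$ is smooth away from the origin and homogeneous of degree $d>0$, the continuous function $\bx\mapsto |f_i(\bx)|$ attains a maximum $M_i$ on the unit sphere, and then homogeneity gives the pointwise bound $|f_i(\bx)| \leq M_i\,\|\bx\|_2^{d}$ for all $\bx\in\bR^m$. Set $M:=\max_i M_i$ and
\[
\rho := \rho(\delta,Q) := \bigl(\delta/(2MQ)\bigr)^{1/d}.
\]
For $q\in[Q,2Q)$ and any $\ba\in\bZ^m$ with $\|\ba\|_2 \leq q\rho$, we have $\|\ba/q\|_2 \leq \rho$, hence
\[
\bigl|q f_i(\ba/q)\bigr| \;\leq\; q\,M\,\|\ba/q\|_2^{d} \;\leq\; q\,M\,\rho^{d} \;\leq\; \tfrac{1}{2}\delta \;<\; \tfrac{1}{2},
\]
so in particular $\|qf_i(\ba/q)\| = |qf_i(\ba/q)| \leq \delta$ for every $1\leq i\leq c$. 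Also $\|\ba/q\|_2\leq\rho<1$ when $\rho<1$, so $\ba/q\in\sU_m$. Thus every such pair $(q,\ba)$ contributes to $N_{\bf}(\delta,Q)$.

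The next step is to count, for each fixed $q\asymp Q$, the number of $\ba\in\bZ^m$ with $\|\ba\|_2\leq q\rho$. A standard geometry-of-numbers estimate (comparing with the volume of the Euclidean ball) gives
\[
\#\{\ba\in\bZ^m : \|\ba\|_2\leq q\rho\} \;\gg\; \bigl(q\rho\bigr)^{m} \qquad \text{provided } q\rho\gg 1.
\]
The condition $q\rho\gg 1$ reduces to $Q\cdot (\delta/Q)^{1/d}\gg 1$, i.e.\ $\delta \gg Q^{1-d}$, and this is precisely guaranteed by the hypothesis $\delta > Q^{\varepsilon-(d-1)}$ (with room to spare, so the implicit constant is uniform). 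Summing over the $\asymp Q$ integers $q\in[Q,2Q)$ yields
\[
N_{\bf}(\delta,Q) \;\gg\; Q\cdot (Q\rho)^m \;\asymp\; Q^{m+1}\bigl(\delta/Q\bigr)^{m/d},
\]
which is the desired bound.

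The argument is essentially routine once the Knapp cap is identified; the only place one has to be a little careful is in the transition from $|qf_i(\ba/q)|\leq\delta$ to the nearest-integer constraint $\|qf_i(\ba/q)\|\leq\delta$, which is why we included the factor $1/(2M)$ in the definition of $\rho$, ensuring $|qf_i(\ba/q)|<1/2$. No significant obstacle arises; the threshold $\delta > Q^{\varepsilon-(d-1)}$ appears naturally as the regime in which the Knapp cap contains more than $O(1)$ lattice points per denominator.
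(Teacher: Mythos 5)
Your proof is correct and matches the paper's argument: both count lattice points in the Knapp-cap ball of radius $\asymp (\delta/Q)^{1/d}$ around the origin via \eqref{eq: integer points in ball}, using the homogeneity bound $|f_i(\bx)| \leq M\|\bx\|_2^d$ and the hypothesis $\delta > Q^{\varepsilon-(d-1)}$ to ensure $q\rho \gg Q^{\varepsilon/d}$. One tiny slip: for $q \in [Q,2Q)$ one has $qM\rho^d = q\delta/(2Q) \in [\delta/2, \delta)$, not $\leq \delta/2$ as written, but the bounds actually needed ($<\delta$ and $<1/2$) still hold, so the conclusion is unaffected.
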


We begin with proving 
the last proposition first.
To this end, recall 
that $\sU_m$ denotes the unit ball 
in $\bR^m$. Let $V_m$ be 
the $m$-dimensional volume 
of $\sU_m$. Centering 
a box of unit-side length 
around every integer point shows
\begin{equation}\label{eq: integer points in ball}
    \# (\lambda \sU_m \cap \bZ^m)=
    \lambda^m V_m + O_m(\lambda^{m-1})
\end{equation}
as $\lambda \rightarrow \infty$.

\begin{proof}[Proof of Proposition
\ref{prop: geometry lower bound}]
Define the (Knapp cap inspired) set
$$
\sS:=
\Big(
\frac{\delta}{Z Q}\Big)^\frac{1}{d}
\cdot
\sU_{m}
\quad \mathrm{where}
\quad Z:=
\max_{i\leq c}
\max_{\bx \in \sU_m} 
\vert f_i(\bx)\vert.
$$
Let $i\leq c$. Notice 
$f_i(\bx)= \Vert \bx \Vert_2^d 
f_i(\Vert \bx \Vert_2^{-1}\bx)$
for non-zero $\bx\in\bR^m$.
So if $\bx \in \sS\setminus \{\bzero\}$
and $q/Q\in \supp(\om)$, then 
$$
\vert f_i(\bx) \vert \leq 
\frac{\delta}{Z Q}
\big\vert 
f_i\big(\frac{\bx}{\Vert \bx 
\Vert_2}\big) \big\vert 
\leq 
\frac{\delta}{Q}
\leq 
\frac{\delta}{q}.
$$
If $\bx=\bzero$ then
$\vert f_i(\bx) \vert= 0 
\leq \delta/q$ is also true.
Consequently, 
any $\ba/q\in \sS$
satisfies 
$ \vert qf_i(\ba/q) \vert \leq \delta $.
Therefore
$$
N_{\bf}(\delta,Q)\geq 
\sum_{q\in \bZ}
\om(\frac{q}{Q})
\#(\sS\cap q^{-1}\bZ^m).
$$
Observe that 
$\#(\sS\cap q^{-1}\bZ^m)=
\#(q \sS\cap \bZ^m)$
and $q \sS = \lambda \sU_m$,
where 
$$
\lambda:= q \delta^\frac{1}{d}
(Z Q)^{-1/d}=\delta^\frac{1}{d}
(Z q^{-d} Q)^{-1/d}. 
$$ 
In view of 
$\delta \geq Q^{\varepsilon-(d-1)}$
and $q\asymp Q$,
we conclude that 
$\lambda \gg Q^{\varepsilon/d}$. 
By taking \eqref{eq: integer points in ball}
into account we infer 
$ \#(\sS\cap q^{-1}\bZ^m)\gg \lambda^m$.
The implied constant is uniform 
in $q/Q\in \supp(\om)$. 
As $\lambda^m \asymp Q^m 
(\delta/Q)^{\frac{m}{d}}$,
summing the previous inequality over 
$q/Q\in \supp(\om)$ completes the proof.
\end{proof}
We proceed to prove Proposition \ref{prop: fourier lower bound}.
\begin{proof}[Proof of Proposition \ref{prop: fourier lower bound}]
Let $f \in \sH_d^{\bzero}(\bRn)$
where $d>1$.
For demonstrating
lower bounds one may 
remove any part
of the hypersurface 
as long as one can work
with the remaining part. 
In particular, $N_{f}(\delta,Q)\geq 
\fN_{f}(\delta,Q,0)$.
Note that $\ell=0$ corresponds to the piece of the hypersurface where the Gaussian curvature is uniformly bounded. Due to Theorem \ref{thm: Huang},
we know $\fN_{f}(\delta,Q,0)
\gg \delta Q^n$, for any 
$\delta\in (Q^{\varepsilon-1},1/2)$,
as required.
\end{proof}

\section{Closing Remarks, Open Questions, and Acknowledgements}\label{sec: final remarks}
In view of the results of this paper, 
studying rational points near 
hypersurfaces immersed 
by a homogeneous function
$f:\bR^{n-1} \rightarrow \bR$ 
becomes a topical question.
To explain further details
we write, as before, the hypersurfaces
in the normalised Monge form
\eqref{def: normalised Monge}.
Now a natural problem is the following.
\begin{problem}\label{prob: homogeneous}
Let $n\geq 3$ be an integer, and  
$f\in \sH_d^{\bzero}(\bRn)$.
For which $d\in (0,1]$
does \eqref{eq: main bounds 1}
hold true?
% as $Q\rightarrow \infty$ and
% $\delta \in (Q^{\varepsilon-1},1)$?
\end{problem}
While Problem \ref{prob: homogeneous}
asks to extend Theorem \ref{thm: main}
in the $d$-aspect, one can also wonder at what happens when one 
widens the class $\sH_d^{\bzero}(\bRn)$.
In particular, what happens when
we drop the assumption that $$\det\, H_f(\bx)=0
\,\text{ implies }\, \bx= \bzero$$ and consider all
$f\in \sH_d(\bRn)$? 
We formulate this as a separate problem.
\begin{problem}\label{prob: homogeneous general}
Let $n\geq 3$ be an integer, and $d>1$.
For which $f\in \sH_d(\bRn)$ 
does \eqref{eq: main bounds 1}
hold true?
\end{problem}

Let us briefly indicate what 
kind of difficulties 
arise when approaching
the above problems with our methods. 
For Problem \ref{prob: homogeneous}---
since for $d\in(0,1]$, the H\"older dual $d'$ 
is in $(-\infty,0)$ --- one is forced
to consider $\widetilde{f}\in 
\mathscr{H}_{d'}^{\mathbf{0}}(\bRn)$.
The singularity of $\widetilde{f}$
at the origin needs a different kind of treatment
than the pruning arguments provided 
in Section \ref{sec: pruning}. \\
Regarding 
Problem \ref{prob: homogeneous general},
the curvature-informed 
decomposition 
(via $\rho$ and its dyadic 
dilations)
underlying our method 
is rendered insufficient in the scenario
that 
$f\in  
\mathscr{H}_{d}(\bRn)$
because $\det\, H_f(\by)$ may vanish for $\by$
on a higher dimensional sub-variety. 
Instead one would need to work 
with a decomposition that is adapted
to the distance from
$$ 
\{\by \in \bRn: \det\, H_f(\by)=0\}.
$$
Returning to the situation when $f\in \sH_d^{\bzero}(\bRn)$,
we point out a few other interesting directions:
\begin{itemize}
    \item obtaining an asymptotic formula
    for $N_f(\delta,Q)$ 
    valid (uniformly) in the range 
    $\delta \in (Q^{\varepsilon-1},1)$, and
    \item establishing the size of
    $N_f(\delta,Q)$ (uniformly) in 
    the range $(Q^{\varepsilon(Q)-1},1)$
    so that $\varepsilon(Q) \rightarrow 0$
    as $Q\rightarrow \infty$.
\end{itemize}
Accomplishing either of these tasks involves
several related technical difficulties. 
For the second one, a notable hurdle is as follows:
because $\varepsilon$ is fixed
in the current argument, 
the stationary phase expansion 
could be cut-off 
at the point $t$ given by \eqref{def: t}.
In contrast, 
when $\varepsilon$ is allowed to vary
with $Q$ then
$t$ can no longer be uniformly bounded.
Aside from the obvious ramification
that the stationary phase analysis is far more 
intricate in this scenario, 
we observe that
the enveloping machinery in 
Section \ref{sec: envelope} 
(and the boot-strapping) 
will also become significantly
more involved. The underlying reason is 
that we now need good control over the $C^{A}$-norms, with $A$ being unbounded.
As a result, 
we expect the growth of,   
$(\Vert \rho \Vert_{C^A})_{A\geq 1}$ 
to play a crucial role in the analysis
when bounding the decay of 
Fourier transforms.
In other words, the exact
choice of the smoothing
functions $\omega, \rho$ will affect our
argument now.\\
The difficulty pertaining 
to the first problem listed
above is as follows. 
One might want to tighten
the constants from the bootstrapping process, 
by letting the support of $\rho$
shrink (with a certain rate depending on $Q$
and $f$). This is in contrast to just 
using finitely many functions $\rho$,
as we did in Section \ref{subsec local}. \\
To proceed in either of these directions,
a substantial part of our methods 
have to be reworked, extended,
and be made more elaborate. \\
\\
Since the present manuscript has already reached
a certain length, we decided 
against extending this 
work further. \\
\\
Of course, all the questions 
above make sense and are interesting 
also in the planar case $n=2$.
We will address this situation
in a forthcoming paper.

\subsubsection*{Acknowledgements}
R. S. was
supported by the Deutsche 
Forschungsgemeinschaft 
(DFG, German Research Foundation) 
under Germany’s Excellence Strategy 
- EXC-2047/1 -
390685813 as well as SFB 1060, and also in part by NSF grants DMS 1764295 and DMS 2054220.
N. T. was supported by
a Schr\"{o}dinger Fellowship 
of the Austrian Science Fund (FWF):
project J 4464-N, and thanks 
Maksym Radziwi{\l\l} for helpful discussions 
about analysis in general.
Both the authors are grateful to Andreas Seeger for 
enlightening conversations about 
oscillatory integrals, and to him, 
Victor Beresnevich and Damaris Schindler 
for their encouragement.
Furthermore, they thank 
Shuntaro Yamagishi for a careful reading of an earlier version of this manuscript and several helpful comments.
They also acknowledge the Hausdorff Research Institute of Mathematics and the organisers of the trimester program “Harmonic Analysis
and Analytic Number Theory” for a productive stay in the summer of 2021, when this project began. 
\begin{appendices}   
\section{Poisson summation, Smoothing and Partition of Unity} \label{app: smoothing}
We gather several standard results here. The first lemma is well-known to some
but perhaps not known to the extent 
that a general reader knows 
readily what a `smooth partition of unity' is.
In view of this, we provide:
\begin{lem}[Smooth partition of Unity]\label{lem: partition of unity}
There exists a smooth function $\omega: \bR \rightarrow [0,1]$
so that 
$$
\sum_{i\in \bZ} \omega(\frac{x}{2^i}) 
= 1, \quad 
\mathrm{for\, all}\, x\neq 0,
$$
while $\omega$ is supported in $[-4,-1]\cup [1,4]$.
\end{lem}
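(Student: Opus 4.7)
The plan is to construct $\omega$ as a telescoping dyadic difference of a suitable bump function, which is a classical trick in harmonic analysis. First I would fix a smooth auxiliary function $\chi\colon \bR\to[0,1]$ that is even, satisfies $\chi\equiv 1$ on $[-1,1]$, is supported in $[-2,2]$, and is radially non-increasing (so that $\chi(x)=h(|x|)$ with $h$ smooth and non-increasing). The existence of such a $\chi$ is standard; for instance, one can take $h(t)=g(2-t)/\bigl(g(2-t)+g(t-1)\bigr)$ with $g(s)=e^{-1/s}\ind_{s>0}$.

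Next I would define
\[
\omega(x):=\chi(x/2)-\chi(x),
\]
and verify the three required properties. Smoothness is inherited from $\chi$. For the range, since $|x/2|\leq |x|$ and $h$ is non-increasing one has $\chi(x/2)\geq \chi(x)$, so $\omega\geq 0$; and $\omega\leq \chi(x/2)\leq 1$. For the support, observe that on $\{|x|\leq 1\}$ both $\chi(x/2)$ and $\chi(x)$ equal $1$, while on $\{|x|>4\}$ both vanish, so $\supp(\omega)\subseteq [-4,-1]\cup[1,4]$.

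Finally, the partition-of-unity identity follows by a one-line telescoping: for any $N\in\bN$,
\[
\sum_{i=-N}^{N}\omega\!\left(\frac{x}{2^i}\right)
=\sum_{i=-N}^{N}\left[\chi\!\left(\frac{x}{2^{i+1}}\right)-\chi\!\left(\frac{x}{2^i}\right)\right]
=\chi\!\left(\frac{x}{2^{N+1}}\right)-\chi(2^{N}x).
\]
For fixed $x\neq 0$, as $N\to\infty$ the first term tends to $\chi(0)=1$ and the second to $0$ (since $|2^{N}x|\to\infty$ exits the support of $\chi$), yielding $\sum_{i\in\bZ}\omega(x/2^i)=1$.

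There is no real obstacle here; the only mild care is in choosing $\chi$ to be radially non-increasing so that the difference $\omega=\chi(\cdot/2)-\chi(\cdot)$ is automatically non-negative, which is what allows us to dispense with any normalization step (such as dividing by a partition-of-unity denominator, which would complicate the support bookkeeping).
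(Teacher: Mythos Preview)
Your proof is correct and takes a genuinely different route from the paper's. The paper follows Skriganov's explicit construction: it writes down a specific smooth function $\beta$ built from exponentials, defines $\omega$ piecewise on the shells $\{|x|\in[1,2)\}$, $\{|x|=2\}$, $\{|x|\in(2,4]\}$, and then verifies the partition-of-unity identity by a case check (for $x$ in a fixed dyadic shell only two translates overlap, and they sum to $\beta(\cdot)+(1-\beta(\cdot))=1$). Your approach instead uses the telescoping difference $\omega=\chi(\cdot/2)-\chi(\cdot)$ of a single monotone cutoff; this makes the identity $\sum_i\omega(x/2^i)=1$ automatic and eliminates all case analysis, while the radial monotonicity of $\chi$ cleanly forces $\omega\in[0,1]$. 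The paper's construction has the minor advantage of being completely explicit, but your argument is more conceptual and is the standard device in Littlewood--Paley theory; for the purposes of this lemma either is perfectly adequate.
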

\begin{proof}
We follow the explicit construction 
from a work of Skriganov \cite{Skriganov}.
The function 
\[
\beta(x):=\Biggl(1-\exp\biggl(-\frac{1}{1-\frac{\vert x\vert}{2}}\biggr)\Biggr)\exp\biggl(\frac{1-\frac{\vert x\vert}{2}}{1-\vert x\vert}\biggr)
\]
is even, smooth and vanishes at $\vert x\vert=2$ to an infinite order.
Hence, 
\[
\omega (x):=\begin{cases}
0, & \mathrm{if}\,\,0\leq\vert x\vert<1,\\
\beta(x), & \mathrm{if}\,\,1<\vert x\vert<2,\\
1, & \mathrm{if}\,\,\vert x\vert=2,\\
1-\beta(\frac{x}{2}) & \mathrm{if}\,\,2<\vert x\vert<4,\\
0, & \mathrm{if}\,\,4\leq\vert x\vert
\end{cases}
\]
is smooth, and supported in 
$[-4,-1]\cup[1,4]$. 
Further, it is easily seen that
$0\leq \omega \leq 1$
Since $\omega $ is
even, we can without loss of generality verify $1=\sum_{j\in\mathbb{Z}}\omega (\frac{x}{2^{j}})$
for $x>0$. Notice that 
$x\mapsto\omega (\frac{x}{2^{j}})$ is supported
in 
\[
2^{j}[-4,-1]\cup[1,4]=[-2^{j+2},-2^{j}]\cup[2^{j},2^{j+2}].
\]
As a result, $2^{j_{0}}<x<2^{j_{0}+1}$ implies
\[
\sum_{j\in\mathbb{Z}}\omega 
(\frac{x}{2^{j}})=
\omega (\frac{x}{2^{j_{0}-1}})
+\psi(\frac{x}{2^{j_{0}}})
=1-
\beta(\frac{x}{2\cdot2^{j_{0}-1}})
+\beta(\frac{x}{2^{j_{0}}})=1.
\]
The case that $2^{j_{0}}=x$ can be argued in the same way.
\end{proof}
To we explain how 
Theorem \ref{thm: main smoothed}
implies Theorem \ref{thm: main}
we need the following basic result.

\begin{proof}[Proof that 
Theorem \ref{thm: main smoothed}
implies Theorem \ref{thm: main}]
At first, we are concerned with showing 
\begin{equation}\label{eq: upper bound count}
\rN_f(\delta,Q) \ll 
\delta Q^n
+ 
\Big(\frac{\delta}{Q}\Big)^{
\frac{n-1}{d}}Q^{n+k\varepsilon}
\,\,\mathrm{for\,any}\,\, Q\geq 1 \,\,
\mathrm{and}\,\, \delta \in 
(Q^{\varepsilon-1}, 1/2).
\end{equation}
Choose smooth and compactly supported 
functions $\om, b:\bR \rightarrow [0,1]$
so that 
$$
\ind_{[1,2)}(x) \leq \om(x), 
\,\, \mathrm{and} \,\,
\ind_{[0,1]}(y) \leq b(y)
\,\,\mathrm{for\,all\,} x,y\in \bR.
$$
By the compactness arguments from 
Subsection \ref{subsec local}, 
we see that there is finite collection
$\rho_1,\ldots,\rho_I$ 
and (strictly) 
positive numbers $\eta_1\ldots,\eta_I$
so that the following holds true. 
We have
\begin{equation}\label{eq: rho partition unity}
\sum_{\ell \geq 0 } \rho_1(2^\ell \bx) 
\leq \ind_{\sU_{n-1}\setminus\{\bzero\}}(\bx) 
\leq \sum_{\ell \geq 0 } \sum_{i\leq I} 
\rho_i(2^\ell \bx)
\,\, \mathrm{for\,any\,}\bx \in \bRn,
\end{equation}
while each $\rho_i$, for each $i\leq I$,
is $(f,\eta_i)$ admissible. 
Notice if $\ba/q=\bzero$ is the origin,
then
$$
\ind_{[1,2)}(\frac{q}{Q})
\ind_{\sU_{n-1}} (\bzero)
\ind_{[0,1]}
(\delta^{-1}\Vert q f(\bzero)\Vert) = 1
$$
for any $q\in [Q,2Q)$.
As a result, 
$$
\ind_{[1,2)}(\frac{q}{Q})
\ind_{\sU_{n-1}} (\frac{\ba}{q})
\ind_{[0,1]}
(\delta^{-1}\Vert q f(\frac{\ba}{q})\Vert)
\leq 1+ \om(\frac{q}{Q}) 
\Big( \sum_{\ell \geq 0 } \sum_{i\leq I} 
\rho_i(2^\ell \frac{\ba}{q})\Big)
b(\delta^{-1}\Vert q f(\frac{\ba}{q})\Vert)
$$
for any $\ba \in \bZn$, $q\in \bZ$, 
and $\delta\in (0,1/2)$.
By recalling
\eqref{eq; counting function hypersurfaces}
and \eqref{def: counting function},
we see that
$$
\rN_{f}(\delta,Q) 
\leq 
Q+ 
\sum_{i\leq I} N_f^{\omega,\rho_i,b}(\delta,Q).
$$
By applying Theorem \ref{thm: main smoothed}
the estimate \eqref{eq: upper bound count}
follows. Next, we 
prove the correspond lower bound 
\begin{equation}\label{eq: lower bound count}
\rN_f(\delta,Q) \gg
\delta Q^n
+ 
\Big(\frac{\delta}{Q}\Big)^{
\frac{n-1}{d}}Q^{n}
\,\,\mathrm{for\,any}\,\, Q\geq 1 \,\,
\mathrm{and}\,\, \delta \in 
(Q^{\varepsilon-1}, 1/2)
\end{equation}
to \eqref{eq: upper bound count}.
Now we choose smooth and compactly supported 
functions $\om_{-}, b_{-}:\bR \rightarrow [0,1]$
so that 
$$
\ind_{[1,2)}(x) \geq \om_{-}(x), 
\,\, \mathrm{and} \,\,
\ind_{[0,1]}(y) \geq b_{-}(y)
\,\,\mathrm{for\,all\,} x,y\in \bR .
$$
By the left most inequality 
in \eqref{eq: rho partition unity},
we conclude 
$\rN_{f}(\delta,Q) 
\geq 
\rN_f^{\omega_{-},\rho_1,b_{-}}(\delta,Q)$.
Theorem \ref{thm: main smoothed} 
implies \eqref{eq: lower bound count}.
Combining \eqref{eq: upper bound count}
and \eqref{eq: lower bound count},
completes the proof.
\end{proof}
Finally, we frequently
use multi-dimensional Poisson summation.
For the convenience of the reader,
we state:
\begin{lem}[Poisson Summation]
%\label{lem: Poisson sum}
Let $m\in \bZ_{\geq 1}$. If $f:\bR^m \rightarrow \bR$
is a compactly supported $C^2$ function, then
$$
\sum_{\bn \in \bZ^m} f(\bn) = \sum_{\bk \in \bZ^m} \widehat{f}(\bk).
$$
\end{lem}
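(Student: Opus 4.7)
The strategy is the classical periodisation argument. Define
\[
F(\mathbf{x}) := \sum_{\mathbf{n}\in\mathbb{Z}^m} f(\mathbf{x}+\mathbf{n}), \qquad \mathbf{x}\in\mathbb{R}^m.
\]
Since $f$ is compactly supported, for each fixed $\mathbf{x}$ only finitely many terms in the sum are nonzero, so $F$ is well-defined. The function $F$ is manifestly $\mathbb{Z}^m$-periodic, and it inherits the $C^2$-regularity of $f$ (indeed one can differentiate term-by-term since the sum is locally finite). View $F$ as a function on the torus $\mathbb{T}^m=\mathbb{R}^m/\mathbb{Z}^m$ and compute its Fourier coefficients by unfolding:
\[
\widehat{F}(\mathbf{k}) = \int_{[0,1]^m} F(\mathbf{x})\, e(-\langle \mathbf{k},\mathbf{x}\rangle)\, \mathrm{d}\mathbf{x} = \sum_{\mathbf{n}\in \mathbb{Z}^m}\int_{[0,1]^m} f(\mathbf{x}+\mathbf{n})\, e(-\langle \mathbf{k},\mathbf{x}\rangle)\, \mathrm{d}\mathbf{x} = \int_{\mathbb{R}^m} f(\mathbf{y})\, e(-\langle \mathbf{k},\mathbf{y}\rangle)\, \mathrm{d}\mathbf{y} = \widehat{f}(\mathbf{k}),
\]
where one uses that $e(-\langle \mathbf{k},\mathbf{n}\rangle)=1$ for $\mathbf{n}\in\mathbb{Z}^m$ and that the sum over $\mathbf{n}$ tiles $\mathbb{R}^m$.

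Next I want to invoke Fourier inversion for $F$ at the origin, which will give
\[
F(\mathbf{0}) = \sum_{\mathbf{k}\in \mathbb{Z}^m}\widehat{F}(\mathbf{k}) = \sum_{\mathbf{k}\in \mathbb{Z}^m}\widehat{f}(\mathbf{k}),
\]
while by definition $F(\mathbf{0})=\sum_{\mathbf{n}\in\mathbb{Z}^m} f(\mathbf{n})$, which yields the claim. The point where some care is required, and the only obstacle worth naming, is the pointwise convergence of the Fourier series of $F$ at the single point $\mathbf{0}$. Integration by parts twice gives the decay estimate $|\widehat{f}(\mathbf{k})|\ll (1+\|\mathbf{k}\|_2)^{-2}$, and this alone is not enough to guarantee absolute convergence in dimension $m\geq 2$.

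To circumvent this, I would approximate: fix a non-negative even smooth bump $\eta\in C_c^\infty(\mathbb{R}^m)$ with $\int\eta=1$ and set $\eta_\varepsilon(\mathbf{x}):=\varepsilon^{-m}\eta(\varepsilon^{-1}\mathbf{x})$. Then $f_\varepsilon := f * \eta_\varepsilon$ is smooth, compactly supported, and $\widehat{f_\varepsilon}(\mathbf{k}) = \widehat{f}(\mathbf{k})\,\widehat{\eta}(\varepsilon \mathbf{k})$ with $\widehat{\eta}$ a Schwartz function; hence the Poisson formula for $f_\varepsilon$ holds with absolutely convergent series on both sides by the usual argument applied to the periodisation of $f_\varepsilon$, which is a $C^\infty$ function on $\mathbb{T}^m$. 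As $\varepsilon\to 0^+$, the left-hand side $\sum_{\mathbf{n}} f_\varepsilon(\mathbf{n})$ tends to $\sum_{\mathbf{n}} f(\mathbf{n})$ by uniform convergence on compacta (the sum is finite). For the right-hand side, dominated convergence applies once we verify $\sum_{\mathbf{k}}|\widehat{f}(\mathbf{k})\,\widehat{\eta}(\varepsilon\mathbf{k})|$ is uniformly bounded in $\varepsilon$; splitting the sum at $\|\mathbf{k}\|_2\asymp \varepsilon^{-1}$ and using $|\widehat{f}(\mathbf{k})|\ll (1+\|\mathbf{k}\|_2)^{-2}$ on the low-frequency part and the rapid decay of $\widehat{\eta}$ on the high-frequency part suffices. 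The limit then matches $\sum_{\mathbf{k}}\widehat{f}(\mathbf{k})$ (interpreted as a conditionally convergent series summed e.g.\ in spherical shells), which is precisely the form used throughout the paper, completing the proof.
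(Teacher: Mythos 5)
The paper states this lemma without proof (it is invoked only for $C^\infty$ weight functions, where the classical Schwartz-class argument applies with absolute convergence on both sides), so there is no proof to compare against; I will assess your proposal on its own terms.

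Your periodisation set-up is correct, the unfolding computation $\widehat F(\mathbf{k})=\widehat f(\mathbf{k})$ is correct, and you rightly identify the genuine difficulty: $C^2$ regularity only yields $|\widehat f(\mathbf{k})|\ll(1+\Vert\mathbf{k}\Vert_2)^{-2}$, which is not absolutely summable over $\bZ^m$ for $m\geq 2$, so one cannot simply invoke Fourier inversion for $F$. The mollification scheme is also a reasonable strategy, and the identities $\widehat{f_\varepsilon}=\widehat f\cdot\widehat\eta(\varepsilon\,\cdot)$, the validity of Poisson summation for $f_\varepsilon$, and the uniform convergence $\sum_{\mathbf n}f_\varepsilon(\mathbf n)\to\sum_{\mathbf n}f(\mathbf n)$ are all fine. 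The chain correctly establishes
\[
\sum_{\mathbf n\in\bZ^m}f(\mathbf n)\;=\;\lim_{\varepsilon\to 0^+}\sum_{\mathbf k\in\bZ^m}\widehat f(\mathbf k)\,\widehat\eta(\varepsilon\mathbf k),
\]
i.e. that the left side equals a Gauss-regularised version of the right-hand sum.

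The gap is in the final step. You claim the bound
\[
\sum_{\mathbf k\in\bZ^m}\bigl|\widehat f(\mathbf k)\,\widehat\eta(\varepsilon\mathbf k)\bigr|\ll 1\quad\text{uniformly in }\varepsilon,
\]
but this is false as soon as $m\geq 2$: with $|\widehat f(\mathbf k)|\asymp(1+\Vert\mathbf k\Vert_2)^{-2}$ and $|\widehat\eta(\varepsilon\mathbf k)|\asymp 1$ on $\Vert\mathbf k\Vert_2\leq\varepsilon^{-1}$, the low-frequency block already contributes $\asymp\sum_{1\leq r\leq\varepsilon^{-1}}r^{m-3}$, which is $\asymp\log(1/\varepsilon)$ for $m=2$ and $\asymp\varepsilon^{-(m-2)}$ for $m\geq 3$; the high-frequency block scales the same way. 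Moreover, even if the sums were uniformly bounded, that would not license the interchange — dominated convergence needs a fixed summable majorant, which here would have to be $|\widehat f|$, and $\sum_{\mathbf k}|\widehat f(\mathbf k)|$ is exactly what may diverge. To pass from the Gauss-regularised limit to the spherical-shell sum $\sum_{\mathbf k}\widehat f(\mathbf k)$ one needs a separate Tauberian-type argument, which you do not supply. Two clean ways out: (i) prove the lemma only for $C^\infty$ compactly supported $f$ (which is all the paper uses — then $F\in C^\infty(\bT^m)$, its Fourier coefficients are rapidly decaying, and inversion holds with absolute convergence), or (ii) strengthen the hypothesis to $C^k$ with $k>m$ (or $k>m/2$ together with an $H^k$ argument via Cauchy–Schwarz), which restores absolute convergence. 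As written, your argument proves a weaker, regularised version of the identity rather than the statement as given.
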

Further, we need fine-scale smoothing
which is a basic maneuver (e.g.
in the theory of correlation functions).
\begin{lem}\label{lem: truncated Poisson}
Let $b: \bR \rightarrow \bR$ 
be smooth and $\supp(b)\subseteq(-1/3,4/3)$.
If $\varepsilon,A>0$,
then 
$$
\bump\Big(\frac{\Vert x\Vert}
{\delta}\Big)
= 
\delta \widehat{\bump}(0)
+ \delta  
\sum_{1 \leq \vert j \vert 
\leq Q^{\varepsilon}/\delta} 
\widehat{\bump}(\delta j) e(jx)
+ O_{b,\varepsilon,A}(Q^{-A})
$$
uniformly in $\delta \in (0,1/2)$ 
and $x\in \bR$.
\end{lem}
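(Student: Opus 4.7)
The plan is to expand $b(\Vert x\Vert/\delta)$ as a Fourier series on $\bR/\bZ$ and truncate using the rapid decay of $\widehat{b}$. Set $g_\delta(y) := b(|y|/\delta)$, which is compactly supported in $|y| < 4\delta/3$. For $\delta < 3/8$ (so that $4\delta/3 < 1/2$), the translates $g_\delta(x-n)$ with $n \in \bZ$ have pairwise disjoint supports in $\bR$, and exactly one such translate reproduces the value of $b(\Vert x\Vert/\delta)$ in each fundamental domain. Hence
\[
b\!\left(\frac{\Vert x\Vert}{\delta}\right) = \sum_{n \in \bZ} g_\delta(x-n).
\]
The narrow complementary range $\delta \in [3/8, 1/2)$, where $\delta$ is of constant order, admits only $O(Q^\varepsilon)$ frequencies in the asserted Fourier sum, and the tail estimate below suffices there by the same reasoning.

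Applying the Poisson summation formula to $g_\delta$ gives
\[
\sum_{n \in \bZ} g_\delta(x-n) = \sum_{j \in \bZ} \widehat{g_\delta}(j)\, e(jx),
\]
and the substitution $u = y/\delta$ in the defining integral yields $\widehat{g_\delta}(j) = \delta\, \widehat{b}(\delta j)$, with $\widehat{b}$ interpreted as the Fourier transform of the even extension of $b|_{[0,\infty)}$ to $\bR$, in accordance with the paper's convention of evaluating $b$ only on nonnegative arguments. Splitting off the $j=0$ contribution gives the main term $\delta \widehat{b}(0)$, and the remaining modes with $|j| \leq Q^\varepsilon/\delta$ form the asserted finite sum.

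To bound the tail
\[
\mathscr{E}(x) := \sum_{|j|>Q^\varepsilon/\delta} \delta\, \widehat{b}(\delta j)\, e(jx)
\]
by $O_{A,\varepsilon,b}(Q^{-A})$, I would exploit that $b \in C_c^\infty(\bR)$ forces $\widehat{b}$ to be Schwartz class: repeated integration by parts yields $|\widehat{b}(\xi)| \ll_{B,b} (1+|\xi|)^{-B}$ for every integer $B \geq 1$. On the tail, $\delta|j| > Q^\varepsilon$, hence
\[
|\mathscr{E}(x)| \ll_B \delta \sum_{|j|>Q^\varepsilon/\delta}(\delta|j|)^{-B} \ll_B Q^{-\varepsilon(B-1)},
\]
and taking $B \geq 1 + A/\varepsilon$ completes the proof. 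The argument is routine once the Fourier expansion is set up; the only point of mild care is the consistent interpretation of $\widehat{b}$ given the evenness of $\Vert \cdot \Vert$, which is implicit throughout the paper, and no substantive obstacle arises.
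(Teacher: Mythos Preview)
Your proof is correct and takes essentially the same approach as the paper: periodize, apply Poisson summation, and truncate using the rapid decay of $\widehat{b}$. The only difference is that the paper periodizes $b((x+m)/\delta)$ directly without the absolute value, which keeps the support width at $5\delta/3<1$ for all $\delta\in(0,1/2)$ and thereby avoids your case split at $\delta=3/8$; your even-extension device is a legitimate alternative that makes the constraint $\Vert x\Vert\ge 0$ explicit.
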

\begin{proof}
Observe that
for any $x\in \bR$
there exists at most one
$m=m(x)$ so that $\Vert x\Vert 
= x - m \leq \delta <1/2$.
As $\supp(b)$ is contained in an 
interval of length at most 
$5/3<1/\delta$ we see that
$$
\bump\Big(\frac{\Vert x\Vert}
{\delta}\Big) = 
\sum_{m\in \bZ} b\Big(\frac{x+m}{\delta}\Big)
$$
for any $x\in \bR$.
By Poisson summation 
the right hand side equals
$$
\sum_{j\in \bZ} \int_{\bR} 
 b\left(\frac{x+y}{\delta}\right) e(-jy) \rd y.
$$
For each $j\in \mathbb{Z}$, using two changes of variables 
($y\mapsto y-x$ and $y\mapsto \delta y$), we conclude 
$$
\int_{\bR} b\left(\frac{x+y}{\delta}\right) 
e(-jy) \rd x = 
\delta e(jx)\int_{\bR} b(y) 
e(-j\delta y) \rd y = 
\delta e(jx) \widehat{b}(j\delta).
$$
By partial summation and a straightforward 
dyadic decomposition, 
the tail $\vert j \vert >
Q^{\varepsilon}/\delta$
contributes $O(Q^{-100})$.
Since the zero mode $j=0$ evaluates to 
$\delta \widehat{b}(0)$
the proof is complete.
\end{proof}

\section{Proof of 
Theorem \ref{thm: dio app low degree} 
and Theorem \ref{thm: dio app high degree}}\label{app: Dio}
This reduction is well-known,
and we include it for the sake of completeness
claiming no novelty here.
We follow the exposition
of \cite[Section 9]{Huang rational points} closely
modifying it when necessary for our setting.

It suffices to prove 
Theorem \ref{thm: dio app low degree} 
and Theorem \ref{thm: dio app high degree}
locally on a given chart. 
We begin with proving the former theorem.
Once this is done, establishing the latter theorem
will be a simple change in the exponents 
in a few places.

\subsection{Proof of Theorem \ref{thm: dio app low degree}}
Recall that $\sU_{n-1}$
is the (Euclidean) unit ball in 
$\bR^{n-1}$.
Without loss of
generality, 
we assume that 
$\sM$ is given 
in the normalised 
 Monge parametrisation 
 \eqref{def: normalised Monge}.
Let $\Omega_n(f, \psi)
\subset \bRn$ denote the projection 
of $\mathcal{S}\cap 
\mathscr{S}_n(\psi)\subset \bR^n$ 
onto $\sC$
defined by forgetting the last coordinate.
Since this projection is bi-Lipschitz, 
$\mathcal{H}^s(\mathcal{S}\cap \mathscr{S}_n(\psi)) =0$ 
if and only if
$\mathcal{H}^s(\Omega_n(f, \psi))=0$.
Next, we show that if $s>\frac{n-1}2$ then
$\mathcal{H}^s(\Omega_n(f,\psi))=0$ under the condition 
\begin{equation}\label{eq: Hausdorff convergence low degre}
\sum_{q\geq 1}\psi(q)^{s+1}q^{n-1-s}<\infty.
\end{equation}
First of all, $\mathcal{H}^s(\Omega_n(f,\psi))=0$ automatically for all $s>n-1$. So we may assume that $s\le n-1$. Next choose $\eta>0$ such that $\eta<(2s-n+1)/(s+1)$. It is easily verified that there is no loss of generality in assuming that 
\begin{equation}\label{eq: above conv}
\psi(q)\ge q^{-1+\eta} \quad\text{for all }q\geq 1,
\end{equation}
because if \eqref{eq: above conv} 
fails, then we replace $\psi$ by 
$\hat\psi(q):=\max(\psi(q), q^{-1+\eta})$ 
which satisfies \eqref{eq: above conv}.
We observe that $\Omega_n(f,\psi)$ 
consists of those points 
$\mathbf{x}\in \sC$ so that
\[
\max_{1\leq i\leq n-1}
|x_i-\frac{a_i}q|<\frac{\psi(q)}q,
\quad \mathrm{as\,well\,as}\quad 
|f(\mathbf{x})
-\frac{\fa}q| < \frac{\psi(q)}q
\]
is satisfied for infinitely many 
$(q, \mathbf{a}, \fa)\in\bN\times\bZ^{n-1}\times \bZ$. 
For $\frac{\mathbf{p}}q\in\bQ^n$,
where 
$\mathbf{p}=(\mathbf{a},\fa)\in\bZ^{n-1}\times\bZ$, 
let $\sigma(\mathbf{p}/q)$ denote the set of 
$\mathbf{x}\in \sC$ satisfying the above inequality. 
If we show that
\begin{equation}\label{eq: Hausdorff measures converge}
\sum_{q\geq 1}
\sum_{\mathbf{p}\in\bZ^{n}} 
\text{diam}\left(
\sigma\left(\frac{\mathbf{p}}
q\right)\right)^s
<\infty,
\end{equation}
then 
the Hausdorff-Cantelli Lemma 
\cite[p. 68]{BeD} 
implies that 
$\mathcal{H}^s(\Omega_n(f,\psi))=0$ 
and thus the proof would be complete.
In order to verify
\eqref{eq: Hausdorff measures converge}
we use the trivial estimate
\begin{equation}\label{eq: diameter simple bound}
\text{diam}
\left(\sigma\left(\frac{\textbf{p}}q\right)\right)
\ll \frac{\psi(q)}q
\end{equation}
where the implied 
constant only depends on $n$. Furthermore, the dyadic decomposition
$$
\sum_{q\geq 1}
\sum_{\mathbf{p}\in\bZ^{n}} 
\text{diam}\left(
\sigma\left(\frac{\mathbf{p}}
q\right)\right)^s
=
\sum_{i\geq 0}
\sum_{\substack{
2^i\leq q <2^{i+1}\\
\mathbf{p}\in\bZ^{n}\\
\sigma\left(\frac{\mathbf{p}}
q\right) \neq \emptyset} }
\text{diam}\left(
\sigma\left(\frac{\mathbf{p}}
q\right)\right)^s
\ll 
\sum_{i\geq 0}
\Big(\frac{\psi(2^i)}{2^i}\Big)^s \#\sD_i
$$
where $\sD_i:=\{\mathbf{p}/q\in\bQ^{n}: 
2^i\le q<2^{i+1}, 
\sigma(\mathbf{p}/q)\not=\emptyset\}$
will be useful. 
Suppose
$\sigma(\mathbf{p}/q)\not=\emptyset$ 
and
$\textbf{x}\in\sigma(\mathbf{p}/q)$. 
Being a smooth function,
$f$ is Lipschitz-continuous
(on any fixed compact interval).
Hence,
\begin{align*}
\left|f\left(\frac{\textbf{a}}q\right)
-\frac{\fa}q\right|
&\le\left|f(\mathbf{x})
-\frac{\fa}q\right|+\left|
f\left(\frac{\textbf{a}}q\right)
-f(\mathbf{x})\right|
\le \frac{\psi(q)}q+c_3\left|
\mathbf{x}-\frac{\mathbf{a}}q\right|
\le c_4\frac{\psi(q)}q,
\end{align*}
where $c_3$ and $c_4$ 
are constants that depend only on $f$. 
Thus, for $i\ge0$, we have
\begin{align*}
\#\sD_i
\le
\#\{(\ba, \fa)/q\in\bQ^{n}: 
2^i\le q<2^{i+1}, \mathbf{a}/q\in\sU_{n-1}, 
|f(\mathbf{a}/q)-\fa/q|\le 
c_4\psi(q)/q\}.
\end{align*}
Because $\psi$ is non-increasing,
we are guaranteed $\psi(q)\leq \psi(2^i)$
for $2^i\leq q <2^{i+1}$. As a result,
$$
\#\sD_i
\leq 
\#
\{\mathbf{a}/q\in
\bQ^{n-1}: 2^i\leq q<2^{i+1}, 
\mathbf{a}/q\in \sU_{n-1}, 
\|qf (\mathbf{a}/q)\|
\le c_4\psi(2^i)
\}.
$$ In view of \eqref{eq: above conv}, 
Theorem \ref{thm: main} implies
\begin{equation}\label{eq: bound on sets}
\#\sD_i\ll \psi(2^i)2^{ni}.
\end{equation}
Recall that $\Omega_n(f,\psi)$ 
consists of $\mathbf{x}\in \sU_{n-1}$ 
lying in infinitely 
many sets $\sigma(\mathbf{p}/q)$. 
Utilizing \eqref{eq: diameter simple bound}
and \eqref{eq: bound on sets}, we have
\begin{align*}
\sum_{q\geq 1}
\sum_{\mathbf{p}\in\bZ^{n}} 
\text{diam}\left(
\sigma\left(\frac{\mathbf{p}}
q\right)\right)^s
& \ll
 \sum_{i\geq 0} 
 \#\{\mathbf{p}/q\in\bQ^{n}: 2^i
\le q<2^{i+1}, 
\sigma(\mathbf{p}/q)\not=\emptyset\}
(\psi(2^i))^s
2^{-is}\\
& \ll 
\sum_{i\geq 0} 
(\psi(2^i))^{1+s}2^{i(n-s)}
\ll \sum_{q\geq 1} 
(\psi(q))^{1+s}q^{n-s-1}.
\end{align*}
Due to \eqref{eq: Hausdorff convergence low degre},
the right hand side converges,
completing the proof of 
Theorem \ref{thm: dio app low degree}.

\subsection{Proof of Theorem \ref{thm: dio app high degree}}
As mentioned earlier, 
we argue very similarly as
in the proof 
Theorem \ref{thm: dio app low degree}. 
Therefore we recycle the notation from that proof,
and proceed to describe 
the necessary modifications.
First of all, instead of 
\eqref{eq: Hausdorff convergence low degre}
we show now that  
\begin{equation}\label{eq: convergence assumption large degree}
    \sum_{q\geq 1}(\psi(q))^{s+1}q^{n-1-s}<\infty,
    \,\,\, 
    \mathrm{and}
    \,\,\,
    \sum_{q\geq 1}(\psi(q))^{s+\frac{n-1}{d}}
    q^{n+k\varepsilon-s-\frac{n-1}{d}-1}<\infty
\end{equation}
implies $\mathcal{H}^s(\Omega_n(f,\psi))=0$. 
Theorem \ref{thm: main smoothed} 
implies in the present situation
$$
\# \sD_i \ll \psi(2^i) 2^{in} 
+ \Big( \frac{\psi(2^i)}{2^i}\Big)^{\frac{n-1}{d}}
2^{i (n+k\varepsilon)}
$$
in place of \eqref{eq: bound on sets}.
Arguing as before, we deduce 
$$
\sum_{q\geq 1}
\sum_{\mathbf{p}\in\bZ^{n}} 
\text{diam}\left(
\sigma\left(\frac{\mathbf{p}}
q\right)\right)^s
\ll \sum_{q\geq 1} 
(\psi(q))^{1+s}q^{n-s-1}
+ 
\sum_{q\geq 1}(\psi(q))^{s+\frac{n-1}{d}}
    q^{n+k\varepsilon-s-\frac{n-1}{d}-1}
$$
By \eqref{eq: convergence assumption large degree}
the right hand side converges. 
Thus the proof of 
Theorem \ref{thm: dio app high degree} is complete.
\section{Proof of 
Theorem \ref{thm: dimension growth app}}\label{app: dimension growth}
The following bi-rational projection
argument is standard in the context 
of the dimension growth conjecture. 
Recall the notation from 
Definition \ref{def: class of addmissible manifolds}.
In view of the compactness of the manifold,
it suffices to prove the theorem 
when for the manifold the first 
$(n-\rc)$-coordinates of the manifold
in question are restricted to 
a small open ball $\sB(\bx_0,r)$. 
Denote this local piece of $\sM$
by $\sM_{\bx_0,r}$.
For the ease of exposition we detail
only the case $\bx_0 = \bzero$
as the general case can be done similarly
by shifting appropriately by $\bx_0$
throughout. Define 
$P: \sB(\bzero,r) \rightarrow \bR^{n-\rc +1}$
via
$ P(\bx):= (\bx, f(\bx))$
where $f(\bx) := \langle 
\bf(\bx),\bs/ Q_0\rangle$.
There are two useful observations to make now. 
Firstly, if $(\bx,\bf(\bx)) 
\in \sM_{\bzero,r} \cap q^{-1} \bZ^n$
% with $\ba \in \bZ^{n-\rc}$,
then 
$\bx = \ba/q$  
for some integer vector $\ba$ 
and $ \Vert  Q_0 q f(\ba/q) \Vert =0$.
By slackening this equality to
the inequality $ \Vert  Q_0 q f(\ba/q) \Vert 
\leq Q^{\varepsilon-1}$ we infer 
$$
\rN_{\sM_{\bzero,r}}(B) 
\leq 
N_{Q_0f}(B,0)
\leq 
N_{Q_0f}(B,Q^{\varepsilon-1}).
$$
Applying Theorem \ref{thm: main}
implies the claim.
\end{appendices}

\nocite{*}

\newpage
\pagestyle{plain}
\printindex%[style=mcolindex,sort=use]

\begin{thebibliography}{1}
\nocite{*}
%\bibitem{AM}
%F. Adiceam, and O. Marmon:
%\emph{Rational points close 
%to non-singular algebraic curves},
%arXiv preprint: 2204.11607 (2022).


\bibitem{Beresnevich Rational points near manifolds}
V. Beresnevich:
\emph{Rational points near 
manifolds and 
metric Diophantine approximation},
Ann. Math. (2012): 187--235.

\bibitem{BBDV}
V. Beresnevich, V. Bernik, M. Dodson, and S. Velani:
\emph{Classical metric Diophantine 
approximation revisited}, Chapter in:
Analytic number theory. 
Essays in honour of Klaus Roth 
on the occasion of his 80th birthday.
Cambridge University Press 
(ISBN 978-0-521-51538-2/hbk). 38--61 (2009).

\bibitem{B Recent}
V. Beresnevich, V. Bernik, M. Dodson,
and S. Velani:
\emph{Classical metric 
Diophantine approximation revisited}
in Analytic number theory. 
Essays in honour of Klaus Roth 
on the occasion of his 
80th birthday. 
Cambridge University Press, 
2009. 38-61.

\bibitem{BDV limsup sets}
V. Beresnevich, D. Dickinson, and S. Velani:
\emph{Measure theoretic laws for lim sup sets}, 
MAMS (2006), Vol. 179. No. 846.



\bibitem{BK}
V. Beresnevich and D. Kleinbock:
\emph{Quantitative nondivergence and diophantine approximation on manifolds}
in: Dynamics, geometry, number theory. 
The impact of Margulis on 
modern mathematics. Chicago, IL: 
The University of Chicago Press. 
303--341, 2022.


\bibitem{Beresnevich Yang}
V. Beresnevich, and L. Yang: \emph{Khintchine's
theorem and Diophantine approximation 
on manifolds}, 
Acta Math. 231 (2023): 1--30.

\bibitem{BZ}
V. Beresnevich, and E. Zorin:
\emph{Explicit bounds for rational points near planar curves and metric Diophantine approximation}
Advances Math.225.6 (2010): 3064--3087.


\bibitem{Bombieri Pila}
E. Bombieri, and J. Pila: 
\emph{The number of integral points on arcs and ovals},
Duke Math. J. 59.2 (1989): 
337--357.

\bibitem {BeD}
V.I. Bernik and M.M. Dodson, \emph{Metric Diophantine approximation on manifolds.} Cambridge Tracts in Mathematics, 137. Cambridge University Press, Cambridge, 1999. 

\bibitem{Batyrev Manin}
V. Batyrev, and Y. I. Manin
\emph{Sur le nombre des points rationnels 
de hauteur born\'{e} des vari\'{e}t\'{e}s alg\'{e}briques} 
Math. Ann. 286.1 (1990): 27--43.

\bibitem{Browning Quantitative arithmetic geometry}
T. D. Browning:
\emph{Quantitative arithmetic of 
projective varieties}, Vol. 277.
Springer Science \& Business Media, 2009.


\bibitem{Chow}
S. Chow:
\emph{A note on rational points near planar curves}, 
Acta Arith. 177 (2017), 393--396.


\bibitem{Fouvry Iwaniec}
E. Fouvry, and H. Iwaniec. 
\emph{Exponential sums with monomials},
J. Number Theory 33.3 (1989): 311--333.

\bibitem{Franke Manin Tschinkel}
J. Franke, Y. I. Manin, and Y. Tschinkel: 
\emph{Rational points of bounded height on Fano varieties},
Invent. Math.,
95 (2): 421--435 (1989).

\bibitem{Heath Brown}
D. R. Heath-Brown:
\emph{The density of rational points on curves 
and surfaces} Ann. Math. 155.2 (2002): 553--598.

\bibitem{Hoermander: The Analysis}
L. H\"{o}rmander: 
\emph{The Analysis of linear partial 
differential operators I, 
Distribution theory and Fourier Analysis}. 
Springer, 1990.

\bibitem{Huang: Planar curves}
J.-J. Huang:
\emph{Rational points near planar curves and Diophantine approximation},
Advances Math. 274 (2015): 490--515.


\bibitem{Huang rational points}
J.-J. Huang: 
\emph{The density of 
rational points near hypersurfaces}, 
Duke Math. J., 169.11 (2020): 2045--2077.

\bibitem{Huang Subspaces}
J.-J. Huang: 
\emph{Extremal affine subspaces
and Khintchine-Jarn\' \i k type theorems}, Geom. Funct. Anal. 34 (2024): 113--163.


\bibitem{Huxley}M. N. Huxley: 
\emph{The rational points close to
a curve}, Ann.  Sc. Ec. Norm. Super.
21.3 (1994): 357--375.


\bibitem{KM}
D. Kleinbock, and G. Margulis. 
\emph{Flows on homogeneous spaces 
and Diophantine approximation on manifolds},
Ann. Math. (1998): 339-360.


\bibitem{Munkelt}F. Munkelt: 
\emph{On the number of rational points
close to a compact manifold under 
a less restrictive curvature condition},
preprint arXiv:2205.06183 (2022).

\bibitem{Robert Sargos}
O. Robert, and P. Sargos. 
\emph{Three-dimensional exponential sums with monomials},
Journal f\"{u}r die reine und angewandte Mathematik,
(2006): 1--20.

\bibitem{RSS Heisenberg}
J. Roos, A. Seeger, and R. Srivastava: 
\emph{Lebesgue space estimates for spherical
 maximal functions on Heisenberg groups},
IMRN, https://doi.org/10.1093/imrn/rnab246.


\bibitem{DRN}
D. Schindler, R. Srivastava and N. Technau: 
\emph{Rational Points Near Manifolds, Homogeneous Dynamics, and Oscillatory Integrals}, 
arXiv preprint arXiv:2310.03867 (2023).

\bibitem{Schindler Yamagishi}
D. Schindler, and S. Yamagishi: 
\emph{Density of rational points near/on compact manifolds 
with certain curvature conditions}, 
Adv. Math. 403 (2022): 108358.

\bibitem{Serre Topics in Galois theory}
J.-P. Serre: \emph{Topics in Galois theory}. 
CRC Press, 2016.

\bibitem{Skriganov}
M.M. Skriganov:
\emph{Constructions of uniform distributions in terms of geometry of numbers}
\begin{otherlanguage*}{russian}
Алгебра и анализ 
\end{otherlanguage*}
6.3 (1994): 200--230.

\bibitem{Vaughan Velani}
R. C. Vaughan and S. Velani: \emph{Diophantine
approximation on planar curves: the convergence theory}, Inv. Math.
166.1 (2006): 103--124.

\end{thebibliography}
\end{document}